\tikzset{->-/.style={decoration={
  markings,
  mark=at position #1 with {\arrow{>}}},postaction={decorate}}}
\tikzset{middlearrow/.style={
        decoration={markings,
            mark= at position 0.5 with {\arrow{#1}} ,
        },
        postaction={decorate}
    }
}
\theoremstyle{plain}
\newtheorem{theorem}{Theorem}
\newtheorem{corollary}[theorem]{Corollary}
\newtheorem{proposition}[theorem]{Proposition}
\newtheorem{lemma}[theorem]{Lemma}
\theoremstyle{definition}
\newtheorem{definition}[theorem]{Definition}
\newtheorem{conjecture}[theorem]{Conjecture}
\theoremstyle{definition}
\newtheorem{remark}[theorem]{Remark}
\numberwithin{equation}{section}
\numberwithin{theorem}{section}
\newcommand{\refequal}[1]{\xy {\ar@{=}^{#1}
(-1,0)*{};(1,0)*{}};
\endxy}
\newcommand{\Hom}{{\rm Hom}}
\newcommand{\maps}{\colon}
\renewcommand{\to}{\rightarrow}
\def\1{\mathbf{1}}%
\def\id{\mathrm{id}}
\def\mf{\mathfrak}
\numberwithin{equation}{section}
\let\hat=\widehat
\let\tilde=\widetilde
\let\epsilon=\varepsilon
\def\C{{\mathbb{C}}}
\def\cal#1{\mathcal{#1}}%
\def\cal#1{\mathcal{#1}}
\newcommand\nc{\newcommand}
\nc\rnc{\renewcommand}
\nc\Kar{\operatorname{Kar}}
\nc\End{\operatorname{End}}
\nc\Sym{\operatorname{Sym}}
\DeclareMathOperator{\alt}{alt}
\DeclareMathOperator{\del}{del}
\DeclareMathOperator{\full}{full}
\DeclareMathOperator{\Grr}{Gr}
\DeclareMathOperator{\OSz}{OSz}
\DeclareMathOperator{\proj}{proj}
\DeclareMathOperator{\rest}{rest}
\DeclareMathOperator{\sign}{sign}
\DeclareMathOperator{\var}{var}
\newcommand{\At}{\tilde{A}}
\newcommand{\Bt}{\tilde{B}}
\newcommand{\A}{\mathcal{A}}
\newcommand{\B}{\mathcal{B}}
\newcommand{\V}{\mathcal{V}}
\newcommand{\Eb}{\mathbf{E}}
\newcommand{\Fb}{\mathbf{F}}
\newcommand{\gl}{\mathfrak{gl}}
\newcommand{\M}{\mathfrak{M}}
\newcommand{\Q}{\mathbb{Q}}
\newcommand{\R}{\mathbb{R}}
\newcommand{\tf}{\mathfrak{t}}
\newcommand{\Z}{\mathbb{Z}}
\newcommand{\Zc}{\mathcal{Z}}
\newcommand{\x}{\mathbf{x}}
\newcommand{\y}{\mathbf{y}}
\newcommand{\z}{\mathbf{z}}
\newcommand{\Ib}{\mathbf{I}}
\newcommand{\Del}{\mathbf{Del}}
\newcommand{\Rest}{\mathbf{Rest}}
\title{From hypertoric geometry to bordered Floer homology via the $m=1$ amplituhedron}
\begin{document}

\author{Aaron D. Lauda}
\email{lauda@usc.edu}
\address{Department of Mathematics\\ University of Southern California \\ Los Angeles, CA}
\thanks{Research was sponsored by the Army Research Office and was
accomplished under Grant Number W911NF-20-1-0075. The views and conclusions contained in this
document are those of the authors and should not be interpreted as representing the official policies, either
expressed or implied, of the Army Research Office or the U.S. Government. The U.S. Government is
W911NF2010075 authorized to reproduce and distribute reprints for Government purposes notwithstanding any copyright
notation herein.}

\author{Anthony M. Licata}
\email{anthony.licata@anu.edu.au}
\address{Mathematical Sciences Institute\\ Australian National University \\ Canberra, Australia}

\author{Andrew Manion}
\email{amanion@usc.edu}
\address{Department of Mathematics\\ University of Southern California \\ Los Angeles, CA}

\begin{abstract}
We give a conjectural algebraic description of the Fukaya category of a complexified hyperplane complement, using the algebras defined in \cite{BLPPW} from the equivariant cohomology of toric varieties.  We prove this conjecture for cyclic arrangements by showing that these algebras are isomorphic to algebras appearing in work of Ozsv{\'a}th--Szab{\'o} \cite{OSzNew} in bordered Heegaard Floer homology \cite{LOTBorderedOrig}.  The proof of our conjecture in the cyclic case extends work of Karp--Williams \cite{KarpWilliams} on sign variation and the combinatorics of the $m=1$ amplituhedron.  We then use the algebras associated to cyclic arrangements to construct categorical actions of $\gl(1|1)$.
\end{abstract}

\maketitle

\section{Introduction}

\subsection{Generalities}

Let $\cal{V}$ be an arrangement of real affine hyperplanes in real Euclidean space, and let $\cal X_\cal{V}$ denote the complexified hyperplane complement.  This paper investigates algebraic and combinatorial structures which arise in the symplectic geometry of $\cal X_\cal{V}$.  We give a conjectural presentation of a wrapped Fukaya category of $\cal X_\cal{V}$ as the module category of a convolution algebra $\tilde{B}(\cal{V})$  defined and studied in \cite{Gale,BLPPW,HypertoricCatO} in their work on geometric representation theory and symplectic duality.

We prove our conjecture in an interesting special case, namely when the arrangement $\cal{V}$ is cyclic, by identifying $\tilde{B}(\cal{V})$ with a known description of the Fukaya category in this case \cite{OSzNew, LP, MMW2, Auroux}.
Cyclic arrangements are of independent interest for at least three reasons, each of which we discuss later in this section:
\begin{itemize}
\item The complexified cyclic hyperplane complement $\cal X_\cal{V}$ is isomorphic to a symmetric product of the punctured plane $\Sym^k(\C - \{z_1, \ldots, z_n\})$.  This connects Fukaya categories of complexified hyperplane complements in the cyclic case to algebras appearing in Floer theory.
\item  The union of the compact regions in a cyclic arrangement is the $m=1$ amplituhedron introduced by Arkani-Hamed and Trnka.  This suggests that hypertoric varieties for cyclic arrangements should be a source of ``positive geometries" relevant for scattering amplitudes in gauge theories.
\item The hypertoric categories $\cal O$ associated to cyclic arrangements are the weight space categories for a categorified action of (a variant of) the quantum supergroup $U_q(\mf{gl}(1|1))$ on its representation $V^{\otimes n}$, with $V$ the vector representation.
 \end{itemize}

Our conjectural description of the Fukaya category of $\cal X_\cal{V}$ is closely connected to the combinatorial geometry of the hypertoric variety\footnote{These varieties are also commonly referred to elsewhere in the literature as toric hyperk{\"a}hler manifolds or toric hyperk{\"a}hler varieties.} $\cal M_\cal{V}$ associated to the real arrangement $\cal{V}$.  In the last decade, hypertoric varieties have appeared prominently in investigations of symplectic duality, a mathematical incarnation of 3d mirror symmetry from physics \cite{IS}, in part because the mirror dual of a hypertoric variety is also hypertoric.  This makes hypertoric varieties useful as testing grounds for more general 3d mirror symmetry expectations.  One such expectation, which comes from the work of the second author with Braden, Proudfoot and Webster \cite{BLPW-II,Gale,HypertoricCatO}, is a relationship between symplectic duality and Koszul duality.  This expectation has been established in the case of hypertoric varieties, as the hypertoric categories $\cal O$ associated to symplectic dual hypertoric varieties are Koszul dual.  Moreover, the finite-dimensional Koszul algebra $B(\cal{V})$ governing hypertoric category $\cal O$ conjecturally arises as the endomorphism algebra of a canonical Lagrangian in a Fukaya category of the hypertoric variety $\cal M_\cal{V}$.  

The hypertoric variety $\cal M_\cal{V}$ and the complexified complement $\cal X_\cal{V}$ are related via a moment map for a torus action.  Thus it seems reasonable to expect that a Fukaya category of the complexified complement $\cal X_\cal{V}$ is also governed by an algebra related to $B(\cal{V})$.  Indeed, there is a ``universal deformation" $\tilde{B}(\cal{V})$ of the Koszul algebra $B(\cal{V})$, which is related to the algebra $B(\cal{V})$ much the way torus-equivariant cohomology is related to ordinary cohomology.  It is tempting to speculate that the universal deformation $\tilde{B}(\cal{V})$ governs a ``torus-equivariant Fukaya category" of a hypertoric variety, but it seems challenging to make this precise.  Our main conjecture is somewhat simpler:  the deformation $\tilde{B}(\cal{V})$ describes a wrapped Fukaya category of the complexified hyperplane complement $\cal X_\cal{V}$.

\begin{conjecture}\label{conj:symp} The algebra $\tilde{B}(\cal{V})$ is quasi-isomorphic to the endomorphism algebra of a canonical generating Lagrangian in a wrapped Fukaya category of $\cal{X}_\cal{V}$, where we take full wrapping around the hyperplanes of the arrangement and an appropriate partial wrapping at infinity.
\end{conjecture}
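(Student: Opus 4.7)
The plan is to build a dictionary between the combinatorics of the real arrangement $\cal{V}$ and the symplectic geometry of the Liouville manifold $\cal{X}_\cal{V}$. First, I would identify the canonical generating Lagrangian $L_\cal{V}$: its connected components should be indexed by the feasible sign vectors, i.e.\ by the idempotents of $\tilde B(\cal V)$, with each component a Lagrangian section (or cotangent-fiber-type object) sitting over the corresponding chamber of $\cal V$. The full wrapping around the hyperplanes should produce the polynomial generators that distinguish the deformation $\tilde B(\cal V)$ from the specialization $B(\cal V)$, while the partial wrapping at infinity must be arranged to kill exactly those extra Reeb chords that would otherwise enlarge the algebra beyond $\tilde B(\cal V)$.

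Next, I would compute the resulting $A_\infty$-algebra by working on an explicit Lagrangian skeleton of $\cal{X}_\cal{V}$ assembled from the real arrangement, with strata labeled by sign vectors, and with a collection of stops encoding the partial wrapping at infinity. The natural tool here is the identification of the partially wrapped Fukaya category of a Weinstein sector with global sections of a microlocal sheaf on its skeleton (following Ganatra--Pardon--Shende). Under this identification, the endomorphisms of $L_\cal{V}$ should reduce to a combinatorial computation of constructible sheaves on the skeleton with prescribed singular support, which I would organize so that the generators, relations, and polynomial deformation parameters of $\tilde B(\cal V)$ from \cite{BLPPW,HypertoricCatO} emerge directly from sign-vector combinatorics and microlocal monodromy around each hyperplane.

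Finally, I would match the output of this calculation with the explicit presentation of $\tilde B(\cal V)$, with the polynomial deformation parameters pairing against loops around individual hyperplanes and the idempotent structure against feasible sign vectors. The Koszul and quadratic-dual features of $\tilde B(\cal V)$ should provide useful sanity checks on the Hochschild-type invariants of the Fukaya answer, and in simple examples (a single hyperplane, a generic arrangement in low dimension) the two sides can be compared directly.

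The hard step is the choice of partial wrapping at infinity. The undeformed Koszul algebra $B(\cal V)$ is finite-dimensional while $\tilde B(\cal V)$ carries a polynomial subalgebra coming from the deformation, and the distinction between them is invisible to the underlying topology of $\cal{X}_\cal{V}$: it is the precise amount of wrapping permitted at infinity that governs which winding numbers contribute and thus whether one obtains $B(\cal V)$, $\tilde B(\cal V)$, or some intermediate or larger algebra. Making ``appropriate partial wrapping at infinity'' precise, and proving that this particular choice reproduces exactly $\tilde B(\cal V)$, is the principal obstacle. In the cyclic case this difficulty is circumvented because $\cal{X}_\cal{V} \cong \Sym^k(\C\setminus\{z_1,\dots,z_n\})$ admits an independent description of its wrapped Fukaya category via \cite{OSzNew,LP,MMW2,Auroux}, so one can recognize $\tilde B(\cal V)$ on the symplectic side without ever specifying the stops by hand; this is the route the paper takes.
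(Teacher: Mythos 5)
The statement you are addressing is Conjecture~\ref{conj:symp}, which the paper itself leaves open in general: there is no proof in the paper to compare against, only a proof in the special case of (left/right) cyclic polarized arrangements. Measured against that, your proposal is a program rather than a proof, and you say as much yourself. The GPS/skeleton strategy you outline is reasonable, but every load-bearing step is deferred: the canonical generating Lagrangian and the skeleton are described only heuristically, no stop datum is actually specified, and the microlocal-sheaf computation that is supposed to output the presentation of $\tilde{B}(\mathcal{V})$ (Proposition~\ref{prop:BTildeGensRels}) is not carried out even in an example. The ``hard step'' you flag --- pinning down the partial wrapping at infinity so that one gets exactly the deformation $\tilde{B}(\mathcal{V})$ rather than $B(\mathcal{V})$ or something larger --- is not a technical wrinkle in an otherwise complete argument; it is essentially the whole content of the conjecture, which is why the paper states it as a conjecture and only resolves the cyclic case.

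Your closing paragraph does correctly identify the paper's route in the cyclic case, but it is worth being precise about where the work actually lies there. The symplectic side is imported wholesale: by \cite{LP,MMW2,Auroux}, the Ozsv{\'a}th--Szab{\'o} algebra $\mathcal{B}_l(n,k)$ is the homology of the endomorphism algebra of the distinguished straight-line Lagrangians in a partially wrapped Fukaya category of $\Sym^k(\C\setminus\{z_1,\dots,z_n\})$, with the stops made explicit by a decorated surface (Remark~\ref{rem:AurouxStops}); so in the cyclic case the stops \emph{are} specified by hand, contrary to your phrasing. The paper's actual contribution is the purely combinatorial/algebraic isomorphism $\tilde{B}(\mathcal{V})\cong\mathcal{B}_l(n,k)$ (Theorems~\ref{thm:LeftCyclicIsom} and~\ref{thm:RightCyclicIsom}), which rests on the sign-variation characterization of feasible and bounded sign sequences for left/right cyclic arrangements (Corollary~\ref{cor:LeftCyclicBoundedFeasible}, Corollary~\ref{cor:SimpleBTilde}), extending Karp--Williams, together with the bijection between bounded feasible sign sequences and Ozsv{\'a}th--Szab{\'o} idempotents of Definition~\ref{def:StatesFromAlpha}. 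If you want to push your general program, the cyclic case is the natural testing ground: your proposed skeleton computation should reproduce Corollary~\ref{cor:SimpleBTilde} and the stop structure of Remark~\ref{rem:AurouxStops}, and until a computation of that kind is done for at least one non-cyclic arrangement, the proposal remains a plausible outline rather than an argument.
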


\subsection{Cyclic arrangements}
The first main result of this paper is a proof of Conjecture \ref{conj:symp} in the special case of cyclic hyperplane arrangements, where the partial wrapping at infinity is specified as in Remark~\ref{rem:AurouxStops}. The special feature of cyclic arrangements we use in the proof comes from the first bullet point above:  the complexified hyperplane complement is isomorphic to a symmetric product of a multiply-punctured plane (or disc).  This is crucial to our proof of Conjecture \ref{conj:symp} in the cyclic case, as it connects Fukaya categories of complexified hyperplane complements to algebras appearing in Heegaard Floer theory.  In particular, by \cite{Auroux} the wrapped Fukaya categories of these symmetric products underlie bordered Floer homology~\cite{LOTBorderedOrig}, the extended TQFT approach to Heegaard Floer homology.  When the surface is a multiply-punctured disc, the results of \cite{LP, MMW2} together with those of \cite{Auroux} imply that the Fukaya category for a certain sutured structure on the surface (determining stops for the wrapping) is described by an algebra used recently by Ozsv{\'a}th--Szab{\'o} for very fast knot Floer homology (HFK) computations in their theory of bordered HFK~\cite{OSzNew,HFKcalc}.   Conjecture~\ref{conj:symp} for cyclic arrangements is then implied by the following theorem, which we prove in Section~\ref{sec:OSz}.

\begin{theorem}\label{thm:iso}[cf. Theorem~\ref{thm:LeftCyclicIsom}, Corollary~\ref{cor:MainUnpolarizedThm}, Theorem~\ref{thm:RightCyclicIsom}]
The universal deformation $\tilde{B}(\cal{V})$ associated to a cyclic hyperplane arrangement is isomorphic to the Ozsv{\'a}th--Szab{\'o} algebra associated to a symmetric product of a multiply-punctured disc.
\end{theorem}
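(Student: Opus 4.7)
The plan is to produce an explicit algebra isomorphism by comparing generators-and-relations presentations on both sides.  For $\tilde{B}(\cal{V})$ with $\cal{V}$ cyclic, the indecomposable idempotents are indexed by feasible sign vectors of $\cal{V}$, the non-idempotent generators include ``sign-flip'' morphisms that change a single coordinate of a sign vector while preserving feasibility, and there are polynomial generators coming from the universal deformation (i.e.\ the equivariant parameters).  On the Heegaard Floer side, the Ozsv\'ath--Szab\'o algebra $A_{\OSz}$ for a symmetric product of a multiply-punctured disc has an analogous combinatorial presentation: idempotents indexed by I-states (subsets of an arc set satisfying a cardinality condition), strand-like morphism generators between I-states, and polynomial dot generators.

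The second step is combinatorial: construct a bijection between feasible sign vectors for $\cal{V}$ and I-states for the OSz algebra.  Here the key input is the Karp--Williams result on the $m=1$ amplituhedron, which identifies bounded chambers of a cyclic arrangement with subsets of $\{1,\ldots,n\}$ of prescribed sign-variation count.  I would extend this chamber bijection to the level of all feasible sign vectors and verify that the two natural polarization conventions on $\tilde{B}(\cal{V})$ correspond to two different (``left'' and ``right'') I-state conventions on the OSz side; this is presumably why the theorem is broken into Theorem~\ref{thm:LeftCyclicIsom} and Theorem~\ref{thm:RightCyclicIsom}, with Corollary~\ref{cor:MainUnpolarizedThm} assembling these into the polarization-free statement.

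With the idempotent bijection fixed, I would define the isomorphism on generators by sending each sign-flip to the corresponding strand generator connecting the matched I-states and each equivariant parameter to the appropriate OSz polynomial variable.  The proof then reduces to a finite local case analysis: the quadratic sign-flip relations of $\tilde{B}(\cal{V})$ must match strand-composition relations in $A_{\OSz}$, and the commutation of polynomial generators with sign-flips must match the dot-sliding relations on the OSz side.  Each relation is supported on a small local configuration of two or three hyperplanes (equivalently, two or three adjacent strand positions), so in principle it suffices to tabulate all such local configurations and check the identity in each case.

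The main obstacle, and the combinatorial heart of the argument, is precisely this bijection-and-relation step: getting the two polarization conventions aligned on the nose, and verifying that the relations involving the polynomial/equivariant generators (not just their non-equivariant shadows) are preserved.  I expect the bulk of the technical work to go into this verification; by contrast, the underlying chamber-level correspondence of idempotents should follow in spirit from the Karp--Williams sign-variation combinatorics already present in the literature, and the extension to non-bounded feasible sign vectors should reduce to a direct unpacking of the definitions once the bounded case is understood.
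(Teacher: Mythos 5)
Your proposal follows essentially the same route as the paper: it compares the small-step quiver presentation of the Ozsv{\'a}th--Szab{\'o} algebra with the quiver presentation of $\tilde{B}(\cal{V})$ (via $\tilde{A}(\cal{V}^{\vee})$), uses the Karp--Williams sign-variation combinatorics to biject bounded feasible sign sequences with I-states, sends sign-flips to $R_i/L_i$ and the deformation parameters $u_i$ to the $U_i$'s, and verifies the relations by a local case analysis, with separate left and right cyclic statements and an unpolarized version. The only small calibration is that the idempotents correspond to \emph{bounded} feasible sign sequences (those of variation exactly $k$ in the appropriate left/right sense), and the unpolarized statement is obtained by restricting the left cyclic isomorphism to the compact regions rather than by assembling the left and right cases.
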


The proof of Theorem~\ref{thm:iso} makes contact with the second bullet point above.  In particular, the proof builds on work of Karp--Williams~\cite{KarpWilliams}, who relate the geometry of the $m=1$ amplituhedron and the positive Grassmannian to the combinatorics of sign sequences.  The combinatorial theory we develop in service of the proof of Theorem~\ref{thm:iso} is perhaps of independent interest; in particular, we formulate a ``positive Gale duality" in linear programming, by modifying the usual Gale duality in such a way that it preserves positivity by exchanging left and right cyclic polarized arrangements (see Section~\ref{sec:Cyclic}).
We also show that the cyclic arrangements associated to the ``dual" symmetric products $\Sym^k(\C - \{z_1, \ldots, z_n\})$ and $\Sym^{n-k}(\C - \{z_1, \ldots, z_n\})$ are positive Gale duals; see Theorem~\ref{thm:AltGale}. 

\subsection{Bordered Floer homology and the representation theory of $\mf{gl}(1|1)$}

The connection between Fukaya categories of symmetric products and convolution algebras associated to cyclic arrangements is interesting in both directions.
In \cite{LaudaManion}, the first and third authors used the algebras of Ozsv{\'a}th--Szab{\'o} to construct categorical representations of $\gl(1|1)$; these constructions are a small part of a larger program of the third author and Rouquier to develop the foundations of the higher representation theory of $\gl(1|1)$  (see also \cite{ManionDecat}, \cite{EPV}, and \cite{TianUq,TianUT} for closely related work).  Thus, in addition to being basic ingredients to Heegaard Floer homology, the Ozsv{\'a}th--Szab{\'o} algebras are also basic objects in $\gl(1|1)$ representation theory, and
Theorem~\ref{thm:iso} implies several interesting facts about these algebras.  For example, it follows from Theorem~\ref{thm:iso} that the center of Ozsv{\'a}th--Szab{\'o}'s algebra is isomorphic to the torus-equivariant cohomology of the associated hypertoric variety, and the endomorphism algebras of projective modules over Ozsv{\'a}th--Szab{\'o}'s algebra are isomorphic to the equivariant cohomologies of toric varieties; see Corollary~\ref{cor:center}.

In the companion paper \cite{LLM2} we prove that for general arrangements, the universal deformations $\tilde{B}(\cal{V})$ are affine highest weight categories, a notion recently introduced by Kleshchev~\cite{Klesh-affine} in order to extend ideas from finite-dimensional quasi-hereditary theory to infinite-dimensional settings.  We also prove that these algebras are homologically smooth.  This homological smoothness is relevant for Conjecture~\ref{conj:symp}, as one expects that the wrapped Fukaya categories of complexified hyperplane complements are equivalent to (dg or $A_\infty$-) module categories of homologically smooth algebras.

In the cyclic case, Theorem \ref{thm:iso}, together with \cite{LLM2}, therefore gives the following.

\begin{corollary}
The wrapped Fukaya categories of  $\Sym^k(\C - \{z_1, \ldots, z_n\})$ are affine highest weight categories.
\end{corollary}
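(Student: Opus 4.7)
The plan is to combine two ingredients that are already assembled in the introduction. First, I would apply the cyclic case of Conjecture~\ref{conj:symp}, which is established by combining Theorem~\ref{thm:iso} with the identification of the wrapped Fukaya category of $\Sym^k(\C - \{z_1,\ldots,z_n\})$ (equipped with the partial wrapping at infinity specified in Remark~\ref{rem:AurouxStops}) as the $A_\infty$-module category of an Ozsv\'ath--Szab\'o algebra via the combined results of \cite{Auroux}, \cite{LP}, \cite{MMW2}, and \cite{OSzNew}. This identifies the wrapped Fukaya category with the module category of $\tilde{B}(\cal{V})$ for the corresponding cyclic polarized arrangement $\cal{V}$ on $k$ hyperplanes with $n$ punctures.

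Second, I would quote the companion paper \cite{LLM2}, which proves that $\tilde{B}(\cal{V})$ is an affine highest weight category in the sense of Kleshchev~\cite{Klesh-affine} for an arbitrary polarized arrangement. Specializing this to cyclic $\cal{V}$ and combining with the previous step yields the corollary.

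The only point that demands care is transporting the affine highest weight structure across the equivalence between the wrapped Fukaya category and $\tilde{B}(\cal{V})\dmod$. Because Theorem~\ref{thm:iso} provides an honest isomorphism of algebras rather than a quasi-isomorphism with nontrivial higher operations, the standard modules, the exhausting ideal filtration, and the projective/tilting resolutions constructed in \cite{LLM2} transport directly along the isomorphism, and no additional derived bookkeeping is needed. The main obstacle is therefore expository rather than mathematical: one must verify compatibility of conventions (homological versus cohomological grading, left versus right modules, and the choice of sutured structure determining the stops) between \cite{Auroux,LP,MMW2,OSzNew} on the Floer side and \cite{LLM2,Klesh-affine} on the algebraic side, so that the affine highest weight data on $\tilde{B}(\cal{V})$ is unambiguously inherited by the Fukaya category on the nose.
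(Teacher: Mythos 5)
Your proposal is correct and follows the same route as the paper: the paper obtains this corollary exactly by combining Theorem~\ref{thm:iso} (together with the identification of the wrapped Fukaya category of $\Sym^k(\C\setminus\{z_1,\ldots,z_n\})$ with modules over the Ozsv{\'a}th--Szab{\'o} algebra via \cite{Auroux,LP,MMW2}, with stops as in Remark~\ref{rem:AurouxStops}) with the affine highest weight result for $\tilde{B}(\cal{V})$ from \cite{LLM2}, and your observation that the honest algebra isomorphism lets the structure transport without derived subtleties matches the paper's (implicit) reasoning. The only slip is cosmetic: the relevant cyclic arrangement consists of $n$ hyperplanes in $k$-space, not ``$k$ hyperplanes with $n$ punctures.''
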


This corollary is useful as a tool to further understand the categorification of bases for the $U_q(\gl(1|1))$ representations initiated in \cite{LaudaManion}.  First, the affine quasi-hereditary structure of $\Bt(\cal{V})$ includes as part of the structure a family of standard modules over Ozsv{\'a}th--Szab{\'o}'s algebras categorifying the standard tensor-product basis of $V^{\otimes n}$.  There are also natural classes of modules categorifying the canonical and dual canonical basis of $V^{\otimes n}$.  As a result we get a geometric construction of canonical bases for $V^{\otimes n}$, wherein each canonical basis element corresponds to an irreducible component of the relative core of the hypertoric variety $\M_{\cal{V}}$.

Existing work relating Floer theory and $U_q(\gl(1|1))$, e.g. \cite{LaudaManion}, can also be reformulated in the language of hypertoric category $\cal O$. In particular, when reformulated in the language of cyclic hyperplane arrangements, the bimodule constructions of \cite{LaudaManion} are closely related to the basic hyperplane operations of deletion and restriction (see Section~\ref{sec:qg-bimodules}).  In Section \ref{sec:GeneralDeletionRestriction}, we generalize these bimodules from the cyclic case to the case of general arrangements.  We expect these deletion/restriction bimodules to be part of a larger functorial invariant of complexified hyperpane complements, a subject we hope to revisit later.

\subsection{Positive geometries, hypertoric varieties, and the amplituhedron} \label{intro:amp}

Cyclic hyperplane arrangements are connected by the results of \cite{KarpWilliams} to the $m=1$ amplituhedron defined by Arkani-Hamed and Trnka in their work on scattering theory~\cite{AHT14}. Amplituhedron geometry arises in physics as a tool to understand scattering amplitudes in gauge theories; these amplitudes exhibit symmetries and recursion relations in both realistic situations (e.g. gluon scattering in particle colliders) and in cases of theoretical interest (e.g. planar $\cal{N} = 4$ super Yang Mills). These symmetries are obscured if one wants locality and unitarity of the physics to be manifest, as it is with Feynman diagrams, suggesting the possibility of an alternate perspective on physics in which locality and unitarity emerge from more fundamental principles. The work of Arkani--Hamed and Trnka shows how the mathematics surrounding scattering amplitudes and their symmetries can be organized in such a way that locality and unitarity follow as consequences, not assumptions.

The different amplituhedra depend on integers $n, k, m$ as well as external scattering data.  The $m=4$ amplituhedra are most relevant for scattering amplitudes, but mathematically, $m=1$ is the more basic. In fact, recent work of Arkani-Hamed--Thomas--Trnka \cite{Binary} gives a characterization of higher-$m$ amplituhedra in terms of projections down to the $m=1$ amplituhedron, rederiving locality and unitarity as elementary consequences of basic features in the $m=1$ case.

These amplituhedra are also examples of ``positive geometry" \cite{PosGeom}. The crucial object needed for deriving physics from amplituhedra---and a crucial ingredient in the definition of the positive geometry coming from amplituhedra---is a top-degree differential form, which one integrates to get scattering amplitudes. For $k=1$ amplituhedra, which are cyclic polytopes, the amplituhedron form is a moment-map pushforward of a natural form on a toric variety having the cyclic polytope as its moment polytope.

It is an open problem to find analogous descriptions of other amplituhedra, that is, to describe their natural form as a pushforward of a form coming from some other positive geometry.  For $m=1$, this problem is likely related to the geometry of hypertoric varieties: the $m=1$ amplituhedron is a moment-map image of the core of the cyclic hypertoric variety.  Just as the $k=1$ amplituhedron form is a pushforward via a moment map of a form on a toric variety, the $m=1$ amplituhedron form should be the pushforward via a moment map of a form on the associated cyclic hypertoric variety.

\subsection{Outline of paper}
We briefly summarize the contents of this paper. In Section~\ref{sec:hyp-var-alg} we review the theory of hypertoric varieties, including their connection with polarized arrangements $\cal{V}$.  We review two algebras $\At(\cal{V})$ and $\Bt(\cal{V})$ naturally associated to a polarized arrangement.  Subsection ~\ref{sec:GeneralDeletionRestriction} introduces bimodules for these algebras associated with the hyperplane operations of deletion and restriction.  Section~\ref{sec:Cyclic} focuses on cyclic hyperplane arrangements;  we begin by extending the work of \cite{KarpWilliams} by developing the theory of (left/right) cyclic polarized arrangements in terms of subspaces in the positive Grassmannian.  We introduce left/right cyclic arrangements in Subsection~\ref{sec:AlternateCyclic} and formulate a positive variant of Gale duality that exchanges left and right cyclic arrangements in Subsection~\ref{sec:AltGale}.   In Subsection~\ref{sec:SymmetricPowers} we review the connection between the complexified complement of a cyclic arrangement and symmetric products of the punctured plane.  Subsection~\ref{sec:DelRestCyclic} shows that the deletion and sign-modified restriction operations for polarized arrangements preserve left/right cyclic arrangements. In Section~\ref{sec:OSz} we establish the connection between hypertoric convolution algebras associated to cyclic polarized arrangements and Ozsv{\'a}th--Szab{\'o} algebras appearing in the theory of bordered Heegaard Floer homology, proving Theorem~\ref{thm:iso}. As a consequence, we show in Subsection~\ref{sec:center} that the center of the Ozsv{\'a}th--Szab{\'o} algebra is isomorphic to the torus equivariant cohomology of the hypertoric variety $\mf{M}_{\cal{V}}$ of a (left/right) cyclic polarized arrangement.    Finally, Section~\ref{sec:qg-bimodules} connects the bimodules arising from deletion and restriction of cyclic polarized arrangements with an action of a variant of quantum $\mf{gl}(1|1)$ as bimodules over Ozsv{\'a}th--Szab{\'o} algebras;  we also discuss a Heegaard Floer interpretation of the factorization of these bimodules as the composition of deletion and restriction bimodules.

\subsection*{Acknowledgements} The authors are grateful to  Mina Aganagi{\'c}, Nima Arkani-Hamed, Sheel Ganatra, Sasha Kleshchev, Ciprian Manolescu, Nick Proudfoot, Raphael Rouquier, and Joshua Sussan  for helpful conversations.   A.D.L. was partially supported by NSF grant DMS-1664240, DMS-1902092 and Army Research Office W911NF2010075.
A.M.L. was supported by an Australian Research Council Future Fellowship.

\section{Hypertoric varieties and algebras} \label{sec:hyp-var-alg}

\subsection{Hyperplane arrangements and hypertoric varieties}
\subsubsection{Hyperplane arrangements}\label{sec:ArrangementDefs}
We use linear-algebraic data to specify real affine hyperplane arrangements, which we refer to as  arrangements, following the general framework used in \cite{Gale}.

\begin{definition}\label{def:VEta}
An \emph{arrangement} of $n$ hyperplanes in $k$-space is a pair $\cal{V} = (V,\eta)$ where $V \subset \R^n$ is a linear subspace of dimension $k$ and $\eta$ is an element of $\R^n / V$; we require that an element of $\R^n$ representing $\eta$ has at least $n-k$ nonzero entries\footnote{The condition on nonzero elements representing $\eta$ is sometimes omitted, in which case $(V,\eta)$ satisfying this condition is called \emph{simple} or \emph{regular}.}. We say that $\cal{V} = (V,\eta)$ is \emph{rational} if $V$ arises (uniquely) from a subspace $V_{\Q} \subset \Q^n$ and $\eta$ arises (uniquely) from an element $\eta_{\Q} \in \Q^n / V_{\Q}$.
\end{definition}

The intersections of $V$ with the coordinate hyperplanes of $\R^n$ give an honest affine hyperplane arrangement in the real vector space $V$ which we will denote $\cal{H}_{\cal{V}}$ (however, some hyperplanes of the arrangement might be empty). If $V$ is presented as the column span of an $n \times k$ matrix $A$, we can use the columns of the matrix as a basis for $V$ to identify $V$ (and thus its affine translate $V + \eta$) with $\R^k$; if $w \in \R^n$ represents $\eta$, then under this identification, the hyperplanes of the arrangement take the form
\[
H_i = \left\{ x \in \R^n \mid   w_i + \sum_{j=1}^k a_{ij} x_j = 0\right\}, \qquad \text{$1 \leq i \leq n$}
\]
for $x = (x_1,\ldots,x_k) \in \R^k$. The positive half-spaces of $\R^n$ induce a co-orientation on $\cal{H}_{\cal{V}}$, using which we can associate a region of the arrangement (possibly empty) to each length-$n$ sequence $\alpha$ of signs in $\{+,-\}$: given $\alpha = (\alpha_1,\ldots,\alpha_n)$ and  a matrix $A$ presenting $V$ as above, the corresponding region $\Delta_{\alpha} \subset V + \eta$ is the set of points $x$ such that
\[
\alpha_i \left(w_i + \sum_{j=1}^k a_{ij} x_j \right) \geq 0
\]
for $1 \leq i \leq n$. In what follows we sometimes write $\alpha(i)=\alpha_i$ to denote the $i^{th}$ term of the sequence $\alpha$.  If $\Delta_{\alpha}$ is nonempty, $\alpha$ is called a \emph{feasible} sign sequence. We let $\cal{F} = \cal{F}(\cal{V})$ denote the set of feasible sign sequences for $\cal{V}$, and we let $\cal{K} = \cal{K}(\cal{V}) \subset \cal{F}(\cal{V})$ denote the set of feasible sign sequences $\alpha$ such that $\Delta_{\alpha}$ is compact.

We define equivalence of arrangements by saying that $\cal{V} \sim \cal{V'}$ if they have the same affine oriented matroid as discussed in \cite[Chapter 4.5]{BLVSWZ}; if $\cal{V}$ and $\cal{V}'$ are equivalent, it follows that $\cal{F}(\cal{V}) = \cal{F}(\cal{V}')$ and $\cal{K}(\cal{V}) = \cal{K}(\cal{V}')$. Concretely, if we let $\phi$ be the unique linear functional on $V + \langle \eta \rangle$ such that $\phi(V) = 0$ and $\phi(\eta) = 1$, and define $\phi'$ similarly for $(V',\eta')$, then $(V,\eta) \sim (V',\eta')$ if and only if the Pl{\"u}cker coordinates of the $(k+1)$-dimensional subspaces $(\id,\phi)(V + \langle \eta \rangle)$ and $(\id,\phi')(V' + \langle \eta' \rangle)$ of $\R^{n+1}$ have the same (projective) signs. If $(A,w)$ and $(A',w')$ represent $(V,\eta)$ and $(V',\eta')$ as above, then $(V,\eta) \sim (V',\eta')$ if and only if the column spans of the matrices
\[
\left[
\begin{array}{c|c}
A & w \\
\hline
0 & 1
\end{array}
\right],
\left[
\begin{array}{c|c}
A' & w' \\
\hline
0 & 1
\end{array}
\right]
\]
have Pl{\"u}cker coordinates of the same (projective) signs.

\subsubsection{Hypertoric varieties} \label{sec:hyeprertoric}

Below we follow \cite{ProudfootThesis} with some minor expositional changes. Let $(V,\eta)$ be rational. Let $\mf t^n = (\C^n)^*$ with coordinate basis $\{\epsilon_1,\dots,\epsilon_n\}$. Let $\mf t^d = V^{\perp} \subset \mf t^n$, the complex perpendicular to $V$ ($\mf t^d$ has complex dimension $d := n-k$), and let $\mf t^k = \mf t^n / \mf t^d$. We have full integer lattices $\tf^n_{\Z} = (\Z^n)^* \subset \tf^n$, $\tf^d_{\Z} = \tf^d \cap \tf^n_{\Z} \subset \tf^d$, and $\tf^k_{\Z} =\tf^n_{\Z} / \tf^d_{\Z} \subset \tf^k$. There is an exact sequence of abelian Lie algebras
\[
	0 \rightarrow \mf t^d \xrightarrow{\iota} \mf t^n \rightarrow \mf t^k \rightarrow 0
\]
which exponentiates to an exact sequence of tori
\[
	1 \rightarrow T^d \rightarrow T^n \rightarrow T^k \rightarrow 1.
\]
The torus $T^n = (\C^\times)^n$ acts by coordinate-wise multiplication on $\C^n$, and we regard $T^d$ as acting on $\C^n$ via the inclusion of $T^d$ into $T^n$.  This, in turn, gives rise to a hamiltonian action of $T^d$ on the cotangent bundle $T^*(\C^n)$ via $t\dot (x,y) = (tx,t^{-1}y)$. The action of the maximal compact subtorus $T^d_{\R}$ of $T^d$ is hyperhamiltonian with hyperk{\"a}hler moment map given by
\[
\mu_{\R}(x,y) \oplus \mu_{\C}(x,y) = \left(\iota_{\R}^* \left( \frac{1}{2} \sum_i (|x_i|^2 - |y_i|^2) \epsilon_i \right), \iota^* \left(\sum_i (x_iy_i)  \epsilon_i \right) \right) \in (\tf^d_{\R})^* \oplus (\tf^d)^* = (\R^n / V) \oplus (\C^n / V_{\C}).
\]

The \emph{hypertoric variety} associated to $\cal{V}$ is defined to be
\[
\M_{\cal{V}} = \M_{V,\eta} = \left( \mu_{\R}^{-1}(\eta) \cap \mu_{\C}^{-1}(0) \right) / T^d_{\R};
\]
one can also define $\M_{\cal{V}}$ as an algebraic symplectic quotient.  These varieties (also called toric hyperk{\"a}hler manifolds or toric hyperk{\"a}hler varieties) were introduced in the smooth case by Goto \cite{Goto2}, unifying examples studied in \cite{EguchiHanson, GibbonsHawking, Calabi}, and in the singular case by Bielawski and Dancer \cite{BielawskiDancer}.

It follows from our assumptions in Definition~\ref{def:VEta} that $\M_{\cal{V}}$ is an algebraic symplectic orbifold of complex dimension $2k$; it is smooth if and only if $V \cap \Z^n$ is a unimodular lattice. There is a residual hyperhamiltonian action of the compact torus $T^k_{\R}$ on $\M_{\cal{V}}$ which can be extended to a hamiltonian action of the complex torus $T^k$.  We consider a variant of the hyperk{\"a}hler moment map for the $T^k_{\R}$ action defined by
\[
\bar{\mu}_{\R}([x,y]) \oplus \bar{\mu}_{\C}([x,y]) := \left( \frac{1}{2} \sum_i (|x_i|^2 - |y_i|^2) \epsilon_i, \sum_i (x_iy_i)  \epsilon_i \right) \in (V + \eta) \oplus V_{\C}.
\]
By taking appropriate linear combinations of $\bar{\mu}_{\R}$ and the real and imaginary parts of $\bar{\mu}_{\C}$, one can also construct variants of the moment map which take values in the complexification $(V + \eta)_{\C}$ of $V + \eta$.

Equivalent rational arrangements give rise to isomorphic hypertoric varieties, respecting the additional structure described below.

\subsubsection{Additional structure}

The hypertoric variety $\M = \M_{V,\eta}$ makes sense even when $\eta$ does not satisfy the simplicity condition of Definition~\ref{def:VEta}; in particular, we can consider $\M_0 = \M_{V,0}$, and for general $\eta$ there is a canonical morphism $\nu: \M \to \M_0$ that is a resolution of singularities when $\M$ is smooth. The action of $\mathbb{S} := \C^{\times}$ on $T^*(\C^n)$ by $s\dot(x,y) = (s^{-1}x, s^{-1}y)$ gives actions on $\M$ and $\M_0$ such that $\nu$ is equivariant; we have $s \cdot \omega = s^2 \omega$ where $\omega$ is the symplectic form on $\M$. The $\mathbb{S}$ action on $\C[\M_0]$ gives $\M_0$ the structure of an affine cone; it has nonnegative weights with zero weight space consisting of constant functions. Thus, when $\M$ is smooth, $\M \xrightarrow{\nu} \M_0$ is a conical symplectic resolution in the sense of \cite{BLPW-I}. In general, the map $\M \xrightarrow{\nu} \M_0$ with the $\mathbb{S}$ actions is invariant under equivalences of rational arrangements $\cal{V}$.

The subvariety
\[
\bar{\mu}_{\C}^{-1}(0) = \{ [x,y] \in \M : x_i y_i = 0 \textrm{ for all } i \}
\]
is called the \emph{extended core} of $\M$. The irreducible components of the extended core of $\M$ are $X_{\alpha}$ for $\alpha \in \cal{F}$, where
\[
X_{\alpha} = \{ [x,y] \in \M: y_i = 0 \textrm{ when } \alpha(i) = + \textrm{ and } x_i = 0 \textrm{ when } \alpha(i) = - \}.
\]
The varieties $X_{\alpha}$ can also be defined as toric varieties using the Cox construction; see \cite[Section 4.2]{Gale}. The image of $X_{\alpha}$ under $\bar{\mu}_{\R}$ is $\Delta_{\alpha} \subset V + \eta$. If we add to $\bar{\mu}_{\R}$ any complex linear combination of the real and imaginary parts of $\bar{\mu}_{\C}$, the image of $X_{\alpha}$ under this new map is still $\Delta_{\alpha} \subset V + \eta \subset (V + \eta)_{\C}$.

The \emph{core} of $\M_{V,\eta}$ is the union of $X_{\alpha}$ for $\alpha \in \cal{K}$; it can also be defined as $\nu^{-1}(0)$ where $0 \in \M_0$ is the cone point.

\subsection{Polarizations}

\subsubsection{Definitions}\label{sec:PolarizationDefs}

We now consider polarized arrangements, in which an affine hyperplane arrangement is equipped with an objective function as in linear programming. We recall the basic definitions here and refer to \cite{Gale} and \cite[Section 5]{HypertoricCatO} for further details.

\begin{definition}
A \emph{polarized arrangement} of $n$ hyperplanes in $k$-space is a triple $\cal{V} = (V,\eta,\xi)$ where $(V,\eta)$ is an arrangement as in Definition~\ref{def:VEta}
and $\xi$ is an element of $V^* = (\R^n)^* / V^{\perp}$ such that each element of $(\R^n)^*$ representing $\xi$ has at least $k$ nonzero entries.
We say that $(V,\eta,\xi)$ is \emph{rational} if $(V,\eta)$ is rational and
$\xi\in V_{\Q}^* = (\Q^n)^*/V_{\Q}^{\perp}$.
\end{definition}

If $V$ is presented as the column span of an $n \times k$ matrix $A$ as above, then in the basis of columns, $\xi$ is expressed as a $1 \times k$ matrix.  Thus we can specify the polarized arrangement $(V,\eta,\xi)$ (non-uniquely) as a single matrix:
\[
(V,\eta,\xi) \xrightarrow{V = \mathrm{col}(A)}
\left[
\begin{array}{c|c}
A & w \\
\hline
x^T & *
\end{array}
\right],
\]
where $A$ has size $n \times k$, $w$ has size $n \times 1$, $x$ has size $k \times 1$ with $x^T$ its transpose, and the bottom-right entry $*$ is left unspecified. From this data, $(V,\eta)$ is defined as above, and $\xi$ is defined to have matrix $x$ with respect to the columns of $A$. If we define a \emph{strong polarized arrangement} to be a polarized arrangement $(V,\eta,\xi)$ equipped with a lift of $\xi$ to $\bar{\xi} \in (V+\langle \eta \rangle)^*$, then we can specify a strong polarized arrangement by a matrix of the above form in which $*$ has been replaced by a real number $c$.

A strong polarized arrangement gives an affine hyperplane arrangement in the affine space $V + \eta$ with a well-defined objective function $\bar{\xi}$; from this data we can extract an oriented matroid program as in \cite[Chapter 10]{BLVSWZ}. For strong polarized arrangements, we say that $(V,\eta,\bar{\xi}) \sim (V',\eta',\bar{\xi}')$ if they have the same oriented matroid program. Concretely, if $(V,\eta,\bar{\xi})$ and $(V',\eta',\bar{\xi}')$ are represented by $(A,w,x,c)$ and $(A',w',x',c')$ as above, then $(V,\eta,\bar{\xi}) \sim (V',\eta',\bar{\xi}')$ if and only if the column spans of the matrices
\[
\left[
\begin{array}{c|c}
A & w \\
\hline
x^T & c \\
\hline
0 & 1
\end{array}
\right],
\left[
\begin{array}{c|c}
A' & w' \\
\hline
(x')^T & c' \\
\hline
0 & 1
\end{array}
\right]
\]
have Pl{\"u}cker coordinates of the same (projective) signs. For ordinary polarized arrangements, we say that $(V,\eta,\xi) \sim (V',\eta',\xi')$ if they have strong lifts that are equivalent.

Given a polarized arrangement $\cal{V} = (V,\eta,\xi)$, we say $\alpha \in \{+,-\}^n$ is \emph{bounded} if the affine-linear functional $\bar{\xi}$ on $V + \eta$ is bounded above on $\Delta_{\alpha}$ for some (equivalently, any) strong lift $\bar{\xi}$ of $\xi$. We let $\cal{B} = \cal{B}(\cal{V})$ be the set of $\alpha \in \{+,-\}^n$ that are bounded; we have $\cal{K} \subset \cal{B}$ where $\cal{F} = \cal{F}(\cal{V})$ and $\cal{K} = \cal{K}(\cal{V})$ are defined from $(V,\eta)$ as in Section~\ref{sec:ArrangementDefs}. We let $\cal{P} = \cal{F} \cap \cal{B}$ denote the set of \emph{bounded feasible regions}. The subsets $(\cal{F}, \cal{K}, \cal{B}, \cal{P})$ of $\{+,-\}^n$ are preserved under equivalence of polarized arrangements.

\subsubsection{Polarizations and hypertoric varieties}

Assume that $\cal{V} = (V,\eta,\xi)$ is rational and that $\xi \in \tf^k_{\Z}$. Exponentiating the map $\C \xrightarrow{ \cdot \xi} \tf^k$, we get a homomorphism $\mathbb{T} := \C^{\times} \to T^k$ and thus a hamiltonian action of $\mathbb{T}$ on $\M = \M_{V,\eta}$. This action commutes with the action of $\mathbb{S}$ and has finite fixed point set $\M^{\mathbb{T}}$, and it is preserved under equivalences of polarized arrangements. We will write $\M_{\cal{V}} = \M_{V,\eta,\xi}$ when we want to consider $\M_{V,\eta}$ equipped with this additional $\mathbb{T}$ action (together with $\M_0, \nu$, and the actions of $\mathbb{S}$).

For $\M = \M_{V,\eta,\xi}$, the \emph{relative core} $\M^+$ of $\M$ is the set of $p \in \M$ such that $\displaystyle \lim_{\mathbb{T} \ni t \to 0} t \cdot p$ exists; it is the union of the toric varieties $X_{\alpha}$ for the bounded feasible regions $\alpha \in \cal{P}$. The relative core contains the core and is contained in the extended core (see \cite[Section 4.2]{Gale}).

\subsubsection{Dualities}

A central feature of linear programming is the existence of a duality on linear programs, referred to as Gale duality in \cite{Gale}.

\begin{definition}
If $\cal{V} = (V,\eta,\xi)$ is a polarized arrangement, its \emph{Gale dual} is the polarized arrangement
\[
\cal{V}^{\vee} := (V^{\perp}, -\xi, -\eta).
\]
\end{definition}
Gale duality squares to the identity and preserves rationality and equivalence of polarized arrangements. A sequence $\alpha \in \{+,-\}^n$ is feasible for $\cal{V}$ if and only if it is bounded for $\cal{V}^{\vee}$ (and vice-versa). That is, $\cal{B}=\cal{F}^{\vee}$, $\cal{F}=\cal{B}^{\vee}$ and $\cal{P}=\cal{P}^{\vee}$.

\begin{remark}
As discussed in \cite{BLPW-II}, conical symplectic resolutions with $\mathbb{T}$ actions as above admit a duality known as symplectic duality, a mathematical incarnation of 3d mirror symmetry. Gale dual (rational) polarized arrangements give symplectically dual hypertoric varieties.
\end{remark}

The Pl{\"u}cker coordinates of $V$ and $V^{\perp}$ are indexed by the same set of $\binom{n}{k} = \binom{n}{n-k}$ elements, but they do not agree in general. However, for a subspace $V$, we can obtain a related subspace $\alt(V)$ by mapping $V$ through the automorphism of $\R^n$ that flips the sign of all even-index coordinates; note that $\alt(V^{\perp}) = (\alt (V))^{\perp}$. It is a standard result that the Pl{\"u}cker coordinates of $V$ and $\alt V^{\perp}$ do agree; thus, when discussing cyclic arrangements in Section~\ref{sec:Cyclic} below, it will be useful to consider alt-variants of Gale duality for polarized arrangements. Correspondingly, we define
\[
\alt(V,\eta,\xi) := (\alt(V), \alt(\eta), \alt(\xi))
\]
where $\alt(\eta)$ and $\alt(\xi)$ are obtained from any representatives of $\eta$ and $\xi$ by flipping the signs of all even-index coordinates.

Finally, we define the \emph{polarization reversal} of $\cal{V}=(V,\eta,\xi)$ to be $p(\cal{V}):=(V,\eta,-\xi)$; geometrically, polarization reversal precomposes the action of $\mathbb{T}$ with the automorphism $t \mapsto t^{-1}$ of $\mathbb{T}$. The polarization reversal operation will be important in Section~\ref{sec:AltGale}.

\subsubsection{Partial orders} \label{subsec:partial_order}
Let $\cal{V}$ be a polarized arrangement. Let $\mathbb{B}$ denote the set of $k$-element subsets $\mathbbm{x}$ of $I=\{1,\dots, n\}$  such that
\[
H_{\mathbbm{x}} = \bigcap_{i \in \mathbbm{x}} H_i \neq \emptyset.
\]
Equivalently, $\mathbb{B}$ is the set of bases of the matroid associated to $\cal{V}$.
There is a bijection $\mu\maps \mathbb{B} \to \cal{P}$ sending $\mathbbm{x}$ to the unique sign sequence $\alpha_{\mathbbm{x}}$ such that $\xi$ obtains its maximum on $\Delta_{\alpha}$ at the point $H_{\mathbbm{x}}$. We write $\mathbbm{x}_{\beta} = \mu^{-1}(\beta)$ for the subset associated to a sign sequence $\beta$. The covector $\xi$ induces a partial order\footnote{This partial order is the transitive closure of the relation $\preceq$, where $\mathbbm{x} \preceq \mathbbm{x}'$ if $|\mathbbm{x}\cap \mathbbm{x}'| = |\mathbbm{x}|-1 = k-1$ and $\xi(H_{\mathbbm{x}})<\xi(H_{\mathbbm{x}'})$. The first condition ensures that $H_{\mathbbm{x}}$ and $H_{\mathbbm{x}'}$ lie on the same one dimensional flat, so that $\xi$ cannot take the same value at these two points.   } $\leq $ on $\mathbb{B}\cong \cal{P}$.

Write $\mathbbm{x}^c$ for the complement in $I$ of the subset $\mathbbm{x}$.  Let $\mathbb{B}^{\vee}$ denote the set of bases of $\cal{V}^{\vee}$, i.e. $(n-k)$ element subsets of $I$.  Then $\mathbbm{x} \mapsto \mathbbm{x}^c$ defines a bijection from $\mathbb{B} \to \mathbb{B}^{\vee}$.  The bijection $\mu^{\vee} \maps \mathbb{B}^{\vee} \to \cal{P}^{\vee}$ is compatible with the equality $\cal{P}=\cal{P}^{\vee}$, so that $\mu(\mathbbm{x})=\mu^{\vee}(\mathbbm{x}^c)$~\cite[Lemma 2.9]{Gale}.

\subsection{Convolution algebras}\label{sec:ABAlgebraDefs}

This section introduces finite-dimensional Koszul algebras $A(\cal{V})$ and $B(\cal{V})$ associated to arrangements, and universal flat graded deformations $\tilde{A}(\cal{V}) $ and $\tilde{B}(\cal{V})$ of them.  With the exception of the deletion and restriction bimodules of Section~\ref{sec:bimodules}, which have not been explicitly discussed elsewhere, almost all of the material in this section is taken directly from the original sources \cite{Gale,HypertoricCatO,BLPPW}.

\subsubsection{Definitions and basic properties}\label{sec:conv}

We now recall the algebras associated to a polarized arrangement $\cal{V}=(V,\eta,\xi)$. Related algebras can be defined for unpolarized arrangements $(V,\eta)$, although these do not play an explicit role in \cite{Gale, BLPPW}. We will start with the polarized case, where the algebras satisfy interesting duality relationships, and then discuss the necessary modifications in the unpolarized case.   Recall the notation $\cal{F}$, $\cal{B}$, $\cal{P}$, $\cal{K}$ for feasible, bounded, bounded feasible, and compact feasible sign sequences of $\cal{V}$ from Section~\ref{sec:PolarizationDefs}.

\subsubsection{The $A$ algebras}

For sign sequences $\alpha,\beta\in \{\pm\}^n$, we write
 \[
	\alpha \leftrightarrow \beta \iff \alpha \text{ and } \beta \text{ differ in exactly one entry.}
\]
If $\alpha , \beta \in \cal{F}$ this means that $\Delta_{\alpha}$ and $\Delta_{\beta}$ are related by crossing a single hyperplane $H_i$, in which case we write $\beta=\alpha^i$.

Define a quiver $Q = Q(\cal{V})$ whose vertex set is $\cal{F}$ and arrows $p(\alpha,\beta)$ from $\alpha$ to $\beta$ and $p(\beta,\alpha)$ from $\beta$ to $\alpha$ if and only if $\alpha \leftrightarrow \beta$. Let $P(Q)$ be the path algebra of this quiver over $\Z$; $P(Q)$ has a distinguished idempotent $e_{\alpha}$ for all $\alpha \in \cal{F}$.

\begin{definition}[Definition 3.1 and Remark 3.1 of \cite{Gale}]
The $\Z$-algebra $\tilde{A}(\cal{V})$ is defined to be $P(Q) \otimes_{\Z} \Z[t_1,\ldots,t_n]$ modulo the two-sided ideal generated by the following relations:
\begin{enumerate}
\item[A1]: $e_{\alpha}$ for all $\alpha \in \cal{F}\setminus \cal{B}$, that is those feasible $\alpha$ that are not bounded,
\item[A2]: $p(\alpha,\beta) p(\beta,\gamma) - p(\alpha,\delta) p(\delta,\gamma)$ for all distinct $\alpha,\beta,\gamma,\delta \in \cal{F}$ with $\alpha \leftrightarrow \beta \leftrightarrow \gamma \leftrightarrow \delta \leftrightarrow \alpha$,
\item[A3]: $p(\alpha,\beta,\alpha) - t_i e_{\alpha}$ for all $\alpha,\beta \in \cal{F}$ with $\alpha \leftrightarrow \beta$ via a sign change in coordinate $i$.
\end{enumerate}
We give $\tilde{A}(\cal{V})$ a grading by setting $\deg(p(\alpha,\beta)) = 1$ and $\deg(t_i) = 2$. We can refine this grading to a multi-grading by $\Z\langle e_1, \ldots, e_n \rangle$ by letting
\begin{itemize}
\item $\deg(p(\alpha,\beta)) = e_i$ if $\alpha \to \beta$ changes a sign in position $i$,
\item $\deg (t_i) = 2e_i$;
\end{itemize}
we recover the single grading by sending $e_i$ to $1$ for all $i$. While $\tilde{A}(\cal{V})$ is a $\Z$-algebra a priori, we can view it as a $\Z[t_1,\ldots,t_n]$-algebra.
\end{definition}

Over $\R$ (or $\Q$ given a rational arrangement), the infinite-dimensional algebra $\tilde{A}(\cal{V})$ can be viewed as the universal graded flat deformation in the sense of \cite{BLPPW} of a finite-dimensional quasi-hereditary Koszul algebra $A(\cal{V})$; see \cite[Remark 4.5]{Gale}. We briefly recall the definition of $A(\cal{V})$ below.

We have $\R[t_1,\ldots,t_n] \cong \Sym((\R^n)^*)$, with the isomorphism identifying $t_i$ with the $i$-th coordinate function on $\R^n$.  We can then identify $V^*$ with $(\R^n)^*/V^{\perp}$. It follows that $\Sym(V^*)$ is the quotient of $\R[t_1,\ldots,t_n]$ by the ideal generated by all linear combinations of $t_1,\ldots,t_n$ whose coefficient vectors annihilate $V$; equivalently, we have $\Sym(V^*) \cong \frac{\R[t_1,\ldots,t_n]}{\Sym(V^{\perp})}$. The algebra $A(\cal{V})$ is defined similarly to $\tilde{A}(\cal{V})$, except that we take a quotient of $P(Q) \otimes_{\R} \Sym(V^*)$ instead of $P(Q) \otimes_{\Z} \Z[t_1,\ldots,t_n]$. Only the single $\Z$ grading descends to a grading on $A(\cal{V})$. It follows from \cite[Theorem 8.7]{BLPPW} that
\[
\Sym(V^{\perp}) \xrightarrow{j} \tilde{A}(\cal{V}) \xrightarrow{\pi} A(\cal{V})
\]
is a graded flat deformation that is universal in the sense of \cite[Remark 4.2]{BLPPW}, where $j$ includes an element of $\Sym(V^{\perp})$ into $\R[t_1,\ldots,t_n]$ and then multiplies by $1 \in \tilde{A}(\cal{V})$, while $\pi$ is the natural quotient map from $\tilde{A}(\cal{V})$ to $A(\cal{V})$.

\subsubsection{The $B$ algebras}

For $S = \{i_1, \ldots, i_m\} \subset \{1,\ldots n\}$, let $u_S := u_{i_1} \cdots u_{i_m} \in \Z[u_1, \ldots, u_n]$, and let $H_S \subset V + \eta$ denote the intersection of the hyperplanes corresponding to elements of $S$. For $\alpha, \beta \in \cal{P}$, set
\[
\tilde{R}_{\alpha \beta} := \frac{\Z[u_1,\ldots,u_n]}{(u_S: S \subset \{1,\ldots,n\} \textrm{ with } \Delta_{\alpha} \cap \Delta_{\beta} \cap H_S = \emptyset)}.
\]
Let $f_{\alpha,\beta} \in \tilde{R}_{\alpha \beta}$ be the element corresponding to $1 \in \Z[u_1,\ldots,u_n]$. For $\alpha, \beta, \gamma \in \cal{P}$, let $S(\alpha \beta \gamma) = \{i \in \{1,\ldots,n\} : \alpha(i) = \gamma(i) \neq \beta(i)\}$, where $\alpha(i), \beta(i), \gamma(i)$ denote the $i^{th}$ sign of $\alpha, \beta, \gamma$ respectively.

\begin{definition}\label{def:BStyle}
The $\Z$-algebra $\tilde{B}(\cal{V})$ is defined to be
$
\tilde{B}(\cal{V}):= \bigoplus_{\alpha,\beta \in \cal{P}} \tilde{R}_{\alpha,\beta}
$
with multiplication given by
\[
f_{\alpha,\beta} \cdot f_{\beta,\gamma} := u_{S(\alpha \beta \gamma)} f_{\alpha,\gamma}
\]
and extended bilinearly over $\Z[u_1,\ldots,u_n]$. The algebra $\tilde{B}(\cal{V})$ admits a single grading by setting $\deg(f_{\alpha,\beta}) = d_{\alpha,\beta}$, where $d_{\alpha,\beta}$ is the number of sign changes required to turn $\alpha$ into $\beta$, and $\deg(u_i) = 2$. We can refine to a multi-grading by $\Z\langle e_1,\ldots,e_n \rangle$ by letting $\deg(f_{\alpha, \beta}) := e_{i_1} + \cdots + e_{i_m}$ if $\beta$ is obtained from $\alpha$ by changing the signs in positions $i_1,\ldots,i_m$. We define the multi-degree of $u_i$ to be $2e_i$; we recover the single grading by sending $e_i$ to $1$ for all $i$. We can view $\tilde{B}(\cal{V})$ as an algebra over $\Z[u_1,\ldots,u_n]$.
\end{definition}

To define the finite-dimensional version $B(\cal{V})$ over $\R$ (or $\Q$ if $\cal{V}$ is rational), write $\R[u_1,\ldots,u_n] = \Sym(\R^n)$ by identifying $u_i$ with the $i^{th}$ coordinate function on $(\R^n)^*$. The inclusion of $V$ into $\R^n$ gives us a ring homomorphism from $\Sym(V)$ into $\R[u_1,\ldots,u_n]$ and thus into the quotient $\tilde{R}^{\R}_{\alpha \beta}$. For $\alpha, \beta \in \cal{P}$ we set
\[
R_{\alpha \beta} := \tilde{R}^{\R}_{\alpha \beta} \otimes_{\Sym(V)} \R,
\]
where the action of $\Sym(V)$ on $\R$ has all elements of $V$ acting as zero, so that $R_{\alpha \beta}$ can be viewed as a further quotient of $\tilde{R}^{\R}_{\alpha \beta}$ by $(c_1 u_1 + \cdots + c_n u_n : (c_1,\ldots, c_n) \in V)$.
We define $B(\cal{V})$ using $R_{\alpha \beta}$ in place of $\tilde{R}_{\alpha \beta}$ in the definition of $\tilde{B}(\cal{V})$; the multi-grading does not make sense on $B(\cal{V})$ but the single grading does. By \cite[Theorem 8.7]{BLPPW},
\begin{equation} \label{eq:Bt-flat}
  \Sym(V) \xrightarrow{j} \tilde{B}(\cal{V}) \xrightarrow{\pi} B(\cal{V})
\end{equation}
is a universal graded flat deformation.

\begin{remark}\label{rem:ExtendedBDef}
We could alternatively define $\tilde{B}(\cal{V})$ using rings $\tilde{R}_{\alpha \beta}$ for all bounded (but possibly infeasible) sign sequences $\alpha$; let $\B$ denote the set of such sequences, so that $\cal{P} = \cal{F} \cap \B$. The rest of the definition would be unchanged, since $\tilde{R}_{\alpha \beta}$ would be zero if $\alpha$ or $\beta$ is infeasible (the ideal in the quotient defining $\tilde{R}_{\alpha \beta}$ would contain $1 = u_{\emptyset}$). The product still makes sense without modification and agrees with the product on $\tilde{B}(\cal{V})$; note that if $\alpha, \gamma$ are feasible but $\beta$ is infeasible, then
\[
\Delta_{\alpha} \cap \Delta_{\gamma} \cap H_{S(\alpha \beta \gamma)} \subset \Delta_{\alpha} \cap \Delta_{\beta} \cap \Delta_{\gamma} = \emptyset.
\]
\end{remark}

\begin{theorem}[Theorem 4.14 and Corollary 4.15 \cite{Gale}]
For a polarized arrangement $\cal{V}$, we have graded algebra isomorphisms $\tilde{B}(\cal{V}) \cong \tilde{A}(\cal{V}^{\vee})$ and $B(\cal{V}) \cong A(\cal{V}^{\vee})$.
\end{theorem}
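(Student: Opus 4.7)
The plan is to build an explicit graded $\Z$-algebra map $\phi\co \tilde{A}(\cal{V}^{\vee}) \to \tilde{B}(\cal{V})$ on generators, check the defining relations, and deduce the isomorphism of universal deformations from its finite-dimensional quotient via~\eqref{eq:Bt-flat}. The starting observation is that Gale duality exchanges feasibility and boundedness, namely $\cal{F}(\cal{V}^{\vee}) = \cal{B}(\cal{V})$ and $\cal{B}(\cal{V}^{\vee}) = \cal{F}(\cal{V})$, so that the indexing sets for the surviving idempotents coincide as $\cal{P}(\cal{V}^{\vee}) = \cal{P}(\cal{V})$. Extending $\tilde{B}(\cal{V})$ over bounded but possibly infeasible signs via Remark~\ref{rem:ExtendedBDef}, I would define $\phi$ by $e_\alpha \mapsto f_{\alpha,\alpha}$, by $p(\alpha,\beta) \mapsto f_{\alpha,\beta}$ whenever $\alpha \leftrightarrow \beta$, and by $t_i \mapsto u_i$.

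The three families of $\tilde{A}$-relations are then verified directly. Relation A1 holds since $f_{\alpha,\alpha} = 0$ whenever $\alpha$ is bounded but infeasible. Relation A3 becomes $f_{\alpha,\beta} f_{\beta,\alpha} = u_i f_{\alpha,\alpha}$, which is immediate from $S(\alpha \beta \alpha) = \{i\}$ for $\alpha \leftrightarrow \beta$ in coordinate $i$. For the commuting-square relation A2, a sign-by-sign check on the cycle $\alpha \leftrightarrow \beta \leftrightarrow \gamma \leftrightarrow \delta \leftrightarrow \alpha$ (which necessarily changes signs in two distinct coordinates) yields $S(\alpha \beta \gamma) = S(\alpha \delta \gamma) = \emptyset$, so both composite products equal $f_{\alpha,\gamma}$.

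Surjectivity of $\phi$ reduces to exhibiting each $f_{\alpha,\beta}$ as a product in the image. I would choose a minimal gallery in the adjacency graph of $\cal{P}(\cal{V})$ (its connectedness being a standard fact in the oriented matroid programming framework used by \cite{Gale}); minimality guarantees that each successive application of the multiplication rule $f_{\cdot,\cdot}f_{\cdot,\cdot} = u_S f_{\cdot,\cdot}$ picks up $S = \emptyset$, so the composite is exactly $f_{\alpha,\beta}$. The main obstacle is injectivity. I would attack it by quotienting both sides by the ideal generated by $V \subset \Sym(V)$, reducing to the isomorphism $A(\cal{V}^{\vee}) \cong B(\cal{V})$ of finite-dimensional graded $\R$-algebras; flatness of both universal deformations over $\Sym(V)$ then lifts the quotient isomorphism back to $\tilde{A}(\cal{V}^{\vee}) \cong \tilde{B}(\cal{V})$. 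The finite-dimensional statement itself can be handled by a graded dimension count in each $e_\alpha \cdot e_\beta$ weight space, matching a basis of $e_\alpha B(\cal{V}) e_\beta$ given by monomials in the $u_i$ modulo the defining ideal and the image of $V$ against the standard-module layered basis of $e_\alpha A(\cal{V}^{\vee}) e_\beta$ afforded by the quasi-hereditary structure associated to the partial order of Section~\ref{subsec:partial_order}.
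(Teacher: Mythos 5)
This statement is not proved in the paper at all: it is imported verbatim from \cite{Gale} (Theorem 4.14 and Corollary 4.15), so your proposal can only be measured against that source and on its own terms. The easy half of your outline is fine and is indeed how the two presentations are related: defining $\phi\co\tilde{A}(\cal{V}^{\vee})\to\tilde{B}(\cal{V})$ on the extended model of Remark~\ref{rem:ExtendedBDef} by $e_\alpha\mapsto f_{\alpha,\alpha}$, $p(\alpha,\beta)\mapsto f_{\alpha,\beta}$, $t_i\mapsto u_i$, and checking A1 (vanishing of $\tilde{R}_{\alpha\alpha}$ for bounded infeasible $\alpha$), A3 (via $S(\alpha\beta\alpha)=\{i\}$) and A2 (via $S(\alpha\beta\gamma)=S(\alpha\delta\gamma)=\emptyset$) is correct, and is exactly the content behind Proposition~\ref{prop:BTildeGensRels}. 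The two remaining steps, however, are where the theorem actually lives, and both have genuine gaps.

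Surjectivity: your inference ``minimality of a gallery in the adjacency graph of $\cal{P}$ guarantees $S=\emptyset$ at each step'' is not valid. A minimal path in that graph need not have length $d_{\alpha,\beta}$; if its length exceeds the Hamming distance, some coordinate outside the difference set is flipped and flipped back (or a coordinate is flipped twice), and the product then equals $u_S f_{\alpha,\beta}$ with $S\neq\emptyset$, not $f_{\alpha,\beta}$. Since $\deg f_{\alpha,\beta}=d_{\alpha,\beta}$ is the minimal degree in $e_\alpha\tilde{B}(\cal{V})e_\beta$ and every image of a path has degree at least the graph distance, surjectivity of $\phi$ is in fact \emph{equivalent} to the statement that any $\alpha,\beta\in\cal{P}$ with $f_{\alpha,\beta}\neq 0$ are joined inside $\cal{P}$ by a path of length exactly $d_{\alpha,\beta}$. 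Connectedness of $\cal{P}$ does not give this, and the naive argument (walk along a straight segment from $\Delta_\alpha$ to $\Delta_\beta$) only produces a Hamming geodesic in $\cal{F}$: the intermediate regions can fail to be bounded. For instance, for three lines bounding a triangle with a generic polarization one has $\cal{P}=\{(+,+,+),(+,+,-),(+,-,-)\}$, and a straight segment from the triangle to $\Delta_{(+,-,-)}$ can pass through the unbounded region $(+,-,+)\notin\cal{P}$; a $\cal{P}$-geodesic exists, but only by a different route, and producing it in general is precisely the nontrivial combinatorial lemma you are assuming away.

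Injectivity: the reduction via \eqref{eq:Bt-flat} and graded Nakayama is legitimate over $\R$ (or $\Q$) given the flatness results of \cite{BLPPW} cited in the paper, though note it only yields the isomorphism over a field, whereas $\tilde{A}$ and $\tilde{B}$ here are $\Z$-algebras, so the integral statement would need a separate argument (e.g.\ compatible $\Z$-bases). More seriously, the reduction lands you exactly on the other half of the theorem, $A(\cal{V}^{\vee})\cong B(\cal{V})$, and your proposed proof of that --- a graded dimension count matching monomial bases of $R_{\alpha\beta}$ against a ``standard-module layered basis'' of $e_\alpha A(\cal{V}^{\vee})e_\beta$ --- is a placeholder, not an argument: computing the Hilbert series of $R_{\alpha\beta}$ (the cohomology of the toric variety $X_{\alpha\beta}$) and exhibiting a basis of $e_\alpha A(\cal{V}^{\vee})e_\beta$ with matching graded dimension is the hard core of \cite{Gale}'s proof. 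Moreover, the quasi-hereditary structure of $A(\cal{V}^{\vee})$ with known standard-module multiplicities, which you invoke, is itself one of the main theorems of that same paper, so without checking the logical order your appeal to it risks circularity. As written, the proposal establishes the map and the relations but defers both surjectivity and injectivity to unproved statements that are essentially equivalent in difficulty to the theorem itself.
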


As a consequence, we have the following description of $\tilde{B}(\cal{V})$.
\begin{proposition}\label{prop:BTildeGensRels}
For a polarized arrangement $\cal{V} = (V,\eta,\xi)$, let $Q$ be the quiver with vertices $e_{\alpha}$ given by $\alpha\in \cal{B}$ and arrows $p(\alpha,\beta)$ from $\alpha$ to $\beta$ when $\alpha \leftrightarrow \beta$. The algebra $\tilde{B}(\cal{V})$ is $P(Q) \otimes_{\Z} \Z[u_1,\ldots,u_n]$ modulo the two-sided ideal generated by the following relations:
\begin{enumerate}
\item[B1]: $e_{\alpha}$ if $\alpha \in \cal{B}\setminus \cal{F}$, that is $\alpha$ bounded and infeasible,
\item[B2]: $p(\alpha,\beta) p(\beta,\gamma) - p(\alpha,\delta) p(\delta,\gamma)$ for all distinct bounded $\alpha,\beta,\gamma,\delta$ with $\alpha \leftrightarrow \beta \leftrightarrow \gamma \leftrightarrow \delta \leftrightarrow \alpha$,
\item[B3]: $p(\alpha,\beta,\alpha) - u_i e_{\alpha}$ for all bounded $\alpha,\beta$ with $\alpha \leftrightarrow \beta$ via a sign change in coordinate $i$.
\end{enumerate}
The gradings on $\tilde{B}(\cal{V})$ defined above match the ones defined as for $\tilde{A}(\cal{V})$.
\end{proposition}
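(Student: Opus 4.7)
The plan is to derive this presentation as a translation of the definition of $\tilde{A}(-)$ under the isomorphism $\tilde{B}(\cal{V}) \cong \tilde{A}(\cal{V}^\vee)$ recalled in the preceding theorem. Since the presentation of $\tilde{A}(-)$ is essentially its definition, what requires proof is that applying that definition to $\cal{V}^\vee$ and then transporting back along the isomorphism produces precisely the relations B1, B2, B3 listed above.

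First I would write out the presentation of $\tilde{A}(\cal{V}^\vee)$ directly from its definition. The key combinatorial input is Gale duality at the level of sign sequences, namely $\cal{F}(\cal{V}^\vee) = \cal{B}(\cal{V})$ and $\cal{B}(\cal{V}^\vee) = \cal{F}(\cal{V})$, together with the observation that the relation $\alpha \leftrightarrow \beta$ does not involve the arrangement at all and so is preserved under Gale duality. This immediately turns the vertex set $\cal{F}(\cal{V}^\vee)$ into $\cal{B}$; relation A1 for $\cal{V}^\vee$, which kills $e_\alpha$ for $\alpha \in \cal{F}(\cal{V}^\vee) \setminus \cal{B}(\cal{V}^\vee) = \cal{B} \setminus \cal{F}$, becomes B1; the commuting-square relation A2 for $\cal{V}^\vee$ among feasible sequences becomes B2 (among bounded sequences of $\cal{V}$); and A3 for $\cal{V}^\vee$ becomes B3, provided the polynomial generators $t_i$ of $\tilde{A}(\cal{V}^\vee)$ correspond to the polynomial generators $u_i$ of $\tilde{B}(\cal{V})$ under the isomorphism.

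The step I expect to be the main obstacle is making the cited isomorphism explicit on generators: one must verify that a length-one path $p(\alpha, \beta)$ in $\tilde{A}(\cal{V}^\vee)$ (for $\alpha \leftrightarrow \beta$) is matched with the class $f_{\alpha, \beta}$ in $\tilde{B}(\cal{V})$, and that $t_i$ maps to $u_i$. This identification can be extracted from the construction used to establish $\tilde{B}(\cal{V}) \cong \tilde{A}(\cal{V}^\vee)$ in \cite{Gale, BLPPW}, where the comparison is arranged so that the natural generating sets on the two sides match; the B3-type quadratic relation $f_{\alpha,\beta} f_{\beta,\alpha} = u_i f_{\alpha,\alpha}$ in $\tilde{B}(\cal{V})$ follows directly from Definition~\ref{def:BStyle} because $S(\alpha \beta \alpha) = \{i\}$ when $\alpha$ and $\beta$ differ only in coordinate $i$. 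Once the matching of generators is in hand, the grading and multi-grading claims are automatic: $\deg(f_{\alpha, \beta}) = d_{\alpha, \beta} = 1$ for $\alpha \leftrightarrow \beta$ matches $\deg(p(\alpha,\beta)) = 1$, $\deg(u_i) = 2$ matches $\deg(t_i) = 2$, and both multi-gradings assign degree $e_i$ to the arrow that changes a sign in coordinate $i$.
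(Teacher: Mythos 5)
Your proposal is correct and follows exactly the route the paper takes: the paper states this proposition as an immediate consequence of the isomorphism $\tilde{B}(\cal{V}) \cong \tilde{A}(\cal{V}^{\vee})$ from \cite{Gale}, combined with the Gale-duality identities $\cal{F}(\cal{V}^{\vee}) = \cal{B}(\cal{V})$ and $\cal{B}(\cal{V}^{\vee}) = \cal{F}(\cal{V})$, which is precisely your translation of A1--A3 into B1--B3. Your additional care in matching generators ($p(\alpha,\beta)\leftrightarrow f_{\alpha,\beta}$, $t_i\leftrightarrow u_i$) and gradings fills in details the paper leaves implicit, and is consistent with the construction in the cited sources.
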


The natural operations on $(V,\eta,\xi)$ on arrangements interact with the finite-dimensional algebras $A$ and $B$ (see \cite{Gale}).
\begin{itemize}
\item Gale duality of arrangements becomes Koszul duality of algebras $A(\cal{V}) \cong A(\cal{V}^\vee)$, $B(\cal{V}) \cong B(\cal{V}^\vee)$.
\item Polarization reversal of $(V,\eta,\xi)$ gives Ringel duality of algebras.
\item Applying alt to $(V,\eta,\xi)$ induces isomorphisms of algebras (this also holds for $\tilde{A}$ and $\tilde{B}$).
\end{itemize}

\subsubsection{Geometric aspects}\label{sec:AlgebrasGeometricAspects}

As discussed in \cite[Section 8]{BLPPW}, in the rational case $B(\cal{V})$ has an interpretation as a convolution algebra whose underlying vector space is the direct sum of cohomology spaces of $X_{\alpha \beta} := X_{\alpha} \cap X_{\beta}$, where $\alpha$ and $\beta$ are bounded feasible sign vectors, equipped with a convolution product:
\[
B(\cal{V})  \cong \bigoplus_{\alpha,\beta \in \cal{I}} H^{\ast}(X_{\alpha\beta})[-d_{\alpha\beta}].
\]
The algebra $\tilde{B}(\cal{V})$ has a similar interpretation as a convolution algebra built from equivariant cohomology spaces: we have
\[
\tilde{B}(\cal{V})  \cong \bigoplus_{\alpha,\beta \in \cal{I}} H^{\ast}_{T^k}( X_{\alpha\beta})[-d_{\alpha\beta}].
\]

As an upshot of the geometric definitions of $\tilde{B}(\cal{V})$ (resp. $B(\cal{V})$), one can identify the center of these convolution algebras with the equivariant (resp. ordinary) cohomology of the associated hypertoric variety (see \cite[Theorem 8.3 \& Proposition 8.5]{BLPPW}):
\[
Z(B(\cal{V})) \cong H^{\ast}(\M_{\cal{V}}) \,\, \textrm{ and } \,\, Z(\tilde{B}(\cal{V})) \cong H^{\ast}_{T^k}(\M_{\cal{V}}).
\]
The graded flat deformation \eqref{eq:Bt-flat} comes from forgetting the equivariant structure and ${\rm Sym}(V) = H^{\ast}_T({\rm pt})$.

It is expected that the convolution algebra $B(\cal{V})$ is an endomorphism algebra of the relative core in an appropriately defined Fukaya category of $\mathfrak{M}_{\cal{V}}$, see e.g. \cite[Remark 4.12]{Gale}. The results of this paper suggest the following Fukaya interpretation for the universal deformations $\tilde{B}(\cal{V})$ directly in terms of hyperplane data.

\begin{conjecture}
For a polarized arrangement $\cal{V}$ (not necessarily rational), the algebra $\tilde{B}(\cal{V})$ is the homology of the endomorphism algebra of the interiors of regions $\Delta_{\alpha}$ for $\alpha \in \cal{P}$ in a suitably defined wrapped Fukaya category of the complement of $\cal{H}_{\cal{V}} \subset (V + \eta)_{\C}$.
\end{conjecture}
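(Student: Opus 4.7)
The plan is to compute the endomorphism $A_\infty$-algebra of the Lagrangians $\Delta_\alpha$ in the wrapped Fukaya category of $(V+\eta)_\C \setminus \cal{H}_{\cal{V},\C}$ and match it, up to quasi-isomorphism, with the presentation of $\tilde{B}(\cal{V})$ given in Proposition~\ref{prop:BTildeGensRels}. First I would fix a model for the wrapped category. The regions $\Delta_\alpha \subset V+\eta$ are real Lagrangian with respect to the natural symplectic structure on $(V+\eta)_\C$; the condition $\alpha \in \cal{P}$ selects those regions that are bounded by the linear programming data $\xi$, and the choice of $\xi$ should translate into a partial wrapping at infinity that stops the Reeb flow in directions along which $\bar{\xi}$ would be unbounded. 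Full wrapping around each hyperplane $H_i$ together with this $\xi$-controlled stop structure at infinity defines the Fukaya category I want.

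Next I would identify the generators. Between $\Delta_\alpha$ and $\Delta_\beta$, the geometry produces two families of wrapped chords: a ``minimal'' chord $p(\alpha,\beta)$ whenever $\alpha \leftrightarrow \beta$ along some hyperplane $H_i$ (corresponding to a short arc across the common codimension-one face), and ``hyperplane loops'' $u_i$ obtained by wrapping once around $H_i$. The relation B3, $p(\alpha,\beta,\alpha) = u_i e_\alpha$, should fall out directly: the concatenation of the short chord from $\Delta_\alpha$ to $\Delta_\beta$ and back is homotopic, in the complement of $\cal{H}_{\cal{V},\C}$, to a loop encircling $H_i$ based in $\Delta_\alpha$. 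Relation B2 should come from the standard commutation of wrappings through disjoint hyperplanes (the ``square relation'' across two independent sign changes), and B1 is automatic since $L_\alpha = 0$ when $\alpha \in \cal{B} \setminus \cal{F}$. The grading $\deg(p(\alpha,\beta)) = e_i$ and $\deg(u_i) = 2e_i$ should match the Maslov grading weighted by the homology class of the chord around each $H_i$.

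To organize the argument I would proceed by induction on the number of hyperplanes using the deletion/restriction bimodules introduced in Section~\ref{sec:GeneralDeletionRestriction}: I expect these algebraic bimodules to be mirror to geometric operations (removing a stop at a hyperplane versus restricting the Lagrangians to that hyperplane), fitting into a Mayer--Vietoris/Viterbo restriction long exact triangle on the Fukaya side. At the base of the induction one reduces, either to arrangements of a single hyperplane (a cylinder, whose wrapped category is well-understood) or, by embedding an arbitrary arrangement into a cyclic one of larger ambient dimension, to the cyclic case already settled by Theorem~\ref{thm:iso} and Auroux's comparison~\cite{Auroux}. Matching the two sides of this inductive decomposition is what turns a generators-and-relations match into a full quasi-isomorphism of $A_\infty$-algebras.

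The main obstacle, as usual in wrapped Floer theory, is formality together with the precise specification of the wrapping scheme. Making ``partial wrapping at infinity determined by $\xi$'' into a rigorous Liouville sector, verifying that the $\Delta_\alpha$ for $\alpha \in \cal{P}$ really do generate, and ruling out higher $A_\infty$-operations beyond the $m_2$ captured by $\tilde{B}(\cal{V})$ are each substantial problems. Formality in particular is the crux: the algebra $\tilde{B}(\cal{V})$ is intrinsically a graded associative algebra, so one must show that the cochain-level $A_\infty$-structure on $CW^\ast(\bigoplus \Delta_\alpha)$ admits a minimal model with vanishing $m_{\geq 3}$. In the cyclic case this is inherited from the bordered HFK algebras, but for general $\cal{V}$ one likely needs a symmetry argument (such as a torus action with appropriate weights on the Fukaya category) or an appeal to a Koszul-type rigidity coming from the quasi-hereditary structure proved in~\cite{LLM2}.
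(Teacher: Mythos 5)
The statement you are addressing is a conjecture, and the paper does not prove it: for general polarized arrangements it is left open, and it is established only for left and right cyclic $\cal{V}$, by combining the purely algebraic isomorphisms of Theorems~\ref{thm:LeftCyclicIsom} and~\ref{thm:RightCyclicIsom} ($\tilde{B}(\cal{V}) \cong \B_l(n,k)$, resp.\ $\B_r(n,k)$) with the already-known identification of the Ozsv{\'a}th--Szab{\'o} algebras as (homology of) endomorphism algebras of the distinguished Lagrangians in partially wrapped Fukaya categories of $\Sym^k(\C \setminus \{z_1,\dots,z_n\})$, due to \cite{LP,MMW2} together with \cite{Auroux}. Your text is accordingly not a proof but a research program, and you say so yourself: the construction of the Liouville sector encoding ``partial wrapping at infinity determined by $\xi$,'' generation by the $\Delta_\alpha$, the disk counts yielding B1--B3 and (crucially) the absence of further relations, and the vanishing of higher operations are all named as open problems rather than solved. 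Note also that the statement as written only concerns the homology of the endomorphism algebra, so formality is not needed for it (it would be needed for the stronger Conjecture~\ref{conj:symp}); insisting on $m_{\geq 3}=0$ conflates the two.

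The concrete step that would fail is your inductive scheme. You propose to induct on the number of hyperplanes via the deletion/restriction bimodules of Section~\ref{sec:GeneralDeletionRestriction} and to bottom out ``by embedding an arbitrary arrangement into a cyclic one of larger ambient dimension.'' But Section~\ref{sec:DelRestCyclic} shows that deletion and signed restriction \emph{preserve} (left/right) cyclicity, so the class of arrangements reachable from cyclic ones by these operations is again cyclic (up to sign changes/equivalence); arrangements whose affine oriented matroid is not cyclic---which exist already for small $n,k$---are never reached, so the base case does not cover the general case. Moreover, even granting a Mayer--Vietoris/Viterbo-type exact triangle on the Fukaya side mirroring deletion and restriction, knowing the endomorphism algebras of the deleted and restricted arrangements together with such a triangle does not determine the endomorphism algebra of $\cal{V}$ itself: the algebra homomorphisms $\del^{\tilde{B}}_i$ and $\rest^i_{\tilde{B}}$ relate $\tilde{B}(\cal{V}_i)$, $\tilde{B}(\cal{V})$, $\tilde{B}(\cal{V}^i)$, but there is no reconstruction statement, and none is supplied. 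So the proposal neither matches the paper's (deliberately partial) treatment nor closes the general case; what it gets right is the expected dictionary---minimal chords $p(\alpha,\beta)$, hyperplane loops $u_i$, and the identification of the cyclic case with Theorem~\ref{thm:iso} plus \cite{Auroux}---which is exactly the heuristic picture the paper itself offers as motivation.
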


Theorems~\ref{thm:LeftCyclicIsom} and \ref{thm:RightCyclicIsom} establish this conjecture when $\cal{V}$ is left or right cyclic as defined in Section~\ref{sec:Cyclic}; in this case we describe the stops for the wrapping in more detail in Section~\ref{rem:AurouxStops} below.

When $\cal{V}$ is rational, $\Delta_{\alpha}$ is the image under $\bar{\mu}_{\R}$ (plus any linear combination of $\mathrm{Re}(\bar{\mu}_{\C})$, $\mathrm{Im}(\bar{\mu}_{\C})$) of the relative-core toric Lagrangian $X_{\alpha} \subset \M_{\cal{V}}$. This observation, together with the geometric interpretations of the centers of $B(\cal{V})$ and $\tilde{B}(\cal{V})$, makes it tempting to speculate further that $\tilde{B}(\cal{V})$ admits an alternative interpretation in terms of some sort of algebraically-equivariant Fukaya category of $\M_{\cal{V}}$.  Thus we speculate that the algebra $\tilde{B}(\cal{V})$ arises as an endomorphism algebra in two ways---in the Fukaya category of the complexified hyperplane complement $\cal X_\cal{V}$, and in some equivariant Fukaya category of a hypertoric variety $\M_{\cal{V}}$.

We will not go further into Fukaya categories here; however, we can consider Grothendieck groups associated to $B(\cal{V})$ and $\tilde{B}(\cal{V})$, which given the Fukaya interpretations should be related to the middle cohomology of $\M_{\cal{V}}$. In fact, by \cite{HypertoricCatO} we have
\[
K_0(B(\cal{V}){\rm -mod})_{\C} \cong H^{2k}_{\mathbb{T}}(\mathfrak{M}_{\cal{V}}; \C)
\]
when $\M_{\cal{V}}$ is smooth, where classes $[P_{\alpha}]$ of indecomposable projectives over $B(\cal{V})$ correspond to classes $[X_{\alpha}]$ in cohomology.

\subsubsection{Unpolarized case}\label{sec:UnpolarizedHypertoricAlgs}

Absent a polarization and given only $(V,\eta)$, one can define variants $B'$, $\tilde{B}'$ of the algebras $B, \tilde{B}$   whose idempotents correspond to feasible $\alpha$ such that $\Delta_{\alpha}$ is compact (rather than just bounded above with respect to $\xi$, which has not been chosen here). Defining analogues of the $A$ algebras in this setting is more complicated, and will not be discussed here. It will be convenient for us to define
\[
\tilde{B}'(V,\eta) := \left( \sum_{\alpha \in \cal{K}} e_{\alpha} \right) \tilde{B}(V,\eta,\xi) \left( \sum_{\alpha \in \cal{K}} e_{\alpha} \right)
\]
for any choice of polarization $\xi$ on $(V,\eta)$, and similarly for $B'(V,\eta)$. Using the definition of $B$ and $\tilde{B}$ from Definition~\ref{def:BStyle}, it is clear that $\tilde{B}'(V,\eta)$ and $B'(V,\eta)$ admit definitions requiring no choice of $\xi$, and are thus independent of the choice of $\xi$. The idempotents $e_{\alpha}$ such that $\Delta_{\alpha} \neq \emptyset$ is compact are precisely those for which the indecomposable projective module $\tilde{B}(\cal{V}) e_{\alpha}$ is also injective.

\subsection{Deletion and restriction}\label{sec:GeneralDeletionRestriction}

\subsubsection{Operations on $(V,\eta)$ and $(V,\eta,\xi)$}
Two of the most natural operations one can perform on hyperplane arrangements are deletion of a hyperplane and restriction to a hyperplane; we briefly discuss how to view these operations as acting on $(V,\eta)$. Below, suppose $(V,\eta)$ is an arrangement.

For restriction, let $1 \leq i \leq n$ and consider the inclusion $\iota_i: \R^{n-1} \to \R^n$ of the $i^{th}$ coordinate hyperplane. Assume that $V + \iota(\R^{n-1}) = \R^n$ (i.e. that $V$ is not contained in the $i^{th}$ coordinate hyperplane of $\R^n$).  Define the \emph{restriction} of $(V,\eta)$ to the $i^{th}$ hyperplane to be
\[
	(V^i,\eta^i) := (\iota_i^{-1}(V),\iota_i^{-1}(\eta))
\]
(note that $\iota_i$ induces an isomorphism from $\R^{n-1} / V^i$ to $\R^n / V$).   The restriction is an arrangement of $n-1$ hyperplanes in $k-1$-space, naturally identified with the restriction of $\cal{H}_{\cal{V}}$ to its $i^{th}$ hyperplane.

Now, let $1 \leq i \leq n$ and consider the coordinate projection $\pi_i: \R^n \to \R^{n-1}$ that omits the $i^{th}$ coordinate. Assume that $V$ does not contain the $i^{th}$ coordinate axis of $\R^n$.
The \emph{deletion} of the $i^{th}$ hyperplane is defined by
\[
(V_i,\eta_i) := (\pi_i(V),\pi_i(\eta))
\]
(note that $\pi_i$ induces a map from $\R^n / V$ to $\R^{n-1} / V_i$). The deletion is an arrangement of $n-1$ hyperplanes in $k$-space, naturally identified with the deletion of the $i^{th}$ hyperplane from $\cal{H}_{\cal{V}}$. One can check that restriction and deletion preserve rationality of arrangements.

We now discuss the polarized case.

\begin{itemize}
\item (Restriction)
The \emph{restriction} of $\cal{V} = (V,\eta,\xi)$ to the $i^{th}$ hyperplane is defined by $\cal{V}^i := (V^i,\eta^i,\xi^i)$, where $(V^i,\eta^i)$ is the restriction of $(V,\eta)$ as before, and $\xi^i = \xi|_{V^i}$.

\item (Deletion)
The \emph{deletion} of the $i^{th}$ hyperplane from $\cal{V} = (V,\eta,\xi)$ is defined as
$\cal{V}_i:= (V_i,\eta_i,\xi_i)$, where $(V_i,\eta_i)$ is the deletion as before, and $\xi_i = \xi \circ \pi_i^{-1}$,
where $\pi$ is the isomorphism $V \cong V_i$.
\end{itemize}

Deletion and restriction are exchanged by Gale duality; if $\cal{V} = (V,\eta,\xi)$ is a polarized arrangement, then the restriction of $\cal{V}$ to its $i^{th}$ hyperplane $H_i$ is defined if and only if the deletion of the $i^{th}$ hyperplane $H_i^{\vee}$ of $\cal{V}^{\vee}$ is defined, and in this case we have $(\cal{V}^i)^{\vee} = (\cal{V}^{\vee})_i$ (see \cite[Lemma 2.6]{Gale}).

\subsubsection{Homomorphisms and bimodules for deletion and restriction} \label{sec:bimodules}

The deformed algebras $\tilde{A}(\cal{V})$ and $\tilde{B}(\cal{V})$ interact well with deletion and restriction.  Namely, to a pair of arrangements related by deletion or restriction, there is an associated (non-unital) algebra homomorphism that maps distinguished idempotents to distinguished idempotents.  Interestingly, these homomorphisms are only defined for the infinite-dimensional algebras $\tilde{A}(\cal{V})$ and $\tilde{B}(\cal{V})$, not for their finite-dimensional Koszul quotients $A(\cal{V})$ and $B(\cal{V})$.

\begin{definition}
Let $\cal{V} = (V,\eta,\xi)$ be a polarized arrangement of $n$ hyperplanes in $k$-space such that the restriction $\cal{V}^i$ of $\cal{V}$ to the $i^{th}$ hyperplane is well-defined, and choose a sign $s \in \{+,-\}$. Define an algebra homomorphism
\[
\rest_{\tilde{A}}^i(\cal{V},s): \tilde{A}(\cal{V}^i) \to \tilde{A}(\cal{V})
\]
by sending
\begin{itemize}
\item $e_{\alpha} \mapsto e_{\iota_{i,s}(\alpha)}$ where $\iota_{i,s}(\alpha)$ is $\alpha$ with sign $s$ inserted in position $i$,
\item $p(\alpha,\beta) \mapsto p(\iota_{i,s}(\alpha), \iota_{i,s}(\beta))$,
\item $t_j \mapsto t_j$ for $j < i$ and $t_j \mapsto t_{j+1}$ for $j \geq i$.
\end{itemize}
Note that if $\alpha$ is feasible, then $\iota_{i,s}(\alpha)$ is feasible for $s \in \{+,-\}$ (although it may be unbounded even if $\alpha$ is bounded; in this case, $e_{\iota_{i,s}(\alpha)} = 0$). One can check that the relations defining $\tilde{A}(\cal{V})$ are sent to zero under this homomorphism. The homomorphism is compatible with the map between multi-grading groups $\Z\langle e_1, \ldots, e_{n-1} \rangle \to \Z\langle e_1,\ldots,e_n \rangle$ sending $e_j$ to $e_j$ for $j < i$ and sending $e_j$ to $e_{j+1}$ for $j \geq i$. It preserves the single grading.
\end{definition}

We can obtain a $\tilde{B}$ version of the homomorphism $\rest_{\tilde{A}}^i(\cal{V},s)$ using Gale duality
\[
\tilde{B}(\cal{V}_i) = \tilde{A}((\cal{V}_i)^{\vee}) = \tilde{A}((\cal{V}^{\vee})^i).
\]
We define
\[
\del^{\tilde{B}}_i(\cal{V},s): \tilde{B}(\cal{V}_i) \to \tilde{B}(\cal{V})
\]
to be the homomorphism $\rest_{\tilde{A}}^i(\cal{V}^{\vee},s)$ under the above identifications.

For deletion and $\tilde{A}(\cal{V})$ (equivalently by duality, restriction and $\tilde{B}(\cal{V})$), we will consider two closely related homomorphisms $\del^{\tilde{A}}_i(\cal{V},s)$ and $\del'^{\tilde{A}}_i(\cal{V},s)$.

\begin{definition}
Let $\cal{V} = (V,\eta,\xi)$ be as above such that the deletion $\cal{V}_i$ of the $i^{th}$ hyperplane of $\cal{V}$ is well-defined, and choose a sign $s \in \{+,-\}$. Define an algebra homomorphism
\[
\del^{\tilde{A}}_i(\cal{V}, s): \tilde{A}(\cal{V}) \to \tilde{A}(\cal{V}_i)
\]
by sending
\begin{itemize}
\item $e_{\alpha} \mapsto e_{\rho_{i,s}(\alpha)}$ where $\rho_{i,s}(\alpha)$ is $\alpha$ with sign $s$ removed from position $i$, if the $i^{th}$ sign of $\alpha$ is $s$, and $e_{\rho_{i,s}(\alpha)} := 0$ otherwise,
\item $p(\alpha,\beta) \mapsto p(\rho_{i,s}(\alpha), \rho_{i,s}(\beta))$,
\item $t_j \mapsto t_j$ for $j < i$, $t_i \mapsto 0$, and $t_j \mapsto t_{j-1}$ for $j > i$.
\end{itemize}
One can check that this homomorphism is well-defined. It is compatible with the map between multi-grading groups $\Z\langle e_1, \ldots, e_n \rangle \to \Z\langle e_1, \ldots, e_{n-1} \rangle$ sending $e_j$ to $e_j$ for $j < i$, sending $e_i$ to zero, and sending $e_j$ to $e_{j-1}$ for $j > i$. It preserves the single grading.
\end{definition}

As above, define
\[
\rest_{\tilde{B}}^i(\cal{V}, s): \tilde{B}(\cal{V}) \to \tilde{B}(\cal{V}^i)
\]
using the identifications of $\tilde{B}(\cal{V})$ with $\tilde{A}(\cal{V}^{\vee})$ and of $\tilde{B}(\cal{V}^i)$ with $\tilde{A}((\cal{V}^{\vee})_i)$. For the homomorphism $(\del')^{\tilde{A}}_i(\cal{V}, s)$, it is convenient to define $(\rest')_{\tilde{B}}^i(\cal{V},s)$ first.

\begin{definition}
Let $\cal{V} = (V,\eta,\xi)$ be as above such that the restriction $\cal{V}^i$ to the $i^{th}$ hyperplane of $\cal{V}$ is well-defined, and choose a sign $s \in \{+,-\}$. Let
\[
\tilde{B}^s(\cal{V}) = \left( \sum_{\alpha : \alpha(i) = s} e_{\alpha} \right) \cdot \tilde{B}(\cal{V}) \cdot \left( \sum_{\alpha : \alpha(i) = s} e_{\alpha} \right).
\]
Define an algebra homomorphism
\[
(\rest')_{\tilde{B}}^i(\cal{V}, s): \tilde{B}^s(\cal{V}) \to \tilde{B}(\cal{V}^i)
\]
by sending
\begin{itemize}
\item $e_{\alpha} \mapsto e_{\rho_{i,s}(\alpha)}$,
\item $f_{\alpha, \beta} \mapsto f_{\rho_{i,s}(\alpha), \rho_{i,s}(\beta)}$,
\item $u_j \mapsto u_j$ for $j < i$, $u_i \mapsto 1$, and $u_j \mapsto u_{j-1}$ for $j > i$.
\end{itemize}
\end{definition}
One can check that this map sends the ideals defining $\tilde{R}_{\alpha \beta}$ on the left to the ideals defining $\tilde{R}_{\rho_{i,s}(\alpha) \rho_{i,s}(\beta)}$ on the right (this would not be true if we tried to define the homomorphism on the full algebra $\tilde{B}(\cal{V})$) and that it respects the products on each side, so it defines an algebra homomorphism. It is compatible with the map between multi-grading groups $\Z\langle e_1, \ldots, e_n \rangle \to \Z\langle e_1, \ldots, e_{n-1} \rangle$ sending $e_j$ to $e_j$ for $j < i$, sending $e_i$ to zero, and sending $e_j$ to $e_{j-1}$ for $j > i$. However, it does not preserve the single grading (note that $\deg(u_i) = 2$ while $\deg(1) = 0$). Define
\[
(\del')^{\tilde{A}}_i(\cal{V}, s): \tilde{A}^s(\cal{V}) \to \tilde{A}(\cal{V}_i)
\]
using the identifications of $\tilde{A}^s(\cal{V})$ with $\tilde{B}^s(\cal{V}^{\vee})$ and of $\tilde{A}(\cal{V}_i)$ with $\tilde{B}((\cal{V}^{\vee})^i)$, where
\[
\tilde{A}^s(\cal{V}) = \left( \sum_{\alpha : \alpha(i) = s} e_{\alpha} \right) \cdot \tilde{A}(\cal{V}) \cdot \left( \sum_{\alpha : \alpha(i) = s} e_{\alpha} \right).
\]
We have:
\begin{itemize}
\item $e_{\alpha} \mapsto e_{\rho_{i,s}(\alpha)}$,
\item $p(\alpha,\beta) \mapsto p(\rho_{i,s}(\alpha), \rho_{i,s}(\beta))$,
\item $t_j \mapsto t_j$ for $j < i$, $t_i \mapsto 1$, and $t_j \mapsto t_{j-1}$ for $j > i$.
\end{itemize}

From the above homomorphisms, one can define bimodules over the algebras in question by starting with the identity bimodule over the domain and inducing the left action via the homomorphism. Tensor products with these bimodules on the left send projectives to projectives.

\subsubsection{Compositions}

Suppose we are given $(V,\eta,\xi)$ arising as the $i^{th}$ deletion of another polarized arrangement $(V', \eta', \xi')$. Define a third polarized arrangement $(V'',\eta'',\xi'')$ as the $i^{th}$ restriction of $(V',\eta',\xi')$, assuming this makes sense. For any $s', s'' \in \{+,-\}$, we can consider the composite homomorphism
\[
\tilde{A}(\cal{V}'') \xrightarrow{\rest_{\tilde{A}}^i(\cal{V}'',s'')} \tilde{A}(\cal{V}') \xrightarrow{\del^{\tilde{A}}_i(\cal{V}', s')} \tilde{A}(\cal{V}).
\]
If $s' \neq s''$ this is zero; if $s' = s''$, then the resulting homomorphism is independent of $s' = s''$. For the $\tilde{B}$ algebras, if $s',s'' \in \{+,-\}$ we have the composite
\[
\tilde{B}(\cal{V}) \xrightarrow{\del^{\tilde{B}}_i(\cal{V},s')} \tilde{B}(\cal{V}') \xrightarrow{\rest_{\tilde{B}}^i(\cal{V}', s'')} \tilde{B}(\cal{V}''),
\]
with the same properties. In terms of hyperplanes in $V + \eta$, one can think of the $\tilde{B}$ homomorphism as being determined by adding an additional hyperplane, then restricting to it. If one composes two such addition-restriction homomorphisms, the result is the same as adding both hyperplanes first, then restricting to their intersection; if the intersection is empty, the composite of the two addition-restriction homomorphisms is zero.

One can obtain the same composite homomorphism using the variants $(\del')^{\tilde{A}}_i(\cal{V}', s')$ and $(\rest')_{\tilde{B}}^i(\cal{V}', s'')$. Note that in the above compositions, $\rest_{\tilde{A}}^i(\cal{V}'',s'')$ has image contained in the non-unital subalgebra $\tilde{A}^s(\cal{V}')$ on which $(\del')^{\tilde{A}}_i(\cal{V}', s')$ is defined. Similarly, $\del^{\tilde{B}}_i(\cal{V},s')$ has image contained in the non-unital subalgebra $\tilde{B}^s(\cal{V}')$ on which $(\rest')_{\tilde{B}}^i(\cal{V}', s'')$ is defined. Composing using these variant homomorphisms, one can check that we get the same composite homomorphism as above. This composite preserves both the single grading and the multi-grading by $\Z \langle e_1, \ldots, e_n \rangle$.

\section{Cyclic arrangements}\label{sec:Cyclic}

\subsection{Definitions}

\subsubsection{Cyclic arrangements}

We let $\Grr_{k,n}^{> 0}$ denote the positive Grassmannian consisting of positive (i.e. totally positive) $k$-dimensional subspaces of $\R^n$, i.e. the set of subspaces whose Pl{\"u}cker coordinates are all nonzero and have the same sign.
An element in $\Grr_{k,n}^{> 0}$ can be represented as the column span of  an $n\times k$ matrix with strictly positive maximal minors.

\begin{definition}[cf. \cite{Shannon, Ziegler, RamirezAlfonsin, FRA, KarpWilliams}]
An arrangement $(V,\eta)$ is called \emph{cyclic} if:
\begin{itemize}
\item $V \in \Grr_{k,n}^{> 0}$,
\item $V + \langle \eta \rangle \in \Grr_{k+1,n}^{> 0}$, and
\item $\eta$ is \emph{positively oriented} with respect to $V$, which means that the first coordinate of the orthogonal projection of some, or equivalently every, representative $w \in \R^n$ of $\eta$ onto $V^{\perp}$ is positive.
\end{itemize}
\end{definition}

\begin{theorem}[Theorem 6.16 of \cite{KarpWilliams}]
Let $(V,\eta)$ be a cyclic arrangement. The map from the affine $k$-dimensional space $V + \eta$ to the projectivization of the linear $k+1$-dimensional space $W := V + \langle \eta \rangle$ sending $v + \eta$ to $[v + \eta]$ restricts to a homeomorphism from the union of the compact regions of $\cal{H}_{(V,\eta)}$, a subset of $V + \eta$, to the $m=1$ ``B-amplituhedron'' $B_{n,k,1}(W) \subset \mathbb{P}(W)$.
\end{theorem}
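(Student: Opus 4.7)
Using the direct-sum decomposition $W = V \oplus \langle \eta \rangle$ (well-defined because the hypotheses on $(V,\eta)$ ensure $\eta \notin V$), I would identify $V + \eta$ with the affine hyperplane in $W$ consisting of vectors with $\eta$-coefficient equal to $1$. With this identification, $\phi(v+\eta) := [v+\eta]$ is the standard affine-chart embedding of $V+\eta$ as the open subset $\mathbb{P}(W) \setminus \mathbb{P}(V) \subset \mathbb{P}(W)$. The union $K := \bigcup_{\alpha \in \cal{K}(\cal{V})} \Delta_\alpha$ of compact regions is a compact subset of $V+\eta$, so $\phi|_K$ is automatically a homeomorphism onto its image as a continuous injection from a compact space to a Hausdorff space. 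It then suffices to show that $\phi(K) = B_{n,k,1}(W)$ and that $B_{n,k,1}(W)$ lies in the chart $\mathbb{P}(W) \setminus \mathbb{P}(V)$.

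First I would translate signs across $\phi$. Under $\phi$, the hyperplane $H_i \subset V+\eta$ is the preimage of the coordinate hyperplane $\{y \in W : y_i = 0\}$, so the sign sequence $\alpha = (\alpha_1,\ldots,\alpha_n)$ labeling a region $\Delta_\alpha$ coincides with $(\sign(y_1),\ldots,\sign(y_n))$ for any (hence every) lift $y = v+\eta$ of a point of $\phi(\Delta_\alpha)$. Thus sign strata of regions in $V+\eta$ correspond bijectively to sign strata of points in the affine chart of $\mathbb{P}(W)$.

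The main step is to identify compactness of $\Delta_\alpha$ with the combinatorial condition cutting out $B_{n,k,1}(W)$. A region $\Delta_\alpha \subset V+\eta$ is compact if and only if no ray $(v+\eta) + t v'$, $t \geq 0$, with $v' \in V \setminus \{0\}$, stays inside $\Delta_\alpha$; equivalently, the sign vector of every nonzero $v' \in V$ must differ from $\alpha$ in at least one coordinate. I would then invoke the positivity hypotheses: since $V \in \Grr_{k,n}^{>0}$, the classical Gantmakher--Krein bounds for totally positive subspaces give $\var(v') \leq k-1$ for every nonzero $v' \in V$, while $W \in \Grr_{k+1,n}^{>0}$ gives $\var(y) \leq k$ for every nonzero $y \in W$. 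In the Karp--Williams framework, $B_{n,k,1}(W)$ is characterized as the subset of $[y] \in \mathbb{P}(W)$ whose lifts attain the maximal variation $\var(y) = k$, equivalently those sign patterns that are not realized by any nonzero element of $V$. Matching these two descriptions, with the positive orientation of $\eta$ pinning down the sign conventions, yields $\phi(K) = B_{n,k,1}(W) \subset \mathbb{P}(W) \setminus \mathbb{P}(V)$.

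The hardest part is expected to be this combinatorial matching: converting the geometric no-ray condition for compactness into the maximal-sign-variation description of $B_{n,k,1}(W)$. This is the combinatorial core of Karp--Williams' argument and relies on their systematic analysis of sign variation on totally positive subspaces; in particular, one needs to argue that the feasibility-plus-compactness constraints exactly rule out all sign vectors arising from $V$, so that the remaining feasible sign sequences are precisely the maximally varying ones. Once this equivalence is established, the homeomorphism conclusion is forced by the compactness argument at the start.
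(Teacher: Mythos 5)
First, a point of comparison: the paper does not prove this statement at all --- it is imported verbatim as Theorem 6.16 of \cite{KarpWilliams}, so there is no internal proof to measure your proposal against; the relevant argument lives in Karp--Williams. Your outline does follow their general strategy (identify $V+\eta$ with the affine chart $\mathbb{P}(W)\setminus\mathbb{P}(V)$, note that a continuous injection from the compact set $\bigcup_{\alpha\in\cal{K}}\Delta_\alpha$ is automatically a homeomorphism onto its image, translate region sign sequences into coordinate signs of lifts, and then match compactness with a sign-variation description of $B_{n,k,1}(W)$), and the framing of the first two steps is fine.

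The genuine gap is in the ``main step,'' and it is not only that you defer it to Karp--Williams: the characterization you state is incorrect as written. You describe $B_{n,k,1}(W)$ as the set of $[y]\in\mathbb{P}(W)$ with $\var(y)=k$, but the union of the compact regions is closed and contains faces lying on the hyperplanes $H_i$; the lifts of those boundary points have zero coordinates and satisfy $\var(y)<k$, while still belonging to $B_{n,k,1}(W)$. The correct condition (and the one Karp--Williams actually prove and use) is $\overline{\var}(y)\geq k$, and the distinction between $\var$ and $\overline{\var}$ is exactly what controls the boundary strata, so your two ``equivalent'' formulations ($\var(y)=k$ versus ``sign patterns not realized by any nonzero element of $V$,'' the latter being $\overline{\var}(y)\geq k$ by Gantmakher--Krein) are not in fact equivalent. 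Relatedly, your compactness criterion is misstated: noncompactness of $\Delta_\alpha$ is obstructed by a nonzero $v'\in V$ whose coordinates satisfy $\alpha_i v'_i\geq 0$ for all $i$ (zeros are allowed), so what must be ruled out is sign-compatibility, not merely that the sign vector of $v'$ ``differs from $\alpha$ in at least one coordinate.'' Finally, even granting the corrected characterizations, surjectivity onto $B_{n,k,1}(W)$ --- i.e.\ that every $[w]$ with $\overline{\var}(w)\geq k$ has its affine representative (normalized so the $\eta$-coefficient is $+1$, using the positive orientation of $\eta$) inside the closure of some nonempty compact region --- is the substantive content of Karp--Williams' Theorem 6.16 and is nowhere argued in your sketch; as it stands the proposal reduces to citing the result it is meant to prove.
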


Karp--Williams also show that given an explicit $n \times (k+1)$ matrix $Z^T$ representing $(V,\eta)$ as above, the map from $\mathbb{P}(W)$ to $\Grr_{k,k+1}$ sending $X$ to $Z(X^{\perp})$ restricts to a homeomorphism from $B_{n,k,1}(W)$ to the $m=1$ amplituhedron $A_{n,k,1}(Z)$ as defined by Arkani--Hamed and Trnka~\cite{AHT14} (in fact, they show an analogous result for general $m$).

\subsubsection{Left and right cyclic polarized arrangements}\label{sec:LRCyclicDefs}

We propose that there are two natural analogues of the definition of cyclicity in the world of polarized arrangements; below, we define ``left cyclic'' and ``right cyclic'' polarized arrangements.

\begin{definition}
Let $\cal{V} = (V,\eta,\xi)$ be a polarized arrangement. We say that $\V$ is \emph{left cyclic} if:
\begin{itemize}
\item $V + \langle \eta \rangle \in \Grr_{k+1,n}^{>0}$,
\item $(\xi,\id)(V) \in \Grr_{k,n+1}^{>0}$, and
\item $\eta$ is positively oriented with respect to $V$.
\end{itemize}
where $(\xi,\id)$ is the linear map from $V$ to $\R^{n+1}$ whose first coordinate is given by the linear functional $\xi$ on $V$. Similarly, we say that $\V$ is \emph{right cyclic} if:
\begin{itemize}
\item $V + \langle \eta \rangle \in \Grr_{k+1,n}^{>0}$,
\item $(\id,(-1)^k \xi)(V) \in \Grr_{k,n+1}^{>0}$, and
\item $\eta$ is positively oriented with respect to $V$.
\end{itemize}
\end{definition}
The conditions $(\xi,\id)(V) \in \Grr_{k,n+1}^{>0}$ and $(\id,(-1)^k \xi)(V) \in \Grr_{k,n+1}^{>0}$ both imply that $V \in \Grr_{k,n}^{> 0}$, so if $(V,\eta,\xi)$ is left or right cyclic then $(V,\eta)$ is cyclic. In Section~\ref{sec:AltGale} we will show that left and right cyclicity are related by the combination of Gale duality, alt, and polarization reversal.

\subsection{Background results}

\subsubsection{Sign variation}

The results of Karp--Williams make extensive use of an explicit identification of the compact nonempty regions $\Delta_{\alpha}$ for a cyclic arrangement $(V,\eta)$ as those for which $\alpha$ has ``sign variation'' $k$, i.e. the signs in $\alpha$ change from $+$ to $-$ or $-$ to $+$ exactly $k$ times when reading from left to right (or from right to left). We review some properties of sign variation and cyclic arrangements here; we note that sign variation also plays a crucial role in Arkani-Hamed--Thomas--Trnka's ``binary code'' reformulation of higher-$m$ amplituhedra in terms of the $m=1$ amplituhedron \cite{Binary}.

\begin{definition}
For $\alpha \in \{+,-,0\}^n$, let $\var(\alpha)$ denote the number of sign changes in $\alpha$ as above, ignoring any zeroes. Let $\overline{\var}(\alpha)$ denote the maximum value of $\var(\alpha')$ over all $\alpha' \in \{+,-\}^n$ obtained from $\alpha$ by replacing each zero with either plus or minus (different zeroes may be replaced with different signs).
\end{definition}

\begin{proposition}[Proposition 6.14, Definition 5.1 of \cite{KarpWilliams}]\label{prop:SignPatternCpct}
If $(V,\eta)$ is cyclic, a sign sequence $\alpha$ represents a (nonempty) compact region of $(V,\eta)$ if and only if $\var(\alpha) = k$ and $\alpha$ starts with a plus (equivalently, $\var(\alpha) = k$ and $\alpha$ ends with $(-1)^k$). It represents a noncompact region of $(V,\eta)$ if and only if $\var(\alpha) < k$.
\end{proposition}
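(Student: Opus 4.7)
The plan is to follow the strategy of Karp--Williams, reducing the dichotomy to the sign-variation content of the Gantmakher--Krein theorem applied separately to $V$ and to $W := V + \langle \eta \rangle$, and then handling the tight case $\var(\alpha) = k$ using the positive orientation of $\eta$. Throughout, $\alpha \in \{+,-\}^n$ has no zero entries, so $\var(\alpha) = \overline{\var}(\alpha)$. First I reformulate linear-algebraically: $\Delta_\alpha$ is the intersection of $V + \eta \subset \R^n$ with the closed orthant $\bar O_\alpha = \{x : \alpha_i x_i \geq 0\}$, so it is nonempty iff some $v \in V$ satisfies $v + \eta \in \bar O_\alpha$, and it is bounded iff no nonzero $v \in V$ lies in $\bar O_\alpha$ (otherwise any such $v$ generates a ray in $\Delta_\alpha$). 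Compactness is the conjunction of these two conditions.

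Next I invoke the Gantmakher--Krein theorem in the form used in~\cite{KarpWilliams}: a $d$-dimensional subspace $U \subset \R^n$ lies in $\Grr_{d,n}^{>0}$ iff every nonzero $u \in U$ satisfies $\overline{\var}(u) \leq d-1$, and conversely every sign vector with $\overline{\var}(\alpha) \leq d-1$ is weakly realized by some nonzero $u \in U$ (and generically strictly realized). Applied to $V$ this gives: $\Delta_\alpha$ is bounded iff $\var(\alpha) \geq k$. Applied to $W$ it gives: feasibility of $\alpha$ forces $\var(\alpha) \leq k$, and when $\var(\alpha) = k$ the set $W \cap \bar O_\alpha$ is a single ray (linear interpolation between two independent such vectors would otherwise produce a vector with strictly greater sign variation, contradicting the bound on $W$). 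If $\var(\alpha) < k$, I pick $v \in V$ strictly in $O_\alpha$; then $tv + \eta \in \Delta_\alpha$ for $t \gg 0$, so $\alpha$ is feasible and noncompact. This settles every case except $\var(\alpha) = k$, where boundedness is automatic and only the feasibility question remains.

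For the tight case, let $w$ generate the unique ray in $W \cap \bar O_\alpha$ and, after fixing a representative of $\eta$ in $\R^n$, write $w = v + c\eta$. Then $c \neq 0$ (else $v \in V \cap \bar O_\alpha$ would be a nonzero element contradicting the $V$-side of Gantmakher--Krein), and $\Delta_\alpha$ is nonempty iff $c > 0$, because then $v/c + \eta \in \Delta_\alpha$. The combinatorial heart of the proof is therefore the claim $\sign(c) = \alpha_1$, after which the equivalence ``$\alpha$ starts with $+$'' $\Leftrightarrow$ ``$\alpha$ ends with $(-1)^k$'' is automatic since $\alpha$ with $\var(\alpha) = k$ consists of $k+1$ alternating blocks. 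I would deduce $\sign(c) = \alpha_1$ by expressing $c$ as a ratio of a $(k+1) \times (k+1)$ minor of a totally positive matrix representing $W$ (with columns chosen at the sign-change positions of $\alpha$ and with a distinguished row representing $\eta$) to a fixed positive Pl\"ucker coordinate of $W$; the positive orientation of $\eta$ forces a specific positive contribution from the $\eta$-row, while the alternating-block structure of $\alpha$ pins down the remaining sign via a Laplace expansion.

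The main obstacle is precisely this last sign identification: unpacking the positive orientation hypothesis on $\eta$ and matching it to a specific cofactor sign is delicate. The cleanest route is likely either to work in an adapted totally positive basis of $W$ whose first basis vector is a chosen representative of $\eta$, or to verify the identification on an explicit cyclic model (e.g.\ columns of a Vandermonde-type matrix) and then propagate the result using the fact that $\sign(c)$ is locally constant along the connected space $\Grr_{k+1,n}^{>0}$ of admissible $W$'s.
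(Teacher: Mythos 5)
The paper itself offers no proof of this proposition---it is imported verbatim from Karp--Williams---so your reconstruction has to stand on its own, and as written it does not. Your global structure (recession-cone criterion for boundedness, Gantmakher--Krein applied to $V$ and to $W=V+\langle\eta\rangle$, one-sidedness in the tight case $\var(\alpha)=k$) is the right one, and the cases $\var(\alpha)<k$ and $\var(\alpha)>k$ are handled correctly, modulo the realization statement for totally positive subspaces, which is true but needs more than the bare variation bound you quote. The first genuine error is the claim that $W\cap \overline{O}_{\alpha}$ is a single ray when $\var(\alpha)=k$: it is in fact the closure of a full $(k+1)$-dimensional chamber of the coordinate arrangement restricted to $W$. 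For example, $W=\mathrm{span}\{(1,1,1),(1,2,3)\}\in\Grr^{>0}_{2,3}$ with $\alpha=(+,-,-)$ and $k=1$ contains the independent vectors $(1,0,-1)$ and $(1,-1,-3)$ of $\overline{O}_{\alpha}$; interpolation inside a closed orthant never increases sign variation, so your parenthetical argument has no force. This error is repairable: what you actually need is that every nonzero point of the convex cone $C=W\cap\overline{O}_{\alpha}$ lies strictly on one side of the hyperplane $V\subset W$, which follows from $V\cap\overline{O}_{\alpha}=\{0\}$ together with convexity (two points of $C$ on opposite sides would produce a nonzero point of $C\cap V$ unless they were antiparallel, which is impossible in a closed orthant with a full sign vector). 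Feasibility of $\alpha$ is then equivalent to the common side being the positive $\eta$-side.

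The decisive gap is the step you yourself flag: showing that this common side is the positive one precisely when $\alpha_1=+$, i.e.\ connecting the ``positively oriented'' hypothesis on $\eta$ to the first entry of $\alpha$. Everything before that point is symmetric under $\alpha\mapsto-\alpha$, so this is exactly where the content of the proposition lives, and you only describe two candidate strategies (a cofactor-sign computation, or verification on a Vandermonde model plus propagation) without carrying out either. Of the two, the second is the one that can be completed with tools already in this paper: the statement is invariant under equivalence of arrangements since $\cal{F}$ and $\cal{K}$ depend only on the oriented matroid, cyclic arrangements with given $(n,k)$ form a single equivalence class (the corollary in Section~\ref{sec:AlternateCyclic} following Corollary~\ref{cor:AnotherCyclicDefCHANGED}, whose proof does not use the present proposition), and for the Vandermonde model the statement follows from elementary root-counting for real degree-$k$ polynomials evaluated at $z_1<\cdots<z_n$, of the kind carried out in the proof of Proposition~\ref{prop:LagrangiansCorrespond} (with a little care about the weak inequalities defining $\Delta_\alpha$). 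Until that sign identification is actually executed, your proposal is a correct outline with its combinatorial heart missing.
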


If $z$ is a vector in $\R^n$, we can define $\var(z)$ and $\overline{\var}(z)$ by taking $\alpha$ to be the signs of the coordinates of $z$.

\begin{lemma}\label{lem:VarOfAlt}
For $z \in \R^n \setminus \{0\}$, we have
$
\var(\alt(z)) = n-1-\overline{\var}(z),
$
or equivalently
$
\overline{\var}(\alt(z)) = n-1-\overline{\var}(z).
$
Also, $V$ is positive if and only if $\alt(V^{\perp})$ is positive.
\end{lemma}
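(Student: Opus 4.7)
The plan is to treat the two assertions of the lemma separately: the sign-variation identity and the positivity equivalence.

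I would first prove the identity assuming $z$ has no zero entries. Setting $\alpha_i := \sign(z_i) \in \{+,-\}$, one has $\sign(\alt(z))_i = (-1)^{i-1}\alpha_i$, so the adjacent pair $(i,i+1)$ is a sign change in $\alt(z)$ exactly when $(-1)^{i-1}\alpha_i \neq (-1)^i \alpha_{i+1}$, i.e., when $\alpha_i = \alpha_{i+1}$. Thus sign changes in $\alt(z)$ correspond bijectively to non-sign-changes in $z$, giving $\var(\alt(z)) = (n-1) - \var(z) = (n-1) - \overline{\var}(z)$, where the last equality uses the absence of zeros.

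To pass to general $z \in \R^n \setminus \{0\}$, observe that $\alt$ preserves zero-patterns, so $\alpha^* \mapsto \alt(\alpha^*)$ is a bijection between sign fill-ins of $\alpha := \sign(z)$ and sign fill-ins of $\alt(\alpha)$. Applying the no-zero case termwise and minimizing,
\[
\min_{\alpha^*} \var(\alt(\alpha^*)) \;=\; \min_{\alpha^*}\bigl((n-1) - \var(\alpha^*)\bigr) \;=\; (n-1) - \overline{\var}(z).
\]
The left side equals the minimum of $\var$ over fill-ins of $\alt(\alpha)$, which I would then identify with $\var(\alt(z))$ itself by the elementary observation that the minimum of $\var$ over fill-ins of any $\beta \in \{+,-,0\}^n$ equals $\var(\beta)$ — an upper bound is realized by extending each zero block with the sign of its nearest nonzero neighbor (introducing no new sign changes), and the lower bound is immediate since every fill-in registers at least the sign changes already present in the nonzero subsequence of $\beta$. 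This yields the first form of the identity; the second form follows by applying the first form to $\alt(z)$ and using $\alt^2 = \id$.

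For the positivity equivalence, I would invoke the standard Plücker identification hinted at just before the lemma: for every $k$-subset $I \subset \{1,\dots,n\}$ one has $p_I(V) = \epsilon \cdot p_I(\alt(V^\perp))$, where $\epsilon \in \{\pm 1\}$ depends only on $n$ and $k$ (and not on $I$ or $V$). Since positivity means that all Plücker coordinates are nonzero and share a common sign, the two conditions are visibly equivalent. The only real subtlety in the whole argument is the zero-entry case of the variation identity; it rests on the minimax interchange through the $\alt$-bijection together with the identity $\min_{\beta^*}\var(\beta^*) = \var(\beta)$, and the key point to watch is that the corresponding statement for the \emph{maximum} fails (its failure is exactly what $\overline{\var}$ quantifies), so this asymmetry between $\var$ and $\overline{\var}$ must be respected throughout.
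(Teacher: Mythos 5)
Your proof is correct, but it takes a different route from the paper: the paper offers no argument at all for this lemma, simply citing \cite[Lemma 3.3]{KarpWilliams} (which in turn traces back to classical work of Hilbert, Gantmacher--Krein, Hochster, and Ando), whereas you give a short self-contained combinatorial proof. Your key steps all check out: the zero-free computation $\var(\alt(\alpha^*)) = (n-1)-\var(\alpha^*)$ is right (a sign change in $\alt(z)$ at $(i,i+1)$ occurs exactly when $\alpha_i=\alpha_{i+1}$), $\alt$ does preserve zero-patterns and so bijects fill-ins of $\sign(z)$ with fill-ins of $\alt(\sign(z))$, and the auxiliary identity $\min_{\beta^*}\var(\beta^*)=\var(\beta)$ is correctly justified (nearest-neighbor filling for the upper bound, monotonicity of $\var$ under subsequences for the lower bound, with $z\neq 0$ guaranteeing a nonzero entry to fill from). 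Two small remarks. First, applying your first identity to $\alt(z)$ yields $\overline{\var}(\alt(z)) = n-1-\var(z)$, which is the form in Karp--Williams; the paper's printed ``equivalent'' form $\overline{\var}(\alt(z)) = n-1-\overline{\var}(z)$ appears to be a typo, so your derivation recovers the intended statement rather than the literal one. Second, in the positivity part the Pl\"ucker coordinates of $\alt(V^{\perp})$ are naturally indexed by the complements $I^c$ of $k$-subsets $I$, so the identity you want is $p_{I^c}(\alt(V^{\perp})) = \epsilon\, p_I(V)$ with a global sign $\epsilon$ depending only on $(n,k)$; with that indexing fixed, your equivalence of positivity is exactly the standard fact the paper invokes in its discussion of alt and Gale duality. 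What the two approaches buy: the citation keeps the paper lean and anchors the lemma in the total-positivity literature, while your argument makes it logically independent of the external reference at essentially no extra cost.
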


\begin{proof}
This is \cite[Lemma 3.3]{KarpWilliams}, following \cite{Hilbert,GantmakherKrein,Hochster,Ando}.
\end{proof}

\begin{lemma}\label{lem:VarOfU}
For $(V,\eta)$ with $V \in \Grr_{k,n}^{> 0}$ and $V + \langle \eta \rangle \in \Grr_{k+1,n}^{>0}$, if $u := \proj_{V^{\perp}}(\eta)$, then $\var(u) = \overline{\var}(u) = k$.
\end{lemma}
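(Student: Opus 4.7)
The plan is a short sandwiching argument, obtaining the upper bound $\overline{\var}(u) \le k$ directly from positivity of $V+\langle\eta\rangle$, and the lower bound $\var(u)\ge k$ from positivity of $\alt(V^{\perp})$ via Lemma~\ref{lem:VarOfAlt}.

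First, I would observe that $u = \eta - \proj_V(\eta)$ lies in $V + \langle \eta \rangle$, since both summands do. Because $V+\langle\eta\rangle$ is $(k+1)$-dimensional (the projection $u$ is nonzero as $\eta \notin V$), this subspace is an element of $\Grr_{k+1,n}^{>0}$. The classical sign-variation bound for totally positive subspaces (see e.g.\ the results of \cite{GantmakherKrein, Hochster, Ando} recalled in \cite{KarpWilliams}) then gives $\overline{\var}(w) \le (k+1)-1 = k$ for every nonzero $w$ in a positive $(k+1)$-plane. Applied to $w = u$, this yields $\overline{\var}(u) \le k$, and hence $\var(u) \le \overline{\var}(u) \le k$.

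For the matching lower bound, I would apply Lemma~\ref{lem:VarOfAlt}: since $V \in \Grr_{k,n}^{>0}$, the alt-variant $\alt(V^{\perp})$ is in $\Grr_{n-k,n}^{>0}$. The same classical bound applied to the nonzero vector $\alt(u) \in \alt(V^{\perp})$ gives $\overline{\var}(\alt(u)) \le (n-k)-1 = n-k-1$. Combining this with the identity from Lemma~\ref{lem:VarOfAlt} (in the form $\var(u)+\overline{\var}(\alt(u)) = n-1$) produces
\[
\var(u) = n-1-\overline{\var}(\alt(u)) \ge n-1-(n-k-1) = k.
\]
Chaining the two inequalities yields $k \le \var(u) \le \overline{\var}(u) \le k$, so all three quantities are equal to $k$.

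The only real issue is making sure one can cite the sign-variation upper bound $\overline{\var}(w)\le \dim(W)-1$ for $w$ in a totally positive subspace $W$; this is the content of the Gantmakher--Krein theorem and is exactly what underlies Lemma~\ref{lem:VarOfAlt}, so it should be available by reference. Beyond that, the proof is essentially a one-line application of that bound in both $V+\langle\eta\rangle$ and $\alt(V^{\perp})$, with Lemma~\ref{lem:VarOfAlt} bridging the two.
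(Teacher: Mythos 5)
Your proof is correct and rests on exactly the same ingredient as the paper, which simply cites the Gantmakher--Krein variation criterion (\cite[Theorem 3.4]{KarpWilliams}): your sandwich argument—$\overline{\var}(u)\le k$ from $u\in V+\langle\eta\rangle\in \Grr^{>0}_{k+1,n}$, and $\var(u)\ge k$ from $\alt(u)\in\alt(V^{\perp})\in\Grr^{>0}_{n-k,n}$ combined with Lemma~\ref{lem:VarOfAlt}—is precisely how that consequence is unpacked. Note only that the form of the identity you use, $\overline{\var}(\alt(u))=n-1-\var(u)$, is the intended (correct) reading of Lemma~\ref{lem:VarOfAlt}, whose displayed "equivalent" version contains a typo.
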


\begin{proof}
This is a consequence of \cite[Theorem 3.4]{KarpWilliams}, which follows \cite{GantmakherKrein} (see also \cite[Definition 6.6]{KarpWilliams}).
\end{proof}

\begin{lemma}\label{lem:VarOfAv}
Let $A$ be a totally positive $m \times n$ matrix and let $v \in \R^n$. We have $\var(Av) \leq \var(v)$; if equality holds, then the signs of the first (and thus last) nonzero entries of $Av$ and $v$ are equal.
\end{lemma}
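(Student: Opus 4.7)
The statement is the classical variation-diminishing property of totally positive matrices, going back to Schoenberg and Gantmakher--Krein, with the refined sign statement at equality appearing in standard references such as Karlin's \emph{Total Positivity} or Pinkus's \emph{Totally Positive Matrices}; my plan is to invoke it directly in the spirit of Lemma~\ref{lem:VarOfAlt}, while sketching a self-contained argument via a double expansion of a Cramer-type determinant.

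For the self-contained argument, I would first reduce to a critical square case: set $r = \var(Av)$ and choose row indices $i_0 < \cdots < i_r$ with $(Av)_{i_l}$ nonzero and alternating in sign, say $\sign((Av)_{i_l}) = (-1)^l \epsilon$. Let $B$ denote the $(r+1) \times n$ submatrix of $A$ on rows $i_0, \ldots, i_r$; then $B$ is still totally positive, and $u := Bv \in \R^{r+1}$ has strictly alternating nonzero entries. It then suffices to show that $\var(v) \geq r$ and that the first (resp.\ last) nonzero entry of $v$ has the same sign as the first (resp.\ last) entry of $u$. For each $r$-subset $S = \{j_1 < \cdots < j_r\} \subset \{1, \ldots, n\}$ I form the $(r+1) \times (r+1)$ matrix $M_S = [b_{j_1} \mid \cdots \mid b_{j_r} \mid u]$, whose first $r$ columns are the corresponding columns of $B$. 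Expanding $\det M_S$ along the last column, the alternating sign of $u$ together with positivity of all $r \times r$ minors of $B$ gives $\sign(\det M_S) = (-1)^r \epsilon$, independent of $S$. Substituting $u = \sum_j v_j b_j$ and sorting columns instead, $\det M_S$ becomes a signed sum $\sum_{j \notin S} \sigma(S,j)\, v_j \Delta_{S,j}$ with positive minors $\Delta_{S,j}$ and signs $\sigma(S,j)$ determined by the relative position of $j$ with respect to $S$.

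Comparing the two expressions across well-chosen $S$ forces $v$ to exhibit at least $r$ strict sign alternations, and specializing $S$ to exclude the leftmost nonzero index of $v$ pins down its sign to equal $\epsilon$; the last-entry statement follows symmetrically by reversing the column order. The main obstacle is the careful bookkeeping needed to upgrade this comparison into the strict bound $\var(v) \geq r$ rather than merely $\var(v) \geq r-1$, and to simultaneously extract the matching-first-sign condition. An alternative strategy, avoiding the delicate double expansion, would be to invoke the Loewner--Whitney factorization of totally positive matrices into products of elementary bidiagonal matrices with positive entries; one then checks the variation-diminishing and sign-preserving properties for each elementary factor by direct case analysis and composes, taking limits at the end to cover all totally positive $A$.
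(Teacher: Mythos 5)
Your primary route---quoting the classical variation-diminishing theorem of Schoenberg and Gantmakher--Krein---is exactly what the paper does: its proof is a citation to the statement inside Karp--Williams' proof of their Proposition 6.8, supplemented only by a footnote removing the restriction $m \geq n$ that appears in some literature statements (by appending rows so that $A$ stays totally positive) and disposing of the degenerate case $Av = 0$. If you cite a source stating the result for arbitrary $m \times n$ totally positive matrices together with the sign condition at equality, nothing more is needed; just check that hypothesis when you choose the reference. Your back-up self-contained arguments are not complete as written---you yourself flag that the determinant bookkeeping needed to force $\var(v) \geq r$ rather than $r-1$ and to extract the first-sign condition is unresolved, and the Loewner--Whitney-plus-limits variant needs care because $\var$ is not continuous and the equality-case sign refinement does not obviously survive limits---but since your main plan is the citation, these gaps are not load-bearing.
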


\begin{proof}
This is stated in the proof of \cite[Proposition 6.8]{KarpWilliams}, following \cite{Schoenberg,GantmakherKrein}\footnote{In the literature this result appears with the restriction $m \geq n$. In general, we can add rows to $A$ so it has more rows than columns and remains totally positive; let $A'$ denote the resulting matrix. We have $\var(Av) \leq \var(A'v) \leq \var(v)$. If $\var(Av) = \var(v)$, then the first nonzero entries of $A'v$ and $v$ have the same sign. If $Av = 0$, then $\var(v) = 0$, so $v = 0$; it follows that the first nonzero entries of $v$ and $Av$ have the same sign.}.
\end{proof}

\subsubsection{Sign variation for polarized arrangements}
If $(V,\eta)$ is a cyclic arrangement, then all deletions and restrictions from Section~\ref{sec:GeneralDeletionRestriction} are defined for $(V,\eta)$. We can thus obtain new arrangements by deleting the first and last hyperplanes of $(V,\eta)$; remembering the deleted hyperplanes as polarizations gives us two polarized arrangements $(V', \eta', \xi')$ and $(V'', \eta'', \xi'')$.

\begin{definition}
Given cyclic $(V,\eta)$,  set $(V', \eta', \xi')$ to be the deleted arrangement $(V_1, \eta_1)$, with $\xi'$   the unique linear functional on $V \cong V'$ whose level sets on the affine space $V + \eta \cong V' + \eta'$ are parallel to the deleted hyperplane $H_1$ and which is increasing in the positive normal direction to $H_1$. We define $(V'', \eta'', \xi'')$ similarly, with $(V'', \eta'') = (V_n, \eta_n)$ and $\xi''$ increasing in $(-1)^k$ times the positive normal direction to $H_n$.
\end{definition}

\begin{remark}
In contrast to Section~\ref{sec:GeneralDeletionRestriction}, here we start with an unpolarized arrangement and obtain a polarized arrangement after deletion. The unpolarized part of this polarized arrangement, though, comes from Section~\ref{sec:GeneralDeletionRestriction}.
\end{remark}

Let $\cal{V}' = (V', \eta', \xi')$ and $\cal{V}'' = (V'', \eta'', \xi'')$. One can check that these polarized arrangements are left and right cyclic respectively, and that all left and right cyclic arrangements arise in this manner. Reversing the perspective, given $(V,\eta,\xi)$ left cyclic, we will write $(V^l, \eta^l)$ for an (arbitrary) choice of cyclic arrangement $(V^l, \eta^l)$ producing $(V,\eta,\xi)$ as its polarized arrangement $(V',\eta',\xi')$. Similarly, we write $(V^r, \eta^r)$ for a choice of cyclic arrangement producing a given right cyclic polarized arrangement $(V,\eta,\xi)$.

In fact, the bounded feasible regions of a left cyclic polarized arrangement $(V,\eta,\xi)$ naturally correspond to the (nonempty) compact regions of $(V^l, \eta^l)$ (an analogous statement holds in the right cyclic case). To see this, first note that by Proposition~\ref{prop:SignPatternCpct}, the sign sequence of any nonempty compact region of $(V^l, \eta^l)$ starts with a plus, so it suffices to determine when $+\alpha$ is nonempty and compact for sign sequences $\alpha \in \{+,-\}^n$.

\begin{lemma}\label{lem:HH'helper}
The following statements hold for a sign sequence $\alpha \in \{+,-\}^n$.
\begin{itemize}
\item $\Delta_{+\alpha}$ is empty if and only if $\Delta_{\alpha}$ is empty (i.e. $\alpha$ is infeasible), in which case $\alpha$ is bounded.
\item $\Delta_{+\alpha}$ is nonempty and compact if and only if $\Delta_{\alpha}$ is nonempty and bounded (i.e. $\alpha$ is bounded feasible).
\item $\Delta_{+\alpha}$ is noncompact if and only if $\Delta_{\alpha}$ is nonempty and unbounded (i.e. $\alpha$ is feasible but unbounded).
\end{itemize}
\end{lemma}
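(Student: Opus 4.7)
The three alternatives for $\Delta_{+\alpha}$ in the cyclic arrangement (empty, nonempty compact, noncompact) and the three for $\Delta_\alpha$ in the polarized arrangement (empty, nonempty with $\alpha$ bounded, nonempty with $\alpha$ unbounded) are each exhaustive trichotomies, so it suffices to prove Claims~1 and~2; Claim~3 then follows by matching the remaining cases. My plan is to exploit the canonical identification of the affine spaces $V^l + \eta^l$ and $V' + \eta'$ induced by $\pi_1$: under this identification the hyperplanes of $(V',\eta',\xi')$ are exactly $H_2, \ldots, H_{n+1}$, so $\Delta_\alpha = \Delta_{+\alpha} \cup \Delta_{-\alpha}$ as closed sets, with $\Delta_{+\alpha} \subseteq \{\bar\xi' \geq 0\}$ and $\Delta_{-\alpha} \subseteq \{\bar\xi' \leq 0\}$ for any strong lift $\bar\xi'$ of $\xi'$.

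For Claim~1, the direction $\Delta_\alpha$ empty $\Rightarrow \Delta_{+\alpha}$ empty is immediate. For the converse, since $+\alpha$ starts with $+$, Proposition~\ref{prop:SignPatternCpct} forces $\Delta_{+\alpha} = \emptyset$ to entail $\var(+\alpha) > k$. A case analysis on $\alpha_1$ gives $\var(-\alpha) \in \{\var(+\alpha)-1, \var(+\alpha)+1\}$, so $\var(-\alpha) \geq k$; and when $\var(-\alpha) = k$ one must be in the case $\alpha_1 = -$, so $-\alpha$ starts with $-$ and is infeasible by the start-sign condition in Proposition~\ref{prop:SignPatternCpct}. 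Thus $\Delta_{-\alpha}$ is empty, hence $\Delta_\alpha$ is empty, and the vacuous boundedness of $\alpha$ is automatic.

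For Claim~2, I plan to prove the sharper equivalence that $\alpha$ is bounded feasible if and only if $\var(+\alpha) = k$; by Proposition~\ref{prop:SignPatternCpct} this is exactly the condition for $\Delta_{+\alpha}$ to be nonempty and compact. The feasibility half ($\alpha$ feasible iff $\var(+\alpha) \leq k$) is a direct consequence of Proposition~\ref{prop:SignPatternCpct} applied to both $+\alpha$ and $-\alpha$, together with the same $\alpha_1$ case analysis. For the boundedness half, I would translate the recession-cone condition through the isomorphism $\pi_1 \colon V^l \xrightarrow{\sim} V'$, under which $\xi'$ corresponds to the first-coordinate functional on $V^l$: then $\alpha$ is bounded if and only if there is no nonzero $v \in V^l$ with $v_1 > 0$ and $\alpha_j v_{j+1} \geq 0$ for all $j$. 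Given such a $v$, each coordinate of $v$ is either zero or has the same sign as the corresponding entry of $+\alpha$ and $v_1 > 0$, so filling the zero entries of $v$ with the signs of $+\alpha$ realizes $+\alpha$ as one filling computing $\overline{\var}(v)$, yielding $\var(+\alpha) \leq \overline{\var}(v) \leq k-1$ by positivity of $V^l \in \Grr_{k,n+1}^{>0}$. Conversely, when $\var(+\alpha) \leq k-1$ the Gantmakher--Krein realization theorem for totally positive subspaces (cf.\ the results of \cite{KarpWilliams} underlying Lemma~\ref{lem:VarOfU}) produces $v \in V^l$ with strict sign pattern $+\alpha$, which directly witnesses unboundedness of $\alpha$. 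Combining, $\alpha$ is bounded iff $\var(+\alpha) \geq k$; together with feasibility this completes Claim~2, and Claim~3 follows from Claims~1 and~2 by the trichotomy noted at the start. The principal obstacle is invoking the sign-realization theorem for totally positive Grassmannians, which is classical but is not explicit in the excerpt and should be cited carefully.
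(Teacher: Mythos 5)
Your proposal is correct, but the heart of it runs along a genuinely different track than the paper's argument. For the first bullet the two proofs are close in spirit (both reduce to Proposition~\ref{prop:SignPatternCpct}; the paper argues from feasibility of $\alpha$ for the cyclic $(V,\eta)$, you argue the contrapositive by showing both $\Delta_{+\alpha}$ and $\Delta_{-\alpha}$ are empty). For the second bullet the paper stays entirely inside elementary convex geometry: compactness of $\Delta_{+\alpha}$ bounds $\xi$ there, the level-set description of $H_1$ bounds $\xi$ on $\Delta_{-\alpha}$, and conversely a noncompact $\Delta_{+\alpha}$ contains a ray which, being squeezed between level sets of $\xi$, is parallel to $H_1$, so tilting it inside an open cone of $\Delta_{\alpha}$ makes $\xi$ unbounded. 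You instead prove the sharper statement of Corollary~\ref{cor:LeftCyclicBoundedFeasible} directly, via the linear-programming recession cone of $\Delta_{\alpha}$ and the Gantmakher--Krein bound $\overline{\var}(v)\le k-1$ for nonzero $v$ in a totally positive $k$-plane (the same result of \cite{KarpWilliams} underlying Lemma~\ref{lem:VarOfU}), together with a sign-realization statement producing $v\in V^l$ with strict sign pattern $+\alpha$ when $\var(+\alpha)\le k-1$. What your route buys: it yields Corollary~\ref{cor:LeftCyclicBoundedFeasible} as the primary statement rather than a consequence, and it avoids the slightly delicate ``move the ray into the interior and perturb'' step of the paper. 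What it costs: the realization theorem is an extra classical input not quoted in the paper (it is true --- e.g.\ it follows from the fact that the oriented matroid of a totally positive subspace is the alternating one, or by transporting the question to the Vandermonde model of Section~\ref{sec:Vandermonde}, where a polynomial with prescribed root intervals exhibits the desired sign vector --- but you are right that it needs a careful citation), and the characterization direction only needs $\overline{\var}\le k-1$, which you do have available. Two small points to tighten: your recession-cone equivalence ``$\alpha$ bounded iff no such $v$'' is only valid when $\Delta_{\alpha}\neq\emptyset$ (an infeasible $\alpha$ is vacuously bounded even if the cone contains such a $v$) --- harmless here since you only apply it to feasible $\alpha$, but state the hypothesis; and the containments $\Delta_{+\alpha}\subseteq\{\bar\xi'\ge 0\}$, $\Delta_{-\alpha}\subseteq\{\bar\xi'\le 0\}$ hold only for the strong lift whose zero level set is $H_1$, not for an arbitrary lift (you never use this, so simply drop it).
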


\begin{proof}
We have $\Delta_{+\alpha} \subset \Delta_{\alpha}$, so if $\Delta_{\alpha}$ is empty then so is $\Delta_{+\alpha}$. Conversely, if $\Delta_{\alpha}$ is nonempty, then either $\var(\alpha) \leq k-1$, or $\var(\alpha) = k$ and $\alpha$ starts with a plus by Proposition~\ref{prop:SignPatternCpct}. In either case, we have $\var(+\alpha) \leq k$ and $+\alpha$ starts with a plus, so $\Delta_{+\alpha}$ is nonempty by Proposition~\ref{prop:SignPatternCpct}.

If $\Delta_{+\alpha}$ is nonempty and compact then $\Delta_{\alpha}$ is nonempty by above. The affine functional $\xi$ on $\Delta_{\alpha}$ is bounded above on $\Delta_{+\alpha} \subset \Delta_{\alpha}$ by compactness. Since the hyperplane $H_1 \subset V^l + \eta^l$ is a level set of $\xi$ and $\xi$ is larger on the positive side of $H_1$ than on the negative side, we see that $\xi$ is also bounded above on $\Delta_{-\alpha} \subset \Delta_{\alpha}$, so $\xi$ is bounded above on $\Delta_{\alpha} = \Delta_{+\alpha} \cup \Delta_{-\alpha}$.

Conversely, suppose $\Delta_{\alpha}$ is nonempty and $\xi$ is bounded above on $\Delta_{\alpha}$. By above, $\Delta_{+\alpha}$ is nonempty; if $\Delta_{+\alpha}$ is noncompact, then it contains some semi-infinite ray $\rho$. Without loss of generality we may take $\rho$ to be in the interior of $\Delta_{\alpha}$, which must then contain an open cone $C(\rho)$ of semi-infinite rays centered around $\rho$. By construction, $\Delta_{+\alpha}$ is contained between level sets of $\xi$ acting on $V + \eta$, so $\rho$ must be parallel to $H_1$. Since $\xi$ is constant on $\rho$, $\xi$ must be unbounded above on some rays in the cone $C(\rho) \subset \Delta_{\alpha}$, a contradiction. The final item of the lemma follows from the first two items.
\end{proof}

For $\alpha \in \{+,-\}^n$, let $\var_l(\alpha) := \var(+\alpha)$.

\begin{corollary}\label{cor:LeftCyclicBoundedFeasible}
A sign sequence $\alpha \in \{+,-\}^n$ is feasible for the left cyclic arrangement $(V,\eta,\xi)$ if and only if $\var_l(\alpha) \leq k$ and is bounded if and only if $\var_l(\alpha) \geq k$.
\end{corollary}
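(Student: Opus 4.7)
The plan is to reduce the corollary directly to Lemma~\ref{lem:HH'helper} and Proposition~\ref{prop:SignPatternCpct}, applied to the ambient cyclic arrangement $(V^l,\eta^l)$ from which $(V,\eta,\xi)$ is obtained by deleting the first hyperplane and remembering it as a polarization. The only bookkeeping observation driving the proof is that $+\alpha$ always starts with a plus, so the ``starts with a plus'' hypothesis in Proposition~\ref{prop:SignPatternCpct} is automatically satisfied for any sign sequence of the form $+\alpha$.

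For feasibility, Lemma~\ref{lem:HH'helper} identifies $\Delta_\alpha\neq\emptyset$ with $\Delta_{+\alpha}\neq\emptyset$ in $(V^l,\eta^l)$. By Proposition~\ref{prop:SignPatternCpct}, the nonempty regions of the cyclic arrangement $(V^l,\eta^l)$ are exactly those whose sign sequence has $\var<k$ (nonempty noncompact) or has $\var=k$ and starts with a plus (nonempty compact). For sign sequences of the form $+\alpha$ both conditions collapse to $\var(+\alpha)\leq k$, i.e., $\var_l(\alpha)\leq k$, yielding the first claim.

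For boundedness, recall from Section~\ref{sec:PolarizationDefs} that $\alpha$ is bounded iff $\bar\xi$ is bounded above on $\Delta_\alpha$, which holds vacuously when $\alpha$ is infeasible. Thus $\alpha$ is bounded iff either $\alpha$ is infeasible or $\alpha$ is bounded feasible. Lemma~\ref{lem:HH'helper} translates these two cases to ``$\Delta_{+\alpha}$ is empty'' and ``$\Delta_{+\alpha}$ is nonempty compact'' respectively. Applying Proposition~\ref{prop:SignPatternCpct} to $+\alpha$, which always starts with $+$, these conditions become $\var(+\alpha)>k$ and $\var(+\alpha)=k$; taking the union gives $\var_l(\alpha)\geq k$.

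There is no real obstacle: the proof is a direct translation through the Karp--Williams sign-pattern dictionary, with all of the geometric content already absorbed into Lemma~\ref{lem:HH'helper} and Proposition~\ref{prop:SignPatternCpct}. The only point worth double-checking is that the $k$ appearing in Proposition~\ref{prop:SignPatternCpct} for $(V^l,\eta^l)$ coincides with the $k$ of the polarized arrangement $(V,\eta,\xi)$, which is immediate since deletion of a hyperplane does not change the dimension of the ambient subspace.
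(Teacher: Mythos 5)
Your proposal is correct and follows essentially the same route as the paper: both arguments reduce the corollary to Lemma~\ref{lem:HH'helper} together with Proposition~\ref{prop:SignPatternCpct} applied to the ambient cyclic arrangement, using that $+\alpha$ automatically starts with a plus. The only difference is cosmetic—you spell out the "bounded $=$ infeasible or bounded feasible" case split that the paper compresses into the single statement that $\alpha$ is bounded iff $\Delta_{+\alpha}$ is compact.
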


\begin{proof}
By Lemma~\ref{lem:HH'helper}, $\alpha$ is feasible if and only if $\Delta_{+\alpha}$ is nonempty, which by Proposition~\ref{prop:SignPatternCpct} happens if and only if $\var(+\alpha) \leq k$. Similarly, $\alpha$ is bounded if and only if $\Delta_{+\alpha}$ is compact, which happens if and only if $\var(+\alpha) \geq k$.
\end{proof}

We give the corresponding statements in the right cyclic case without proof; let $(V,\eta,\xi)$ be a right cyclic polarized arrangement.

\begin{lemma}\label{lem:HH'helper2}
The following statements hold for a sign sequence $\alpha \in \{+,-\}^n$:
\begin{itemize}
\item $\Delta_{\alpha(-1)^k}$ is empty if and only if $\Delta_{\alpha}$ is empty (i.e. $\alpha$ is infeasible), in which case $\alpha$ is bounded.
\item $\Delta_{\alpha(-1)^k}$ is nonempty and compact if and only if $\Delta_{\alpha}$ is nonempty and bounded (i.e. $\alpha$ is bounded feasible).
\item $\Delta_{\alpha(-1)^k}$ is noncompact if and only if $\Delta_{\alpha}$ is nonempty and unbounded (i.e. $\alpha$ is feasible but unbounded).
\end{itemize}
\end{lemma}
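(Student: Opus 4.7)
The plan is to follow the proof of Lemma~\ref{lem:HH'helper} with the roles of $H_1$ and the leading sign $+$ replaced by those of $H_n$ and the trailing sign $(-1)^k$. The two key inputs are (1) Proposition~\ref{prop:SignPatternCpct}, applied now to the cyclic arrangement $(V^r,\eta^r)$, which characterizes compact and noncompact regions via the ``ends with $(-1)^k$'' formulation, and (2) the fact that $\xi''$ is increasing in $(-1)^k$ times the positive normal direction to $H_n$, so that $\xi''$ takes strictly larger values on the $(-1)^k$ side of $H_n$ than on the opposite side.

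For the first bullet I would argue that $\Delta_{\alpha(-1)^k}\subset\Delta_\alpha$ under the identification of $(V,\eta)$ with the deletion of $H_n$ from $(V^r,\eta^r)$, so emptiness of $\Delta_\alpha$ forces emptiness of $\Delta_{\alpha(-1)^k}$. Conversely, if $\Delta_\alpha$ is nonempty then at least one of $\Delta_{\alpha+},\Delta_{\alpha-}$ is nonempty in $(V^r,\eta^r)$; Proposition~\ref{prop:SignPatternCpct} then gives either $\var(\alpha s)\le k-1$ for some $s$, or $\var=k$ with the sequence ending in $(-1)^k$. Either way, $\var(\alpha(-1)^k)\le k$ and the sequence ends in $(-1)^k$, so $\Delta_{\alpha(-1)^k}$ is nonempty by the same proposition. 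The assertion that any infeasible $\alpha$ is automatically bounded is immediate, since there is nothing on which $\xi''$ can fail to be bounded.

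For the second bullet, suppose first that $\Delta_{\alpha(-1)^k}$ is nonempty and compact. By the previous item, $\Delta_\alpha$ is nonempty. Since $\xi''$ is bounded above on the compact set $\Delta_{\alpha(-1)^k}$ and attains strictly smaller values on $\Delta_{\alpha(-1)^{k+1}}$ (the opposite side of the level set $H_n$), it is bounded above on $\Delta_\alpha=\Delta_{\alpha(-1)^k}\cup\Delta_{\alpha(-1)^{k+1}}$. Conversely, assume $\Delta_\alpha$ is nonempty and $\xi''$ is bounded above on it. Nonemptiness of $\Delta_{\alpha(-1)^k}$ follows from the first item. If it were noncompact, it would contain a semi-infinite ray $\rho$, which we may place in the interior of $\Delta_\alpha$ so that a small open cone $C(\rho)$ of rays around $\rho$ still lies in $\Delta_\alpha$. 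Because $\Delta_{\alpha(-1)^k}$ lies on one side of $H_n$, the ray $\rho$ must be parallel to $H_n$ and hence $\xi''$ is constant on $\rho$; then $\xi''$ would be unbounded above on some rays of $C(\rho)\subset\Delta_\alpha$, a contradiction. The third bullet is the logical contrapositive of the first two.

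The only obstacle is bookkeeping of signs: one must check carefully that ``$(-1)^k$ times the positive normal direction'' and the ``ends with $(-1)^k$'' convention in Proposition~\ref{prop:SignPatternCpct} are consistent, so that the $(-1)^k$ side of $H_n$ is the side on which $\xi''$ is larger. Once this is verified, the argument is a transcription of the proof of Lemma~\ref{lem:HH'helper} under the symmetry that reverses the order of the coordinates and shifts the starting sign from $+$ to $(-1)^k$.
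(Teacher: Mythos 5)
Your proposal is correct and is exactly the argument the paper intends: the paper states Lemma~\ref{lem:HH'helper2} explicitly ``without proof'' as the right cyclic analogue of Lemma~\ref{lem:HH'helper}, and your write-up is a faithful transcription of that proof with $H_1$, the leading $+$, and the positive normal direction replaced by $H_n$, the trailing $(-1)^k$, and $(-1)^k$ times the positive normal, with the sign bookkeeping checking out (the $(-1)^k$ side of $H_n$ is indeed the side where $\xi''$ is larger, matching the ``ends with $(-1)^k$'' criterion of Proposition~\ref{prop:SignPatternCpct}). The only stylistic caveat is that in the noncompactness argument the parallelism of $\rho$ to $H_n$ follows from the one-sided bound given by lying on the $(-1)^k$ side \emph{together with} the assumed upper bound on $\xi''$ over $\Delta_\alpha$, which your surrounding hypotheses supply.
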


For $\alpha \in \{+,-\}^n$, let $\var_r(\alpha) := \var(\alpha(-1)^k)$.

\begin{corollary}
A sign sequence $\alpha \in \{+,-\}^n$ is feasible for the right cyclic arrangement $(V,\eta,\xi)$ if and only if $\var_r(\alpha) \leq k$ and is bounded if and only if $\var_r(\alpha) \geq k$.
\end{corollary}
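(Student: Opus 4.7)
The plan is to mirror the argument used in Corollary~\ref{cor:LeftCyclicBoundedFeasible} for the left cyclic case, simply replacing Lemma~\ref{lem:HH'helper} by its right cyclic analogue Lemma~\ref{lem:HH'helper2}. The essential idea is that feasibility/boundedness for the polarized arrangement $(V,\eta,\xi)$ has been translated in Lemma~\ref{lem:HH'helper2} into the statement that the region obtained by appending the sign $(-1)^k$ to $\alpha$ is nonempty/compact in the cyclic arrangement $(V^r,\eta^r)$. Once in the cyclic setting, Proposition~\ref{prop:SignPatternCpct} characterizes emptiness and compactness entirely via sign variation.

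First, by Lemma~\ref{lem:HH'helper2}, $\alpha$ is feasible for $(V,\eta,\xi)$ if and only if $\Delta_{\alpha(-1)^k}$ is nonempty in the cyclic arrangement $(V^r,\eta^r)$. Since by construction $\alpha(-1)^k$ ends in $(-1)^k$, the half of Proposition~\ref{prop:SignPatternCpct} describing nonempty regions (compact or not) reduces to the single inequality $\var(\alpha(-1)^k) \leq k$, i.e.\ $\var_r(\alpha)\leq k$: if $\var(\alpha(-1)^k) < k$ the region is noncompact nonempty, while if $\var(\alpha(-1)^k)=k$ the region is compact and nonempty (the requirement that the last sign be $(-1)^k$ is automatic).

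Second, by the same lemma, $\alpha$ is bounded for $(V,\eta,\xi)$ if and only if $\Delta_{\alpha(-1)^k}$ is either empty or nonempty and compact. The empty case corresponds to $\var(\alpha(-1)^k)>k$ by Proposition~\ref{prop:SignPatternCpct} (since $\alpha(-1)^k$ already ends in $(-1)^k$, the only obstruction to nonemptiness is having too many sign changes), and the nonempty compact case corresponds to $\var(\alpha(-1)^k)=k$. In both situations $\var_r(\alpha)\geq k$, and conversely these are the only possibilities.

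There is essentially no obstacle here beyond bookkeeping: the right cyclic setup has been arranged so that appending the sign $(-1)^k$ plays exactly the role that prepending $+$ did in the left cyclic case, and the only subtlety is verifying that the parity-adjusted endpoint sign $(-1)^k$ agrees with the endpoint sign dictated by Proposition~\ref{prop:SignPatternCpct} for nonempty compact regions, which holds by inspection.
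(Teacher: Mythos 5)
Your proposal is correct and is exactly the argument the paper intends: the paper states this corollary without proof as the mirror of Corollary~\ref{cor:LeftCyclicBoundedFeasible}, and your deduction from Lemma~\ref{lem:HH'helper2} together with Proposition~\ref{prop:SignPatternCpct} (using that $\alpha(-1)^k$ automatically ends in $(-1)^k$, just as $+\alpha$ automatically starts with $+$) is the same route. Nothing further is needed.
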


\subsubsection{Consequences for hypertoric varieties and algebras}

For cyclic arrangements and left or right cyclic polarized arrangements, the above results give us a nice parametrization of the Lagrangians $X_{\alpha} \subset \M_{\cal{V}}$ and ${\rm int}(\Delta_{\alpha}) \subset (V + \eta)_{\C} \setminus \cal{H}_{\cal{V}}$ appearing in Section~\ref{sec:AlgebrasGeometricAspects}, for $\alpha \in \cal{P}$ as well as $\alpha \in \cal{K}$. When $\cal{V}$ is left cyclic, relative core components of $\M_{\cal{V}}$ are those $X_{\alpha}$ with $\var_l(\alpha) = k$, and similarly for right cyclic $\cal{V}$. The core components of $\M_{\cal{V}}$ are those $X_{\alpha}$ with $\alpha(1) = +$ and $\var(\alpha) = k$.

Correspondingly, we can use the above results to describe the algebra $\tilde{B}(\cal{V})$ more explicitly in the case of interest; we first discuss the case where $\cal{V}$ is left cyclic. Let $Q$ be the quiver whose vertices are sign sequences $\alpha \in \{+,-\}^n$ with $\var_l(\alpha) \geq k$, with arrows $p(\alpha,\beta)$ from $\alpha$ to $\beta$ and $p(\beta,\alpha)$ from $\beta$ to $\alpha$ whenever $\alpha \leftrightarrow \beta$.

\begin{corollary}\label{cor:SimpleBTilde}
If $\cal{V}$ is left cyclic, the $\Z$-algebra $\tilde{B}(\cal{V})$ can be naturally identified with $P(Q) \otimes_{\Z} \Z[u_1,\ldots,u_n]$ modulo the two-sided ideal generated by the following relations:
\begin{enumerate}
\item[A1]\label{it:SimpleA1L}: $e_{\alpha}$ for all $\alpha$ with $\var_l(\alpha) > k$,
\item[A2]\label{it:SimpleA2L}: $p(\alpha,\beta) p(\beta,\gamma) - p(\alpha,\delta) p(\delta,\gamma)$ for all distinct $\alpha,\beta,\gamma,\delta$ with $\var_l \geq k$ and $\alpha \leftrightarrow \beta \leftrightarrow \gamma \leftrightarrow \delta \leftrightarrow \alpha$,
\item[A3]\label{it:SimpleA3L}: $p(\alpha,\beta,\alpha) - u_i e_{\alpha}$ for all $\alpha,\beta $ with $\var_l \geq k$ and $\alpha \leftrightarrow \beta$.
\end{enumerate}
\end{corollary}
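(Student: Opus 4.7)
The plan is to deduce this corollary directly from Proposition~\ref{prop:BTildeGensRels} by translating the conditions ``bounded'' and ``feasible'' into statements about the sign variation statistic $\var_l$. Since Proposition~\ref{prop:BTildeGensRels} already gives a presentation of $\tilde B(\cal V)$ for any polarized arrangement $\cal V$ in terms of a quiver whose vertices are indexed by $\cal B$, with relations B1--B3, the work is essentially bookkeeping once the correct identifications are in place.

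First I would invoke Corollary~\ref{cor:LeftCyclicBoundedFeasible}, which says that for a left cyclic polarized arrangement the bounded sign sequences are exactly those $\alpha$ with $\var_l(\alpha)\ge k$, while the feasible sign sequences are those with $\var_l(\alpha)\le k$. Intersecting, the bounded feasible (that is, bounded and feasible) sign sequences are exactly those with $\var_l(\alpha)=k$, and the bounded infeasible ones are exactly those with $\var_l(\alpha)>k$. This immediately matches the vertex set of the quiver $Q$ in the statement with the vertex set $\cal B$ in Proposition~\ref{prop:BTildeGensRels}, and identifies the class of idempotents killed by relation B1 with those killed by relation A1 of the corollary.

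Next I would check that relations A2 and A3 in the corollary are literally the relations B2 and B3 of Proposition~\ref{prop:BTildeGensRels}, once one restricts attention to vertices in $\cal B$; this is immediate from the definitions and requires no further argument beyond renaming. Because the arrow set of $Q$ agrees with the arrow set from Proposition~\ref{prop:BTildeGensRels} (both consisting of $p(\alpha,\beta)$ for $\alpha\leftrightarrow\beta$ with $\alpha,\beta$ bounded), the two presentations coincide as presentations of $\tilde B(\cal V)$.

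There is no substantial obstacle; the only thing that must be verified carefully is that the characterization of $\cal B$ and $\cal F$ via $\var_l$ really does apply here, which is precisely the content of Corollary~\ref{cor:LeftCyclicBoundedFeasible}, itself a consequence of Lemma~\ref{lem:HH'helper} and Proposition~\ref{prop:SignPatternCpct}. The analogous statement in the right cyclic case follows by the identical argument using the right-cyclic variants (Lemma~\ref{lem:HH'helper2} and its corollary) and could be recorded in the same breath.
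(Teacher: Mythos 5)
Your proposal is correct and is essentially the argument the paper intends: the corollary is stated as a direct consequence of Proposition~\ref{prop:BTildeGensRels} combined with the identification, via Corollary~\ref{cor:LeftCyclicBoundedFeasible}, of the bounded sequences with $\{\var_l \geq k\}$ and the bounded infeasible ones with $\{\var_l > k\}$, after which B1--B3 become A1--A3 verbatim. Nothing further is needed.
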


The right cyclic case has a similar description with $\var_l$ replaced by $\var_r$ everywhere.

\subsection{Cyclic arrangements as an equivalence class}\label{sec:AlternateCyclic}

The following alternative characterization of cyclic arrangements will be useful.

\begin{proposition}\label{prop:AlternateCyclicDef}
Given $(V,\eta)$, let $\phi \in (V + \langle \eta \rangle)^*$ be the unique functional with $\phi(V) = 0$ and $\phi(\eta) = 1$. The arrangement $(V,\eta)$ is cyclic if and only if $(\phi, \id)(V + \langle \eta \rangle) \in \Grr^{> 0}_{k+1,n+1}$.
\end{proposition}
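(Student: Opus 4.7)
The plan is to compute the Pl\"ucker coordinates of $(\phi, \id)(V + \langle \eta \rangle) \subset \R^{n+1}$ in an explicit matrix representation and translate positivity into conditions on $(V,\eta)$. Fix a basis $v_1,\ldots,v_k$ of $V$ (assembled into an $n \times k$ matrix $A$) and a representative $w \in \R^n$ of $\eta$. Since $\phi(v_i) = 0$ and $\phi(w) = 1$, the subspace $(\phi, \id)(V + \langle \eta \rangle)$ is the column span of the $(n+1) \times (k+1)$ matrix
\[
\widetilde{M} = \begin{pmatrix} 0 & \cdots & 0 & 1 \\ & A & & w \end{pmatrix},
\]
which is precisely the representing matrix attached to $(V,\eta)$ in Section~\ref{sec:ArrangementDefs}. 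A cofactor expansion along the top row yields $p_{\{0\} \cup K'}(\widetilde{M}) = (-1)^k p_{K'}(V)$ and $p_K(\widetilde{M}) = p_K(V + \langle \eta \rangle)$ for $0 \notin K$. Hence $(\phi, \id)(V + \langle \eta \rangle) \in \Grr^{>0}_{k+1,n+1}$ is equivalent to the conjunction of (a) $V \in \Grr^{>0}_{k,n}$, (b) $V + \langle \eta \rangle \in \Grr^{>0}_{k+1,n}$, and (c) a compatibility condition: after normalizing so every $p_{K'}(V) > 0$, each $p_K(V + \langle \eta \rangle)$ has sign $(-1)^k$.

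Conditions (a) and (b) are two of the three defining properties of cyclicity, so the proposition reduces to showing (c) is equivalent to positive orientation of $\eta$ (given (a), (b)). Set $u := \proj_{V^{\perp}}(w) \in V^{\perp}$: then $V + \langle \eta \rangle = V + \langle u \rangle$ with matching Pl\"ucker coordinates, and moving $u$ from the last to the first column absorbs the $(-1)^k$, so (c) is equivalent to requiring every $(k+1)$-minor of the matrix $(u \mid A)$ to be positive. With $A$ fixed, each such minor is a \emph{linear} functional of $u \in V^\perp$, so
\[
U^{\pm} := \{u \in V^{\perp} : p_K(u \mid A) \gtrless 0 \text{ for all } K\}
\]
are open convex cones, each connected. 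By Lemma~\ref{lem:VarOfU}, any $u$ with $V + \langle u \rangle \in \Grr^{>0}_{k+1,n}$ satisfies $\var(u) = \overline{\var}(u) = k$, which forces $u_1 \neq 0$: otherwise filling a leading zero opposite to $\sign(u_2)$ would create an extra sign change and give $\overline{\var}(u) > \var(u)$.

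Since $\sign(u_1)$ is locally constant and nonvanishing on $U^+$, it is constant on $U^+$ by connectedness, and the involution $u \mapsto -u$ exchanges $U^+ \leftrightarrow U^-$ with $\{u_1 > 0\} \leftrightarrow \{u_1 < 0\}$; so $U^+$ coincides with exactly one of the two half-spaces $\{u_1 > 0\} \cap (U^+ \cup U^-)$ and $\{u_1 < 0\} \cap (U^+ \cup U^-)$. To identify which, I would verify in a single explicit example: for $n = 2$, $k = 1$, $V = \mathrm{span}\{(1,1)\}$, and $w = (1,0)$ one has $u = (1/2, -1/2)$ with $u_1 > 0$ and
\[
\det \begin{pmatrix} 1/2 & 1 \\ -1/2 & 1 \end{pmatrix} = 1 > 0,
\]
placing $u$ in $U^+$. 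Combined with the observation that the total space $\{(V, u) : V \in \Grr^{>0}_{k,n},\, u \in U^+(V)\}$ is a bundle over the connected cell $\Grr^{>0}_{k,n}$ with convex (hence connected) fibers, this forces $U^+ = \{u_1 > 0\} \cap (U^+ \cup U^-)$ uniformly in $(n,k)$. Therefore (c) is equivalent to $u_1 > 0$---i.e., to positive orientation of $\eta$---completing the proof. The main subtlety is the dimension-uniformity of the sign determination, which the convexity/connectedness framework handles cleanly.
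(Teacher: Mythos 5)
Your reduction is sound and parallels the first half of the paper's argument: splitting the maximal minors of the matrix for $(\phi,\id)(V+\langle\eta\rangle)$ into those that use the extra row and those that do not correctly shows the positivity condition is equivalent to (a) $V\in\Grr^{>0}_{k,n}$, (b) $V+\langle\eta\rangle\in\Grr^{>0}_{k+1,n}$, plus the relative-sign condition (c), and your reformulation of (c) as positivity of all maximal minors of $(u\mid A)$ with $u=\proj_{V^{\perp}}(w)$ is correct, as is the observation (via Lemma~\ref{lem:VarOfU}) that $u_1\neq 0$ so that $\sign(u_1)$ is constant on the convex cone $U^+$.

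The gap is in the final sign determination, which is in fact the entire content of the paper's proof. Your convexity/connectedness argument runs inside the space of arrangements with a \emph{fixed} pair $(n,k)$: it shows only that there is a well-defined sign $\epsilon(n,k)\in\{+,-\}$ such that $U^+\subset\{\,\epsilon(n,k)\,u_1>0\,\}$, and your single example pins down $\epsilon(2,1)=+$. Nothing in the argument connects different values of $n$ or $k$, so the assertion that this holds ``uniformly in $(n,k)$'' is exactly the unproved claim; the concern is not idle, since factors of $(-1)^k$ already enter condition (c) itself, and an a priori answer of the form $\epsilon(n,k)=(-1)^{\text{something}(n,k)}$ is not excluded by anything you wrote. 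The paper closes precisely this gap with a sign-variation argument valid for all $(n,k)$: writing $u=\left[\begin{smallmatrix}z\\ v\end{smallmatrix}\right]$ in a normalized totally positive presentation and using Lemma~\ref{lem:VarOfAv} together with $\var(u)=\overline{\var}(u)=k$ (Lemma~\ref{lem:VarOfU}) to derive a contradiction when the sign of $u_1$ disagrees with the parity forced by positivity. To repair your proof you would either need to reproduce such a uniform argument, or complete your (legitimate) connectedness reduction by exhibiting, for \emph{every} $(n,k)$, one explicit cyclic arrangement where $u_1>0$ and all minors of $(u\mid A)$ are checked positive --- e.g.\ the Vandermonde arrangements of Section~\ref{sec:Vandermonde} --- which is a genuine computation you have not supplied.
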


\begin{proof}
We first claim that given either of the conditions in the statement, we have $V \in \Grr^{> 0}_{k,n}$ and $V + \langle \eta \rangle \in \Grr^{> 0}_{k+1,n}$. This is immediate if $(V,\eta)$ is left cyclic. Assuming that $(\phi, \id)(V + \langle \eta \rangle) \in \Grr^{> 0}_{k+1,n+1}$, represent $V$ as the column span of a matrix $A'$, and represent $\eta$ by a vector $w' \in \R^n$. Then $(\phi, \id)(V + \langle \eta \rangle)$ is the column span of
$\left[
\begin{array}{c|c}
1 & 0 \\
\hline
w' & A'
\end{array}
\right]$,
so the maximal minors of this matrix all have the same sign. It follows that the maximal minors of $A'$ and of $\left[ \begin{array}{c|c} w' &  A' \end{array} \right]$ all have the same sign, so $V \in \Grr^{> 0}_{k,n}$ and $V + \langle \eta \rangle \in \Grr^{> 0}_{k+1,n}$. It thus suffices to show that $\eta$ is positively oriented with respect to $V$ if and only if $(\phi, \id)(V + \langle \eta \rangle) \in \Grr^{> 0}_{k+1,n+1}$, assuming that $V \in \Grr^{> 0}_{k,n}$ and $V + \langle \eta \rangle \in \Grr^{> 0}_{k+1,n}$, and we will make these assumptions below.

If $A$ is a matrix with $p$ columns, we will write $\widetilde{A}$ for $A$ with its columns permuted by the longest permutation in the symmetric group $\mathfrak{S}_p$. We let $i$ label the rows and $j$ label the columns of a given matrix, so that an expression like $(-1)^j \widetilde{A}$ means ``$\widetilde{A}$ with its $j^{th}$ column multiplied by $(-1)^j$ for $1 \leq j \leq k$,'' and similarly for expressions like $(-1)^i \widetilde{A}$.

Since $V \in \Grr^{>0}_{k,n}$, there exists a unique totally positive matrix $A$ of size $(n-k) \times k$ such that $V$ is the column span of the matrix
$\left[
\begin{array}{c}
I_k \\
\hline
(-1)^{j+k} \widetilde{A}
\end{array}
\right]$ (see \cite[Lemma 3.9]{Postnikov}),
where $I_k$ is the identity matrix of size $k$ (note that all maximal minors of the above block matrix are positive). There exists a unique vector $w' \in \R^{n-k}$ such that
$\left[
\begin{array}{c}
0 \\
\hline
w'
\end{array}
\right]$
represents $\eta \in \R^n / V$;
then
\[
V + \langle \eta \rangle = {\rm colspan}\left[
\begin{array}{c|c}
0 & I_k \\
\hline
w' & (-1)^{j+k} \widetilde{A}
\end{array}
\right], \quad \text{and}
\qquad
(\phi, \id)(V + \langle \eta \rangle)
= {\rm colspan}\left[
\begin{array}{c|c}
1 & 0 \\
\hline
0 & I_k \\
\hline
w' & (-1)^{j+k} \widetilde{A}
\end{array}
\right]
\]
Since $V + \langle \eta \rangle \in \Grr^{> 0}_{k+1,n}$, there exists a unique vector $w \in \{w', -w'\}$ such that the minors of
$\left[
\begin{array}{c|c}
0 & I_k \\
\hline
(-1)^k w & (-1)^{j+k} \widetilde{A}
\end{array}
\right]$
are all positive. It follows that the maximal minors of
$\left[
\begin{array}{c|c}
1 & 0 \\
\hline
0 & I_k \\
\hline
(-1)^k w & (-1)^{j+k} \widetilde{A}
\end{array}
\right]$
are all positive, so that $\left[ \begin{array}{c|c} A & w \end{array} \right]$ is a totally positive matrix. Writing $w' = (-1)^{\ell} w$ for some $\ell$ defined modulo $2$, we want to show that $\eta$ is positively oriented with respect to $V$ if and only if $\ell = k$ modulo $2$.

To do so, let $u$ be the orthogonal projection of $\eta$ onto $V^{\perp}$. Since $u$ is equivalent to
$\left[
\begin{array}{c}
0 \\
\hline
(-1)^{\ell} w
\end{array}
\right]$
modulo $V$, we can write
\[
u =
\left[
\begin{array}{c|c}
0 & I_k \\
\hline
(-1)^{\ell} w & (-1)^{j+k} \widetilde{A}
\end{array}
\right]
\left[
\begin{array}{c}
1 \\
\hline
z
\end{array}
\right]
\]
for some $z \in \R^k$. Expanding out this product of block matrices, we get
$\left[
\begin{array}{c}
z \\
\hline
v
\end{array}
\right]$
where
\begin{align*}
v &=
\left[
\begin{array}{c|c}
(-1)^{\ell} w & (-1)^{j+k} \widetilde{A}
\end{array}
\right]
\left[
\begin{array}{c}
1 \\
\hline
z
\end{array}
\right]
\;=\;
\left[
\begin{array}{c|c}
A & w
\end{array}
\right]
\left[
\begin{array}{c}
z_k \\
-z_{k-1} \\
\vdots \\
(-1)^{k-1} z_1 \\
(-1)^k (-1)^{\ell-k}
\end{array}
\right], \qquad
\text{writing $z =
\left[
\begin{array}{c}
z_1 \\
\vdots \\
z_k
\end{array}
\right]$.}
\end{align*}
By Lemma~\ref{lem:VarOfAv}, we have
\[
\var(v) \leq \var \left(
\left[
\begin{array}{c}
z_k \\
-z_{k-1} \\
\vdots \\
(-1)^{k-1} z_1 \\
(-1)^k (-1)^{\ell-k}
\end{array}
\right]
\right)
\]

Now, we have $\var(u) = \overline{\var}(u) = k$ by Lemma~\ref{lem:VarOfU}. The first coordinate $u_1$ of $u$ is equal to $z_1$. If this coordinate were zero, we would not have $\var(u) = \overline{\var}(u)$, so either $u_1 = z_1 > 0$ (if $\eta$ is positively oriented with respect to $V$) or $u_1 = z_1 < 0$ (if $\eta$ is negatively oriented with respect to $V$). We want to show that
\[
u_1 = z_1 > 0 \,\, \textrm{ if and only if } \,\, \ell = k \textrm{ mod } 2.
\]

Assume one of the above two statements holds without the other; we will derive a contradiction. It follows that $(-1)^{k-1} z_1$ and $(-1)^k (-1)^{\ell-k}$ have the same sign, so
\[
\var \left(
\left[
\begin{array}{c}
z_k \\
-z_{k-1} \\
\vdots \\
(-1)^{k-1} z_1 \\
(-1)^k (-1)^{\ell-k}
\end{array}
\right]
\right)
=
\var \left(
\left[
\begin{array}{c}
z_k \\
-z_{k-1} \\
\vdots \\
(-1)^{k-1} z_1
\end{array}
\right]
\right)
=:
\var \left(
z'
\right)
\]
and we get

\begin{align}\label{eq:varv}
\var(v) &\leq
\var \left(
z'
\right)
\;\leq \;
\overline{\var} \left(
z'
\right)
\; = \; k-1-\var(z).
\end{align}
Since $u =
\left[
\begin{array}{c}
z \\
\hline
v
\end{array}
\right]$, we have
$
k = \var(u)
\leq \var(z) + \var(v) + 1
\leq \var(z) + (k-1-\var(z)) + 1
= k.
$
The inequalities in \eqref{eq:varv} must therefore be equalities, so that $\var(v)=\var(z')$.

By Lemma~\ref{lem:VarOfAv}, the first nonzero entries of $v$ and $z'$ must have the same sign, and since the vectors have the same value of $\var$, their last nonzero entries must also have the same sign. The last entry $v_{n-k}$ of $u$ is nonzero (otherwise $\var(u) \neq \overline{\var}(u)$ as before), so the sign of $v_{n-k}$ is the sign of $(-1)^{k-1} z_1 = (-1)^{k-1} u_1$. This contradicts $\var(u) = k$, proving the proposition.
\end{proof}

\begin{corollary}\label{cor:AnotherCyclicDefCHANGED}
Given $(V,\eta)$, let $\phi$ be defined as in Proposition~\ref{prop:AlternateCyclicDef}. The arrangement $(V,\eta)$ is cyclic if and only if $(\id, (-1)^k \phi)(V + \langle \eta \rangle) \in \Grr^{> 0}_{k+1,n+1}$.
\end{corollary}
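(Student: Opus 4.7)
The plan is to derive the corollary from Proposition~\ref{prop:AlternateCyclicDef} by a direct comparison of the matrices representing the two subspaces $(\phi,\id)(V+\langle\eta\rangle)$ and $(\id, (-1)^k\phi)(V+\langle\eta\rangle)$ of $\R^{n+1}$. Fix a basis $v_1,\ldots,v_k$ of $V$ and use $\eta$ as a lift, so that $\phi(v_j)=0$ and $\phi(\eta)=1$. If $A$ is the $n\times k$ matrix with columns $v_1,\ldots,v_k$ and $w\in\R^n$ represents $\eta$, then the two subspaces are column spans of
\[
M_{\mathrm{pre}} = \left[\begin{array}{c|c} 0 & 1 \\ \hline A & w \end{array}\right], \qquad
M_{\mathrm{post}} = \left[\begin{array}{c|c} A & w \\ \hline 0 & (-1)^k \end{array}\right],
\]
both of size $(n+1)\times(k+1)$.

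Next I would observe that $M_{\mathrm{post}}$ is obtained from $M_{\mathrm{pre}}$ by the cyclic row permutation that sends row $1$ to row $n+1$ and shifts the remaining rows up by one, followed by multiplying the new last row by $(-1)^k$. The cyclic permutation is a single $(n{+}1)$-cycle, so it has sign $(-1)^n$, and the row rescaling multiplies every maximal minor by $(-1)^k$. Consequently every maximal minor of $M_{\mathrm{post}}$ equals $(-1)^{n+k}$ times the corresponding maximal minor of $M_{\mathrm{pre}}$.

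Since membership in $\Grr^{>0}_{k+1,n+1}$ only depends on whether all Plücker coordinates of a representing matrix share a common nonzero sign (projectively, they can all be taken positive), the uniform scalar $(-1)^{n+k}$ has no effect on this condition. Hence $(\phi,\id)(V+\langle\eta\rangle)\in\Grr^{>0}_{k+1,n+1}$ if and only if $(\id,(-1)^k\phi)(V+\langle\eta\rangle)\in\Grr^{>0}_{k+1,n+1}$, and the corollary follows from Proposition~\ref{prop:AlternateCyclicDef}.

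There is no real obstacle here beyond being careful with the sign of the cyclic permutation and with the factor of $(-1)^k$; the main content has already been accomplished in the proposition, and the corollary is a purely formal reorganization of the last coordinate.
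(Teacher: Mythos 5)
Your overall strategy is the same as the paper's: reduce, via Proposition~\ref{prop:AlternateCyclicDef}, to showing that the two representing matrices for $(\phi,\id)(V+\langle\eta\rangle)$ and $(\id,(-1)^k\phi)(V+\langle\eta\rangle)$ have Pl\"ucker coordinates with the same projective signs. The gap is in your sign bookkeeping. For a tall $(n+1)\times(k+1)$ matrix, maximal minors use \emph{all} columns but only $k+1$ of the rows, so it is column operations, not row operations, that act uniformly on the Pl\"ucker coordinates. Your two claims --- that the cyclic row permutation multiplies every maximal minor by its sign $(-1)^n$, and that rescaling one row multiplies every maximal minor by $(-1)^k$ --- are both false when $k<n$. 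The correct statement is: the cyclic shift turns the minor of $M_{\mathrm{post}}$ on a row set $T$ into the corresponding minor of $M_{\mathrm{pre}}$ with sign $+1$ if $n+1\notin T$, and with sign $(-1)^k$ (from moving the special row past the $k$ other selected rows) if $n+1\in T$; and the rescaling contributes a further $(-1)^k$ exactly on those minors with $n+1\in T$. These cancel, so every maximal minor of $M_{\mathrm{post}}$ equals the corresponding minor of $M_{\mathrm{pre}}$ on the nose --- the true global factor is $+1$, not $(-1)^{n+k}$. A quick sanity check: for $n=1$, $k=0$ one has $M_{\mathrm{pre}}=\left[\begin{smallmatrix}1\\ w\end{smallmatrix}\right]$ and $M_{\mathrm{post}}=\left[\begin{smallmatrix}w\\ 1\end{smallmatrix}\right]$, whose minors agree exactly, contradicting your claimed factor $(-1)^{n+k}=-1$.

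Your final conclusion survives because the minors do in fact differ by a uniform nonzero scalar, but the uniformity is precisely the point at issue and cannot be obtained from the permutation-sign and row-scaling claims as stated; it requires the case analysis over whether the selected row set contains the special row. This is exactly what the paper's proof does: it compares minors involving the distinguished row and minors avoiding it separately, finding the factor $+1$ in both cases. Replace your second paragraph with that two-case computation (or equivalently redo the comparison using column rather than row operations) and the argument is complete.
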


\begin{proof}
By Proposition~\ref{prop:AlternateCyclicDef}, it suffices to show that $(\phi,\id)(V + \langle \eta \rangle) \in \Grr^{> 0}_{k+1,n+1}$ if and only if $(\id, (-1)^k \phi)(V + \langle \eta \rangle) \in \Grr^{> 0}_{k+1,n+1}$. Picking a matrix $A'$ and a vector $w'$ representing $V$ and $\eta$ respectively, $(\phi,\id)(V + \langle \eta \rangle)$ is the column span of
$\left[
\begin{array}{c|c}
1 & 0 \\
\hline
w' & A'
\end{array}
\right]$. Comparing this matrix with
$\left[
\begin{array}{c|c}
(-1)^k w' & A' \\
\hline
1 & 0
\end{array}
\right]$, the maximal minors not involving the top row of the first matrix are $(-1)^{k(k+1)} = 1$ times the maximal minors not involving the bottom row of the second matrix. The maximal minors involving the top row of the first matrix are $(-1)^{k^2 + k} = 1$ times the maximal minors involving the bottom row of the second matrix. Since the column span of the second matrix is $(\id, (-1)^k \phi)(V + \langle \eta \rangle) \in \Grr^{> 0}_{k+1,n+1}$, the corollary follows.
\end{proof}

\begin{corollary}
For a given $(n,k)$, the cyclic arrangements $(V,\eta)$ form an equivalence class of arrangements.
\end{corollary}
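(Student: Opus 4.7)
The key observation is that both cyclicity and equivalence are encoded by the projective sign pattern of the Pl\"ucker coordinates of a single $(k+1)$-dimensional subspace of $\R^{n+1}$ constructed from $(V,\eta)$. Recall from Section~\ref{sec:ArrangementDefs} that $\cal{V} \sim \cal{V}'$ holds precisely when the Pl\"ucker coordinates of $(\id,\phi)(V + \langle \eta \rangle)$ and $(\id,\phi')(V' + \langle \eta' \rangle)$ in $\R^{n+1}$ share the same projective signs, where $\phi$ is the unique functional on $V + \langle \eta \rangle$ with $\phi(V)=0$ and $\phi(\eta)=1$. My plan is therefore to show that cyclicity pins down a single such sign pattern.

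The main input is Corollary~\ref{cor:AnotherCyclicDefCHANGED}, which asserts that $(V,\eta)$ is cyclic if and only if $(\id,(-1)^k\phi)(V + \langle \eta \rangle) \in \Grr_{k+1,n+1}^{>0}$. The two subspaces $(\id,\phi)(V+\langle\eta\rangle)$ and $(\id,(-1)^k\phi)(V+\langle\eta\rangle)$ differ by the fixed linear automorphism of $\R^{n+1}$ that rescales the $(n+1)$-st coordinate by $(-1)^k$. A $(k+1)\times(k+1)$ Pl\"ucker minor indexed by a subset $S \subset \{1,\dots,n+1\}$ transforms under this rescaling by a universal sign $\epsilon_S \in \{\pm 1\}$: namely $\epsilon_S = (-1)^k$ if $n+1 \in S$ and $\epsilon_S = 1$ otherwise, as one sees by expanding along the bottom row $(0,\dots,0,\pm 1)$ of the standard matrix representation.

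Since the signs $\epsilon_S$ depend only on $S$, not on $(V,\eta)$, the condition that all Pl\"ucker coordinates of $(\id,(-1)^k\phi)(V+\langle\eta\rangle)$ share a common sign translates into the condition that the Pl\"ucker coordinates of $(\id,\phi)(V+\langle\eta\rangle)$ realize the fixed sign pattern $(\epsilon_S)_S$, up to a global sign. Thus every cyclic arrangement has the same projective Pl\"ucker sign pattern, so any two cyclic arrangements are equivalent; conversely, any arrangement equivalent to a cyclic one has this same sign pattern and is therefore cyclic by Corollary~\ref{cor:AnotherCyclicDefCHANGED}. The only nontrivial ingredient is Corollary~\ref{cor:AnotherCyclicDefCHANGED}, which has already been established; the remainder is elementary bookkeeping of how Pl\"ucker coordinates transform under scaling a single coordinate, and I do not anticipate any substantive obstacle.
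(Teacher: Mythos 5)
Your proof is correct and is essentially the paper's argument: both show that cyclicity is equivalent to the Pl\"ucker coordinates of the subspace $(\id,\phi)(V+\langle\eta\rangle)$ (the very data defining equivalence) having one fixed projective sign pattern, namely a relative sign $(-1)^k$ on the minors involving the last row. The only cosmetic difference is that you route through Corollary~\ref{cor:AnotherCyclicDefCHANGED} and a diagonal rescaling of the last coordinate, while the paper starts from Proposition~\ref{prop:AlternateCyclicDef} and does the same sign bookkeeping by moving the $\phi$-row from top to bottom.
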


\begin{proof}
It follows from Proposition~\ref{prop:AlternateCyclicDef} that an arrangement $(V,\eta)$, with $V$ the column span of $A'$ and $\eta$ represented by $w'$, is cyclic if and only if the maximal minors of
$\left[
\begin{array}{c|c}
1 & 0 \\
\hline
w' & A'
\end{array}
\right]$, or equivalently of
$\left[
\begin{array}{c|c}
0 & 1 \\
\hline
A' & w'
\end{array}
\right]$,
all have the same sign. This holds if and only if the maximal minors of
$\left[
\begin{array}{c|c}
A' & w' \\
\hline
0 & 1
\end{array}
\right]$
not involving the bottom row all have one sign and the maximal minors involving the bottom row all have $(-1)^k$ times this sign. Such arrangements $(V,\eta)$ constitute an equivalence class.
\end{proof}

\begin{corollary}\label{cor:PolarizedCyclicAlternateDef1}
Given a polarized arrangement $(V,\eta,\xi)$, let $\phi$ be defined as in Proposition~\ref{prop:AlternateCyclicDef}. Then $(V,\eta,\xi)$ is left cyclic if and only $(\id, (-1)^k \phi)(V + \langle \eta \rangle) \in \Grr^{> 0}_{k+1,n+1}$ and $(\xi,\id)(V) \in \Grr^{> 0}_{k,n+1}$. Similarly, $(V,\eta,\xi)$ is right cyclic if and only $(\phi,\id)(V + \langle \eta \rangle) \in \Grr^{> 0}_{k+1,n+1}$ and $(\id, (-1)^k \xi)(V) \in \Grr^{> 0}_{k,n+1}$.
\end{corollary}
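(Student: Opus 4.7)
The proof is a direct assembly of the preceding characterizations of unpolarized cyclicity with the definition of left/right cyclic polarized arrangements; there is essentially no new content beyond this bookkeeping.

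First, I would invoke the observation already made at the end of Section~\ref{sec:LRCyclicDefs}: each of the conditions $(\xi,\id)(V) \in \Grr^{>0}_{k,n+1}$ and $(\id,(-1)^k\xi)(V) \in \Grr^{>0}_{k,n+1}$ (obtained by deleting the last, respectively first, row of a totally positive representative) forces $V \in \Grr^{>0}_{k,n}$. Consequently, in both directions of each asserted equivalence, the condition $V \in \Grr^{>0}_{k,n}$ comes for free from the $\xi$ hypothesis, and the remaining content of ``$(V,\eta)$ is cyclic'' reduces to the pair of conditions $V + \langle\eta\rangle \in \Grr^{>0}_{k+1,n}$ together with positive orientation of $\eta$ relative to $V$.

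Next, I would simply unfold the definitions: $(V,\eta,\xi)$ is left cyclic iff $(V,\eta)$ is cyclic and $(\xi,\id)(V) \in \Grr^{>0}_{k,n+1}$; similarly for right cyclic, with the $\xi$-condition replaced by $(\id,(-1)^k\xi)(V) \in \Grr^{>0}_{k,n+1}$. Now I apply Corollary~\ref{cor:AnotherCyclicDefCHANGED}, which rewrites cyclicity of $(V,\eta)$ as $(\id,(-1)^k\phi)(V+\langle\eta\rangle) \in \Grr^{>0}_{k+1,n+1}$, to obtain the left cyclic equivalence; and I apply Proposition~\ref{prop:AlternateCyclicDef}, which rewrites cyclicity as $(\phi,\id)(V+\langle\eta\rangle) \in \Grr^{>0}_{k+1,n+1}$, to obtain the right cyclic equivalence.

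There is no real obstacle here, because the $\phi$-condition characterizing cyclicity of $(V,\eta)$ depends only on $V$ and $\eta$, and the $\xi$-condition depends only on $V$ and $\xi$, so the two halves of each characterization decouple cleanly. The only item worth double-checking is that the $(-1)^k$ sign decorations line up correctly on the two sides: in the left cyclic case one uses the ``$\phi$-last'' version of cyclicity (Corollary~\ref{cor:AnotherCyclicDefCHANGED}) to match the ``$\xi$-first'' polarization condition, and in the right cyclic case one uses the ``$\phi$-first'' version (Proposition~\ref{prop:AlternateCyclicDef}) to match the ``$\xi$-last with sign $(-1)^k$'' polarization condition. With this matching noted, the proof is complete.
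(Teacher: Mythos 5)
Your proof is correct and follows essentially the same route as the paper's: use the observation (made right after the definition of left/right cyclicity) that the $\xi$-condition forces $V \in \Grr^{>0}_{k,n}$, so that left (resp.\ right) cyclicity is equivalent to cyclicity of $(V,\eta)$ together with the $\xi$-condition, and then substitute Corollary~\ref{cor:AnotherCyclicDefCHANGED} (resp.\ Proposition~\ref{prop:AlternateCyclicDef}) for the cyclicity of $(V,\eta)$. One trivial slip in your parenthetical: $V$ is recovered from $(\xi,\id)(V)$ by deleting the \emph{first} row and from $(\id,(-1)^k\xi)(V)$ by deleting the \emph{last} row, not the other way around, but this does not affect the argument.
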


\begin{proof}
By definition, $(V,\eta,\xi)$ is left cyclic if and only if $(V,\eta)$ is cyclic and $(\xi,\id)(V) \in \Grr^{> 0}_{k,n+1}$; by Corollary~\ref{cor:AnotherCyclicDefCHANGED}, this holds if and only if $(\id, (-1)^k \phi)(V + \langle \eta \rangle) \in \Grr^{> 0}_{k+1,n+1}$ and $(\xi,\id)(V) \in \Grr^{> 0}_{k,n+1}$. The argument in the right cyclic case is similar, using Proposition~\ref{prop:AlternateCyclicDef}.
\end{proof}

\begin{corollary}\label{cor:PolarizedCyclicAlternateDef2}
Given a polarized arrangement $(V,\eta,\xi)$, let $\phi$ be defined as in Proposition~\ref{prop:AlternateCyclicDef}. Then $(V,\eta,\xi)$ is left cyclic if and only if there exists a strong lift $\bar{\xi}$ of $\xi$ such that $(\bar{\xi},\id,(-1)^k \phi)(V + \langle \eta \rangle) \in \Grr_{k+1,n+2}^{> 0}$. Similarly, $(V,\eta,\xi)$ is right cyclic if and only if there exists a strong lift $\bar{\xi}$ of $\xi$ such that $(\phi, \id, (-1)^k \bar{\xi})(V + \langle \eta \rangle) \in \Grr_{k+1,n+2}^{> 0}$.
\end{corollary}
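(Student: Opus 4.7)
The plan is to reduce Corollary~\ref{cor:PolarizedCyclicAlternateDef2} to Corollary~\ref{cor:PolarizedCyclicAlternateDef1} by analyzing the $(k+1) \times (k+1)$ minors of a matrix representing $(\bar{\xi}, \id, (-1)^k \phi)(V + \langle \eta \rangle)$, treating the value of a strong lift $\bar{\xi}$ on a chosen representative of $\eta$ as a free parameter $c$. I would work out the left cyclic case in detail; the right cyclic case is structurally identical with the roles of top and bottom rows exchanged.

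First I would represent $V$ as $\operatorname{colspan}(A')$ with $A' \in \R^{n \times k}$ and $\eta$ by $w' \in \R^n$, and let $x^T$ be the matrix of $\xi$ in the basis of columns of $A'$. A strong lift $\bar{\xi}$ is then specified by the single real number $c := \bar{\xi}(w')$, and $(\bar{\xi}, \id, (-1)^k \phi)(V + \langle \eta \rangle)$ is the column span of the $(n+2) \times (k+1)$ matrix
\[
M(c) := \begin{pmatrix} x^T & c \\ A' & w' \\ 0 & (-1)^k \end{pmatrix}.
\]
The goal then is to show that left cyclicity of $(V,\eta,\xi)$ is equivalent to $M(c) \in \Grr^{>0}_{k+1,n+2}$ for some $c$.

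Next I would partition the maximal minors of $M(c)$ according to whether they include the first row, the last row, both, or neither. Minors omitting the first row are exactly the maximal minors of the submatrix $\begin{pmatrix} A' & w' \\ 0 & (-1)^k \end{pmatrix}$. Minors involving the last row can be expanded along its single nonzero entry $(-1)^k$, producing $(-1)^k$ times a maximal minor of the $(n+1) \times k$ matrix $\begin{pmatrix} x^T \\ A' \end{pmatrix}$. The remaining minors involve the first row but not the last; each is linear in $c$ with leading coefficient $(-1)^k \det(A'_S)$ for an appropriate $k$-subset $S$. By Corollary~\ref{cor:PolarizedCyclicAlternateDef1}, left cyclicity of $(V,\eta,\xi)$ is equivalent to the two submatrices $\begin{pmatrix} A' & w' \\ 0 & (-1)^k \end{pmatrix}$ and $\begin{pmatrix} x^T \\ A' \end{pmatrix}$ representing positive Grassmannian elements. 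Granted this, I would verify via the shared minors $\det(A'_T)$ appearing in both matrices that the first two families of minors of $M(c)$ share a common sign $\tau$, and that the leading coefficients $(-1)^k \det(A'_S)$ in the third family also have sign $\tau$. Then for $c$ of sufficiently large magnitude and the right sign relative to $\tau$, all minors in the third family acquire sign $\tau$ as well; as there are only finitely many such minors, a single $c$ works uniformly.

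For the converse, if $M(c) \in \Grr^{>0}_{k+1,n+2}$ for some $c$, direct inspection of the two families of minors independent of $c$ yields the two conditions of Corollary~\ref{cor:PolarizedCyclicAlternateDef1}. The right cyclic case runs analogously using the matrix
\[
N(c) := \begin{pmatrix} 0 & 1 \\ A' & w' \\ (-1)^k x^T & (-1)^k c \end{pmatrix},
\]
where the distinguished $(0,\ldots,0,1)$ row is now at the top and the $c$-dependent entries at the bottom; one chooses $(-1)^k c$ of sufficiently large magnitude of the appropriate sign. I expect the main obstacle to be sign bookkeeping: verifying that the signs forced by the two conditions of Corollary~\ref{cor:PolarizedCyclicAlternateDef1} are mutually consistent on the shared overlap, and that the leading $c$-coefficient in the third family of minors has sign $\tau$ rather than $-\tau$. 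The $(-1)^k$ twists baked into the maps $(\bar{\xi}, \id, (-1)^k \phi)$ and $(\phi, \id, (-1)^k \bar{\xi})$ are exactly what make these signs cooperate.
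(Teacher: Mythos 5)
Your proposal is correct and follows essentially the same route as the paper: represent $(\bar{\xi},\id,(-1)^k\phi)(V+\langle\eta\rangle)$ by an $(n+2)\times(k+1)$ matrix with the lift encoded in a single parameter $c$, read off the two conditions of Corollary~\ref{cor:PolarizedCyclicAlternateDef1} from the maximal minors omitting the $\bar{\xi}$-row and those involving the $(0,\dots,0,\pm 1)$-row, and handle the remaining minors, which are linear in $c$ with leading coefficient $(-1)^k\det(A'_S)$, by taking $|c|$ sufficiently large. The only differences from the paper's proof are cosmetic (the paper rescales the $w'$-column by $(-1)^k$ so the distinguished row reads $(0\,|\,1)$, and it leaves the sign of the large $c$ unspecified where you verify the leading coefficients already carry the common sign $\tau$), so no further comparison is needed.
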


\begin{proof}
We will give a proof in the left cyclic case; the right cyclic case is similar. If a strong lift $\bar{\xi}$ exists as described, let $A', w'$ be representatives for $V,\eta$, and let $\left[\begin{array}{c|c} (x')^T & c \end{array} \right]$ be the matrix for $\bar{\xi}$ in the columns of $\left[ \begin{array}{c|c} A' & (-1)^k w' \end{array} \right]$. The maximal minors of
$\left[
\begin{array}{c|c}
(x')^T & c \\
\hline
A' & (-1)^k w' \\
\hline
0 & 1
\end{array}
\right]$ all have the same sign; it follows from Corollary~\ref{cor:PolarizedCyclicAlternateDef1} that $(V,\eta,\xi)$ is left cyclic.

Conversely, assume $(V,\eta,\xi)$ is left cyclic (with $A', w', x'$ chosen as above), and consider the above matrix with $c$ left unspecified. By assumption, maximal minors involving the bottom row all have the same sign, and maximal minors not involving the top row all have the same sign (the signs must thus agree in these two cases). Each of the finitely many maximal minors involving the top row, but not the bottom, can be written as $(-1)^k c$ times a maximal minor of $A'$ (all of which have the same sign), plus terms that are independent of $c$. Thus, for $c >> 0$ or $c << 0$, we can ensure that these minors have the same sign as the rest of the minors of this matrix, so a strong lift $\bar{\xi}$ exists as specified in the statement.
\end{proof}

\begin{corollary}
For a given $(n,k)$, the left cyclic and the right cyclic polarized arrangements $(V,\eta,\xi)$ form equivalence classes of polarized arrangements.
\end{corollary}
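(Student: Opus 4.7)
The plan is to leverage Corollary~\ref{cor:PolarizedCyclicAlternateDef2}, which rephrases left (resp.\ right) cyclicity of $(V,\eta,\xi)$ as the existence of a strong lift $\bar{\xi}$ of $\xi$ satisfying a total-positivity condition on an explicit $(n+2)\times(k+1)$ matrix. I will reduce this to a sign condition on those Plücker coordinates of the matrix $M(c):=\left[\begin{smallmatrix} A' & w' \\ (x')^T & c \\ 0 & 1 \end{smallmatrix}\right]$ that do not depend on the lift parameter $c$; since $M(c)$ is precisely the matrix used to test equivalence of strong polarized arrangements, the result will follow.

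I will handle the left cyclic case first, fixing matrix representatives $(A',w',x')$ of $(V,\eta,\xi)$. The matrix appearing in Corollary~\ref{cor:PolarizedCyclicAlternateDef2} is obtained from $M(c)$ by a cyclic row permutation together with a rescaling of the last column by $(-1)^k$; both operations multiply every maximal minor by one common nonzero constant, and therefore preserve the property of having all Plücker coordinates of a single projective sign. Hence $(V,\eta,\xi)$ is left cyclic iff there exists $c\in\R$ such that all maximal minors of $M(c)$ share a common projective sign.

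Next I classify the maximal minors of $M(c)$ according to whether they involve the middle row $(x')^T,c$ and the bottom row $0,1$. Minors omitting the middle row are constant in $c$. Minors using both the middle and bottom rows reduce, upon expansion along the bottom row, to $k\times k$ minors of $\left[\begin{smallmatrix} A' \\ (x')^T \end{smallmatrix}\right]$, and are thus also independent of $c$. Minors using the middle row but not the bottom row are linear in $c$, with leading coefficient equal to $\pm$ a maximal minor of $A'$. If the $c$-independent minors all share a common sign, then $V\in\Grr^{>0}_{k,n}$ (this being among the $c$-independent conditions), so all leading coefficients share a common sign, and the argument recalled in the proof of Corollary~\ref{cor:PolarizedCyclicAlternateDef2} supplies $c$ of sufficiently large absolute value and appropriate sign realizing the required sign on the remaining minors. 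Thus left cyclicity is equivalent to a sign condition on the $c$-independent Plücker coordinates of $M(c)$, and this condition is intrinsic to $(V,\eta,\xi)$.

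Finally, equivalence of polarized arrangements is defined by the existence of strong lifts with matching Plücker sign patterns on $M$, and the $c$-independent part of that sign pattern is determined purely by the underlying polarized arrangement. Hence equivalence preserves left cyclicity; conversely, any two left cyclic arrangements admit, by the above reduction, strong lifts whose $M$-matrices share the distinguished common sign pattern, so they give equivalent strong lifts and hence equivalent polarized arrangements. The right cyclic case is completely parallel, using the right cyclic half of Corollary~\ref{cor:PolarizedCyclicAlternateDef2}. The main obstacle is bookkeeping: correctly identifying which maximal minors of $M(c)$ are independent of $c$, and tracking the fixed sign corrections when passing between $M(c)$ and the matrices in Corollary~\ref{cor:PolarizedCyclicAlternateDef2}.
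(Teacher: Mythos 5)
Your overall route is the same as the paper's: invoke Corollary~\ref{cor:PolarizedCyclicAlternateDef2}, transfer the total-positivity condition on the lifted matrix to a Plücker-sign condition on the matrix $M(c)$ that tests equivalence of strong polarized arrangements, and conclude. Your analysis of which maximal minors of $M(c)$ depend on $c$, the use of the $c \gg 0$ argument to dispose of the $c$-dependent minors, and your closing observation that the $c$-independent sign pattern is independent of the lift and hence an invariant of the equivalence class, are all correct and in fact spell out steps that the paper compresses into a single sentence.

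However, one concrete step fails: the claim that the matrix of Corollary~\ref{cor:PolarizedCyclicAlternateDef2} is obtained from $M(c)$ by operations that ``multiply every maximal minor by one common nonzero constant,'' and hence that $(V,\eta,\xi)$ is left cyclic iff all maximal minors of $M(c)$ share a common projective sign. The cyclic row permutation is not of this kind: a maximal minor of the Corollary-2 matrix that uses the row $[(x')^T \mid c]$ together with $m$ rows of $A'$ differs from the corresponding minor of the permuted $M(c)$ by $(-1)^m$, where $m$ is $k$ or $k-1$ according as the bottom row $[0 \mid 1]$ is omitted or used; moreover the discrepancy between $w'$ and $(-1)^k w'$ is not a rescaling of the whole last column, since that column also contains $c$ and $1$. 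Carrying out the four-case bookkeeping, positivity of the Corollary-2 matrix translates into a sign pattern on the minors of $M(c)$ in which, relative to the minors using the bottom row but not the $\xi$-row, the minors using both the $\xi$-row and the bottom row carry an extra $(-1)^{k-1}$ and the minors using neither carry an extra $(-1)^{k}$; so for every $k \geq 1$ the correct pattern on $M(c)$ is genuinely non-constant, and your ``all one sign'' criterion is false (compare the $(-1)^k$ twist the paper itself records in the proof of the unpolarized corollary). The error is repairable: your argument never uses the specific form of the pattern, only that left cyclicity is equivalent to one fixed, row-set-dependent projective sign pattern on the $c$-independent minors of $M(c)$; replacing ``common sign'' by that fixed pattern throughout (including the ``distinguished common sign pattern'' in your final paragraph) completes the proof, and the right cyclic case needs the same correction.
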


\begin{proof}
It follows from Corollary~\ref{cor:PolarizedCyclicAlternateDef2} that $(V,\eta,\xi)$ is left cyclic if and only if it has a strong lift $(V,\eta,\bar{\xi})$, with $V$ the column span of $A'$, $\eta$ represented by $w'$, and $\bar{\xi}$ having matrix $\left[ \begin{array}{c|c} (x')^T & c \end{array} \right]$ in the columns of $\left[ \begin{array}{c|c} A' & (-1)^k w'\end{array} \right]$, such that the maximal minors of
$\left[
\begin{array}{c|c}
(x')^T & c \\
\hline
A' & (-1)^k w' \\
\hline
0 & 1
\end{array}
\right]$
all have the same sign. This condition on the signs of maximal minors is equivalent to a condition on the signs of maximal minors of the matrix
$\left[
\begin{array}{c|c}
A' & w' \\
\hline
(x')^T & c \\
\hline
0 & 1
\end{array}
\right]$
that specifies an equivalence class of strong polarized arrangements, and thus an equivalence class of polarized arrangements.
\end{proof}

\subsection{Vandermonde arrangements}\label{sec:Vandermonde}

Let $z_1 < \cdots < z_n \in \R \subset \C$ and let $V$ be the column span of the Vandermonde matrix
\begin{equation} \label{eq:VandermondeM}
\begin{bmatrix}
1 & z_1 & \cdots & z_1^{k-1} \\
1 & z_2 & \cdots & z_2^{k-1} \\
\vdots & & & \\
1 & z_n & \cdots & z_n^{k-1}
\end{bmatrix}.
\end{equation}
Let $\eta$ be the element of $\R^n / V$ represented by $w := (-1)^k (z_1^k, \ldots, z_n^k)$; then $(V,\eta)$ is an arrangement. If all the $z_i$ are rational then $(V,\eta)$ is rational; one can check that $(V,\eta)$ is cyclic using Section~\ref{sec:AlternateCyclic} or the proof of \cite[Proposition 6.8]{KarpWilliams}. If we write $v_1,\ldots,v_k$ for the columns of the above matrix and identify $V + \eta$ with $\R^k$ by sending $(a_1,\ldots,a_k) \in \R^k$ to $a_1 v_1 + \cdots + a_k v_k + w \in V + \eta$, then the $i^{th}$ hyperplane of the arrangement has equation
\[
a_1 + a_2 z_i + \cdots + a_k z_i^{k-1} + (-z_i)^k = 0.
\]

It follows from Section~\ref{sec:AlternateCyclic} that all cyclic arrangements are equivalent to ones arising from this Vandermonde construction in the sense defined above, and that given $n$ and $k$ they are all equivalent to each other (i.e. the choice of $z_i$ does not matter up to equivalence).

We now give a polarized analogue of this construction. Given points $z_0 < z_1 < \cdots < z_n$ in $\R \subset \C$, we define a left cyclic polarized arrangement $(V,\eta,\xi)$ with $(V,\eta)$ obtained from $z_1, \ldots, z_n$ as above. We let $\xi$ be the linear functional on $V$ whose matrix in the columns of the above Vandermonde matrix is $\begin{bmatrix} 1 & z_0 & \cdots & z_0^{k-1} \end{bmatrix}$. One can check that $(V,\eta,\xi)$ is left cyclic. Similarly, given $z_1 < \cdots < z_n < z_{n+1}$ in $\R \subset \C$, we define a right cyclic polarized arrangement $(V,\eta,\xi)$ where $(V,\eta)$ are defined as above and $\xi$ has matrix $(-1)^k \begin{bmatrix} 1 & z_{n+1} & \cdots & z_{n+1}^{k-1} \end{bmatrix}$. Again, all left and right cyclic polarized arrangements are equivalent to Vandermonde ones, which (given $n$, $k$, and a choice of left versus right) are all equivalent to each other.

\subsection{Symmetric powers}\label{sec:SymmetricPowers}

\subsubsection{Cyclic arrangements and \texorpdfstring{$\Sym^k(\C)$}{Sym-k(C)}}\label{sec:CyclicAndSymPowers}

Besides their relationship to amplituhedra, cyclic arrangements are also special in that their complexified complements $\cal{X}_{\cal{V}}$ are symmetric products of the punctured plane, as we explain below. While much of the material in this section is standard, we give a detailed exposition due to its conceptual importance in understanding our results.

\begin{proposition}
When $(V,\eta)$ is the Vandermonde arrangement of $n$ points $z_1 < \cdots < z_n$ in $\R \subset \C$ from Section~\ref{sec:Vandermonde}, the map from $\C^k$ to $\Sym^k(\C)$ sending $(a_1,\ldots,a_n)$ to the multi-set of roots of the polynomial $f(z) = (-z)^k + a_k z^{k-1} + \cdots + a_2 z + a_1$ restricts to a bijection from the complexified complement of $(V,\eta)$ (a subset of $(V + \eta)_{\C} \cong (\R^k)_{\C} = \C^k$) to $\Sym^k(\C \setminus \{z_1,\ldots,z_n\})$.
\end{proposition}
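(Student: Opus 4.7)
The plan is to prove the proposition in two essentially independent steps: first verify that the map in question (call it $\Phi : \C^k \to \Sym^k(\C)$) is a bijection globally, and then identify exactly which points of $\C^k$ lie in the preimage of $\Sym^k(\C\setminus\{z_1,\ldots,z_n\})$, matching this preimage with the complexified complement of the Vandermonde arrangement.

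For the first step, I would invoke the fundamental theorem of algebra together with Vieta's formulas. The polynomial $f(z) = (-z)^k + a_k z^{k-1} + \cdots + a_2 z + a_1$ has leading coefficient $(-1)^k$, so factoring it as $f(z) = (-1)^k \prod_{j=1}^k (z - r_j)$ determines an unordered $k$-tuple of roots $\{r_1,\ldots,r_k\} \in \Sym^k(\C)$. Conversely, given any multi-set $\{r_1,\ldots,r_k\} \subset \C$, the polynomial $(-1)^k \prod_j (z - r_j)$ is uniquely of the required form, and Vieta's formulas recover $(a_1,\ldots,a_k)$ explicitly from the elementary symmetric functions of $r_1,\ldots,r_k$. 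This shows $\Phi$ is a bijection with a polynomial inverse.

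For the second step, I would read off the hyperplane equations from the Vandermonde construction of Section~\ref{sec:Vandermonde}. Using the columns of the matrix \eqref{eq:VandermondeM} as a basis to identify $V + \eta$ with $\R^k$ (or $(V+\eta)_\C$ with $\C^k$), the $i^{\text{th}}$ hyperplane $H_i$ was shown in Section~\ref{sec:Vandermonde} to have equation
\[
a_1 + a_2 z_i + \cdots + a_k z_i^{k-1} + (-z_i)^k \;=\; 0,
\]
which is precisely $f(z_i) = 0$. The complexified hyperplane $(H_i)_\C$ has the same defining equation in $\C^k$. Therefore $(a_1,\ldots,a_k)$ lies in the complexified complement $\cal X_\cal{V}$ if and only if $f(z_i) \neq 0$ for every $i = 1,\ldots,n$, i.e.\ if and only if no $z_i$ is a root of $f$. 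Equivalently, $\Phi(a_1,\ldots,a_k)$ is a multi-set of $k$ points in $\C \setminus \{z_1,\ldots,z_n\}$. Combining with step one, $\Phi$ restricts to a bijection from $\cal X_\cal{V}$ onto $\Sym^k(\C \setminus \{z_1,\ldots,z_n\})$, as claimed.

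There is no real obstacle here; the content is entirely in recognizing that the Vandermonde hyperplane equation is nothing but the evaluation condition $f(z_i) = 0$. The only thing to be careful about is the bookkeeping of signs and the ordering convention (the coefficient of $z^k$ is $(-1)^k$, not $1$), so that $\Phi$ genuinely surjects onto $\Sym^k(\C)$ rather than onto some projectivized or rescaled variant.
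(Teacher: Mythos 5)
Your proposal is correct and follows essentially the same route as the paper: identify monic-up-to-sign degree-$k$ polynomials both with $\C^k$ via coefficients and with $\Sym^k(\C)$ via roots, and observe that the complexified hyperplane equation is exactly the evaluation condition $f(z_i)=0$. The only cosmetic difference is that you spell out the root--coefficient bijection via Vieta's formulas, which the paper leaves implicit.
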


\begin{proof}
We can identify $\Sym^k(\C)$ with the space of degree $k$ complex polynomials in a single variable $z$, with leading term $(-z)^k$ for reasons we will see below, by sending a polynomial to its (unordered) multi-set of roots. The subset $\Sym^k(\C \setminus \{z_1,\ldots,z_n\})$ of $\Sym^k(\C)$ gets identified with those polynomials that do not vanish at $z_1, \ldots, z_n$. On the other hand, the same set of degree $k$ complex polynomials can be identified with $\C^k$ by sending a polynomial $f(z) = (-z)^k + a_k z^{k-1} + \cdots + a_2 z + a_1$ to its vector of coefficients. Under this identification, the polynomials $f$ vanishing at $z_i$ correspond to the coefficient vectors $(a_1,\ldots,a_k)$ satisfying the equation
\[
(-z_i)^k + a_k z_i^{k-1} + \cdots + a_2 z_i + a_1 = 0,
\]
the complexified equation for the $i^{th}$ hyperplane of the Vandermonde arrangement $(V,\eta)$. Thus, the complexified complement of $(V,\eta)$ is identified with polynomials not vanishing at any $z_i$, and thus with $\Sym^k(\C \setminus \{z_1,\ldots,z_n\})$.
\end{proof}

For any cyclic arrangement $(V,\eta)$, an equivalence of $(V,\eta)$ with the Vandermonde arrangement for $z_1 < \cdots < z_n$ gives an identification of the complement of the complexification of $(V,\eta)$ with $\Sym^k(\C \setminus \{z_1,\ldots,z_n\})$.

\subsubsection{Distinguished Lagrangians}\label{sec:SymProdLagrangians}

Recall from Section~\ref{sec:AlgebrasGeometricAspects} that for an arrangement $(V,\eta)$, the complexified complement of $(V,\eta)$ has a distinguished family of noncompact Lagrangians given by the interiors of the compact regions of $(V,\eta)$, and that given a polarization $(V,\eta,\xi)$, we get a larger family consisting of interiors of bounded feasible regions. When $(V,\eta)$ is cyclic (resp. $(V,\eta,\xi)$ is left or right cyclic), we can view the interiors of compact (resp. bounded feasible) regions as Lagrangians in $\Sym^k(\C \setminus \{z_1,\ldots,z_n\})$.

\begin{proposition}\label{prop:LagrangiansCorrespond}
For cyclic $(V,\eta)$, the Lagrangians in $\Sym^k(\C \setminus \{z_1,\ldots,z_n\})$ given by interiors of compact regions of $(V,\eta)$ are the symmetric products of the straight-line Lagrangians connecting $z_i$ to $z_{i+1}$ for $1 \leq i \leq n-1$. For left cyclic $(V,\eta,\xi)$, the Lagrangians in $\Sym^k(\C \setminus \{z_1,\ldots,z_n\})$ given by interiors of bounded feasible regions are the same as above, except that one includes the straight-line Lagrangian between $-\infty$ and $z_1$ in $\R \subset \C$. For right cyclic $(V,\eta,\xi)$, one includes the straight-line Lagrangian between $z_n$ and $+\infty$ instead.
\end{proposition}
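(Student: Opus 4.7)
The plan is to work with the Vandermonde realization of a cyclic arrangement from Section~\ref{sec:Vandermonde}, reducing the general case by the equivalence of any cyclic arrangement with a Vandermonde one established in Section~\ref{sec:AlternateCyclic}. Under the identification of $V+\eta$ with $\R^k$, the complexified complement is $\C^k = \Sym^k(\C \setminus \{z_1,\dots,z_n\})$ via $(a_1,\dots,a_k) \mapsto \{\text{roots of } f\}$ where $f(z) = (-z)^k + a_k z^{k-1} + \cdots + a_1$, and the $i^{th}$ hyperplane is cut out by $f(z_i) = 0$. Thus a real polynomial $f$ lies in $\Delta_\alpha$ iff $\mathrm{sign}(f(z_i)) = \alpha_i$ for each $i$.

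The first step is to relate sign sequences to real roots. For $f$ real of degree $k$ with leading coefficient $(-1)^k$ and distinct real roots $n_1 < \cdots < n_k$ disjoint from $\{z_i\}$, one has $f(z) = (-1)^k \prod_j (z - n_j)$, so $\mathrm{sign}(f(z)) = (-1)^{\#\{j : n_j < z\}}$. Consequently $\var(\alpha) \leq k$, with equality iff each root lies in a distinct interval of the partition of $\R$ by the $z_i$. Furthermore $f(z) \to +\infty$ as $z \to -\infty$ and $f(z) \to (-1)^k \infty$ as $z \to +\infty$, giving asymptotic boundary data for $\alpha$.

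Next, I would handle the unpolarized cyclic case. If $\Delta_\alpha$ is compact then by Proposition~\ref{prop:SignPatternCpct} one has $\alpha_1 = +$, $\alpha_n = (-1)^k$, and $\var(\alpha) = k$. For $f$ in the interior of $\Delta_\alpha$, applying the intermediate value theorem between consecutive $z_i$ (each sign change produces a root) together with the matching asymptotics at $\pm\infty$ forces $f$ to have at least $k$ real roots; since $\deg f = k$, all roots are real, distinct, and lie in intervals $(z_{i_j}, z_{i_j+1})$ where the $i_j$ are exactly the positions of sign changes in $\alpha$. Conversely, as the root map restricts to a homeomorphism on the locus of polynomials with distinct real roots, the interior of $\Delta_\alpha$ is identified with the product $(z_{i_1}, z_{i_1+1}) \times \cdots \times (z_{i_k}, z_{i_k+1})$ inside $\Sym^k(\C \setminus \{z_1,\dots,z_n\})$, which is precisely the symmetric product of the claimed open straight-line Lagrangians.

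For the left cyclic case, Section~\ref{sec:Vandermonde} realizes $(V,\eta,\xi)$ as the first-hyperplane deletion of a cyclic Vandermonde arrangement $(V^l,\eta^l)$ on $z_0 < z_1 < \cdots < z_n$, and Lemma~\ref{lem:HH'helper} together with Corollary~\ref{cor:LeftCyclicBoundedFeasible} identifies bounded feasible sign sequences for $(V,\eta,\xi)$ with compact sign sequences for $(V^l,\eta^l)$ beginning with $+$. Applying the cyclic case to $(V^l,\eta^l)$ and forgetting the leading $+$, the resulting Lagrangians are products of $k$ intervals chosen from $\{(z_0,z_1),(z_1,z_2),\dots,(z_{n-1},z_n)\}$. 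Since $z_0$ is not a puncture of $\C \setminus \{z_1,\dots,z_n\}$, sliding $z_0$ to $-\infty$ stays within the equivalence class of left cyclic arrangements (Section~\ref{sec:AlternateCyclic}) and carries $(z_0,z_1)$ to $(-\infty,z_1)$ through a Hamiltonian isotopy supported away from the punctures, yielding the stated description. The right cyclic case is entirely symmetric, with $z_{n+1}$ slid to $+\infty$.

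The main technical point is the degree-count argument promoting the tautological inclusion ``product of intervals $\subseteq$ interior of $\Delta_\alpha$'' to an equality by ruling out complex roots; this is where Proposition~\ref{prop:SignPatternCpct} and the asymptotic analysis combine decisively. A secondary point, relevant to the symplectic interpretation, is checking that the isotopy replacing $(z_0,z_1)$ by $(-\infty,z_1)$ (and its right-cyclic analogue) can be implemented by a Hamiltonian isotopy of $\C \setminus \{z_1,\dots,z_n\}$, so that the identification of Lagrangians is meaningful in the wrapped Fukaya category of the complexified complement.
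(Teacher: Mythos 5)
Your treatment of the unpolarized cyclic case is essentially the paper's argument (sign changes of $\alpha$ force $k$ real roots in the prescribed intervals via the asymptotics and the intermediate value theorem, and conversely any such root configuration produces the sign pattern of a compact region), and that part is fine. The gap is in the left and right cyclic cases. The Lagrangians in the statement are the interiors of the bounded feasible regions $\Delta_{\alpha}$ of the $n$-hyperplane arrangement $(V,\eta,\xi)$ itself; these are \emph{not} the interiors of the compact regions $\Delta_{+\alpha}$ of the auxiliary $(n{+}1)$-hyperplane arrangement on $z_0 < z_1 < \cdots < z_n$. For $\alpha$ beginning with a minus, $\mathrm{int}(\Delta_{\alpha})$ consists of all real $f$ with the sign pattern $\alpha$ at $z_1,\ldots,z_n$, and its extra real root can lie \emph{anywhere} in $(-\infty,z_1)$, whereas $\mathrm{int}(\Delta_{+\alpha})$ only captures those $f$ whose root lies in $(z_0,z_1)$. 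So ``applying the cyclic case to $(V^l,\eta^l)$ and forgetting the leading $+$'' identifies the wrong subsets, and your repair --- ``sliding $z_0$ to $-\infty$ through a Hamiltonian isotopy'' --- does not close this: the proposition asserts an exact set-theoretic equality of subsets of $\Sym^k(\C\setminus\{z_1,\ldots,z_n\})$ (which is what feeds the later combinatorial identifications with Ozsv{\'a}th--Szab{\'o} idempotents), not an identification up to isotopy; moreover the regions $\Delta_{\alpha}$ do not depend on $z_0$ at all, so there is nothing for the isotopy to act on, and the claimed isotopy carrying a bounded arc to a half-infinite one is itself unjustified.

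The fix is the direct argument the paper gives, and you already have the needed ingredient in your first step but never deploy it in the polarized cases: if $\var_l(\alpha)=k$ and $\alpha_1=-$, then $\var(\alpha)=k-1$ produces $k-1$ roots in the intervals at the sign changes, and since $f(z)\to+\infty$ as $z\to-\infty$ while $f(z_1)<0$, the remaining root is real and lies somewhere in $(-\infty,z_1)$; conversely, any multiset of roots with one point in $(-\infty,z_1)$ and the rest in the indicated finite intervals gives a real $f$ whose sign sequence has $\var_l=k$, hence lies in $\cal{P}$. (The right cyclic case is the same using the sign $(-1)^k$ of $f$ at $+\infty$.) Alternatively, your auxiliary-arrangement idea can be salvaged by an exhaustion statement, $\mathrm{int}(\Delta_{\alpha})=\bigcup_{z_0<z_1}\mathrm{int}(\Delta_{+\alpha}(z_0))$, but proving that union identity requires exactly the same asymptotic argument, so it buys nothing over the direct route.
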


\begin{proof}
Let $(V,\eta)$ be cyclic and let $\alpha \in \cal{K}$; by Proposition~\ref{prop:SignPatternCpct}, we have $\var(\alpha) = k$ and $\alpha$ starts with a plus. We can view points in the interior of $\Delta_{\alpha}$ as polynomials $f(z) = (-z)^k + a_k z^{k-1} + \cdots + a_2 z + a_1$, with real coefficients, such that $\alpha(i) f(z_i) > 0$ for all $i$. Such polynomials $f$ have $k$ sign changes on the real axis because $\var(\alpha) = k$, so they have $k$ real roots. More precisely, if $\alpha$ changes sign after index $i_j$ for $1 \leq i_1 < \cdots < i_k < n$, then $f$ has a root between $z_{i_j}$ and $z_{i_j + 1}$ for $1 \leq j \leq k$. It follows that the multi-set of roots of $f$ lies in the symmetric product of straight lines from $z_{i_j}$ to $z_{i_j + 1}$ inside $\Sym^k(\C \setminus \{z_1,\ldots,z_n\})$ for $1 \leq j \leq k$.

Conversely, assume $f(z) = (-z)^k + a_k z^{k-1} + \cdots + a_2 z + a_1$ for arbitrary complex coefficients $a_i$, with $f(z_i) \neq 0$ for all $i$, and that the multi-set of roots of $f$ lies in the symmetric product of straight lines from $z_{i_j}$ to $z_{i_j + 1}$ for some $1 \leq i_1 < \cdots < i_k < n$. For $1 \leq i \leq n$, let $\alpha(i)$ denote the sign of $f(z_i)$; then $(a_1,\ldots,a_k) \in \Delta_{\alpha}$ and we have $\var(\alpha) = k$. Furthermore, $f(z_1) = (r_1 - z_1) \cdots (r_k - z_1) > 0$ since each root $r_j$ is greater than $z_1$, so the first sign of $\alpha$ is a plus. Thus, $\alpha \in \cal{K}$.

Now let $(V,\eta,\xi)$ be left cyclic and let $\alpha \in \cal{P}$; by Corollary~\ref{cor:LeftCyclicBoundedFeasible}, we have $\var_l(\alpha) = k$. For a point in the interior of $\Delta_{\alpha}$ viewed as a polynomial $f$ as above, if $\alpha$ starts with a plus then the above argument goes through and $f$ is in a symmetric product of finite-length straight lines. If $\alpha$ starts with a minus, then $\var_l(\alpha) = k-1$, so there exist $1 \leq i_1 < \cdots < i_{k-1} < n$ such that $f$ has a root between $z_{i_j}$ and $z_{i_j + 1}$ for $1 \leq j \leq k-1$. Furthermore, since $\lim_{z \to -\infty \in \R} f(z) = +\infty$ but $f(z_1) < 0$, $f$ must have a root on the real axis to the left of $z_1$. Thus, the multi-set of roots of $f$ lies in a member of the extended family of symmetric-product Lagrangians in $\Sym^k(\C \setminus \{z_1,\ldots,z_n\})$ from the statement. Conversely, if the multi-set of roots of $f$ lies in one of these Lagrangians, then $f$ has real coefficients and the region $\Delta_{\alpha}$ containing $(a_1,\ldots, a_k)$ satisfies $\var_l(\alpha) = k$; we thereby have $\alpha \in \cal{P}$.

Finally, let $(V,\eta,\xi)$ be right cyclic and let $\alpha \in \cal{P}$; we have $\var_r(\alpha) = k$. We consider two cases depending on whether the last sign of $\alpha$ is $\pm (-1)^k$; if it is $(-1)^k$, then $\var(\alpha) = \var_r(\alpha) = k$ and the argument proceeds as usual. If the last sign of $\alpha$ is $-(-1)^k$, note that $\lim_{z \to \infty \in \R} f(z) = (-1)^k$; the argument proceeds as before.
\end{proof}

\subsection{Dots in regions and partial orders}

\subsubsection{Sign sequences and dots in regions}\label{sec:SignSeqAndOSzIdems}

We introduce an alternate combinatorial description of bounded feasible sign sequences $\alpha$ in the left and right cyclic cases, closely mirroring the structure of the straight-line Lagrangians in Proposition~\ref{prop:LagrangiansCorrespond}. Let $V_l(n,k)$ denote the set of $k$-element subsets of $\{0,\ldots,n-1\}$ and let $V_r(n,k)$ denote the set of $k$-element subsets of $\{1,\ldots,n\}$. Following Ozsv{\'a}th--Szab{\'o} (see Section~\ref{sec:OSz} below), we draw elements of $V_l(n,k)$ as sets of $k$ dots in the regions $\{0,\ldots,n-1\}$ on the left of Figure~\ref{fig:Idems}; see the right of Figure~\ref{fig:Idems} for an example. Elements of $V_r(n,k)$ are drawn similarly as sets of $k$ dots in the regions $\{1,\ldots,n\}$ on the left of Figure~\ref{fig:Idems}.

\begin{figure}
\includegraphics[scale=0.62]{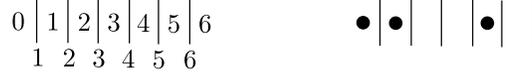}
\caption{Left: $n$ lines and $n+1$ regions between them. Right: a set of $3$ dots in the regions $\{0,1,4\} \subset \{0,\ldots,6\}$.}
\label{fig:Idems}
\end{figure}

\begin{definition}\label{def:StatesFromAlpha} \hfill
\begin{enumerate}[(i.)]
  \item \label{def:LeftIStatesFromAlpha} Let $\cal{V}$ be a left cyclic polarized arrangement so that $\cal{P}$ is the set of $\alpha \in \{+,-\}^n$ with $\var_l(\alpha) = k$.  There is a bijection  $\kappa_l\maps \cal{P}\to V_l(n,k)$ given by sending $\alpha \in \cal{P}$ to $\x_{\alpha} \subset \{0,\ldots,n-1\}$ with  $i \in \x_{\alpha}$ if there is a change after sign $i+1$ in the sequence $+\alpha$; the inverse sends  $\x \in V_l(n,k)$ to $\alpha_{\x}$, defined from $+\alpha_{\x}$   by starting with a $+$ (``step zero'') and writing $n$ signs to the right, introducing a sign change at step $i$ if and only if $i-1 \in \x$.

      \item \label{def:RightIStatesFromAlpha}Let $\cal{V}$ be a right cyclic polarized arrangement so that $\cal{P}$ is the set of $\alpha \in \{+,-\}^n$ with $\var_r(\alpha) = k$.  There is a bijection
           $\kappa_r\maps \cal{P} \to V_r(n,k)$ sending $\alpha$ to $\x_{\alpha} \subset \{1,\ldots,n\}$ defined by $i \in \x_{\alpha}$ if there is a change after sign $i$  in the sequence $\alpha(-1)^k$; the inverse sends $\x \in V_r(n,k)$ to $\alpha_{\x}$, defined from   $\alpha_{\x}(-1)^k$   by starting with $(-1)^k$ as the rightmost entry (``step zero'') and writing $n$ signs from right to left, introducing a sign change at step $i$ if and only if $n-i+1 \in \x$.
\end{enumerate}
\end{definition}

Let $V'(n,k)$ denote the set of $k$-element subsets of $\{1,\ldots,n-1\}$; we have $V'(n,k) \subset V_l(n,k), V_r(n,k)$. The above constructions give a bijection between $V'(n,k)$ and the set of $\alpha \in \cal{K}$ for a cyclic arrangement. In all cases (cyclic, left cyclic, and right cyclic), Proposition~\ref{prop:LagrangiansCorrespond} identifies the interior-of-region Lagrangian for a given $\alpha$ with the symmetric-product Lagrangian for $\x_{\alpha}$.

\subsubsection{The partial order for cyclic arrangements} \label{sec:partial-dots}

Let $z_0 < z_1 < \cdots < z_n \in \R \subset \C$, and let $(V,\eta,\xi)$ be the associated left cyclic Vandermonde arrangement from Section~\ref{sec:LRCyclicDefs}. We have a partial order on the set $\cal{P}$ of bounded feasible sign sequences for $(V,\eta,\xi)$ from Section~\ref{subsec:partial_order}. Identifying $\cal{P}$ with the set $V_l(n,k)$ of $k$-element subsets of $\{0,\ldots,n-1\}$ from Section~\ref{sec:SignSeqAndOSzIdems}, viewed as sets of dots in regions, we also have the lexicographic partial order on $V_l(n,k)$ generated by the relations $\x < \y$ when $\y$ is obtained from $\x$ by moving a dot one step to the right.

\begin{proposition}\label{prop:VandermondeOrders}
 For a left cyclic Vandermonde arrangement, the partial order on $\cal{P}$ induced by $\xi$ agrees with the order induced from the lexicographic order on $V_l(n,k)$ from the bijection $\kappa_l \maps \cal{P} \to V_l(n,k)$.
\end{proposition}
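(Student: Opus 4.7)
The plan is to work in the Vandermonde model of Section~\ref{sec:Vandermonde}, in which points of $V + \eta$ are parametrized by polynomials $f(z) = (-z)^k + a_k z^{k-1} + \cdots + a_1$. Choosing the strong lift of $\xi$ whose value at the origin is $(-z_0)^k$ makes $\bar{\xi}$ literally the evaluation map $f \mapsto f(z_0)$. For any $\mathbbm{x} = \{i_1 < \cdots < i_k\} \in \mathbb{B}$, the vertex $H_\mathbbm{x}$ is the unique polynomial $f_\mathbbm{x}(z) = (-1)^k \prod_{i \in \mathbbm{x}}(z - z_i)$, and since $z_0 < z_i$ for every $i \in \mathbbm{x}$ one finds $\bar{\xi}(H_\mathbbm{x}) = \prod_{i \in \mathbbm{x}}(z_i - z_0) > 0$.

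The first step is to pin down, for each $\alpha \in \cal{P}$ with $\kappa_l(\alpha) = \x = \{j_1 < \cdots < j_k\}$, which vertex of $\overline{\Delta_\alpha}$ maximizes $\bar{\xi}$. By Proposition~\ref{prop:LagrangiansCorrespond}, a polynomial $f \in \overline{\Delta_\alpha}$ has exactly one root in each interval $I_{j_l}$, where $I_0 = (-\infty, z_1]$ and $I_j = [z_j, z_{j+1}]$ for $j \geq 1$. Every factor $r_l - z_0$ of $f(z_0) = \prod_l (r_l - z_0)$ is strictly positive, so the product is uniquely maximized by placing each root at the right endpoint $z_{j_l + 1}$ of its interval. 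The maximizing vertex is thus $H_\mathbbm{x}$ with $\mathbbm{x} = \{j_1 + 1, \ldots, j_k + 1\}$, and a short sign check using $\alpha_j = (-1)^{\#\{p \in \x : p < j\}}$ together with the parity identity $\#\{i \in \mathbbm{x} : i < j\} + \#\{i \in \mathbbm{x} : i > j\} = k$ (for $j \notin \mathbbm{x}$) confirms $H_\mathbbm{x} \in \overline{\Delta_\alpha}$. Consequently, the composite $\mathbb{B} \xrightarrow{\mu} \cal{P} \xrightarrow{\kappa_l} V_l(n,k)$ is simply the shift $\mathbbm{x} \mapsto \mathbbm{x} - 1$.

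The last step is to match the generating relations of the two partial orders under this shift. An elementary lex slide $\x \lessdot \x'$ moving $j_l$ to $j_l + 1 \notin \x$ corresponds on the $\mathbb{B}$-side to swapping $j_l + 1$ for $j_l + 2$, so $|\mathbbm{x} \cap \mathbbm{x}'| = k - 1$ and $\bar{\xi}$ strictly increases since $z_{j_l + 1} - z_0 < z_{j_l + 2} - z_0$; hence this is a Gale cover $\mathbbm{x} \preceq \mathbbm{x}'$. Conversely, a Gale cover swaps some $a \in \mathbbm{x}$ for a strictly larger $b \in \mathbbm{x}'$, which on the $V_l$-side replaces $a - 1$ by $b - 1$. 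The hard part is verifying that such a one-element swap can always be realized as a sequence of legal single-step dot slides, even when positions between $a - 1$ and $b - 1$ are already occupied by dots of $\x$. This reduces to a standard combinatorial fact: the partial order on $V_l(n,k)$ generated by single-step slides coincides with the componentwise order on sorted $k$-tuples, proved by processing dots from the largest downward and sliding each into its target position without collision. With this lemma, the Gale cover decomposes into elementary lex slides, so the transitive closures of the two generating relations agree and the proposition follows.
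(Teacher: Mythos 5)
Your argument is correct, and its core computation is the same as the paper's: identify points of $\Delta_{\alpha}$ with polynomials $f(z)=(-z)^k+a_kz^{k-1}+\cdots+a_1$, observe that a strong lift of $\xi$ is $f\mapsto f(z_0)=\prod_l(r_l-z_0)$ with the roots $r_l$ confined to the intervals determined by $\x$, and conclude that the maximum over $\Delta_{\alpha}$ is attained when every root sits at the right endpoint of its interval, so that the composite $\kappa_l\circ\mu$ is the shift $\mathbbm{x}\mapsto\mathbbm{x}-1$ (this is exactly Corollary~\ref{cor:BijectionsAgree}; your right-endpoint conclusion is the correct one, even though the paper's proof writes ``$r_j\to z_{i_j}$''). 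Where you genuinely differ is in the final step, and your version is more complete: the paper only compares adjacent regions $\alpha\leftrightarrow\beta$ and asserts that the proposition follows, whereas the order of Section~\ref{subsec:partial_order} is generated by arbitrary one-element swaps of bases $\mathbbm{x}\mapsto(\mathbbm{x}\setminus\{a\})\cup\{b\}$, so one must still know that such a swap with $b>a$ lies in the transitive closure of single-step dot slides; you make this reduction explicit via the standard lemma that the single-step order on $k$-element subsets coincides with the componentwise order on sorted tuples, proved by the largest-dot-first sliding argument, which is exactly what is needed. One small slip: when $0\in\x$ the root in $(-\infty,z_1]$ may lie to the left of $z_0$, so the factor $r_1-z_0$ need not be strictly positive; this does not affect the conclusion (either that factor is $\le 0$, making the product at most $0$, or all factors are positive and each is bounded by its right-endpoint value), but the blanket claim ``every factor is strictly positive'' should be qualified accordingly.
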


\begin{proof}
Let $\alpha \in \cal{P}$, and identify $V$ with $\R^k$ using the columns of the Vandermonde matrix from \eqref{eq:VandermondeM}.
 The points $(a_1,\ldots,a_k) \in \R^k$ lying in the region $\Delta_{\alpha}$ are those satisfying the inequalities
\[
\alpha(i) \left( (-z_i)^k + a_k z_i^{k-1} + \cdots + a_2 z_i + a_1 \right) \geq 0.
\]
On the other hand, we can view $(a_1,\ldots,a_k)$ as a degree-$k$ polynomial $f_{a_1,\ldots,a_k}(z) = (-z)^k + a_k z^{k-1} + \cdots + a_2 z + a_1$ of one variable $z$, and this identification is a bijection between $\R^k$ and the set of real-coefficient degree-$k$ polynomials in $z$ with leading term $(-z)^k$. Under this identification, $\Delta_{\alpha}$ is the set of such polynomials $f$ such that $\alpha(i) f(z_i) \geq 0$ for all $i$, i.e. that either $f(z_i) = 0$ or the sign of $f(z_i)$ is $\alpha_i$ for all $i$. The interior of $\Delta_{\alpha}$ is the set of $f$ such that $f(z_i)$ is nonzero and has the same sign as $\alpha_i$ for all $i$. Note that if $f$ is in the interior of $\Delta_{\alpha}$, then since $\var(\alpha) = k$, all roots of $f$ are real and lie in the regions between the $z_i$ coming from the element $\x$ of $V_l(n,k)$ corresponding to $\alpha$ (see Figure~\ref{fig:Idems}). Thus, if $\x = \{i_1 < \cdots < i_k\}$, then we can write $f(z) = (r_1 - z) \cdots (r_k - z)$ where
\begin{itemize}
\item if $i_1 = 0$, then $r_1 < z_1$;
\item if $i_j \neq 0$, then $z_{i_j} < r_j < z_{i_j + 1}$.
\end{itemize}

Now, up to an additive constant that does not depend on $f$, the value of $\xi$ at $f$ is the evaluation $f(z_0)$. Taking the constant to be zero, the maximum value attained by $\xi$ on $\Delta_{\alpha}$ is the supremum of $f(z_0)$ over all $f$ in the interior of $\Delta_{\alpha}$. By the above, we have $f(z_0) = (r_1 - z_0)\cdots(r_k - z_0)$. This quantity approaches its supremum (over the interior of $\Delta_{\alpha}$) as $r_j \to z_{i_j}$ for all $j$, so the supremum is $(z_{i_1} - z_0) \cdots (z_{i_k} - z_0)$. Thus, for $\alpha \leftrightarrow \beta$ (corresponding to $\x,\y$ such that $\y$ is obtained from $\x$ by moving a dot one step), we have $\alpha < \beta$ if and only if $\y$ is obtained from $\x$ by increasing the value of $i_j$ by one for some $j$ (i.e. moving a dot of $\x$ one step to the right), and the proposition follows.
\end{proof}

An analogous result holds in the right cyclic case with the following modifications. We start with $z_1 < \cdots < z_n < z_{n+1}$. For $f$ in the interior of $\Delta_{\alpha}$ we have $f(z) = (r_1 - z) \cdots (r_k - z)$ where
\begin{itemize}
\item if $i_k = n$, then $r_k > z_n$;
\item if $i_j \neq n$, then $z_{i_j} < r_j < z_{i_j + 1}$
\end{itemize}
(analogously to above, we let $\alpha$ correspond to $\x = \{i_1 < \cdots < i_k\} \subset \{1,\ldots,n\}$). The value of $\xi$ at $f$ is $(-1)^k f(z_{n+1})$ up to an additive constant; the supremum of this value over $f$ in the interior of $\Delta_{\alpha}$ is $(z_{n+1} - z_{i_1}) \cdots (z_{n+1} - z_{i_k})$. We conclude that for $\alpha \leftrightarrow \beta$ corresponding to $\x,\y \subset \{1,\ldots,n\}$, we have $\alpha < \beta$ if and only if $\y$ is obtained from $\x$ by moving a dot one step to the left.

The proof of Proposition~\ref{prop:VandermondeOrders} also lets us conclude that our bijections $\cal{P} \leftrightarrow V_l(n,k)$ from Section~\ref{sec:SignSeqAndOSzIdems} and $\cal{P} \leftrightarrow \mathbb{B}$ from Section~\ref{subsec:partial_order} are related straightforwardly.

\begin{corollary}\label{cor:BijectionsAgree}
If $(V,\eta,\xi)$ is the left cyclic Vandermonde arrangement associated to $z_0 < z_1 < \cdots < z_n \in \R \subset \C$ and we have $\alpha \in \cal{P}$, then the element $\mathbbm{x} \in \mathbb{B}$ associated to $\alpha$ in Definition~\ref{def:StatesFromAlpha} is obtained from the element $\x \in V_l(n,k)$ associated to $\alpha$ in Section~\ref{subsec:partial_order} by adding one to each $i \in \x$.
\end{corollary}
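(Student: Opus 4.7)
The plan is to carry out a careful refinement of the analysis in the proof of Proposition~\ref{prop:VandermondeOrders}, this time tracking not just the supremum value of $\xi$ on $\Delta_\alpha$ but the specific point of $H_{\mathbbm{x}} \subset \overline{\Delta_\alpha}$ at which the supremum is attained. Recall from that proof that interior points of $\Delta_\alpha$ correspond to real-coefficient polynomials $f(z) = \prod_{j=1}^k (r_j - z)$ with $\sign(f(z_i)) = \alpha(i)$, and that writing $\x = \kappa_l(\alpha) = \{i_1 < \cdots < i_k\} \subset \{0,\dots,n-1\}$, the roots satisfy $r_j \in (z_{i_j}, z_{i_j+1})$ when $i_j \ge 1$ and $r_1 \in (-\infty, z_1)$ when $i_1 = 0$. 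Since $\xi$ and the evaluation $f \mapsto f(z_0)$ agree up to an additive constant, the task reduces to maximizing $f(z_0) = \prod_{j=1}^k (r_j - z_0)$ over the allowed ranges.

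Next I would analyze this maximum factor by factor. Because $z_0 < z_1 \le z_{i_j}$ for each $j \ge 2$ (and for $j=1$ when $i_1 \ge 1$), every factor $(r_j - z_0)$ in that range is strictly positive and strictly increasing in $r_j$, so it is maximized as $r_j \to z_{i_j+1}$ from below. The remaining case is $j=1$ with $i_1 = 0$: here $r_1 \in (-\infty, z_1)$, and since all the other factors are already positive, maximizing the product requires making $(r_1 - z_0)$ as positive as possible, which again forces $r_1 \to z_1 = z_{i_1+1}$ from below. In every case the supremum is thus approached as $r_j \to z_{i_j + 1}$ for all $j$, and the limiting polynomial is $f(z) = \prod_{j=1}^k (z_{i_j+1} - z)$.

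Finally I would identify this limiting polynomial with the vertex $H_{\mathbbm{x}}$ at which $\xi$ attains its maximum on $\Delta_\alpha$. Under the Vandermonde identification, the hyperplane $H_\ell$ is cut out by $f(z_\ell) = 0$, and the limiting polynomial vanishes precisely at $z_{i_1+1}, \dots, z_{i_k+1}$. Hence $\mathbbm{x}_\alpha = \{i_1+1, \dots, i_k+1\} = \{i+1 : i \in \x_\alpha\}$, which is the desired equality.

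The only delicate point is the case $i_1 = 0$, where the interval for $r_1$ is unbounded below and the supremum could a priori be approached in the wrong direction; the argument shows that the sign constraints in the other factors force $r_1 \to z_1$ rather than $r_1 \to -\infty$. All other steps are essentially bookkeeping once one has the factorization $f(z) = \prod (r_j - z)$ and the interpretation of $\xi$ as evaluation at $z_0$, both of which are already established in the ambient discussion.
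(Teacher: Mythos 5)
Your proof is correct and follows essentially the same route as the paper: it identifies the $\xi$-maximizing point of $\Delta_\alpha$ from the root-interval description in the proof of Proposition~\ref{prop:VandermondeOrders} (roots pushed to the right endpoints $z_{i_j+1}$, including the careful treatment of the unbounded interval when $i_1=0$), and then reads off $\mathbbm{x}$ from the vanishing locus of the limiting polynomial. This matches the paper's argument, which likewise observes that the maximizer has roots at the right endpoints of the dot regions and that the right endpoint of region $i$ is $z_{i+1}$.
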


\begin{proof}
By definition, $\mathbbm{x}$ is the set of indices of the $k$ hyperplanes at whose (unique) intersection point the functional $\xi$ takes its maximum value on $\Delta_{\alpha}$. By the proof of Proposition~\ref{prop:VandermondeOrders}, the point of $\Delta_{\alpha}$ maximizing the value of $\xi$ corresponds to a polynomial $f$ whose roots lie at the right endpoints of the regions containing the dots of $\x$. The hyperplane $H_i$ consists of those polynomials vanishing at $z_i$, and the right endpoint of a region labeled $i \in \{0,\ldots,n-1\}$ is $i+1$.
\end{proof}

If $(V,\eta,\xi)$ is a right cyclic Vandermonde arrangement, one can show similarly that the elements $\mathbbm{x} \in \mathbb{B}$ and $\x \in V_r(n,k)$ associated to $\alpha$ agree as subsets of $\{1,\ldots,n\}$.

Proposition~\ref{prop:VandermondeOrders} and Corollary~\ref{cor:BijectionsAgree} (resp. their right cyclic analogues) hold for general left cyclic (resp. right cyclic) polarized arrangements, since their claims are preserved under equivalence and all left and right cyclic polarized arrangements are equivalent to Vandermonde ones.

\subsection{Cyclicity and Gale duality}\label{sec:AltGale}

For a polarized arrangement $(V,\eta,\xi)$, its Gale dual is $(V^{\perp}, -\xi, -\eta)$, so its alt Gale dual is $(\alt(V^{\perp}), -\alt(\xi), -\alt(\eta))$. Thus, the polarization reversal of its alt Gale dual is $(\alt(V^{\perp}), -\alt(\xi), \alt(\eta))$. Similarly, the alt Gale dual of its polarization reverse is $(\alt(V^{\perp}), \alt(\xi), -\alt(\eta))$. Note that alt commutes with Gale duality and polarization reversal; the relevant question is the ordering of Gale duality and polarization reversal.

\begin{theorem}\label{thm:AltGale}
A polarized arrangement $(V,\eta,\xi)$ is right cyclic if and only if the polarization reversal of its alt Gale dual $(\alt(V^{\perp}), -\alt(\xi), \alt(\eta))$ is left cyclic.
\end{theorem}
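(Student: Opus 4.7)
My plan is to use Corollary~\ref{cor:PolarizedCyclicAlternateDef2} to repackage both ``right cyclic'' and ``left cyclic'' as total-positivity conditions on explicit one-parameter families of subspaces of $\R^{n+2}$, and then match these two families via alt-perpendicular complement in $\R^{n+2}$.

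Concretely, choose matrix representatives: write $V$ as the column span of an $n \times k$ matrix $A$, with $\eta$ represented by $w \in \R^n$ and $\xi$ by a covector $y^T \in (\R^n)^*$. The corollary says that $(V,\eta,\xi)$ is right cyclic iff for some real parameter $d$ (the value $\bar\xi(w)$ of the strong lift), the column span $W_1(d) \subset \R^{n+2}$ of
\[
M_1(d) = \left[\begin{array}{c|c} 0 & 1 \\ \hline A & w \\ \hline (-1)^k y^T A & (-1)^k d \end{array}\right]
\]
lies in $\Grr^{>0}_{k+1,n+2}$. Writing $V^\perp = \mathrm{col}(B)$ for an $n \times k'$ matrix $B$ with $k'=n-k$, the polarization reversal of the alt Gale dual $(\alt V^\perp, -\alt\xi, \alt\eta)$ has subspace spanned by $\alt B$, with ``$\eta$''-representative $-\alt y$ and ``$\xi$''-representative $\alt w^T$. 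Applying the same corollary on that side, its left cyclicity is equivalent to
\[
M_2(d') = \left[\begin{array}{c|c} \alt w^T \alt B & d' \\ \hline \alt B & -\alt y \\ \hline 0 & (-1)^{k'} \end{array}\right]
\]
having column span $W_2(d') \in \Grr^{>0}_{k'+1,n+2}$ for some $d'$.

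The core technical step is to verify the identity $\alt_{n+2}(W_1(d)^\perp) = W_2(d - y^T w)$ for every $d$, where $\alt_{n+2}$ flips signs of even-indexed (1-indexed) coordinates of $\R^{n+2}$. I would prove this by parametrizing $W_1(d)^\perp$ explicitly by pairs $(u,t) \in V^\perp \times \R$, applying $\alt_{n+2}$ coordinate-wise, and matching the output with the parametric form of $W_2(d-y^T w)$ after substituting $u\mapsto -\alt u\in\alt V^\perp$ and rescaling the last-column parameter by a sign. The two key sign identities are $y^T w=(\alt y)^T(\alt w)$ and, for $u\in V^\perp$, $u^T w=-(\alt u)^T(\alt w)$.

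With that identity established, the theorem follows from the standard fact cited in Section~\ref{sec:PolarizationDefs}: for any subspace $U\subset\R^N$, the Pl\"ucker coordinates of $U$ and of $\alt_N(U^\perp)$ agree projectively under index complementation. This gives $W_1(d)\in\Grr^{>0}$ iff $W_2(d-y^Tw)\in\Grr^{>0}$, and since $d\mapsto d-y^Tw$ is a bijection on $\R$, existence of a totally positive $W_1(d)$ is equivalent to existence of a totally positive $W_2(d')$. I expect the main obstacle to be the sign bookkeeping in the alt-perpendicular identity: the exponents $(-1)^k$ and $(-1)^{k'}=(-1)^{n-k}$ in the bottom rows of $M_1$ and $M_2$ must cooperate exactly with the alt-signs of $\R^{n+2}$ and its embedded $\R^n$, and it is this matching that forces the precise combination of Gale duality, alt, and polarization reversal in the statement (rather than, e.g., Gale duality alone or a different sign convention). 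As a sanity check, one could alternatively reduce to verifying a single Vandermonde example, exploiting the fact---established earlier in this section---that the left and right cyclic polarized arrangements each form a single equivalence class for given $(n,k)$ and that all three operations preserve equivalence.
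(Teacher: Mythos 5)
Your proposal is correct in substance, and I checked the key identity: parametrizing $W_1(d)^\perp$ by pairs $(u_0,b)\in V^\perp\times\R$ as $\bigl(-u_0^Tw-b(-1)^k(d-y^Tw),\;u_0-b(-1)^ky,\;b\bigr)$ and applying $\alt_{n+2}$ does land exactly on $W_2(d-y^Tw)$ under the substitution $u_0\mapsto -u_0$, $t'=(-1)^{k+1}b$, so the existential quantifiers over strong lifts match up via the bijection $d\mapsto d-y^Tw$. The route differs from the paper's in its packaging: the paper uses Corollary~\ref{cor:PolarizedCyclicAlternateDef1} rather than Corollary~\ref{cor:PolarizedCyclicAlternateDef2}, so it never introduces a strong lift; instead it verifies \emph{two} alt-perpendicular identities in $\R^{n+1}$ (one for $(\phi,\id)(V+\langle\eta\rangle)$ against $(\alt(\eta),\id)(\alt(V^\perp))$, one for $(\id,(-1)^k\xi)(V)$ against $(\id,(-1)^{n-k}\phi)(\alt(V^\perp)+\langle-\alt(\xi)\rangle)$), written out in the canonical Postnikov-type representative where $V$ is spanned by $\bigl[\,I_k\,;\,(-1)^{j+k}\widetilde A\,\bigr]$ with $A$ totally positive. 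Your version trades that explicit canonical-form bookkeeping for one parametric perpendicular computation in $\R^{n+2}$ plus the bookkeeping of the lift parameter; both rest on the same standard fact that $U$ and $\alt(U^\perp)$ have (projectively) equal Pl\"ucker coordinates, so total positivity is preserved. The one-shot $\R^{n+2}$ formulation is arguably cleaner, while the paper's two-identity version avoids any discussion of lifts and is what its Corollary~\ref{cor:PolarizedCyclicAlternateDef1} was set up for.

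One small correction to your ``key sign identities'': for $u\in V^\perp$ the pairing satisfies $(\alt u)^T(\alt w)=u^Tw$ (the two sign flips cancel), not $u^Tw=-(\alt u)^T(\alt w)$. The minus sign you need in the first coordinate comes from your substitution $u\mapsto-\alt u$ (equivalently $u_0'=-u_0$ above), not from the pairing itself; with that adjustment the computation closes up exactly as you intend.
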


\begin{proof}
Given either condition we have $V \in \Grr^{>0}_{k,n}$, so there exists a unique totally positive matrix $A$ of size $(n-k) \times k$ such that $V$ is the column span of the matrix
$\left[
\begin{array}{c}
I_k \\
\hline
(-1)^{j+k} \widetilde{A}
\end{array}
\right]$. Let $w \in \R^{n-k}$ be the unique vector such that $\left[ \begin{array}{c} 0 \\ \hline (-1)^k w \end{array} \right]$ represents $\eta \in \R^n / V$. Let $[(-1)^{j} x^T]$ be the matrix of $\xi$ in the columns of the matrix representing $V$.
Then
\[
(\phi,\id)(V + \langle \eta \rangle) = {\rm colspan}\left[
\begin{array}{c|c}
1 & 0 \\
\hline
0 & I_k \\
\hline
(-1)^k w & (-1)^{j+k} \widetilde{A}
\end{array}
\right], \qquad
(\id, (-1)^k \xi)(V) ={\rm colspan} \left[
\begin{array}{c}
I_k \\
\hline
(-1)^{j+k} \widetilde{A} \\
\hline
(-1)^{j+k} x^T
\end{array}
\right].
\]

Note that $V^{\perp}$ can be viewed as the column span of
$\left[
\begin{array}{c}
(-1)^{i+k} (\widetilde{A})^T \\
\hline
-I_{n-k}
\end{array}
\right]$
where $()^T$ denotes the transpose (we are viewing elements of both $\R^n$ and $(\R^n)^*$ as column vectors). We can multiply column $j$ by $(-1)^{k+j}$ to view $V^{\perp}$ as the column span of
$\left[
\begin{array}{c}
(-1)^{i+j} (\widetilde{A})^T \\
\hline
(-1)^{k+j-1} I_{n-k}
\end{array}
\right]$. Thus, $\alt(V^{\perp})$ is the column span of
$\left[
\begin{array}{c}
(-1)^{j-1} (\widetilde{A})^T \\
\hline
I_{n-k}
\end{array}
\right]$.

Since
$\left[
\begin{array}{c}
0 \\
\hline
(-1)^k w
\end{array}
\right]$
represents $\eta \in \R^n/V$, we can take
$\left[
\begin{array}{c}
0 \\
\hline
(-1)^{i-1} w
\end{array}
\right]$ to represent $\alt(\eta)$. The matrix of $\alt(\eta)$ as a linear functional on $\alt(V^{\perp})$, in the basis for $\alt(V^{\perp})$ given by columns of the above matrix, is thus
$\left[
\begin{array}{c}
(-1)^{j-1} w^T
\end{array}
\right]$. We see that $(\alt(\eta),\id)(\alt(V^{\perp}))$ is the column span of
$\left[
\begin{array}{c}
(-1)^{j-1} w^T \\
\hline
(-1)^{j-1} (\widetilde{A})^T \\
\hline
I_{n-k}
\end{array}
\right]$.

The vector
$\left[
\begin{array}{c}
(-1)^i x \\
\hline
0
\end{array}
\right]$
in $(\R^n)^*$ represents $\xi \in (\R^n)^*/V^{\perp}$; indeed, dot products of this vector with the columns of
$\left[
\begin{array}{c}
I_k \\
\hline
(-1)^{j+k} \widetilde{A}
\end{array}
\right]$
give the matrix
$\left[
\begin{array}{c}
(-1)^{j} x^T
\end{array}
\right]$
for $\xi$ in this basis for $V$. It follows that the vector
$\left[
\begin{array}{c}
x \\
\hline
0
\end{array}
\right]$
represents $-\alt(\xi) \in (\R^n)^*/V^{\perp}$, so
\[
(\id, (-1)^{n-k} \phi)(\alt(V^{\perp}) + \langle -\alt(\xi) \rangle)
= {\rm colspan}
\left[
\begin{array}{c|c}
(-1)^{j-1} \widetilde{A}^T & (-1)^{n-k} x \\
\hline
I_{n-k} & 0 \\
\hline
0 & 1
\end{array}
\right].
\]

By Corollary~\ref{cor:PolarizedCyclicAlternateDef1} and the above setup, $(V,\eta,\xi)$ is right cyclic if and only if the maximal minors of
$\left[
\begin{array}{c|c}
1 & 0 \\
\hline
0 & I_k \\
\hline
(-1)^k w & (-1)^{j+k} \widetilde{A}
\end{array}
\right]$
and of
$\left[
\begin{array}{c}
I_k \\
\hline
(-1)^{j+k} \widetilde{A} \\
\hline
(-1)^{j+k} x^T
\end{array}
\right]$
are all positive, while $(\alt(V^{\perp}), -\alt(\xi), \alt(\eta))$ is left cyclic if and only if the maximal minors of
$\left[
\begin{array}{c}
(-1)^{j-1} w^T \\
\hline
(-1)^{j-1} (\widetilde{A})^T \\
\hline
I_{n-k}
\end{array}
\right]$
and
$\left[
\begin{array}{c|c}
(-1)^{j-1} \widetilde{A}^T & (-1)^{n-k} x \\
\hline
I_{n-k} & 0 \\
\hline
0 & 1
\end{array}
\right]$
are all positive.

It suffices to show that the column spans of the matrices for $(V,\eta,\xi)$ are the alt perpendiculars of the column spans of the matrices for $(\alt(V^{\perp}), -\alt(\xi), \alt(\eta))$. Indeed, the perpendicular of $(\alt(\eta),\id)(\alt(V^{\perp})$ is the column span of
$\left[
\begin{array}{c|c}
-1 & 0 \\
\hline
0 & -I_k \\
\hline
(-1)^{i-1}w & (-1)^{i-1} \widetilde{A}
\end{array}
\right]$, or equivalently of
$\left[
\begin{array}{c|c}
1 & 0 \\
\hline
0 & (-1)^j I_k \\
\hline
(-1)^{i}w & (-1)^{i+j} \widetilde{A}
\end{array}
\right]$, so its alt-perpendicular is the column span of
$\left[
\begin{array}{c|c}
1 & 0 \\
\hline
0 & I_k \\
\hline
(-1)^{k} w & (-1)^{j+k} \widetilde{A}
\end{array}
\right]$. Similarly, the perpendicular of $(\id,(-1)^k\xi)(V)$ is the column span of
$\left[
\begin{array}{c|c}
(-1)^{i+k} \widetilde{A}^T & (-1)^{i+k} x \\
\hline
-I_{n-k} & 0 \\
\hline
0 & -1
\end{array}
\right]$
or equivalently of
$\left[
\begin{array}{c|c}
(-1)^{i+j} \widetilde{A}^T & (-1)^{i+n-k+1} x \\
\hline
(-1)^{k+j-1}I_{n-k} & 0 \\
\hline
0 & (-1)^n
\end{array}
\right]$, so the alt perpendicular of $(\id,(-1)^k\xi)(V)$ is the column span of
$\left[
\begin{array}{c|c}
(-1)^{j-1} \widetilde{A}^T & (-1)^{n-k} x \\
\hline
I_{n-k} & 0 \\
\hline
0 & 1
\end{array}
\right]$.

\end{proof}

\begin{corollary}
The algebras $A(\cal{V})$ and $B(\cal{V})$ for right cyclic polarized arrangements $\cal{V}$ are Koszul dual to the Ringel duals of the algebras $A(\cal{V}')$ and $B(\cal{V}')$ respectively for left cyclic polarized arrangements $\cal{V}'$. Equivalently, $A(\cal{V})$ is Ringel dual to $B(\cal{V}')$ and $B(\cal{V})$ is Ringel dual to $A(\cal{V}')$.
\end{corollary}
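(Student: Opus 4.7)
The plan is to combine Theorem~\ref{thm:AltGale} with the three bulleted facts recalled at the end of Section~\ref{sec:conv}: that Gale duality of a polarized arrangement exchanges the algebras $A$ and $B$ up to isomorphism (in particular, $A(\cal{V}^\vee) \cong B(\cal{V})$) and corresponds to Koszul duality on algebras, that polarization reversal corresponds to Ringel duality of algebras, and that alt acts trivially on the algebras (as an isomorphism).

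First, I would invert the identification provided by Theorem~\ref{thm:AltGale}. Each of Gale duality $\vee$, alt, and polarization reversal $p$ is an involution on polarized arrangements. So if $\cal{V}$ is right cyclic and
\[
\cal{V}' \;:=\; p\bigl(\alt(\cal{V}^\vee)\bigr)
\]
is the left cyclic polarized arrangement produced by Theorem~\ref{thm:AltGale}, then successively applying $p$, then $\alt$, then $\vee$ to both sides gives the reverse identity
\[
\cal{V} \;=\; \bigl(\alt(p(\cal{V}'))\bigr)^{\vee}.
\]

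Next, I would apply the algebra versions of the three operations in sequence. Using first that Gale duality exchanges $A$ and $B$ (so that $A(\cal{W}^\vee) \cong B(\cal{W})$), then that $\alt$ induces isomorphisms of the algebras, and finally that polarization reversal corresponds to Ringel duality $\cal{R}$ of algebras, I obtain the chain
\[
A(\cal{V}) \;=\; A\!\left(\bigl(\alt(p(\cal{V}'))\bigr)^{\vee}\right) \;\cong\; B\bigl(\alt(p(\cal{V}'))\bigr) \;\cong\; B(p(\cal{V}')) \;\cong\; \cal{R}\bigl(B(\cal{V}')\bigr).
\]
The parallel computation with the roles of $A$ and $B$ interchanged yields $B(\cal{V}) \cong \cal{R}(A(\cal{V}'))$. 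These two isomorphisms are precisely the ``equivalently'' statement at the end of the corollary. The first sentence then follows by taking Koszul duals and using that Koszul duality sends $A(\cal{V}')$ to $B(\cal{V}')$ (Gale duality at the level of algebras) and commutes with Ringel duality on the finite-dimensional quasi-hereditary Koszul algebras in question.

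The only substantive obstacle is the bookkeeping needed to verify the inversion formula $\cal{V} = (\alt(p(\cal{V}')))^{\vee}$, since $p$ and $\vee$ do not commute (only alt commutes with each of the other two); once this is checked directly using involutivity of each operation, the chain of three isomorphisms above is a mechanical application of the already-established bulleted facts, and the final Koszul--Ringel reformulation is standard.
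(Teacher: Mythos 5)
Your proposal is correct and follows essentially the same route the paper intends: the corollary is stated as an immediate consequence of Theorem~\ref{thm:AltGale} combined with the duality dictionary (Gale duality $\leftrightarrow$ Koszul duality with $A(\cal{W}^{\vee}) \cong B(\cal{W})$, polarization reversal $\leftrightarrow$ Ringel duality, alt $\leftrightarrow$ isomorphism), which is exactly the chain you run after inverting the relation $\cal{V}' = p(\alt(\cal{V}^{\vee}))$. Your bookkeeping of the involutions and the appeal to the compatibility of Koszul and Ringel duality for these quasi-hereditary Koszul algebras match the paper's implicit argument.
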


\subsection{Deletions and restrictions of cyclic arrangements}\label{sec:DelRestCyclic}

\begin{definition}\label{def:AltRes}
Let $\cal{V}^i = (V^i,\eta^i,\xi^i)$ be obtained from $\cal{V}=(V,\eta,\xi)$ by restricting to the $i^{th}$ hyperplane.  The \emph{signed restriction} $\cal{V}^i_{{\rm sign}}=(V^i_{{\rm sign}},\eta^i_{{\rm sign}},\xi^i_{{\rm sign}})$ of $\cal{V}$ is the polarized arrangement obtained from $\cal{V}^i$ by applying the automorphisms of $\R^{n-1}$ and $(\R^{n-1})^*$ represented by the diagonal matrix with $j^{th}$ entry $1$ if $j < i$ and $-1$ if $j \geq i$.  We say that $\cal{V}^i_{{\rm sign}}$ is obtained by signed-restricting $\cal{V}$ to the $i^{th}$ hyperplane.  We use the same terminology in the unpolarized case, and refer to
$(V^i_{{\rm sign}},\eta^i_{{\rm sign}})$ as the signed restriction of $(V,\eta)$.
\end{definition}

\begin{lemma}  Let $\cal{V}=(V,\eta,\xi)$ be a left cyclic (resp. right cyclic) arrangement. \hfill
\begin{enumerate}[(i)]
  \item \label{lem:DeletedCyclicIsCyclic}
 Let $\cal{V}_i = (V_i,\eta_i,\xi_i)$ be obtained from $\cal{V}$ by deleting the $i^{th}$ hyperplane as in Section~\ref{sec:GeneralDeletionRestriction}. Then $\cal{V}_i$ is left cyclic (resp. right cyclic).

\item \label{lem:RestrictedCyclicIsCyclic}
Let $\cal{V}^i_{{\rm sign}}$ be the arrangement obtained by signed-restricting $\cal{V}$ to the $i^{th}$ hyperplane (see Definition \ref{def:AltRes}).  Then $\cal{V}^i_{{\rm sign}}$ is left cyclic (resp. right cyclic).
\end{enumerate}
\end{lemma}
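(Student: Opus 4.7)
The plan is to reduce both claims to statements about Vandermonde arrangements, using the fact from Section~\ref{sec:Vandermonde} that every left cyclic (respectively, right cyclic) polarized arrangement is equivalent to a Vandermonde one on some sequence $z_0 < z_1 < \cdots < z_n$ (respectively, $z_1 < \cdots < z_n < z_{n+1}$). Both deletion and signed restriction are defined at the level of the underlying linear-algebraic data, and they descend to operations on affine oriented matroid programs; it therefore suffices to analyze their effect on Vandermonde representatives. The main subtlety will lie in part (ii): naive restriction to a coordinate hyperplane produces sign sequences in which ``half'' the indices are inverted relative to the Vandermonde convention, and Definition~\ref{def:AltRes} is designed precisely to compensate for this sign discrepancy.

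For part (i), deleting the $i^{th}$ hyperplane of a Vandermonde arrangement on $z_1 < \cdots < z_n$ simply removes $z_i$, producing the Vandermonde arrangement on $z_1 < \cdots < \widehat{z_i} < \cdots < z_n$ in the same rank $k$. The polarization point $z_0$ (respectively, $z_{n+1}$) is untouched and still sits to the left (respectively, right) of all surviving $z_j$, so $\cal{V}_i$ is a left cyclic (respectively, right cyclic) Vandermonde arrangement.

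For part (ii), we use the polynomial interpretation from Section~\ref{sec:CyclicAndSymPowers}. Points of $V + \eta$ correspond to polynomials $f(z) = (-z)^k + a_k z^{k-1} + \cdots + a_1$, and the $i^{th}$ hyperplane $H_i$ consists of those $f$ with $f(z_i) = 0$. Writing $f(z) = (z_i - z)\, g(z)$ for $g(z) = (-z)^{k-1} + b_{k-1} z^{k-2} + \cdots + b_1$ gives a linear identification of $H_i$, and hence of $V^i + \eta^i$, with the affine space of degree-$(k-1)$ polynomials of this form. For $j \neq i$, $f(z_j) = (z_i - z_j)\, g(z_j)$, so $\sign(f(z_j)) = \sign(g(z_j))$ when $j < i$ and $\sign(f(z_j)) = -\sign(g(z_j))$ when $j > i$. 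The sign flip in Definition~\ref{def:AltRes} acts on coordinates $j \geq i$ of the reindexed ambient $\R^{n-1}$, which under the natural reindexing $\{1,\ldots,n\} \setminus \{i\} \to \{1,\ldots,n-1\}$ are precisely the old indices $j > i$. Hence $\cal{V}^i_{\mathrm{sign}}$ has the same feasible and bounded sign sequences as the rank-$(k-1)$ Vandermonde arrangement on $\{z_j : j \neq i\}$, and thus the same affine oriented matroid; the two ambient parametrizations differ only by the positive diagonal rescaling of $\R^{n-1}$ with entries $|z_i - z_j|$, which does not affect the combinatorial arrangement.

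It remains to track the polarization. In the left cyclic case $\xi$ is, up to an additive constant, the evaluation $f \mapsto f(z_0)$, so on $H_i$ it restricts to $g \mapsto (z_i - z_0)\, g(z_0)$, a strictly positive scalar multiple of $g(z_0)$ because $z_0 < z_1 \leq z_i$; positive rescaling and additive constants of the objective function preserve the oriented matroid program, and since $z_0$ remains strictly less than every surviving $z_j$, we conclude that $\cal{V}^i_{\mathrm{sign}}$ is left cyclic. The right cyclic case is analogous: up to an additive constant, $\xi$ is $(-1)^k f(z_{n+1})$, and on $H_i$ this restricts to $(-1)^{k-1}(z_{n+1} - z_i)\, g(z_{n+1})$ with $z_{n+1} - z_i > 0$, matching the right cyclic Vandermonde polarization at $z_{n+1}$ in rank $k-1$ up to positive rescaling.
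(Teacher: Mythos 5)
Your proof is correct, but it takes a different route from the paper. The paper dismisses part (i) as straightforward and derives part (ii) from part (i) by duality: by Theorem~\ref{thm:AltGale} the combination of alt, Gale duality and polarization reversal exchanges left and right cyclic arrangements, Gale duality exchanges restriction and deletion, and the sign change built into Definition~\ref{def:AltRes} is exactly the discrepancy between alt of a deletion and deletion of an alt, yielding $\alt(p(\cal{V}_{\sign}^i)^{\vee}) = (\alt(p(\cal{V})^{\vee}))_i$ and hence (ii) from (i). You instead reduce to Vandermonde representatives and compute directly in the polynomial model, where restriction to $H_i$ means factoring $f(z) = (z_i - z)g(z)$ and the sign flip of Definition~\ref{def:AltRes} exactly compensates the negative factors $z_i - z_j$ for $j > i$; this is a legitimate reduction since left (resp.\ right) cyclic arrangements form a single equivalence class and deletion/signed restriction are compatible with equivalence (they are deletion/contraction of the associated oriented matroid programs), though you assert rather than justify that compatibility, and your intermediate remark that equal feasible and bounded sign sequences already give the same affine oriented matroid is not a valid inference on its own --- fortunately your explicit identification of $\cal{V}^i_{\sign}$ with the smaller Vandermonde arrangement via the positive diagonal rescaling by $|z_i - z_j|$ (together with the positive factors $z_i - z_0$, resp.\ $z_{n+1}-z_i$, on the objective) is what actually carries the argument, and it is sound. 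The trade-off: the paper's duality argument is shorter given Theorem~\ref{thm:AltGale} and explains conceptually why the signed restriction is the right notion, while your computation is self-contained, exhibits an explicit positive change of coordinates identifying $\cal{V}^i_{\sign}$ with a Vandermonde arrangement on $\{z_j\}_{j\neq i}$, and meshes nicely with the symmetric-product picture of Section~\ref{sec:SymmetricPowers}, where restricting to $H_i$ amounts to fixing a root at $z_i$.
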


\begin{proof}
The proof of part~\eqref{lem:DeletedCyclicIsCyclic} is straightforward; we will prove part~\eqref{lem:RestrictedCyclicIsCyclic}. First assume $\V = (V,\eta,\xi)$ is left cyclic; then the alt Gale dual of the polarization reversal of $\V$ (namely $\alt(p(\cal{V})^{\vee})= (V^{\perp}, \alt(\xi), - \alt(\eta))$) is right cyclic. The $i^{th}$ restriction of the polarization reversal of $\V$ is the polarization reversal of the $i^{th}$ restriction of $\V$, i.e $p(\cal{V})^i = p(\cal{V}^i)$.
The Gale dual of this is the $i^{th}$ deletion of the Gale dual of the polarization reversal of $\V$, so that  $(p(\cal{V})^i)^{\vee} = p(\cal{V}^i)^{\vee} = (p(\cal{V})^{\vee})_i$.  Thus, the alt Gale dual of the polarization reversal of the $i^{th}$ restriction of $\V$ is alt of the $i^{th}$ deletion of the Gale dual of the polarization reversal of $\V$, i.e. $\alt(p(\cal{V}^i)^{\vee}) = \alt((p(\cal{V})^{\vee})_i)$. Alt of a deletion and deletion of an alt are related by a sign-change automorphism as in the definition of the signed restriction, so
\[
\alt(p(\cal{V}_{\sign}^i)^{\vee}) = (\alt(p(\cal{V})^{\vee}))_i.
\]
By part~\eqref{lem:DeletedCyclicIsCyclic} and the above, the alt Gale dual $\alt(p(\cal{V}_{\sign}^i)^{\vee})$ of the polarization reversal of the $i^{th}$ signed restriction of $\V$ is right cyclic, so the signed restriction $\cal{V}_{\sign}^i$ of $\V$ in the statement of the lemma is left cyclic. The case when $\V$ is right cyclic, rather than left cyclic, is similar.
\end{proof}

Note that the algebras associated to $\cal{V}^i_{\sign}$ are naturally isomorphic to those associated to $\cal{V}^i$. Pick an ordered basis for $V^i$ and a representative in $\R^{n-1}$ for $\eta'$, so that we can view $\cal{V}^i$ as a polarized arrangement of $n-1$ affine hyperplanes in $\R^{k-1}$. Define an ordered basis for $V^i_{\sign}$ and representative for $\eta^i_{\sign}$ by sign-changing the basis vectors for $V^i$ and the representative of $\eta^i$ as in Definition \ref{def:AltRes}, so that we can also view $\cal{V}^i_{\sign}$ as a polarized arrangement of $n-1$ affine hyperplanes in $\R^{k-1}$. The above automorphism of $\R^{n-1}$ and $(\R^{n-1})^*$, sending $\cal{V}^i$ to $\cal{V}^i_{\sign}$, has the effect of reversing the co-orientation on the hyperplanes $H_j$ of $\cal{V}^i$ for $j \geq i$ (coming from hyperplanes $H_j$ of the original arrangement $\cal{V}$ with $j > i$) while keeping the direction of $\xi^i$ unchanged.

The analogous statements in the unpolarized case are also true.

\begin{lemma}
Let $(V,\eta)$ be a cyclic arrangement.
\begin{enumerate}[(i)]
\item Let $(V_i,\eta_i)$ be obtained from $(V,\eta)$ by deleting the $i^{th}$ hyperplane. Then $(V_i,\eta_i)$ is cyclic.
\item Let $(V^i_{\sign},\eta^i_{\sign})$ be obtained by signed-restricting to the $i^{th}$ hyperplane. Then $(V^i_{\sign},\eta^i_{\sign})$ is cyclic.
\end{enumerate}
\end{lemma}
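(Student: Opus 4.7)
The plan is to deduce both parts of this unpolarized lemma directly from the polarized version proved just above, rather than redoing the work from scratch. The key observation is that any cyclic unpolarized arrangement $(V,\eta)$ admits an extension to a left cyclic polarized arrangement $(V,\eta,\xi)$: by Section~\ref{sec:Vandermonde}, up to equivalence we may take $(V,\eta)$ to be the Vandermonde arrangement on some $z_1<\cdots<z_n$, and then any $z_0<z_1$ produces a left cyclic $\xi$. More abstractly, the condition on $\xi$ that makes $(V,\eta,\xi)$ left cyclic is given by positivity of Plücker coordinates as in Corollary~\ref{cor:PolarizedCyclicAlternateDef1}, which is an open condition that is nonempty because the Vandermonde example realizes it; so extensions exist on general grounds.

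For part (i), fix such an extension $\cal{V}=(V,\eta,\xi)$, left cyclic, and consider its polarized deletion $\cal{V}_i=(V_i,\eta_i,\xi_i)$. By the polarized version of the lemma, $\cal{V}_i$ is left cyclic, so in particular the underlying unpolarized arrangement of $\cal{V}_i$ is cyclic. But the unpolarized and polarized deletion operations agree on the $(V,\eta)$ part by construction, so this underlying arrangement is exactly $(V_i,\eta_i)$, the unpolarized deletion of $(V,\eta)$.

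For part (ii), the same reduction works with signed restriction in place of deletion: the polarized signed restriction $\cal{V}^i_{\sign}=(V^i_{\sign},\eta^i_{\sign},\xi^i_{\sign})$ is left cyclic by the polarized lemma, and its underlying unpolarized arrangement $(V^i_{\sign},\eta^i_{\sign})$ therefore satisfies the three cyclicity conditions. Since the unpolarized signed restriction of $(V,\eta)$ is by definition this same pair, the conclusion follows.

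The only substantive step is the extension from unpolarized to left cyclic polarized, which is essentially immediate from the Vandermonde picture; everything else is unpacking definitions. I do not foresee a real obstacle. As a sanity check, one could also give direct proofs: for (i), the matrix $\bigl[\begin{smallmatrix}1&0\\w&A\end{smallmatrix}\bigr]$ from Proposition~\ref{prop:AlternateCyclicDef} has all positive maximal minors, and deleting row $i+1$ preserves this, yielding cyclicity of $(V_i,\eta_i)$; for (ii), one could mimic the alt Gale duality argument used in the polarized proof, exchanging signed restriction with deletion on a suitable dual. The reduction to the polarized case is simply the most economical route.
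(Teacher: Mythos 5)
Your reduction of the unpolarized lemma to the polarized one is a legitimate and natural route (the paper itself offers no proof, merely asserting the unpolarized statements as analogous, and the polarized lemma is where the real work happens; note also that a left cyclic polarized arrangement has cyclic underlying $(V,\eta)$, as the paper records, so the deduction from the polarized lemma is immediate once a polarization exists). The gap is in the one step you yourself flag as substantive: the existence, for an \emph{arbitrary} cyclic $(V,\eta)$, of a $\xi$ making $(V,\eta,\xi)$ left cyclic. Your ``general grounds'' argument is fallacious: the set of admissible $\xi$ for a fixed $V$ is cut out by the open positivity conditions of Corollary~\ref{cor:PolarizedCyclicAlternateDef1}, and the fact that this set is nonempty for the Vandermonde subspace says nothing about its nonemptiness for a different cyclic $V$ --- an open condition can perfectly well be empty. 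Your primary argument, ``up to equivalence we may take $(V,\eta)$ to be Vandermonde, then choose $z_0$,'' also does not literally produce a $\xi$ on the given $(V,\eta)$: equivalence is a relation between (generally different) subspaces via their oriented matroids, so this WLOG is only legitimate if you also know that the conclusion of the lemma is an invariant of the equivalence class, i.e.\ that deletion and signed restriction of equivalent arrangements are equivalent (that cyclicity is a class property is in the paper). This compatibility is true --- for deletion, the Pl\"ucker signs of $(\phi_i,\id)(V_i+\langle\eta_i\rangle)$ are a sub-collection of those of $(\phi,\id)(V+\langle\eta\rangle)$, and restriction corresponds to oriented-matroid contraction --- but you never state or use it, and without it (or a genuine construction of $\xi$ for the given $V$, e.g.\ surjectivity of the row-deletion map $\Grr^{>0}_{k,n+1}\to\Grr^{>0}_{k,n}$, which also requires an argument) the proof is incomplete.

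The fix is cheap, and you essentially have it in your ``sanity check.'' For (i), the direct argument via Proposition~\ref{prop:AlternateCyclicDef} is already a complete proof: all maximal minors of the matrix representing $(\phi,\id)(V+\langle\eta\rangle)$ have the same sign, and the minors of the row-deleted matrix, whose column span is $(\phi_i,\id)(V_i+\langle\eta_i\rangle)$, form a subset of these (cyclicity guarantees the deletion is defined). For (ii), either mimic the polarized proof's duality argument in the unpolarized setting, or keep your reduction to the polarized lemma but first record the equivalence-invariance of deletion and signed restriction so that the Vandermonde WLOG (and hence the explicit $z_0$-polarization) is justified.
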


\subsection{Examples}

\subsubsection{$k=1$}

Choosing real numbers $z_1 < \cdots < z_n$, the $k=1$ Vandermonde arrangement $(V,\eta)$ associated to these numbers has $V$ given by the column span of $\begin{bmatrix} 1 \\ \vdots \\ 1 \end{bmatrix}$ and $\eta$ represented by $\begin{bmatrix} -z_1 \\ \vdots \\ -z_n \end{bmatrix}$. Identifying $V + \eta$ with $\R^1$ using this data, the hyperplane $H_i$ of the arrangement $\cal{H}_{\cal{V}}$ has equation $x = z_i$ with positive region $x > z_i$. Define $\xi_l$ by choosing $z_0 < z_1$, so that $\xi_l$ has matrix $\begin{bmatrix} 1 \end{bmatrix}$. Define $\xi_r$ by choosing $z_{n+1} > z_n$, so that $\xi_r$ has matrix $\begin{bmatrix} -1 \end{bmatrix}$.

\begin{figure}
\includegraphics[width=\textwidth]{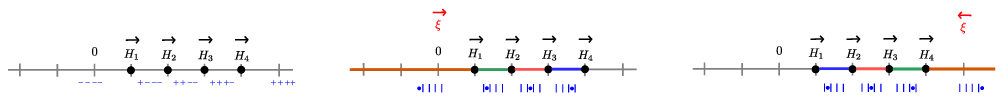}
\caption{A cyclic arrangement with left cyclic and right cyclic polarizations for $n=4, k=1$.}
\label{fig:n4k1}
\end{figure}

The arrangement $\cal{H}_{\cal{V}} \subset \R^1$ is shown in Figure~\ref{fig:n4k1} (left) for $n=4$ and $z_i = i$ for all $i$. The figure also shows the regions $\Delta_{\alpha}$ for $\alpha \in \cal{F}$, labeled by their sign sequences $\alpha$. The middle picture of Figure~\ref{fig:n4k1} indicates the left cyclic polarization arising from the choice of $z_0 = 0 < z_1$; the regions $\Delta_{\alpha}$ for $\alpha \in \cal{P}$ are colored and labeled by sets of dots in regions. The right picture of Figure~\ref{fig:n4k1} does the same for the right cyclic polarization arising from the choice of $z_{5} = 5 > z_4$.

When $\cal{V}$ is a cyclic arrangement with $k=1$, the hypertoric variety $\M_{\cal{V}}$ is isomorphic to the Milnor fiber of the type $A_{n-1}$ Kleinian singularity $\C^2 / \Z_n$; this is the family of varieties studied by Gibbons--Hawking \cite{GibbonsHawking} in the context of gravitational instantons. These are also the varieties appearing in \cite{KS}. Moreover, if we choose a left cyclic polarization of $\cal{V}$, then Khovanov--Seidel's Lagrangians in $\M_{\cal{V}}$ are the relative core Lagrangians $X_{\alpha}$ for $\alpha \in \cal{P}$. The algebra $B(\cal{V})$ in this case is isomorphic to the Khovanov--Seidel quiver algebra $A_{n-1}$ (this is also true for right cyclic polarizations); the algebra $A(\cal{V})$ is isomorphic to its Koszul dual $A_{n-1}^!$. In \cite{Man-KS}, the algebra $B(\cal{V})$ was presented as a quotient of Ozsv{\'a}th--Szab{\'o}'s algebra $\B_l(n,1)$ (see Section~\ref{sec:OSz} below); we will see that $\B_l(n,1) \cong \tilde{B}(\cal{V})$.

\subsubsection{$k=n-1$}

Let $V$ be the column span of the matrix
$\begin{bmatrix}
1 &  \\
1 & 1 & \\
  & 1 & \ddots &  \\
	&   & \ddots & 1  \\
	&   &        & 1 & 1 \\
	&   &        &   & 1
\end{bmatrix}$ of size $n \times (n-1)$,
and choose any $\eta, \xi$ such that $(V,\eta,\xi)$ is left or right cyclic. The form of this matrix implies that the algebra $B(\cal{V})$ is isomorphic to$A_{n-1}^!$, the Koszul dual Khovanov--Seidel algebra (thus this is true for all left and right cyclic arrangements for $k=n-1$). In the rational case, $\M_{\cal{V}}$ is isomorphic to $T^* \C P^{n-1}$, which is studied by Calabi \cite{Calabi} in precursor work to the theory of hypertoric varieties.

The isomorphism $\B_l(n,n-1) \cong \tilde{B}(\cal{V})$ below presents $A_{n-1}^!$ as a quotient of $\B_l(n,n-1)$, in close analogy to the $k=1$ case studied in \cite{Man-KS}. It is interesting to compare with \cite{LaudaManion}, which considers certain finite-dimensional quotients of $\B_l(n,k)$ that are related to category $\cal{O}$. While the $k=1$ quotient in \cite{LaudaManion} is $A_{n-1}$, the $k=n-1$ quotient is not $A_{n-1}^!$ but instead a significantly more complicated algebra. Here, unlike in \cite{LaudaManion}, the cases $k=1$ and $k=n-1$ are equally simple.

Unlike for the finite-dimensional algebras, it is not true that $\B_l(n,1)$ and $\B_l(n,n-1)$ are Koszul dual to each other. Rather, as shown by Ozsv{\'a}th--Szab{\'o} in \cite{OSzNew}, $\B_l(n,k)$ is Koszul dual to an algebra formed from $\B_l(n,n-k)$ (or the isomorphic algebra $\B_r(n,n-k)$) by adding additional algebra generators $C_i$, together with a homological grading and a differential.

\subsubsection{$n=4, k=2$}

Consider the $(n=4,k=2)$ Vandermonde arrangement associated to $1 < 2  < 3 < 4$, where $V$ is the column span of $\begin{bmatrix} 1 & 1 \\ 1 & 2 \\ 1 & 3 \\ 1 & 4 \end{bmatrix}$ and $\eta$ is represented by $\begin{bmatrix} 1 & 4 & 9 & 16 \end{bmatrix}$. Define $\xi_l$ by letting $z_0 = 0$, so that $\xi_l$ has matrix $\begin{bmatrix} 1 & 0 \end{bmatrix}$ in the columns of $V$. Define $\xi_r$ by letting $z_5 = 5$, so that $\xi_r$ has matrix $\begin{bmatrix} 1 & 5 \end{bmatrix}$ in the columns of $V$.

\begin{figure}
\includegraphics[width=.5\textwidth]{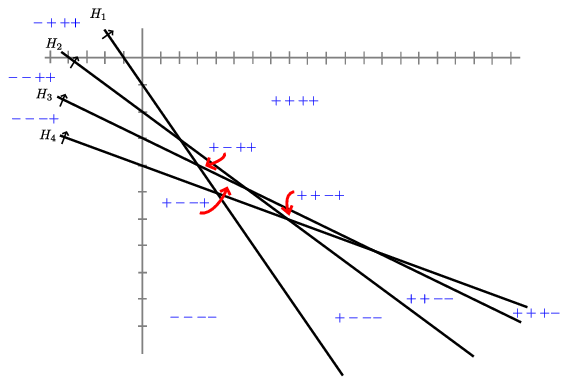}

\includegraphics[width=.5\textwidth]{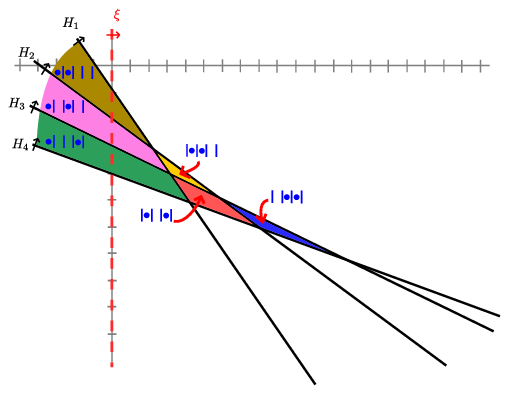}

\includegraphics[width=.5\textwidth]{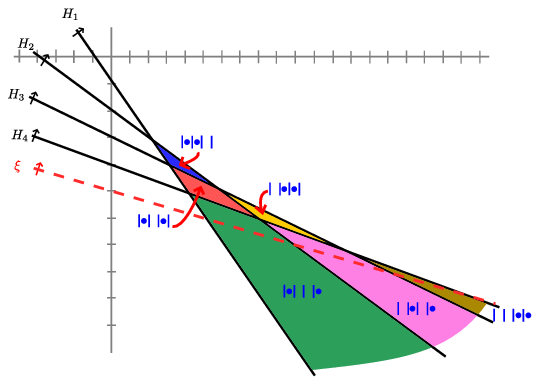}

\caption{A cyclic arrangement with left cyclic and right cyclic polarizations for $n=4, k=2$.}
\label{fig:n4k2}
\end{figure}

The arrangement $\cal{H}_{\cal{V}} \subset (V + \eta) \cong \R^2$ is shown in Figure~\ref{fig:n4k2} (left), with the regions $\Delta_{\alpha}$ for $\alpha \in \cal{F}$. The middle picture of Figure~\ref{fig:n4k2} indicates the left cyclic polarization $\xi_l$, and the right picture of Figure~\ref{fig:n4k2} does the same for $\xi_r$. In both cases, the regions $\Delta_{\alpha}$ for $\alpha \in \cal{P}$ are colored and labeled by sets of dots in regions.

\section{Ozsv{\'a}th--Szab{\'o} algebras as hypertoric convolution algebras}\label{sec:OSz}

\subsection{Definitions}

We define the graded algebra $\B(n,k)$ from \cite{OSzNew} using the generators-and-relations description from \cite{MMW1}. First we introduce some terminology. Let $V(n,k)$ be the set of $k$-element subsets $\x \subset \{0,\ldots,n\}$.

\begin{definition}\label{def:SmallStep}
Let $\B(n,k)$ be the path algebra of the quiver with vertex set $V(n,k)$ and arrows
\begin{itemize}
\item for $1 \leq i \leq n$, $R_i$ from $\x$ to $\y$ and $L_i$ from $\y$ to $\x$ if $\x \cap \{i-1,i\} = \{i-1\}$ and $\y = (\x \setminus \{i-1\}) \cup \{i\}$,
\item for $1 \leq i \leq n$, $U_i$ from $\x$ to $\x$ for all $\x \in V(n,k)$
\end{itemize}
modulo the relations
\begin{enumerate}
\item\label{it:UCommute} $R_i U_j = U_j R_i$, $L_i U_j = U_j L_i$, $U_i U_j = U_j U_i$,
\item\label{it:RLU} $R_i L_i = U_i$, $L_i R_i = U_i$,
\item\label{it:DistantCommute} $R_i R_j = R_j R_i$, $L_i L_j = L_j L_i$, $L_i R_j = R_j L_i$ ($|i-j|>1$),
\item\label{it:TwoLinePass} $R_{i-1} R_{i} = 0$, $L_i L_{i-1} = 0$,
\item\label{it:UVanish} $U_i \Ib_{\x} = 0$ if $\x \cap \{i-1,i\} = \emptyset$.
\end{enumerate}
The relations are assumed to hold for any linear combination of quiver paths with the same starting and ending vertices and labels $R_i,L_i,U_i$ as described; $\Ib_{\x}$ denotes the trivial path at $\x \in V(n,k)$.  The elements $\Ib_{\x} \in B(n,k)$ give a complete set of orthogonal idempotents. We define a grading on $\B(n,k)$ by setting $\deg(R_i) = \deg(L_i) = 1$ and $\deg(U_i) = 2$; we can refine to a multi-grading by $\Z\langle e_1, \ldots, e_n \rangle$ by setting $\deg(R_i) = \deg(L_i) = e_i$ and $\deg(U_i) = 2e_i$. Our single and multiple gradings are two times the single and multiple gradings defined in \cite{OSzNew}.
\end{definition}

Recall from Section~\ref{sec:SignSeqAndOSzIdems} that we let $V_l(n,k)$ denote the subset of $V(n,k)$ consisting of $k$-element subsets of $\{0,\ldots,n-1\}$. Similarly, $V_r(n,k)$ denotes the set of $k$-element subsets of $\{1,\ldots,n\}$, and $V'(n,k)$ denotes the set of $k$-element subsets of $\{1,\ldots,n-1\}$.

\begin{definition} \label{def:variants-OSz}
Let $\B_l(n,k) = \oplus_{\x, \x' \in V_l(n,k)} \Ib_{\x} \cdot \B(n,k) \Ib_{\x'}$, $\B_r(n,k) = \oplus_{\x, \x' \in V_r(n,k)} \Ib_{\x} \cdot \B(n,k) \Ib_{\x'}$, and $\B'(n,k) = \oplus_{\x, \x' \in V'(n,k)} \Ib_{\x} \cdot \B(n,k) \Ib_{\x'}$.
\end{definition}

To build the idempotents $\Ib_{\x}$ into the structure, we can view all of the above algebras as categories (enriched in graded abelian groups) whose objects are $\x \in V(n,k)$, $V_l(n,k)$, $V_r(n,k)$, or $V'(n,k)$ as appropriate. We refer to this definition of Ozsv{\'a}th--Szab{\'o}'s algebras as the small-step quiver description; there is also a ``big-step'' quiver description that is more transparently equivalent to Ozsv{\'a}th--Szab{\'o}'s original definitions.

In \cite[Section 3.6]{OSzNew}, Ozsv{\'a}th--Szab{\'o} define an anti-automorphism of $\B(n,k)$ that restricts to an anti-automorphism of $\B_l(n,k)$, $\B_r(n,k)$ and $\B'(n,k)$ given as follows.

\begin{definition} \label{def:PsiOS}
The anti-automorphism $\psi_{OSz}\colon \B(n,k) \to \B(n,k)^{{\rm opp}}$ sends $R_i \mapsto L_i$, $L_i \mapsto R_i$, and $U_i \mapsto U_i$ in the small-step quiver description of $\B(n,k)$.
\end{definition}

\begin{remark}\label{rem:AurouxStops}
Ozsv{\'a}th--Szab{\'o} introduced $\B(n,k)$ and its relatives in \cite{OSzNew} as part of an algebraic theory that can be used for very efficient computations of knot Floer homology (see also \cite{OSzNewer,OSzHolo,OSzLinks}). Their theory is based on the ideas of bordered Floer homology; given a link (say in $\R^3$), it can be viewed as computing a Heegaard Floer invariant of the link complement by writing the complement as a composition of 3d cobordisms between planes $\C$ with various numbers of punctures $z_1, \ldots, z_n$. By \cite[Theorem 3.25]{LP} or \cite[Corollary 9.10]{MMW2}  plus the relationship between strands algebras and Fukaya categories from \cite[Proposition 11]{Auroux}, Ozsv{\'a}th--Szab{\'o}'s algebras are the homology of formal dg algebras built from morphism spaces between the distinguished Lagrangians in $\Sym^k(\C \setminus \{z_1, \ldots, z_n\})$ of Section~\ref{sec:SymProdLagrangians} in an appropriately-defined partially wrapped Fukaya category of this symmetric product. Specifically, $\B_l(n,k)$ is the homology of the algebra of morphisms between the ``left cyclic'' Lagrangians from that section; similar statements hold for $\B_r(n,k)$ and the ``right cyclic'' Lagrangians, $\B'(n,k)$ and the ``core'' Lagrangians, and $\B(n,k)$ and the union of the left cyclic and right cyclic Lagrangians. In the left cyclic case, the stops for the partial wrapping are the ones specified in \cite{Auroux} given (in the language of that paper) the decorated surface $(F,Z, \underline{\alpha})$ shown in Figure~\ref{fig:DecoratedSurface} with $F$ a disk minus open neighborhoods of $n$ interior points, $Z$ a single point in the outer boundary of $F$, and $\underline{\alpha}$ the system of red arcs shown in Figure~\ref{fig:DecoratedSurface}. The other cases are analogous.
\end{remark}

\begin{figure}
\includegraphics[scale=0.7]{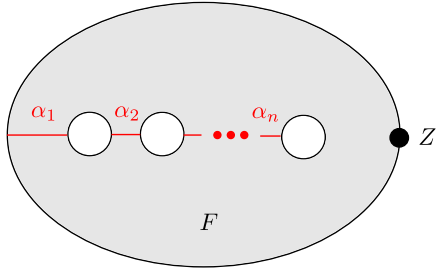}
\caption{The decorated surface $(F,Z,\underline{\alpha})$.}
\label{fig:DecoratedSurface}
\end{figure}

\subsection{Isomorphisms of algebras: left cyclic case}

Let $\cal{V} = (V,\eta,\xi)$ be left cyclic. Note that in the quiver defining $\B_l(n,k)$, $R_i$ and $L_i$ arrows exist between vertices $\x$ and $\y$ if and only if $\alpha_{\x} \leftrightarrow \alpha_{\y}$, where $\alpha_{\x}, \alpha_{\y}$ are the elements of $\cal{P}$ corresponding to $\x,\y$ under the bijection $\kappa_l \maps \cal{P} \to V_l(n,k)$ of Definition~\ref{def:StatesFromAlpha} \eqref{def:LeftIStatesFromAlpha}. By the quiver description of $\Bt(\cal{V})$, we mean its description as $\At(\cal{V}^{\vee})$, i.e. we are using the small-step quiver descriptions everywhere.

\begin{definition}
The homomorphism $\Phi$ from $\B_l(n,k)$ to $\tilde{B}(\cal{V})$ is defined in terms of the quiver descriptions of the algebras by sending
\begin{itemize}
\item vertices $\x$ to vertices $\kappa_l^{-1}(\x) :=\alpha_{\x}$,
\item arrows $\x \xrightarrow{R_i} \y$, $\y \xrightarrow{L_i} \x$ to arrows $p(\alpha_{\x},\alpha_{\y})$, $p(\alpha_{\y},\alpha_{\x})$,
\item arrows $\x \xrightarrow{U_i} \x$ to $u_i e_{\alpha_{\x}}$.
\end{itemize}
One can check that $\Phi$ preserves multi-degrees.
\end{definition}

\begin{proposition}
The map $\Phi$ is well-defined.
\end{proposition}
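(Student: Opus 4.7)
The plan is to use the generators-and-relations description of $\tilde{B}(\cal{V})$ from Corollary~\ref{cor:SimpleBTilde} and verify, relation by relation, that each of the five defining relations of $\B_l(n,k)$ (Definition~\ref{def:SmallStep}) holds in $\tilde{B}(\cal{V})$ after applying $\Phi$. First, I would observe that, by Definition~\ref{def:StatesFromAlpha}\eqref{def:LeftIStatesFromAlpha} combined with Corollary~\ref{cor:LeftCyclicBoundedFeasible}, the image $\alpha_{\x} = \kappa_l^{-1}(\x)$ of any $\x \in V_l(n,k)$ satisfies $\var_l(\alpha_{\x}) = k$, so $e_{\alpha_{\x}}$ is a nonzero idempotent in $\tilde{B}(\cal{V})$; moreover, an $R_i$ (or $L_i$) arrow between $\x$ and $\y$ corresponds precisely to a single sign change in position $i$ of $+\alpha_{\x}$, so $\alpha_{\x} \leftrightarrow \alpha_{\y}$ and both lie in $\cal{P}$. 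Thus the target elements $p(\alpha_{\x},\alpha_{\y})$, $p(\alpha_{\y},\alpha_{\x})$ and $u_i e_{\alpha_{\x}}$ all make sense in $\tilde{B}(\cal{V})$, so $\Phi$ is defined on generators.

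For the $U$-commutation relations (1) and the equalities $R_iL_i = L_iR_i = U_i$ (relation (2)), I would translate into the convolution description from Definition~\ref{def:BStyle}. Relation (2) is exactly the $B3$ relation $p(\alpha,\beta,\alpha) = u_i e_{\alpha}$ when the sign change occurs in coordinate $i$. The commutation relations involving $U_j$ follow from the observation that in the convolution product $f_{\alpha\beta}\cdot f_{\beta\beta} = u_{S(\alpha\beta\beta)}f_{\alpha\beta}$ the set $S(\alpha\beta\beta)$ is empty, and symmetrically on the other side, so both $\Phi(R_iU_j)$ and $\Phi(U_jR_i)$ equal $u_j f_{\alpha_{\x},\alpha_{\y}}$; the analogous computation handles $L_iU_j = U_jL_i$ and $U_iU_j = U_jU_i$.

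For the distant commutation relations (3), the four sign sequences $\alpha_{\x}, \alpha_{\y}, \alpha_{\z}, \alpha_{\w}$ obtained by flipping signs in positions $i$ and $j$ with $|i-j| > 1$ all still have $\var_l = k$ (the two sign changes happen in disjoint local regions of $+\alpha$), so all four sit in $\cal{P}$ and relation $B2$ applies directly to give the desired commutation. This reduces to a purely local sign-variation check.

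The two potentially tricky relations are the two-line-pass relation (4), $R_{i-1}R_i = 0$ and $L_iL_{i-1} = 0$, and the $U$-vanishing relation (5). For (5), if $\x \cap \{i-1,i\} = \emptyset$ then positions $i-1, i, i+1$ of $+\alpha_{\x}$ all share the same sign, so flipping position $i$ creates two new sign changes and $\var_l(\alpha_{\x}^i) = k+2 > k$; hence $\alpha_{\x}^i$ is infeasible, $\Delta_{\alpha_{\x}} \cap H_i = \emptyset$, and $u_i \in \tilde R_{\alpha_{\x}\alpha_{\x}}$ vanishes, i.e. $\Phi(U_i\Ib_{\x}) = 0$ (with the boundary case $i = n$ handled by the single adjacent position). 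For (4), the relevant configuration $\x \to \y \to \z$ moves one dot from $i-2$ to $i-1$ and then another dot from $i-1$ to $i$; the ``fourth corner'' of the would-be $B2$ square is the sign sequence $\delta$ obtained from $\alpha_{\x}$ by flipping both positions $i-1$ and $i$ independently, and a direct count shows $+\delta$ alternates as $(-s,s,-s,s)$ in these four positions, so $\var_l(\delta) = k+2$. Hence $\delta$ is bounded but infeasible, so $e_{\delta} = 0$ in $\tilde{B}(\cal{V})$ by $B1$, and $B2$ yields $p(\alpha_{\x},\alpha_{\y})p(\alpha_{\y},\alpha_{\z}) = p(\alpha_{\x},\delta)p(\delta,\alpha_{\z}) = 0$. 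The $L$-version is symmetric. The main obstacle in carrying this out carefully is the bookkeeping of the sign-variation calculation (checking small boundary cases $i=1$ and $i=n$ where only one neighbor exists), but these follow the same principle once we use the prepended $+$ as the left endpoint in computing $\var_l$.
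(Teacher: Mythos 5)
Your proof is correct and follows essentially the same route as the paper's: you check the five relations of Definition~\ref{def:SmallStep} against the presentation of Corollary~\ref{cor:SimpleBTilde}, with the only substantive points being relations (4) and (5), which you (like the paper) handle by producing a sign sequence with $\var_l > k$ — the fourth corner of the square, resp.\ the flip $\alpha_{\x}^i$ — whose idempotent (or whose $u_i$ in $\tilde{R}_{\alpha\alpha}$) vanishes. One wording fix: in relation (4) the fourth corner $\delta$ is $\alpha_{\x}$ with \emph{only} position $i$ flipped (flipping both positions $i-1$ and $i$ would give $\alpha_{\z}$ itself, which has $\var_l = k$); your displayed pattern $(-s,s,-s,s)$ and the conclusion $\var_l(\delta) = k+2$ are exactly those of this correct $\delta$, so the argument stands as computed.
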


\begin{proof}
We must check that the relations in Definition~\ref{def:SmallStep} are preserved under $\Phi$; we will use the relations for $\tilde{B}(\cal{V})$ from Corollary~\ref{cor:SimpleBTilde}.
The relations \eqref{it:UCommute} hold after applying $\Phi$ because the $u_i$ variables commute with elements of $P(Q)$ in the tensor product algebra $P(Q) \otimes_{\Z} \Z[u_1,\ldots,u_n]$, even before imposing relations on $\tilde{B}(\cal{V})$.
The relations \eqref{it:RLU} hold after applying $\Phi$ by Corollary~\ref{cor:SimpleBTilde}, item A3.
The relations \eqref{it:DistantCommute} hold after applying $\Phi$ by Corollary~\ref{cor:SimpleBTilde}, item A2.

For the relations \eqref{it:TwoLinePass}, suppose we have a composable pair of arrows $\x \xrightarrow{R_{i-1}} \y \xrightarrow{R_i} \z$ in the quiver description of $\B_l(n,k)$. We have $2 \leq i \leq n-1$ and $\x \cap \{i-2,i-1,i\} = \{i-2\}$. Thus, the signs of $\alpha_{\x}$ in positions $(i-1,i,i+1)$ are either $(+,+,+)$ or $(-,-,-)$; without loss of generality assume they are $(+,+,+)$. The signs of $\alpha_{\y}$ and $\alpha_{\z}$ in these positions are $(-,+,+)$ and $(-,-,+)$ respectively. Let $\beta$ agree with $\alpha$ except that $\beta = (+,-,+)$ in these positions. We have $\var_l(\beta) = k+2$, so $e_{\beta} = 0$ by Corollary~\ref{cor:SimpleBTilde}, item A1. Since $\alpha_{\x} \leftrightarrow \beta$ and $\alpha_{\z} \leftrightarrow \beta$, by Corollary~\ref{cor:SimpleBTilde}, item A2 we have
\[
\Phi(R_{i-1}) \Phi(R_i) = p(\alpha_{\x},\alpha_{\y}) p(\alpha_{\y},\alpha_{\z}) = p(\alpha_{\x},\beta) p(\beta,\alpha_{\z}) = 0.
\]
The relations $L_i L_{i-1}$ are similar.

For the relations \eqref{it:UVanish}, suppose that $\x \cap \{i-1,i\} = \emptyset$. We have $1 \leq i \leq n$; first assume $2 \leq i \leq n-1$. The signs of $\alpha$ in positions $(i-1,i,i+1)$ are either $(+,+,+)$ or $(-,-,-)$; without loss of generality assume they are $(+,+,+)$. Defining $\beta$ to agree with $\alpha$ except in these positions where $\beta = (+,-,+)$, we have $\var_l(\beta) = k+2$ and thus $e_{\beta} = 0$ by Corollary~\ref{cor:SimpleBTilde}, item A1. Since $\alpha_{\x} \leftrightarrow \beta$, by Corollary~\ref{cor:SimpleBTilde}, item A3 we have
\[
\Phi(U_i \Ib_{\x}) = u_i e_{\alpha_{\x}} = p(\alpha_{\x},\beta) p(\beta,\alpha_{\x}) = 0.
\]

Now let $i=1$, so that $\x \cap \{0,1\} = \emptyset$. The signs of $\alpha$ in positions $(1,2)$ are $(+,+)$. If we take $\beta$ to have signs $(-,+)$ in positions $(1,2)$, then $\var_l(\beta) = k+2$ and we get $\Phi(U_1 \Ib_{\x}) = 0$ as before. Similarly, if $i = n$, then $\x \cap \{n-1,n\} = \emptyset$. The sign of $\alpha$ in positions $(n-1,n)$ are either $(+,+)$ or $(-,-)$; without loss of generality assume they are $(+,+)$. If we take $\beta$ to have signs $(+,-)$ in positions $(n-1,n)$, then $\var_l(\beta) = k+1)$ and we get $\Phi(U_n \Ib_{\x}) = 0$.
\end{proof}

\begin{definition}
The homomorphism $\Psi$ from $\tilde{B}(\cal{V})$ to $\B_l(n,k)$ is defined in terms of the quiver descriptions by sending
\begin{itemize}
\item vertices $\alpha$ to vertices $\kappa_l(\alpha)=\x_{\alpha}$ if $\var_l(\alpha) =k$ and to zero if $\var_l(\alpha) > k$, 
\item arrows $p(\alpha,\beta)$, $p(\beta,\alpha)$ to arrows $\x_{\alpha} \xrightarrow{R_i} \x_{\beta}$, $\x_{\beta} \xrightarrow{L_i} \x_{\alpha}$ respectively if $\beta = \alpha^i$ and $\beta$ delays a sign change compared to $\alpha$, and the reverse if $\beta$ advances a sign change compared to $\alpha$, 
\item generators $u_i$ of $\Z[u_1,\ldots,u_n]$ to elements $\sum_{\x \in V_l(n,k)} U_i \Ib_{\x}$ of $\B_l(n,k)$.
\end{itemize}
One can check that $\Psi$ preserves multi-degrees.
\end{definition}

\begin{proposition}
The map $\Psi$ is well-defined.
\end{proposition}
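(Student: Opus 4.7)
The plan is to verify, one relation at a time, that each defining relation of $\tilde{B}(\cal{V})$ listed in Corollary~\ref{cor:SimpleBTilde} is preserved under $\Psi$. Relation A1 (the vanishing of $e_\alpha$ when $\var_l(\alpha) > k$) is immediate from the definition. For relation A3, namely $p(\alpha,\beta,\alpha) = u_i e_\alpha$ with $\beta = \alpha^i$, I split into cases based on $\var_l(\beta)$. When $\var_l(\alpha) = \var_l(\beta) = k$, the flip merely shifts a sign change, so the dot picture shows that $\Psi(p(\alpha,\beta))$ and $\Psi(p(\beta,\alpha))$ form an $\{R_i, L_i\}$ pair in $\B_l(n,k)$, with composite $U_i \Ib_{\x_\alpha}$ by relation \eqref{it:RLU}. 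When $\var_l(\beta) > k$ the left-hand side vanishes by definition of $\Psi$; the local configuration around position $i$ of $\alpha$ is forced to be constant, which gives $\x_\alpha \cap \{i-1, i\} = \emptyset$, so relation \eqref{it:UVanish} kills the right-hand side as well. The boundary case $i = n$ (where flipping the last sign changes $\var_l$ by $\pm 1$) is handled similarly, noting that $n \notin \x_\alpha$ by construction.

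The substantive work is relation A2, the square relation $p(\alpha,\beta,\gamma) = p(\alpha,\delta,\gamma)$ for a 4-cycle $\alpha \leftrightarrow \beta \leftrightarrow \gamma \leftrightarrow \delta$ of bounded sign vectors. Let $i \neq j$ be the two positions where $\alpha$ and $\gamma$ differ. If $|i-j| \geq 2$ the two flips act on disjoint windows, so the $R$ versus $L$ types of the images agree on the two sides and the composites commute by the distant commutation relation \eqref{it:DistantCommute}; when an intermediate vertex has $\var_l > k$, this independence forces both sides to vanish simultaneously. When $|i-j| = 1$, say $j = i+1$, I enumerate the local sign pattern on positions $i-1, i, i+1, i+2$ of $+\alpha$: letting $(a,b,c) \in \{0,1\}^3$ encode the presence of a sign change in each of the three consecutive pairs, the requirement that $\alpha, \beta, \gamma, \delta$ all have $\var_l \geq k$ forces $a+b, b+c, a+c \leq 1$, so at most one of $a,b,c$ equals $1$. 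In the subcase $a=1$, $b=c=0$ the vertex $\delta$ is killed, while a direct computation gives $\Psi(p(\alpha,\beta,\gamma)) = R_i R_{i+1} = 0$ by relation \eqref{it:TwoLinePass}; the subcase $a=0, b=0, c=1$ is symmetric with $L_{i+1}L_i = 0$; the subcase $a=0, b=1, c=0$ kills $\gamma$ outright on both sides; and the subcase $a=b=c=0$ kills all three intermediate vertices. In every subcase both sides of $\Psi(A2)$ vanish in $\B_l(n,k)$.

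The main obstacle is precisely this adjacent case of A2: one must carefully translate between the sign-flip description of arrows on the $\tilde{B}$ side and the dot-motion description ($R$ versus $L$, with specific indices) on the $\B_l(n,k)$ side, and then match the vanishings coming from \eqref{it:TwoLinePass} and \eqref{it:UVanish} against those forced by the $\var_l > k$ constraints on the hypertoric side. Boundary positions ($i=1$ and $j = n$) require parallel but separate treatment, since the prepended $+$ in $+\alpha$ modifies the sign-change count at the left endpoint, and flipping the rightmost sign changes $\var_l$ by a single unit rather than by two. Away from these subtleties, the verification is a mechanical case analysis that matches Ozsv{\'a}th--Szab{\'o}'s small-step relations one-for-one with the relations A1, A2, A3 of Corollary~\ref{cor:SimpleBTilde}.
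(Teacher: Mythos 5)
Your overall strategy is the same as the paper's: check that the relations A1, A2, A3 of Corollary~\ref{cor:SimpleBTilde} are sent to relations (or to zero) in $\B_l(n,k)$, matching them against items \eqref{it:RLU}, \eqref{it:DistantCommute}, \eqref{it:TwoLinePass}, \eqref{it:UVanish} of Definition~\ref{def:SmallStep}. Your treatment of A1 and A3 agrees with the paper's; for A2 the paper argues more compactly that the only case in which neither side dies for idempotent reasons is $\var_l(\alpha)=\var_l(\beta)=\var_l(\gamma)=k$, $\var_l(\delta)>k$ (necessarily adjacent flips), where the composite $\alpha\to\beta\to\gamma$ is a consecutive same-direction dot move, killed by \eqref{it:TwoLinePass}, while your version makes the window enumeration explicit.

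Two points need repair. First, in your adjacent case of A2 the inference ``$\var_l\geq k$ for all of $\alpha,\beta,\gamma,\delta$ forces $a+b,\,b+c,\,a+c\leq 1$'' is false as stated: for example the local pattern $+,-,+,+$ (so $a=b=1$, $c=0$) with enough sign variation outside the window gives $\var_l(\alpha)=\var_l(\gamma)=\var_l(\delta)=k+2$ and $\var_l(\beta)=k$, all bounded. The enumeration is only complete after you first dispose of the cases $\var_l(\alpha)>k$ or $\var_l(\gamma)>k$, where every term of both sides factors through a killed idempotent and the relation is trivially preserved; with $\var_l(\alpha)=\var_l(\gamma)=k$ the inequalities (hence your four subcases) do follow from $\var_l(\beta),\var_l(\delta)\geq k$, and your conclusions in those subcases are correct. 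Second, well-definedness also requires the preliminary observation that $\Psi$ is defined on $P(Q)\otimes_{\Z}\Z[u_1,\ldots,u_n]$ at all, i.e.\ that $\Psi(u_i)=\sum_{\x}U_i\Ib_{\x}$ commutes with the images of the idempotents and arrows; this is exactly item \eqref{it:UCommute} of Definition~\ref{def:SmallStep}, and it is where the paper's proof begins. The boundary cases you defer are indeed routine (the prepended $+$ makes $i=1$ behave like the interior, and for adjacent flips in positions $n-1,n$ with $\var_l(\alpha)=k$ the vertex $\gamma$ is always either killed or unbounded), but since your argument appeals to them they should be recorded rather than waved at.
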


\begin{proof}
The map $\Psi$ is well-defined as a map from $P(Q) \otimes_{\Z} \Z[u_1,\ldots,u_n]$ to $\B_l(n,k)$ by item~\eqref{it:UCommute} of Definition~\ref{def:SmallStep}. We will check that $\Psi$ preserves the relations from Corollary~\ref{cor:SimpleBTilde}.
The relations A1 hold by construction.

For the relations A2, if $\alpha,\beta,\gamma,\delta$ all have $\var_l = k$ then the relations follow from item~\eqref{it:DistantCommute} of Definition~\ref{def:SmallStep} (note that $\alpha$, $\beta$, $\gamma$, $\delta$ are required to be distinct). Without loss of generality, the only other case we need to check is when $\alpha,\beta,\gamma$ have $\var_l = k$ while $\var_l(\delta) > k$. In this case, moving from $\alpha$ to $\beta$ and then to $\gamma$ either delays a sign change and then delays it one step further, or advances a sign change and then advances it one step further. The required relation $\Psi(p(\alpha),p(\beta)) \Psi(P(\beta),P(\gamma)) = 0$ then follows from item~\eqref{it:TwoLinePass} of Definition~\ref{def:SmallStep}.

For the relations A3, if $\var_l(\alpha) = \var_l(\beta) = k$ then $\Psi(p(\alpha,\beta)) \Psi(p(\beta,\alpha)) = \Psi(u_i e_{\alpha})$ follows from item~\eqref{it:RLU} of Definition~\ref{def:SmallStep}. On the other hand, if $\var_l(\alpha) = k$ and $\var_l(\beta) > k$, then changing $\alpha$ to $\beta$ must change initial signs $(+,+)$, terminal signs $(+,+)$ or $(-,-)$, or length-three sign intervals $(+,+,+)$ or $(-,-,-)$ to initial signs $(-,+)$, terminal signs $(+,-)$ or $(-,+)$, or length-three intervals $(+,-,+)$ or $(-,+,-)$ respectively. In any of these cases, $\Psi$ preserves the relation of item A3 by item~\eqref{it:UVanish} of Definition~\ref{def:SmallStep}.

\end{proof}

\begin{theorem}\label{thm:LeftCyclicIsom}
The maps $\Phi$ and $\Psi$ are inverse isomorphisms of (multi-graded) $\mathbb{Z}$-algebras that  intertwine the anti-involution $\psi_{OSz}$ from Definition~\ref{def:PsiOS} with
$\psi^{\cal{V}}$  on $\Bt(\cal{V})$ coming from $\tilde{R}_{\alpha \beta} \mapsto \tilde{R}_{\beta\alpha}$.
\end{theorem}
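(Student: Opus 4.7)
The plan is to verify the theorem directly on generators, since both $\Phi$ and $\Psi$ have already been established as well-defined multi-graded $\mathbb{Z}$-algebra homomorphisms. First, I would check $\Psi \circ \Phi = \id_{\B_l(n,k)}$ using the generators of Definition~\ref{def:SmallStep}. On an idempotent $\Ib_{\x}$, the composite is $\Ib_{\x} \mapsto e_{\alpha_{\x}} \mapsto \Ib_{\kappa_l(\alpha_{\x})} = \Ib_{\x}$, using $\var_l(\alpha_{\x}) = k$ together with the fact that $\kappa_l$ and $\alpha \mapsto \alpha_{\x}$ are mutually inverse. On $U_i \Ib_{\x}$, the composite is $U_i \Ib_{\x} \mapsto u_i e_{\alpha_{\x}} \mapsto \bigl(\sum_{\y} U_i \Ib_{\y}\bigr) \cdot \Ib_{\x} = U_i \Ib_{\x}$. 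For $R_i$ and $L_i$, the main bookkeeping is the orientation convention: by Definition~\ref{def:StatesFromAlpha}\eqref{def:LeftIStatesFromAlpha}, moving a dot from $i-1$ to $i$ in $\x$ corresponds to delaying a sign change in $+\alpha_{\x}$, which is precisely the direction assigned to $R_i$ in the definition of $\Psi$.

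Next, I would verify $\Phi \circ \Psi = \id_{\tilde{B}(\cal{V})}$ using the quiver presentation from Corollary~\ref{cor:SimpleBTilde}. The nonzero idempotents $e_{\alpha}$ are those with $\var_l(\alpha) = k$ by relation A1, and on such an $e_{\alpha}$ the composite returns $e_{\alpha_{\x_{\alpha}}} = e_{\alpha}$; idempotents with $\var_l(\alpha) > k$ are zero on both sides by construction. On path generators $p(\alpha,\beta)$ with both $\var_l(\alpha) = \var_l(\beta) = k$, the composite returns $p(\alpha,\beta)$ again using the matched orientation convention. For the polynomial generator $u_i$, the composite yields $\sum_{\x \in V_l(n,k)} u_i e_{\alpha_{\x}} = u_i \cdot \bigl(\sum_{\x} e_{\alpha_{\x}}\bigr) = u_i$, since $\sum_{\x} e_{\alpha_{\x}}$ is the unit of $\tilde{B}(\cal{V})$ (all other idempotents vanishing by A1).

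Finally, the anti-involution statement is a generator-by-generator comparison. The involution $\psi^{\cal{V}}$ coming from $\tilde{R}_{\alpha\beta} \mapsto \tilde{R}_{\beta\alpha}$ sends $p(\alpha,\beta) \mapsto p(\beta,\alpha)$ and fixes $u_i e_{\alpha}$. For an arrow $\x \xrightarrow{R_i} \y$ with $\Phi(R_i) = p(\alpha_{\x},\alpha_{\y})$, we get $\Phi(\psi_{OSz}(R_i)) = \Phi(L_i) = p(\alpha_{\y},\alpha_{\x}) = \psi^{\cal{V}}(\Phi(R_i))$, and symmetrically for $L_i$. For $U_i \Ib_{\x}$, both $\psi_{OSz}$ and $\psi^{\cal{V}}$ act trivially, so $\Phi$ intertwines them.

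I do not expect a genuine obstacle here: every step reduces to evaluating on generators, which are explicit. The one place requiring care is the orientation bookkeeping in the $R_i/L_i$ step — matching the ``delays versus advances a sign change'' convention in the definition of $\Psi$ with the directionality of $\kappa_l$ in Definition~\ref{def:StatesFromAlpha}\eqref{def:LeftIStatesFromAlpha} — and this is really the same compatibility check on both sides of the composition, so consistency is automatic once it is set up correctly.
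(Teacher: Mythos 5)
Your proposal is correct and matches the paper's approach: the paper's own proof of Theorem~\ref{thm:LeftCyclicIsom} simply asserts that one checks $\Phi$ and $\Psi$ are mutually inverse on quiver vertices and arrows, which is exactly the generator-by-generator verification (including the delay/advance orientation bookkeeping and the anti-involution check) that you carry out. Nothing in your argument departs from, or adds a gap relative to, what the paper intends.
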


\begin{proof}
One can check that these are isomorphisms when applied to quiver vertices or arrows on both sides.
\end{proof}

\begin{corollary}\label{cor:MainUnpolarizedThm}
If $(V,\eta)$ is cyclic, the isomorphisms $\Phi, \Psi$ of Theorem~\ref{thm:LeftCyclicIsom} for any left cyclic polarization $\xi$ of $(V,\eta)$ restrict to isomorphisms between $\B'(n,k)$ and the algebra $\Bt'(V,\eta)$ from Section~\ref{sec:UnpolarizedHypertoricAlgs}. These isomorphisms are independent of the choice of $\xi$.
\end{corollary}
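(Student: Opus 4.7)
The plan is to reduce the statement to two observations: (1) the vertex bijection $\kappa_l \maps \cal{P} \to V_l(n,k)$ of Definition~\ref{def:StatesFromAlpha}\eqref{def:LeftIStatesFromAlpha} restricts to a bijection $\cal{K} \to V'(n,k)$, and (2) the formulas defining $\Phi$ and $\Psi$ on generators between $V'(n,k)$-vertices (respectively, between $\cal{K}$-vertices) make no reference to the polarization $\xi$.

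First I would verify (1). By Proposition~\ref{prop:SignPatternCpct}, $\alpha \in \cal{K}$ precisely when $\var(\alpha) = k$ and $\alpha$ starts with $+$. Since prepending a $+$ adds no sign change exactly when $\alpha(1) = +$, the first condition combined with the second gives $\var_l(\alpha) = k$, hence $|\x_\alpha| = k$. The second condition is equivalent to $0 \notin \x_\alpha$, since by definition $0 \in \x_\alpha$ iff there is a sign change at step~$1$ of $+\alpha$. Thus $\kappa_l$ restricts to a bijection $\cal{K} \to V'(n,k)$, and this bijection depends only on the cyclic arrangement $(V,\eta)$, not on the choice of left cyclic polarization.

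Next, I would fix any left cyclic polarization $\xi$ of $(V,\eta)$ and invoke Theorem~\ref{thm:LeftCyclicIsom}. Because $\Phi$ is defined vertex-by-vertex on quiver presentations, one has $\Phi(\Ib_\x\,\B_l(n,k)\,\Ib_\y) \subseteq e_{\alpha_\x}\,\Bt(\cal{V})\,e_{\alpha_\y}$. Summing over $\x, \y \in V'(n,k)$ and invoking (1), $\Phi$ carries the corner idempotent $\sum_{\x \in V'(n,k)} \Ib_\x$ to the corner idempotent $\sum_{\alpha \in \cal{K}} e_\alpha$ defining $\Bt'(V,\eta)$; consequently $\Phi$ restricts to a homomorphism $\B'(n,k) \to \Bt'(V,\eta)$, and the same argument works for $\Psi$, so the restrictions remain mutually inverse.

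Finally, for independence of $\xi$: the target $\Bt'(V,\eta)$ is intrinsic, since the rings $\tilde R_{\alpha\beta}$ and the multiplication $f_{\alpha\beta} f_{\beta\gamma} = u_{S(\alpha\beta\gamma)} f_{\alpha\gamma}$ involve no polarization data, and the set $\cal{K}$ of compact feasible sequences depends only on $(V,\eta)$. The restricted formulas for $\Phi$ on $\Ib_\x$, $R_i$, $L_i$, and $U_i \Ib_\x$ with indices in $V'(n,k)$ reference only the bijection $\cal{K} \leftrightarrow V'(n,k)$, the relation $\leftrightarrow$ on sign sequences, the delay/advance distinction for adjacent pairs, and the generators $u_i$; none of this data depends on $\xi$, so two different left cyclic polarizations produce the same restricted isomorphism. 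The main obstacle, and the only point requiring care, is ensuring that paths in $\B'(n,k)$ passing through intermediate vertices in $V_l(n,k) \setminus V'(n,k)$ are still sent by $\Phi$ into the corner algebra $\Bt'(V,\eta)$; this, however, is automatic from the vertex-by-vertex nature of $\Phi$ together with observation~(1), since the entire image lies in $e_{\cal{K}} \Bt(\cal{V}) e_{\cal{K}} = \Bt'(V,\eta)$.
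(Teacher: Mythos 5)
Your first half matches the paper: the bijection $\kappa_l$ carries $\cal{K}$ onto $V'(n,k)$ (an element of $\cal{K}$ is exactly a sequence with $\var(\alpha)=k$ starting with $+$, i.e.\ $\var_l(\alpha)=k$ and $0\notin\x_\alpha$), and since $\Phi,\Psi$ exchange $\Ib_{\x_\alpha}$ and $e_\alpha$, cornering both sides by the corresponding sums of idempotents gives mutually inverse isomorphisms $\B'(n,k)\cong\Bt'(V,\eta)$. That part is correct and is how the paper argues.

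The gap is in the independence-of-$\xi$ step. Knowing that two left cyclic polarizations give maps that agree on the idempotents $\Ib_\x$ with $\x\in V'(n,k)$ and on the generators $R_i,L_i,U_i$ whose \emph{both} endpoints lie in $V'(n,k)$ does not by itself determine the restricted maps on all of $\B'(n,k)$: a general element $\Ib_\x\, b\, \Ib_\y$ of the corner is a priori only a linear combination of quiver paths that pass through intermediate idempotents $\Ib_\z$ with $0\in\z$ (any word containing $L_1$ does so), and your argument says nothing about agreement on such paths. The sentence in which you name ``the main obstacle'' addresses a different, easier point --- that the image of the corner lands inside $e_{\cal{K}}\Bt(\cal{V})e_{\cal{K}}$ --- which is indeed automatic; the point that actually needs an argument is that the corner algebra is generated by the corner-to-corner elements, so that agreement on those elements suffices. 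This is exactly the step the paper supplies: choosing minimal-length paths between idempotents, every element of $\Bt'(V,\eta)$ is a linear combination of products of the values of $\Phi$ on $e_\alpha$ with $\alpha\in\cal{K}$ and on $R_i,L_i,u_i$ between vertices $\x_\alpha,\x_\beta$ with $\alpha,\beta\in\cal{K}$. Alternatively you could close the gap upstream: for any left cyclic $\xi$ on a fixed cyclic $(V,\eta)$ one has $\cal{P}=\{\alpha:\var_l(\alpha)=k\}$ and the rings $\tilde{R}_{\alpha\beta}$ depend only on $(V,\eta)$, so the entire algebra $\Bt(\cal{V})$ and the entire maps $\Phi,\Psi$ (not just their restrictions) coincide for any two left cyclic polarizations; but as written your proof proves neither of these statements, so the independence claim is not yet established.
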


\begin{proof}
Note that the isomorphisms $\Phi, \Psi$ interchange $\Ib_{\x_{\alpha}}$ and $e_{\alpha}$. The value of $\Phi$ on any idempotent $\Ib_{\x_{\alpha}}$ with $\alpha \in \cal{K}$, and on the elements $R_i$, $L_i$, $u_i$ between vertices $\x_{\alpha}, \x_{\beta}$ with $\alpha, \beta \in \cal{K}$, are independent of $\xi$; by choosing minimal-length paths between idempotents, any element of $\Bt'(V,\eta)$ is a linear combination of products of such elements.
\end{proof}

\subsection{Isomorphisms of algebras: right cyclic case}
Now let $\cal{V} = (V,\eta,\xi)$ be right cyclic. As before, in the quiver defining $\B_r(n,k)$, $R_i$ and $L_i$ arrows exist between vertices $\x$ and $\y$ if and only if $\alpha_{\x} \leftrightarrow \alpha_{\y}$.

\begin{definition}
The homomorphism $\Phi$ from $\B_r(n,k)$ to $\tilde{B}(\cal{V})$ is defined in terms of the quiver descriptions of the algebras by sending
\begin{itemize}
\item vertices $\x$ to vertices $\kappa_r^{-1}(\x) = \alpha_{\x}$,
\item arrows $\x \xrightarrow{R_i} \y$, $\y \xrightarrow{L_i} \x$ to arrows $p(\alpha_{\x},\alpha_{\y})$, $p(\alpha_{\y},\alpha_{\x})$,
\item arrows $\x \xrightarrow{U_i} \x$ to $u_i e_{\alpha_{\x}}$.
\end{itemize}
One can check that $\Psi$ preserves multi-degrees.
\end{definition}

\begin{definition}
The homomorphism $\Psi$ from $\tilde{A}(\cal{V}^{\vee})$ to $\B_r(n,k)$ is defined in terms of the quiver descriptions by sending
\begin{itemize}
\item vertices $\alpha$ to vertices $\kappa_r(\alpha)=\x_{\alpha}$ if $\var_r(\alpha) =k$ and to zero if $\var_r(\alpha) > k$,
\item arrows $p(\alpha,\beta)$, $p(\beta,\alpha)$ to arrows $\x_{\alpha} \xrightarrow{R_i} \x_{\beta}$, $\x_{\beta} \xrightarrow{L_i} \x_{\beta}$ respectively if $\beta = \alpha^i$ and $\beta$ delays a sign change compared to $\alpha$, and the reverse if $\beta$ advances a sign change compared to $\alpha$,
\item generators $u_i$ of $\Z[u_1,\ldots,u_n]$ to elements $\sum_{\x \in V_r(n,k)} U_i \Ib_{\x}$ of $\B_r(n,k)$.
\end{itemize}
One can check that $\Psi$ preserves multi-degrees.
\end{definition}

\begin{theorem}\label{thm:RightCyclicIsom}
The maps $\Phi$ and $\Phi$ are well defined and inverse  isomorphisms of (multi-graded) $\mathbb{Z}$-algebras that  intertwine the anti-involution $\psi_{OSz}$ from Definition~\ref{def:PsiOS} with
$\psi^{\cal{V}}$  on $\Bt(\cal{V})$ coming from $\tilde{R}_{\alpha \beta} \mapsto \tilde{R}_{\beta\alpha}$.
\end{theorem}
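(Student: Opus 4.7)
The plan is to follow the template of the proof of Theorem~\ref{thm:LeftCyclicIsom}, substituting $V_r(n,k)$, $\kappa_r$, and $\var_r$ for $V_l(n,k)$, $\kappa_l$, and $\var_l$ throughout. First I would record the right cyclic analogue of Corollary~\ref{cor:SimpleBTilde}: by Proposition~\ref{prop:BTildeGensRels} together with the right cyclic version of the feasibility and boundedness criteria stated after Lemma~\ref{lem:HH'helper2}, the algebra $\tilde{B}(\cal{V})$ has a presentation on the quiver with vertex set $\{\alpha : \var_r(\alpha) \geq k\}$, with the idempotents for $\var_r > k$ killed and the same relations A2, A3 as in the left cyclic case.

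For well-definedness of $\Phi$, I would verify the five families of relations in Definition~\ref{def:SmallStep}. Relations (1), (2), (3) translate directly into the structure of $P(Q) \otimes_{\Z} \Z[u_1,\ldots,u_n]$ and the relations A3, A2. For the two-step vanishing relation (4) and the $U_i$-vanishing relation (5) with $2 \leq i \leq n-1$, one inserts an intermediate sign sequence $\beta$ obtained from $\alpha_{\x}$ by flipping signs at positions $(i-1, i, i+1)$ from $(+,+,+)$ to $(+,-,+)$ (or from $(-,-,-)$ to $(-,+,-)$); then $\var_r(\beta) = \var_r(\alpha_{\x}) + 2 = k + 2$, since the $(-1)^k$ appended at the right does not interact with a flip in the interior, so $e_\beta = 0$ in $\tilde{B}(\cal{V})$ and the relation holds by A2 or A3.

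The boundary cases $i = 1$ and $i = n$ of (5) require care: $\var_r$ is computed from $\alpha(-1)^k$ rather than $+\alpha$, so the constrained boundary has swapped from left to right compared to the left cyclic proof. For $i = n$, the hypothesis $\x \cap \{n-1, n\} = \emptyset$ forces $\alpha_{\x}(n-1) = \alpha_{\x}(n) = (-1)^k$, and flipping $\alpha_{\x}(n)$ produces $\beta$ with $\var_r(\beta) = k + 2$, exactly analogous to the $i = 1$ boundary in the left cyclic case. For $i = 1$, flipping $\alpha_{\x}(1)$ increases $\var_r$ by only $1$ (there is no interaction with the right boundary), but $k + 1 > k$ still suffices to give $e_\beta = 0$.

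Well-definedness of $\Psi$ is checked by verifying that each of the relations A1, A2, A3 in the right cyclic version of Corollary~\ref{cor:SimpleBTilde} is preserved, using the same case analysis as in the left cyclic proof according to whether intermediate vertices have $\var_r = k$ or $\var_r > k$. That $\Phi$ and $\Psi$ are mutually inverse is immediate on quiver generators, and the intertwining of $\psi_{OSz}$ with $\psi^{\cal{V}}$ follows from $\Phi(R_i) = p(\alpha_{\x}, \alpha_{\y})$ and $\Phi(L_i) = p(\alpha_{\y}, \alpha_{\x})$. The only real obstacle is the edge case bookkeeping at positions $1$ and $n$ under $\var_r$; no new ideas beyond those of the left cyclic proof are required.
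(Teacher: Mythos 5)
Your proposal is correct and takes essentially the same approach as the paper, whose proof of Theorem~\ref{thm:RightCyclicIsom} simply defers to the left cyclic argument of Theorem~\ref{thm:LeftCyclicIsom}; your treatment of the swapped boundary behavior of $\var_r$ at positions $1$ and $n$ is exactly the needed adjustment. The only slip is the index range for relation (4) in $\B_r(n,k)$, which is $3 \leq i \leq n$ rather than $2 \leq i \leq n-1$, but the $i=n$ case is covered by the same appended-$(-1)^k$ computation you already give for relation (5).
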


The proof is similar to above.

\subsection{The center of Ozsv{\'a}th--Szab{\'o}'s algebras} \label{sec:center}

Recall from Section~\ref{sec:AlgebrasGeometricAspects} that for a rational polarized arrangement $\cal{V}$ we have $Z(\Bt(\cal{V})) \cong H^{\ast}_{T^k}(\M_{\cal{V}};\C)$.  Explicitly,
\[
Z(\tilde{B}(\cal{V})) = {\rm Sym} (\R^n) / ( u_s \mid S \subset I \text{ such that $H_s \cap V_{\eta}= \emptyset$}).
\]
From Theorem~\ref{thm:LeftCyclicIsom} and \ref{thm:RightCyclicIsom}, we get similar statements for Ozsv{\'a}th--Szab{\'o}'s algebras.

\begin{corollary} \label{cor:center}
The centers of both $\B_l(n,k)$ and $\B_r(n,k)$ are
\[
Z(\B_l(n,k)) = Z(\B_r(n,k)) \cong \frac{\Z[U_1,\ldots,U_n]}{(U_{i_1} \cdots U_{i_{k+1}} : 1 \leq i_1 < \cdots < i_{k+1} \leq n)},
\]
where the element $U_i$ in this polynomial ring corresponds to the sum of $U_i$ generators at all idempotents of $\B_l(n,k)$ or $\B_r(n,k)$.

\end{corollary}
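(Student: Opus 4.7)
The strategy is to transport the question across the isomorphisms of Theorems~\ref{thm:LeftCyclicIsom} and \ref{thm:RightCyclicIsom}, and then apply the geometric description of $Z(\tilde{B}(\cal{V}))$ recalled in Section~\ref{sec:AlgebrasGeometricAspects}. First, fix a (rational) left cyclic polarized arrangement $\cal{V}$ of $n$ hyperplanes in $k$-space and a right cyclic polarized arrangement $\cal{V}'$ with the same $n, k$, for instance the Vandermonde examples from Section~\ref{sec:Vandermonde}. By Theorems~\ref{thm:LeftCyclicIsom} and \ref{thm:RightCyclicIsom} there are isomorphisms $\B_l(n,k) \cong \tilde{B}(\cal{V})$ and $\B_r(n,k) \cong \tilde{B}(\cal{V}')$ of multi-graded $\Z$-algebras under which the indicated ``total'' generator $U_i = \sum_{\x} U_i \Ib_{\x}$ on the Ozsv{\'a}th--Szab{\'o} side maps to $u_i$ on the hypertoric side. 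In particular the isomorphisms restrict to isomorphisms between centers, so it suffices to prove
\[
Z(\tilde{B}(\cal{V})) \;\cong\; \Z[u_1,\ldots,u_n] \big/ (u_{i_1} \cdots u_{i_{k+1}} \,:\, 1 \le i_1 < \cdots < i_{k+1} \le n)
\]
for cyclic $\cal{V}$.

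Next, apply \cite[Theorem~8.3 and Proposition~8.5]{BLPPW} as recalled in Section~\ref{sec:AlgebrasGeometricAspects}: the natural ring map $\Z[u_1,\ldots,u_n] \to \tilde{B}(\cal{V})$ sending $u_i$ to $u_i \cdot 1$ lands in the center and induces an isomorphism
\[
Z(\tilde{B}(\cal{V})) \;\cong\; \Z[u_1,\ldots,u_n] \big/ \bigl(u_S \,:\, S \subset \{1,\ldots,n\} \text{ with } H_S \cap (V + \eta) = \emptyset\bigr),
\]
where $H_S = \bigcap_{i \in S} H_i$. (This is the integral Stanley--Reisner-type presentation underlying the equivariant cohomology of $\mathfrak{M}_{\cal{V}}$; it follows formally from the center being $\bigcap_{\alpha \in \cal{P}} \tilde{R}_{\alpha\alpha}$ together with the observation that for any $S$ violating the feasibility condition, $u_S$ is already zero in every $\tilde{R}_{\alpha\alpha}$.)

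It remains to identify the forbidden sets $S$ for a cyclic arrangement. By Proposition~\ref{prop:AlternateCyclicDef}, any $k$ of the $n$ hyperplanes of a cyclic arrangement meet in a single point of $V + \eta$, since the presenting $n\times k$ matrix has all maximal minors positive (hence nonzero), so the associated matroid is the uniform matroid $U_{k,n}$. Thus $H_S \neq \emptyset$ for $|S| \le k$. For $|S| = k+1$, using the Vandermonde presentation of Section~\ref{sec:Vandermonde} together with the polynomial interpretation of Section~\ref{sec:CyclicAndSymPowers}, a point of $H_S$ would correspond to a polynomial $f(z) = (-z)^k + a_k z^{k-1} + \cdots + a_1$ vanishing at the $k+1$ distinct real numbers $\{z_i : i \in S\}$, which is impossible since $\deg f = k$. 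Hence $H_S = \emptyset$ precisely when $|S| \geq k+1$, and the ideal of forbidden $u_S$ is generated by the squarefree degree-$(k+1)$ monomials. This matches the target presentation and finishes the proof.

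The main obstacle is ensuring the BLPPW center computation applies integrally; this should be straightforward by inspection of the quiver-and-relations presentation of Proposition~\ref{prop:BTildeGensRels} (namely, the $\Z$-subalgebra of $\tilde{B}(\cal{V})$ generated by the $u_i$ is, by $B1$--$B3$, exactly the quotient of $\Z[u_1,\ldots,u_n]$ by the ideal of $u_S$ that are already killed in every $\tilde{R}_{\alpha\alpha}$), but we should state it carefully. A secondary point worth verifying is that the total sums $\sum_\x U_i \Ib_\x$ really are central in $\B_l(n,k)$ and $\B_r(n,k)$, which is immediate from relation \eqref{it:UCommute} of Definition~\ref{def:SmallStep} together with $R_i U_j = U_j R_i$ and $L_i U_j = U_j L_i$ at all idempotents.
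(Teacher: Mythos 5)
Your proposal is correct and follows essentially the same route as the paper: transport the question through the isomorphisms of Theorems~\ref{thm:LeftCyclicIsom} and \ref{thm:RightCyclicIsom} and invoke the presentation of $Z(\tilde{B}(\cal{V}))$ from \cite{BLPPW} recalled in Section~\ref{sec:AlgebrasGeometricAspects}, then note that for a cyclic arrangement $H_S\cap(V+\eta)$ is nonempty exactly when $|S|\le k$, so the ideal is generated by the squarefree degree-$(k+1)$ monomials. You spell out the identification of the forbidden sets $S$ (and flag the integrality of the BLPPW presentation) in more detail than the paper, which states these steps without proof, but the argument is the same.
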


One consequence is that the centers of $\B_l(n,k)$ and $\B_r(n,k)$ have the structure of localization algebras as discussed in \cite{BLPPW}.

\begin{remark}
It follows from the results in this paper and \cite[Theorem 1.2(5)]{HypertoricCatO}
 that for a rational polarized arrangement $\cal{V}$ with $\M_{\cal{V}}$ smooth, we have an isomorphism
\[
K_0(\Bt(\cal{V})) \otimes_{\Z[q,q^{-1}]} \C \cong H^{2k}_{\mathbb{T}}(\M_{\cal{V}}; \C),
\]
where $q$ acts by $1$ on $\C$. A priori, we cannot apply this result to left and right cyclic $\cal{V}$ if $1 < k < n-1$, since $\M_{\cal{V}}$ is not smooth in these cases. Note that we can interpret $K_0(\B_l(n,k))$ in terms of the cohomology of symmetric products: let $F$ be $D^2$ with open neighborhoods of $n$ interior points removed, and let $S_+$ be the union of the internal boundary components of $F$ with a closed interval on the outer boundary of $F$. The cohomology group $H^k(\Sym^k(F), \Sym^k(S_+) ;\C) \cong \wedge^k H^1(F,S_+;\C)$ has a basis given by $k$-fold wedge products of our $n$ distinguished straight-line Lagrangians in $F$, so we can identify it with $K_0(\B_l(n,k)) \otimes_{\Z[q,q^{-1}]} \C$.
\end{remark}

\section{Quantum group bimodules } \label{sec:qg-bimodules}

Here we recall a categorified action of a variant of $\gl(1|1)$ as bimodules over $\B_l(n,k)$, introduced in \cite{LaudaManion} in the style of \cite{Sar-tensor} and equivalent to a particular case of the bimodules defined in \cite{ManionRouquier}. We will explain a bordered Floer perspective on the bimodules over $\B_l(n,k)$, based on Heegaard diagrams, and connect them to deletion and restriction bimodules. In particular, we will see that our factorization of the bimodule $\Fb_k$ into deletion and restriction bimodules has a natural interpretation from the bordered Floer perspective.

\subsection{Basic definitions}

We first recall the bimodule $\Fb_k^{OSz}$ over $(\B_l(n,k), \B_l(n,k+1))$ defined in \cite[Section 9]{LaudaManion}. Let $e_k^{\vee}$ denote the sum of all idempotents $\Ib_{\x}$ for $\x \in V_l(n,k)$ such that $0 \notin \x$, and let $e_k^{\wedge}$ denote the sum of all idempotents $\Ib_{\x}$ for $\x \in V_l(n,k)$ such that $0 \in \x$. For $\x \in V_l(n,k)$ with $0 \in \x$, let $\x^{(\vee)}$ denote $\x \setminus \{0\}$.

Let $P_k^{\vee} = \B_l(n,k) e_k^{\vee}$, a left module over $\B_l(n,k)$. We will define a right action of $\B_l(n,k+1)$ on $P_k^{\vee}$; first, define a surjective ring homomorphism
\[
\Psi' : e_{k+1}^{\wedge} \B_l(n,k+1) e_{k+1}^{\wedge} \to e_k^{\vee} \B_l(n,k) e_k^{\vee}
\]
as follows. For $\x, \y \in V_l(n,k+1)$ with $0 \in \x \cap \y$, we have
\[
\Ib_{\x} \B_l(n,k+1) \Ib_{\y} \cong \frac{\Z[U_1,\ldots,U_n]}{(p_G : G \textrm{ generating interval between } \x \textrm{ and } \y)},
\]
and similarly for $\Ib_{\x^{(\vee)}} \B_l(n,k) \Ib_{\y^{(\vee)}}$, where generating intervals are defined as in \cite[Definition 2.11]{LaudaManion} (following Ozsv{\'a}th--Szab{\'o} \cite{OSzNew}). The generating intervals $p_G$ between $\x$ and $\y$ are contained in the ideal generated by the generating intervals between $\x^{\vee}$ and $\y^{\vee}$, so we get a homomorphism of $\Z[U_1,\ldots,U_n]$-modules
\[
\Ib_{\x} \B_l(n,k+1) \Ib_{\y} \to \Ib_{\x^{\vee}} \B_l(n,k) \Ib_{\y^{\vee}}
\]
by sending $1 \mapsto 1$. Summing over $\x, \y \in V_l(n,k)$ with $0 \in \x \cap \y$, we get a surjective homomorphism of $\Z[U_1,\ldots,U_n]$-modules
\[
\Psi' : e_{k+1}^{\wedge} \B_l(n,k+1) e_{k+1}^{\wedge} \to e_k^{\vee} \B_l(n,k) e_k^{\vee}.
\]
Using Ozsv{\'a}th--Szab{\'o}'s original definition of $\B_l(n,k)$ in terms of these quotients of $\Z[U_1,\ldots,U_n]$ as reviewed e.g. in \cite[Section 2.1]{LaudaManion}, one can check that $\Psi'$ is actually a ring homomorphism, i.e. it respects multiplication. Given $\x, \y, \z \in V_l(n,k+1)$ with $0 \in \x \cap \z$ but $0 \notin \y$, and $b, b' \in \B_l(n,k+1)$, one can check that
\[
\Psi'(\Ib_{\x} b \Ib_{\y} b' \Ib_{\z}) = 0.
\]

We can now define a surjective ring homomorphism
\[
\frac{\B_l(n,k+1)}{\B_l(n,k+1) e_{k+1}^{\vee} \B_l(n,k+1)} \to e_k^{\vee} \B_l(n,k) e_k^{\vee}
\]
by sending
\[
[b] \mapsto \Psi'(e_{k+1}^{\wedge} b e_{k+1}^{\wedge}).
\]
By the above remarks, this map is well-defined; note that
\[
e_{k+1}^{\wedge} b b' e_{k+1}^{\wedge} = e_{k+1}^{\wedge} b e_{k+1}^{\wedge} b' e_{k+1}^{\wedge} + e_{k+1}^{\wedge} b e_{k+1}^{\vee} b' e_{k+1}^{\wedge}
\]
and $e_{k+1}^{\wedge} b e_{k+1}^{\vee} b' e_{k+1}^{\wedge}$ maps to zero under $\Psi'$. Precomposing with the quotient map, we get a surjective ring homomorphism
\[
\B_l(n,k+1) \to e_k^{\vee} \B_l(n,k) e_k^{\vee},
\]
which we can view as a non-unital homomorphism from $\B_l(n,k+1)$ to $\B_l(n,k)$. On the multiplicative generators of $\B_l(n,k+1)$, this homomorphism sends
\begin{itemize}
\item $\Ib_{\x} \mapsto \Ib_{\x^{(\vee)}}$ if $0 \in \x$,
\item $\Ib_{\x} \mapsto 0$ if $0 \notin \x$,
\item $\x \xrightarrow{R_i} \y$ and $\x \xrightarrow{L_i} \y$ map to $\x^{(\vee)} \xrightarrow{R_i} \y^{(\vee)}$ and $\x^{(\vee)} \xrightarrow{L_i} \y^{(\vee)}$ if $0 \in \x \cap \y$, and map to zero otherwise,
\item $\x \xrightarrow{U_i} \x$ maps to $\x^{\vee} \xrightarrow{U_i} \x^{\vee}$ if $0 \in \x$ and zero otherwise.
\end{itemize}
Given $b \in P_k^{\vee} \subset \B_l(n,k)$ and $b' \in \B_l(n,k+1)$, right multiplying $b$ by the image of $b'$ under this homomorphism results in another element of $P_k^{\vee}$.

\begin{definition}
The bimodule $\Fb_k^{OSz}$ over $(\B_l(n,k), \B_l(n,k+1))$ is the left $\B_l(n,k)$-module $P_k^{\vee}$, with an action of $\B_l(n,k+1)$ given by the above homomorphism followed by right multiplication.
\end{definition}

One can alternatively define $\Fb_k^{OSz}$ by inducing the left action of the identity bimodule over $\B_l(n,k+1)$ by the above non-unital homomorphism. The multi-grading by $\Z\langle e_1, \ldots, e_n \rangle$ on $P_k^{\vee} \subset \B_l(n,k)$ is additive with respect to right multiplication by $\B_l(n,k+1)$ as well as with respect to left multiplication by $\B_l(n,k)$, so we can view $\Fb_k$ as a multi-graded bimodule over $(\B_l(n,k), \B_l(n,k+1))$. As with the algebras, we can collapse this $\Z^{n}$ to a grading by $\Z$.

For $0 \leq k \leq n$, the Grothendieck groups (regarded as modules over $\mathbb{C}(q)$) of the compact derived categories of singly-graded left $\B_l(n,k)$-modules are identified with the nonzero weight spaces of the $U_q(\gl(1|1))$ representation $V^{\otimes n}$, where $V$ is the vector representation, see \cite[Section 8.11]{LaudaManion}). Under this identification, the action of $F \in U_q(\gl(1|1))$ on $V^{\otimes n}$ is induced by tensoring with the bimodule $\Fb_k$ above.
In particular, we have $\Fb_{k}\Fb_{k+1} = 0$, categorifying the  $\mf{gl}(1|1)$ relation $F^2=0$.

Following \cite{LaudaManion}, let $\Eb''_k = (\Eb'')_k^{\OSz}$ be $\Hom_{\B_l(n,k)}(\Fb_k, \B_l(n,k))$, the left dual of $\Fb_k$, which is a multi-graded bimodule over $(\B_l(n,k+1), \B_l(n,k))$. We can view $\Eb''_k$ as the identity bimodule over $\B_l(n,k+1)$ with its right action induced by the above homomorphism from $\B_l(n,k+1)$ to $\B_l(n,k)$.

\begin{remark}
We denote the dual bimodule $\Eb''_k$ rather than $\Eb_k$ because its singly graded version does not categorify the action of the standard $E \in U_q(\gl(1|1))$ on $V^{\otimes n}$.  Rather, $\Eb''_k$ categorifies the action of
\[
E'' := (q^{-1} - q)EK,
\]
whose action on $V^{\otimes n}$ is dual to that of $F$ with respect to the bilinear form $(-,-)_{\cal{V}}$.
\end{remark}

It was convenient in \cite{LaudaManion} to focus on $\B_l(n,k)$, but one can define analogous bimodules over $\B_r(n,k)$ by replacing $0$ with $n$ everywhere above. For the larger algebra $\B(n,k)$ with $\binom{n+1}{k}$ idempotents, one can define two pairs of bimodules (one using $0$ and the other using $n$).  There are no such bimodules over the smaller algebras $\B'(n,k)$ (see the next subsection).
\subsection{Strands interpretation}
Recall that the algebras $\B_l(n,k)$, $\B_r(n,k)$, $\B(n,k)$, and $\B'(n,k)$ have interpretations as the homology of (formal) dg strands algebras $\A(\Zc,k)$ of the form appearing in bordered Floer homology \cite{LP, MMW2}. Here $\Zc$ is an arc diagram as considered by Zarev \cite{Zarev}, except that instead of matchings on a collection of oriented intervals, we allow matchings on a collection of oriented intervals and circles. The arc diagrams $\Zc_l$, $\Zc_r$, $\Zc_{\full}$, and $\Zc'$ whose strands algebras have homology $\B_l(n,k)$, $\B_r(n,k)$, $\B(n,k)$, and $\B'(n,k)$ respectively are shown in Figure~\ref{fig:FourArcDiagrams}. A representative element of the strands algebra $\A(\Zc_l) = \oplus_{k=0}^n \A(\Zc_l, k)$ is shown in Figure~\ref{fig:StrandsAlgElt}; the combinatorics of these strands elements is explained in \cite{MMW2} in the case at hand, adapting the general prescriptions of \cite{LOTBorderedOrig,Zarev}.

\begin{figure}[!tbp]
\centering
\begin{minipage}{.5\textwidth}
  \centering
  \includegraphics[width=.5\linewidth
  ]{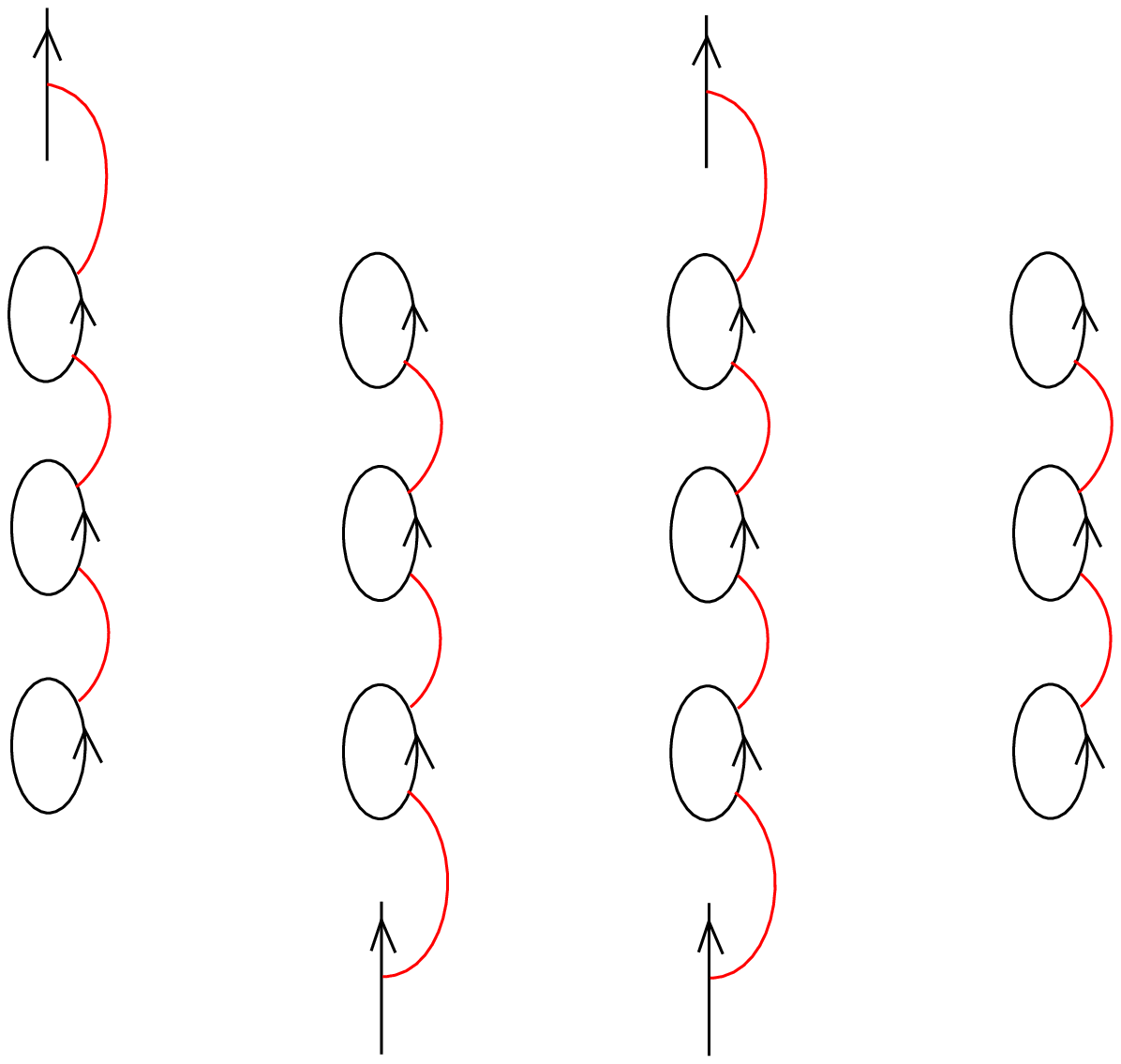}
  \caption{\small From left to right: arc diagrams $\Zc_l$, $\Zc_r$, $\Zc_{\full}$, and $\Zc'$ for $n=3$.}
\label{fig:FourArcDiagrams}
\end{minipage}
\begin{minipage}{.5\textwidth}
  \centering
  \includegraphics[scale=0.4]{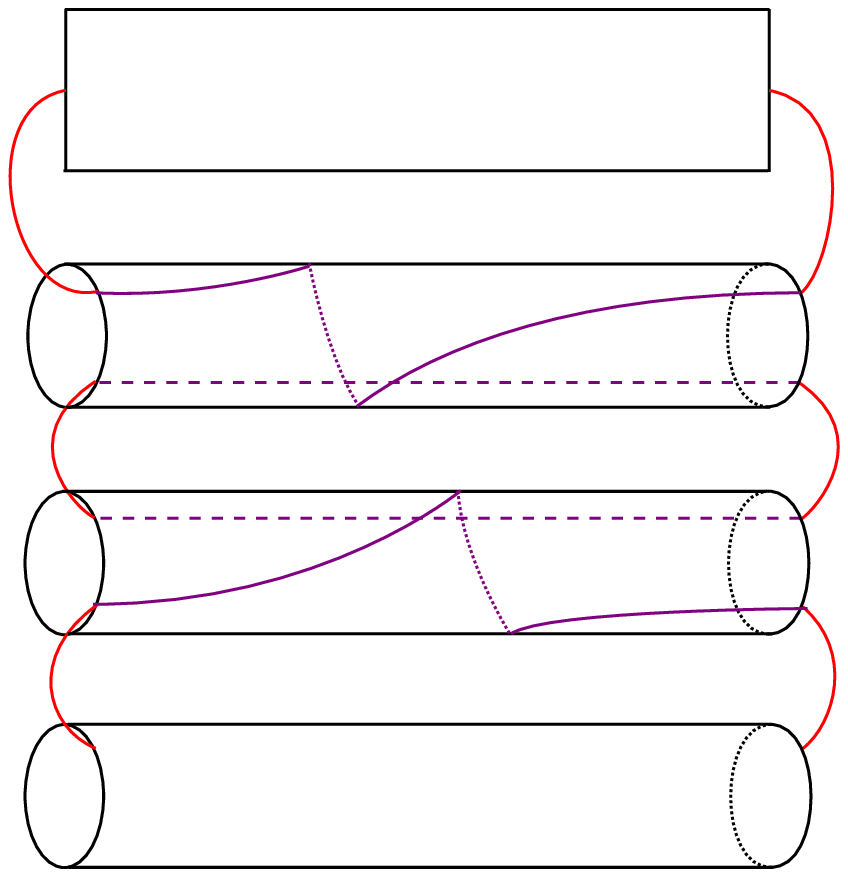}
  \caption{\small An element of the strands algebra $\A(\Zc_l)$.}
\label{fig:StrandsAlgElt}
\end{minipage}
\end{figure}

The constructions of Rouquier and the third author in \cite{ManionRouquier} equip $\A(\Zc)$ with differential bimodules $E$ and $E^{\vee}$.  In our specific setting, the differentials vanish, and the differential bimodules $E$ and $E^{\vee}$ are closely related the bimodules $\Eb''$ and $\Fb$ introduced in the previous subsection.

Representative elements of the bimodules $E$ and $E^{\vee}$ over $\A(\Zc_l)$ are shown in Figure~\ref{fig:StrandsEAndDual}. The left and right actions of elements of $\A(\Zc_l)$ on such elements are defined by concatenation as usual. The $j^{th}$ tensor powers of $E$ and $E^{\vee}$ in general admit similar descriptions using strands pictures in which $j$ strands exit to the top or bottom; in this case, there are no such valid pictures for $j \geq 2$ (see Figure~\ref{fig:ESquared}), so $E$ and $E^{\vee}$ each square to zero.

\begin{remark}
For more general $\Zc$ the bimodules over $\A(\Zc)$ from \cite{ManionRouquier} do not square to zero, but $m^{th}$ powers of the bimodules admit actions of the dg nilCoxeter algebra $\mathfrak{N}_m$ appearing in \cite{DM}, causing the homology of the second and all higher powers to vanish. The bimodules we consider in this page are an especially simple case of this construction.
\end{remark}

For the four arc diagrams in question, one can work with gradings that are simpler than in the general case (see
\cite[Section 6]{MMW2}), and view $E$ and $E^{\vee}$ as graded by $\Z\langle e_1, \ldots, e_n \rangle$ in the unique way such that the following proposition holds with respect to $\Z^{n}$ multi-gradings.

\begin{proposition}\label{prop:gl(11)bim}
There is a natural identification of $\A(\Zc_l,k) \otimes_{\B_l(n,k)} \Fb_k$ with $E^{\vee}$ as dg $(\A(\Zc_l,k), \B_l(n,k+1))$-bimodules, where the right action of $\B_l(n,k+1)$ on $E^{\vee}$ is restricted from the right action of $\A(\Zc_l,k+1)$ via the quasi-isomorphism $\B_l(n,k+1) \xrightarrow{\sim} \A(\Zc_l,k+1)$ from \cite{LP,MMW2}, and similarly for the right action of $\B_l(n,k)$ on $\A(\Zc_l,k)$.

Similarly, there is a natural identification of $\Eb''_l \otimes_{\B_l(n,k)} \A(\Zc_l,k)$ with $E$ as dg $(\B_l(n,k+1), \A(\Zc_l,k))$-bimodules. Similar statements hold for the bimodules over $\B_r(n,k)$ and $\A(\Zc_r,k)$, as well as for the bimodules over $\B(n,k)$ and $\A(\Zc_{\full},k)$.
\end{proposition}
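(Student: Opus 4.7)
The plan is to give both bimodules a common description in terms of strands diagrams and then check the bimodule structures agree by direct inspection. I will focus on the first identification $\A(\Zc_l,k) \otimes_{\B_l(n,k)} \Fb_k \simeq E^{\vee}$; the others follow by symmetric arguments.

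First I would describe $E^{\vee}$ concretely. A nonzero generator is a strands picture with $k$ strands exiting at the top and $k+1$ strands entering from the bottom, together with one additional ``exit strand'' running from position $0$ on the bottom to the marked point on the left boundary of $\Zc_l$. Nontriviality forces the right idempotent $\Ib_{\y}$ to satisfy $0 \in \y$, so the exit strand has somewhere to originate, while the top idempotent is an arbitrary $\x \in V_l(n,k)$. Under the bijection $\y \leftrightarrow \y^{(\vee)} = \y \setminus \{0\}$ between $\{\y \in V_l(n,k+1) : 0 \in \y\}$ and $\{\z \in V_l(n,k) : 0 \notin \z\}$, $E^{\vee}$ is canonically identified, as a left $\A(\Zc_l,k)$-module, with the right ideal $\A(\Zc_l,k) \cdot e_k^{\vee}$.

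Next I would compute the right action of $\A(\Zc_l,k+1)$, and hence the right $\B_l(n,k+1)$-action via the quasi-isomorphism, by strands concatenation. Only diagrams factoring through $e_{k+1}^{\wedge}$ can act nontrivially: the leftmost strand must begin and end at position $0$ so that the exit strand can be continued through it. Pulled back along $\B_l(n,k+1) \to \A(\Zc_l,k+1)$, a generator-by-generator inspection on $R_i, L_i, U_i$ shows that the resulting action agrees exactly with the non-unital homomorphism $\B_l(n,k+1) \to \B_l(n,k)$ used to define $\Fb_k^{OSz}$: generators based at idempotents with $0 \in \x$ act by ``deleting'' the position-$0$ strand, whereas generators based at idempotents with $0 \notin \x$ annihilate. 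The sought bimodule isomorphism
\[
\Phi \colon \A(\Zc_l,k) \otimes_{\B_l(n,k)} \Fb_k \longrightarrow E^{\vee}
\]
then sends $a \otimes b$, with $a \in \A(\Zc_l,k)$ and $b \in P_k^{\vee} = \B_l(n,k) e_k^{\vee}$, to the strands diagram obtained by vertical concatenation of $a$ with a strands representative of $b$, together with the exit strand at position $0$. Balancing over $\B_l(n,k)$ follows from associativity of concatenation; the left action is respected by construction, the right action by the preceding paragraph. Surjectivity and injectivity are immediate from the explicit strands description, and the differentials vanish in this setting so there is nothing further to check.

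The main obstacle is the generator-by-generator verification that strands concatenation with the distinguished exit strand exactly reproduces the ``delete position $0$'' homomorphism defining $\Fb_k^{OSz}$. The $U_i$ generators require particular care, since in strands language they arise from crossings whose interaction with the exit strand must be handled explicitly, and one must check that the $e_{k+1}^{\vee}$ component of $\B_l(n,k+1)$ maps to zero under both descriptions. Once the $E^{\vee}$ statement is in hand, the companion statement for $\Eb''_k$ and $E$ is proved by reversing the roles of top and bottom; the variants over $\B_r(n,k)$ and $\B(n,k)$ follow by swapping positions $0$ and $n$, and by combining the two pairs of constructions, respectively.
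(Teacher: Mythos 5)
Your proposal is correct and follows essentially the same route as the paper: identify $\A(\Zc_l,k)\otimes_{\B_l(n,k)}\Fb_k$ with the induced projective $\A(\Zc_l,k)e_k^{\vee}$ (summand by summand over $\x$ with $0\notin\x$), match its elements with the strands generators of $E^{\vee}$ carrying the extra exiting strand, and then verify agreement of the right $\B_l(n,k+1)$-action on idempotents and the generators $R_i$, $L_i$, $U_i$. The only difference is one of emphasis—the paper records the generator check as immediate and delegates the explicit matching to a figure, while you spell out the concatenation map and flag the $U_i$/exit-strand interaction—but the substance is the same.
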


\begin{proof}
As a left $\B_l(n,k)$-module, $\Fb_k$ is a direct sum of the indecomposable projective $\B_l(n,k)$ modules corresponding to elements $\x \in V_l(n,k)$ with $0 \notin \x$. Thus, $\A(\Zc_l,k) \otimes_{\B_l(n,k)} \Fb_k$ is a direct sum of the indecomposable projective $\A(\Zc_l,k)$-modules corresponding to the same elements $\x$. We identify elements of the summand for a given $\x$ with elements of $E^{\vee}$ as in Figure~\ref{fig:BimodIdentification}; this identification is an isomorphism of dg left $\A(\Zc_l,k)$-modules. To see that the right actions of $\B_l(n,k+1)$ agree, it suffices to check that the actions of the idempotents and of the multiplicative generators $R_i$, $L_i$, and $U_i$ of $\B_l(n,k+1)$ agree, which is immediate. The other statements are analogous.
\end{proof}

It follows that the induction equivalences from $D^c(\B_l(n,k))$ to $D^c(\A(\Zc_l, k))$ and from $D^c(\B_l(n,k+1))$ to $D^c(\A(\Zc_l,k+1))$ intertwine the functors $\Fb_k \otimes -$ and $E^{\vee} \otimes -$.

\begin{figure}
\centering
\begin{subfigure}[t]{.5\textwidth}
  \centering
  \includegraphics[width=.9\linewidth]{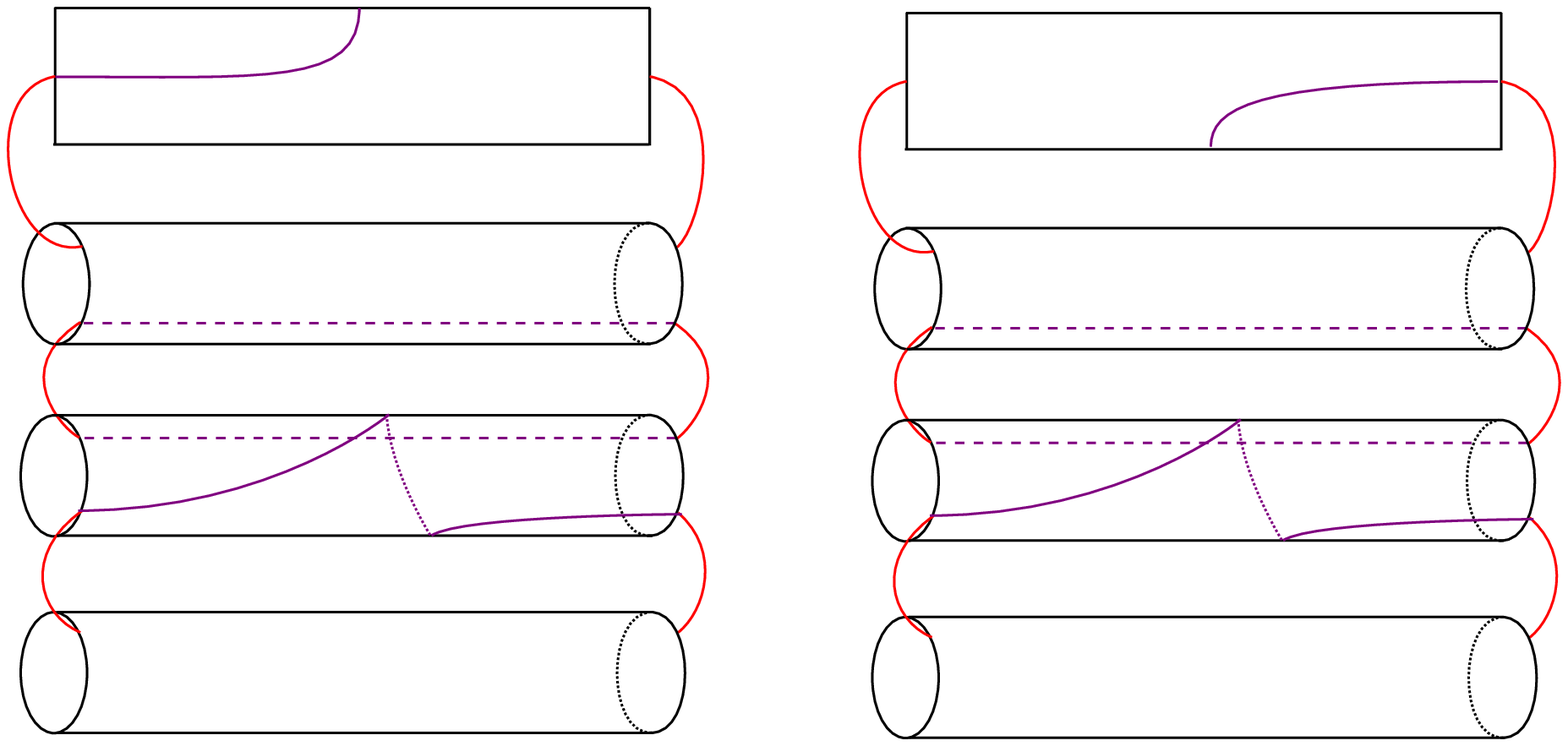}
\caption{Representative elements of $E$ (left) and $E^{\vee}$ (right)}
\label{fig:StrandsEAndDual}
\end{subfigure}\quad
\begin{subfigure}[t]{.45\textwidth}
  \centering
  \includegraphics[width=.45\linewidth]{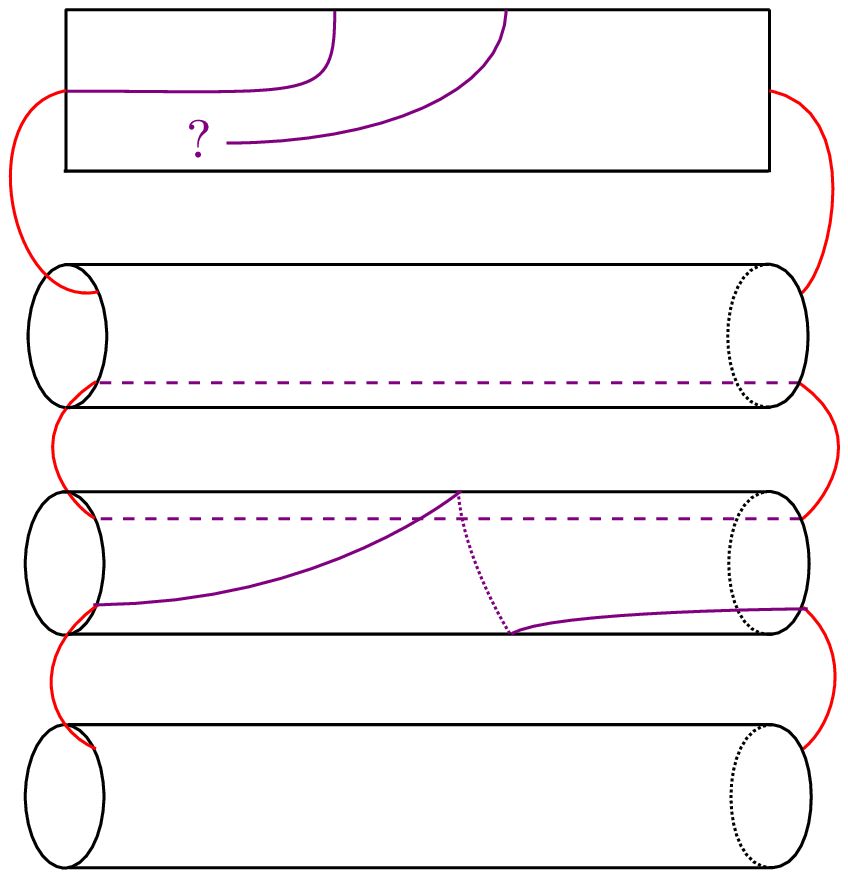}
\caption{$E^2 = 0$ because there are no valid strands pictures with two strands exiting the top of the distinguished rectangle.}\label{fig:ESquared}
\end{subfigure}
\caption{}
\end{figure}

\begin{figure}
\centering
\begin{minipage}{.5\textwidth}
\includegraphics[scale=0.38]{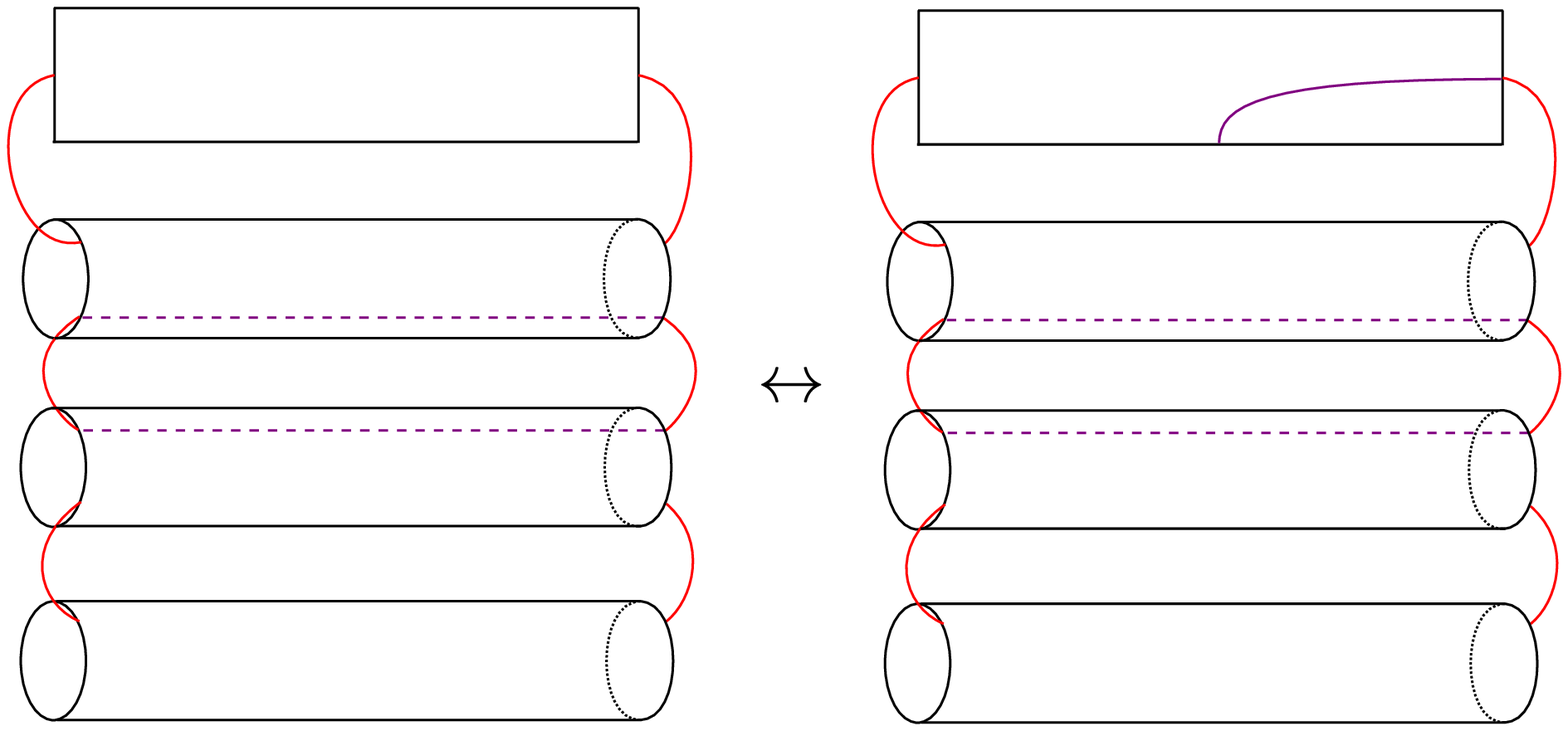}
\end{minipage}
\begin{minipage}{.45\textwidth}
\quad
\includegraphics[scale=0.38]{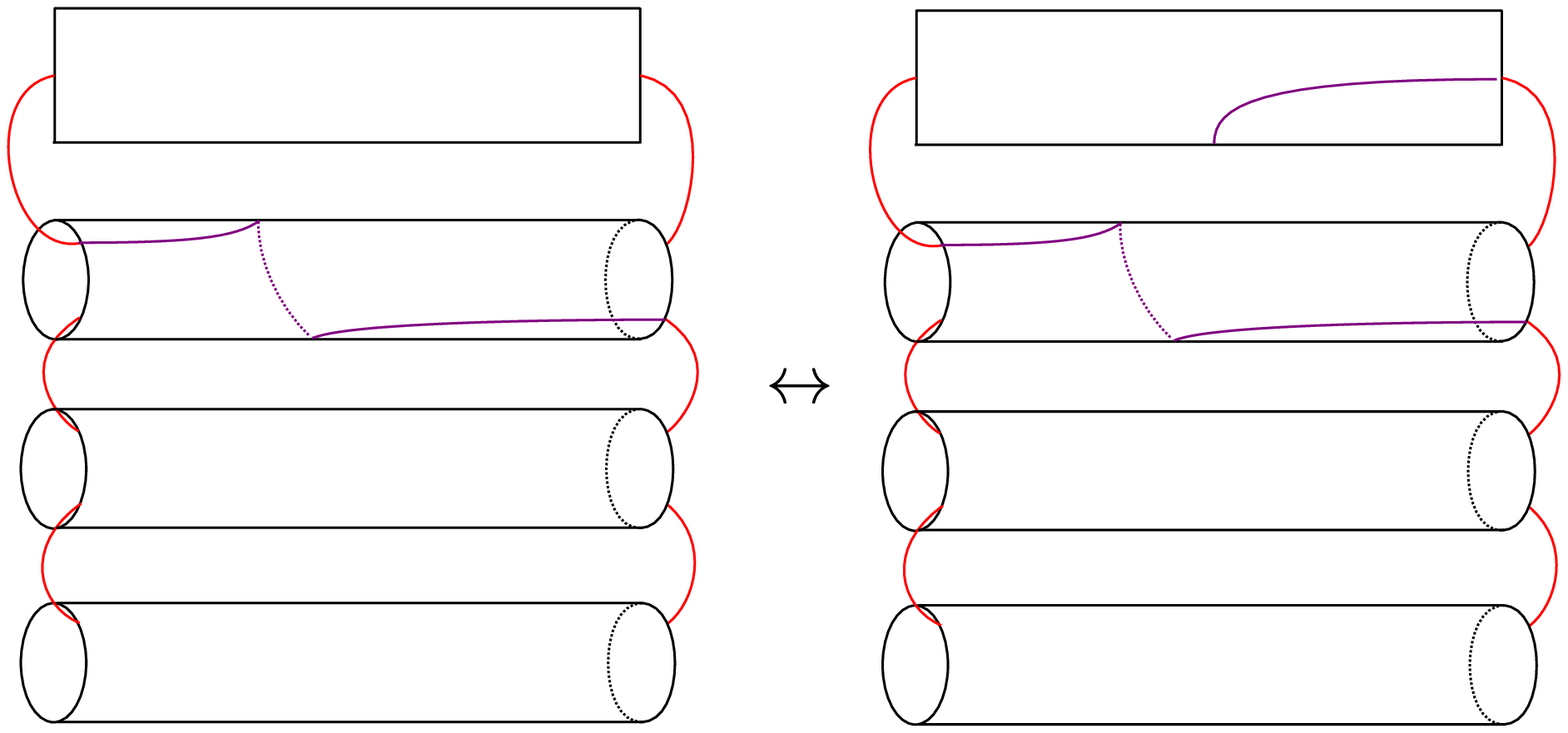}
\end{minipage}
\caption{Elements of $\A(\Zc_l,k) \otimes_{\B_l(n,k)} \Fb_k$ on the left and the corresponding elements of $E^{\vee}$ on the right. }
\label{fig:BimodIdentification}
\end{figure}

\subsection{Heegaard diagram interpretation}\label{sec:FactoringWithHDs}
The bimodules $E$ and $E^{\vee}$ from \cite{ManionRouquier}, which are defined using strands pictures like Douglas--Manolescu's, should admit an alternate description as bimodules associated to (generalized) bordered sutured Heegaard diagrams, reminiscent of the Heegaard diagrams for quantum-group bimodules in \cite[Figure 32]{EPV}. Since these generalized bordered sutured Heegaard diagrams are not so standard in the literature, we review the intended generalization of Zarev's definitions below, before discussing the examples of interest.

\subsubsection{Bordered sutured Heegaard diagrams}\label{sec:HDGeneralities}
Recall that by a generalized arc diagram, we mean an arc diagram as defined in \cite[Definition 2.1.1]{Zarev} except that some or all of the $Z_i$ may be oriented circles rather than oriented intervals (the ordering on the $Z_i$ is also typically irrelevant except for bookkeeping). We do not impose any degeneracy conditions a priori, although developing bordered sutured Floer homology in the full generality of these diagrams is expected to be quite difficult.

In the introduction to \cite{Zarev}, Zarev writes about the $DA$ bimodules $\widehat{BSDA}(Y, \Gamma)$ as if they are associated directly to morphisms in his decorated sutured category $\mathcal{SD}$, which are 3d cobordisms (unparametrized by Heegaard diagrams) between 2d sutured surfaces (parametrized by arc diagrams). This is a common white lie in Heegaard Floer homology; like the sutured Floer complexes they generalize, the bimodules $\widehat{BSDA}(Y, \Gamma)$ are chain-level objects and depend on a parametrization of $(Y,\Gamma)$ in terms of an appropriate Heegaard diagram, along with additional analytic choices (only the homotopy type of the bimodule is an invariant of $(Y,\Gamma)$).

In \cite[Chapter 4]{Zarev}, Zarev explains the Heegaard diagram choices necessary to define the one-sided bordered sutured modules $\widehat{BSD}(Y,\Gamma)$ and $\widehat{BSA}(Y,\Gamma)$ for bordered sutured manifolds $(Y,\Gamma)$ (these $(Y,\Gamma)$ can be viewed as morphisms to or from the empty set in $\mathcal{SD}$). He encodes these choices in what he calls a bordered sutured Heegaard diagram. In \cite[Chapter 8]{Zarev}, where bimodules are discussed, Heegaard diagrams are treated less formally, and it appears that no name is chosen for the type of Heegaard diagram used to represent decorated sutured cobordisms in \cite[Chapter 8.4]{Zarev}. Examining the paragraphs below \cite[Definition 8.4.2]{Zarev}, the relevant type of diagram for a decorated sutured cobordism $(Y,\Gamma)$ from a sutured surface parametrized by $\Zc_1$ to a sutured surface parametrized by $\Zc_2$ is a bordered sutured Heegaard diagram for $(Y,\Gamma)$ viewed as a bordered sutured manifold with boundary parametrized by $\Zc_1 \cup \Zc_2$. Thus, we will also refer to these diagrams for cobordisms as bordered sutured Heegaard diagrams.

We generalize Zarev's definition of bordered sutured Heegaard diagrams in \cite[Definition 4.1.1]{Zarev} by allowing $\Zc$ to be a generalized arc diagram (i.e. to have closed circles as well as closed intervals). We also do not impose homological linear independence in Zarev's terms (his formulation does not correctly extend to generalized diagrams). We treat cobordisms from the sutured surface of $\Zc_1$ to the sutured surface of $\Zc_2$ as described in the above paragraph.

\subsubsection{Ozsv{\'a}th--Szab{\'o}'s middle diagrams}
In \cite[Sections 10 and 11]{OSzHolo}, Ozsv{\'a}th--Szab{\'o} assign $DA$ bimodules to a type of Heegaard diagram that they call a middle diagram, which are defined in turn using the upper diagrams of \cite[Section 2.1]{OSzHolo}. One can think of their middle diagrams as certain bordered sutured Heegaard diagrams in which $\Zc_1$ and $\Zc_2$ are instances of the generalized arc diagram $\Zc'$ of Figure~\ref{fig:FourArcDiagrams} (compare with \cite[Figures 7--10]{OSzHolo}, which we would rotate 90 degrees clockwise in the plane to match our conventions).

Less obviously, it is appropriate to think of the extended middle diagrams appearing in \cite[Section 11.1]{OSzHolo} as bordered sutured Heegaard diagrams in which $\Zc_1$ and $\Zc_2$ are instances of the generalized arc diagram $\Zc_{\full}$ of Figure~\ref{fig:FourArcDiagrams}. Compare \cite[Figure 44]{OSzHolo} with Figure~\ref{fig:ExtendedMiddleDiag}, which shows how we would interpret extended middle diagrams (since Ozsv{\'a}th--Szab{\'o} require holomorphic curves to have zero multiplicity at the boundary circles $Z_0^{||}$ and $Z_1^{||}$, the difference between the versions of the diagram with and without corners is not visible by the holomorphic geometry input to these bimodules).  One can also consider half-extended middle diagrams having either $Z_0^{||}$ or $Z_1^{||}$ but not both, with a similar translation to the bordered sutured language.

\begin{figure}
\includegraphics[scale=0.8]{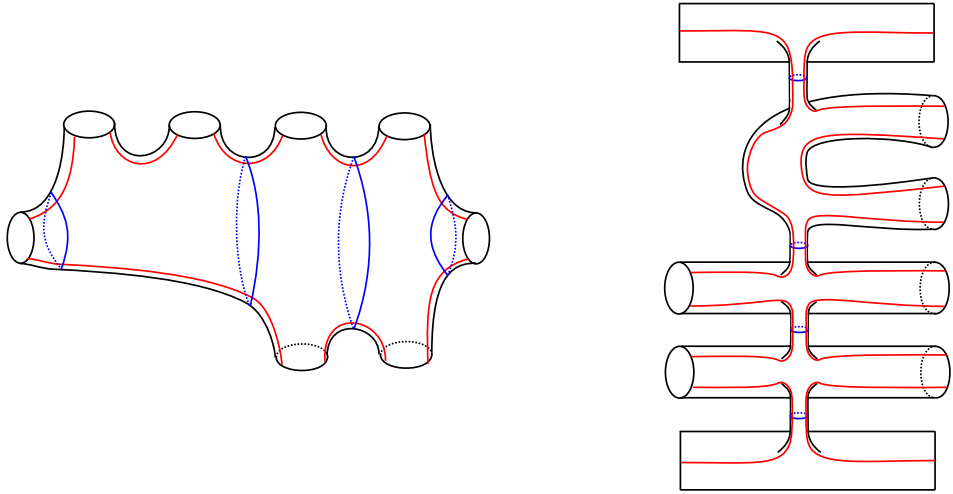}
\caption{Left: Extended middle diagram following Ozsv{\'a}th--Szab{\'o}. Right: the corresponding bordered sutured Heegaard diagram.}
\label{fig:ExtendedMiddleDiag}
\end{figure}

Below we will consider the $DA$ bimodule of a Heegaard diagram related to the one shown on the right of Figure~\ref{fig:HeegaardDiagVariant} by a disk decomposition along the horizontal boundary (this type of decomposition is invisible when defining $DA$ bimodules from the diagrams). We can almost view the diagram on the right of Figure~\ref{fig:HeegaardDiagVariant} as being associated to a half-extended middle diagram in the sense of Ozsv{\'a}th--Szab{\'o}, as on the left of Figure~\ref{fig:HeegaardDiagVariant}. However, this left diagram is not a valid half-extended middle diagram since it does not arise from a middle diagram in the correct way. Thus, while the diagrams are very simple and we can count all the disks that should contribute to their bimodules following the same heuristics that apply in \cite{OSzHolo}, we cannot directly cite \cite{OSzHolo} to turn these disk counts into a rigorous theorem that the bimodule we define below is the $DA$ bimodule associated to our Heegaard diagram by some more general definition.

\begin{figure}
\centering
\begin{minipage}{.6\textwidth}
\centering
\includegraphics[scale=0.9]{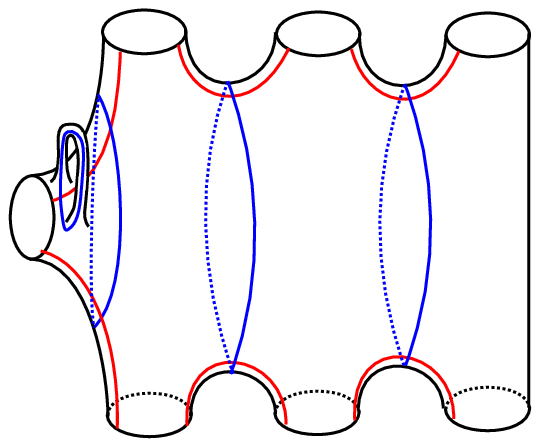}
\end{minipage}%
\begin{minipage}{.4\textwidth}
\includegraphics[scale=0.8]{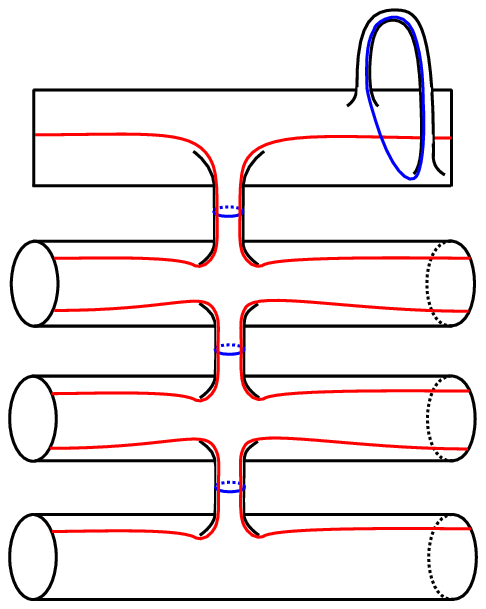}
\end{minipage}
\caption{Variant of the Heegaard diagram considered below, from Ozsv{\'a}th--Szab{\'o}'s perspective (left) and the bordered sutured perspective (right)}
\label{fig:HeegaardDiagVariant}
\end{figure}

\subsubsection{The case of interest}\label{sec:HDsCaseOfInterest}

When discussing bimodules from Heegaard diagrams in this section, we note that based on the most literal extensions of the bordered sutured theory to this case, the Heegaard diagrams would actually produce bimodules over the dg strands versions of the algebras $\A(\Zc_l)$ \cite{LP,MMW2}, which are formal with homology $\B_l(n,k)$. Ozsv{\'a}th--Szab{\'o}'s methods skip the dg step and interpret holomorphic disk counts directly in terms of algebras like $\B_l(n,k)$; we follow their approach here.

Figure~\ref{fig:HeegaardDiag} shows the relevant bordered sutured Heegaard diagram for the bimodule $E^{\vee}$ over $\A(\Zc_l)$ (here we prefer Figure~\ref{fig:HeegaardDiag} for the factorization we consider below, although the diagram of Figure~\ref{fig:HeegaardDiagVariant} has other advantages).  We expect $E^{\vee}$ should agree with the type $DA$ bimodule that would be associated to this Heegaard diagram in bordered sutured Floer homology.  In part because bordered sutured Floer homology has not been defined when arc diagrams have circles instead of just intervals, we do not prove this here (as discussed above). However, in the special case of the bimodules $E$ and $E^\vee$ from Proposition \ref{prop:gl(11)bim}, it is straightforward enough to verify that the disk counts that would typically be used to associate a type $DA$ bimodule to this type of Heegaard diagram make sense, giving a Heegaard-diagram interpretation of these bimodules.

\begin{figure}
\includegraphics[scale=0.45]{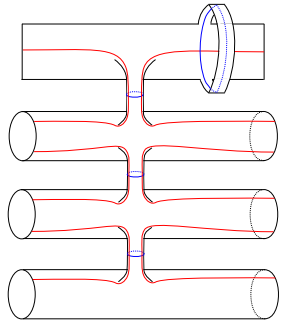}
\caption{Heegaard diagram whose $DA$ bimodule recovers the bimodule $E^{\vee}$ over $\A(\Zc_l)$.}
\label{fig:HeegaardDiag}
\end{figure}

The Heegaard diagram of Figure~\ref{fig:HeegaardDiag} can be factored (up to stabilizations) into two pieces as in Figure~\ref{fig:HeegaardFactorization}. Algebraically, this should mean that $E^{\vee}$ is homotopy equivalent to a box tensor product (as in bordered Floer homology) of two $DA$ bimodules corresponding to the two pieces; in fact, we will have an isomorphism.

\begin{figure}
\includegraphics[scale=0.45]{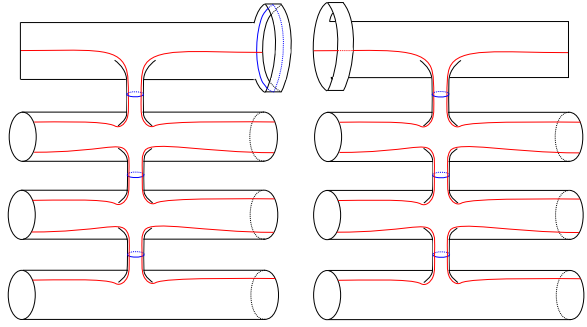}
\caption{Decomposing the Heegaard diagram of Figure~\ref{fig:HeegaardDiag}.}
\label{fig:HeegaardFactorization}
\end{figure}

In the diagram on the right of Figure~\ref{fig:HeegaardFactorization}, there is a Heegaard Floer generator (set of intersection points) for each such generator of the identity Heegaard diagram for the incoming arc diagram. See Figure~\ref{fig:IdentityDiags} for the relevant identity diagrams, which correspond to identity middle diagrams and half-extended middle diagrams in Ozsv{\'a}th--Szab{\'o}'s language and thus have well-defined $DA$ bimodules which are identity bimodules (at least as $DD$ bimodules) by \cite[Proposition 13.2]{OSzHolo}.

\begin{figure}
\includegraphics[scale=0.5]{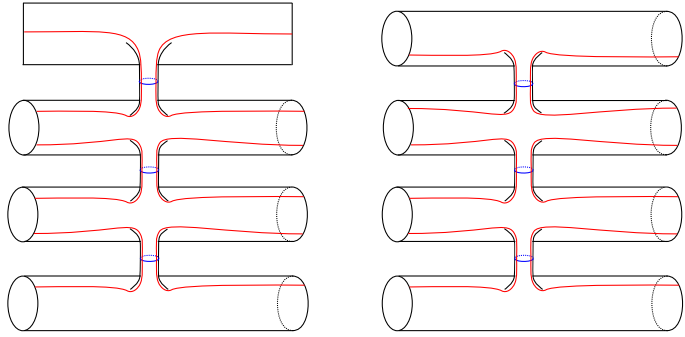}
\caption{Identity Heegaard diagrams for the arc diagrams $\Zc_l$ (left) and $\Zc'$ (right).}
\label{fig:IdentityDiags}
\end{figure}

As with the identity diagram, for the diagram in question there should be no $\delta^1_i$ actions for $i > 2$ (i.e. there should be no higher $\A_{\infty}$ terms in the right action), and there should be no $\delta^1_1$ actions (i.e. no differential). Thus, the $DA$ bimodule of this diagram should be an ordinary bimodule, projective on the left. If the algebra acting on the right is $\B_l(n,k+1)$, then the algebra acting on the left is $\B'(n+1,k+1)$, which naturally contains $\B_l(n,k+1)$ as a subalgebra. For each basic idempotent of $\B_l(n,k+1)$, the $DA$ bimodule of the diagram (as a left module) would have an indecomposable projective summand corresponding to the image of the idempotent in $\B'(n+1,k+1)$. Thus, as a left module, the $DA$ bimodule of the diagram would agree with the induction on the left of the identity bimodule over $\B_l(n,k+1)$ by the inclusion map from $\B_l(n,k+1)$ to $\B'(n+1,k+1)$. The right action of $\B_l(n,k+1)$ on the $DA$ bimodule of the diagram should involve the same curve counts as in the identity Heegaard diagram over the input arc diagram, so we should be able to identify the $DA$ bimodule of the diagram with the induced identity bimodule as bimodules over $(\B'(n+1,k+1), \B_l(n,k+1))$.

Now, in the diagram on the left of Figure~\ref{fig:HeegaardFactorization}, there is a Heegaard Floer generator for each such generator of the identity Heegaard diagram for the incoming arc diagram such that the top arc is unoccupied on the input side of the generator. As before, there should be no $\delta^1_1$ actions or $\delta^1_{\geq 3}$ actions. The input algebra is $\B'(n+1,k+1)$ and the output algebra is $\B_l(n,k)$. For a Heegaard Floer generator of the diagram corresponding to $\x \in V'(n+1,k+1)$ with $1 \in \x$, let $\x' := \{i - 1: i > 1 \in \x\}$; the summand of the $DA$ bimodule coming from $\x$ would be the indecomposable projective $\B_l(n,k)$-module corresponding to $\x'$. For the right action, elements of the form $R_i$ or $L_i$ for $2 \leq i \leq n-1$, as well as $U_i$ for $2 \leq i \leq n$, should act on the right by outputting generators of the same form on the left (assuming the left and right idempotents of these elements contain $1$; otherwise the elements act as zero). Interestingly, elements $U_1$ at $\x \in V'(n+1,k+1)$ such that $1 \in \x$ act on the right by outputting $1$ on the left; see Figure~\ref{fig:InterestingDomain} for the domain of the relevant holomorphic disk. This domain is basically identical to ones appearing in Ozsv{\'a}th--Szab{\'o}'s theory, whose holomorphic geometry and resulting algebra is known from \cite{OSzHolo}, and we treat it in the same way here. Briefly, this type of domain implies that a $U$ variable acts on the right with output determined by which parts of the output boundary are covered by the domain. In our case, the domain stays away from the output boundary, so the algebraic output is $1$.

\begin{figure}
\includegraphics[scale=0.5]{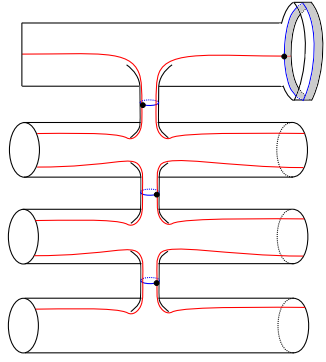}
\caption{Domain for the right action of $U_1$.}
\label{fig:InterestingDomain}
\end{figure}

\begin{remark}
The elements $R_i$, $L_i$, and $U_i$ still generate the truncated algebra $\B'(n+1,k+1)$, with relations that are identical except when $k=n-1$ (see \cite{MMW1}). Using these relations, one could check that the above $DA$ bimodule is well-defined; alternatively, well-definedness follows from the identifications of Section~\ref{sec:HFBimodsVsDelRest}.
\end{remark}

\subsection{Relationship with deletion and restriction}\label{sec:HFBimodsVsDelRest}

The Heegaard diagram factorization above also arises naturally from the point of view of polarized arrangements.  Under the identification of $\B_l(n,k)$ with $\tilde{B}(\cal{V})$ for left cyclic $\cal{V}$ from Theorem~\ref{thm:LeftCyclicIsom}, we can identify $\Fb_k$ with a tensor product of deletion and signed-restriction bimodules as defined in Section~\ref{sec:GeneralDeletionRestriction}.

In more detail, let $\V = (V,\eta,\xi)$ be a left cyclic arrangement of $n$ hyperplanes in $(k+1)$-space. Choose a left cyclic arrangement $\hat{\V} = (\hat{V}, \hat{\eta}, \hat{\xi})$ of $n+1$ hyperplanes in $(k+1)$-space whose first deletion is $\V$, and let $\V' = (V',\eta',\xi')$ be the first restriction of $\hat{\V}$. Since we are looking at the restriction to the first hyperplane, the sign change arising from replacing the restriction by the signed restriction is a global multiplication by $-1$.  Thus, by Lemma~\ref{lem:RestrictedCyclicIsCyclic}, the signed restriction $\V'' := (V', -\eta', -\xi')$ is a left cyclic arrangement of $n$ hyperplanes in $k$-space.

From Section~\ref{sec:GeneralDeletionRestriction}, we have bimodules $\Del_1 := {\rm del}_1^{\Bt}(\cal{V},+)$ and $\Rest'_1 := ({\rm rest}')^1_{\Bt}(\cal{V},+)$ over $(\tilde{B}(\hat{\V}), \tilde{B}(\V))$ and $(\tilde{B}(\V'),\tilde{B}^+(\hat{\V}))$ respectively. Since $\Del_1$ is defined using a homomorphism from $\tilde{B}(\V)$ to $\tilde{B}(\hat{\V})$ whose image is contained in the idempotent-truncated subalgebra $\tilde{B}^+(\hat{\V})$ of $\tilde{B}(\hat{\V})$, we can view $\Del_1$ as an induction on the left of a bimodule $\Del'_1$ over $(\tilde{B}^+(\hat{\V}), \tilde{B}(\V))$. Twisting $\Rest'_1$ by the algebra isomorphism arising from the sign-change relationship between $\V'$ and $\V''$, we get a bimodule $\Rest''_1$ over $(\tilde{B}(\V''), \tilde{B}^+(\hat{\V}))$.

Now, since $\V$, $\hat{\V}$, and $\V''$ are left cyclic, we have identifications
\begin{itemize}
\item $\tilde{B}(\V) \cong \B_l(n,k+1)$,
\item $\tilde{B}^+(\hat{V}) \cong \B'(n+1,k+1)$,
\item $\tilde{B}(\V'') \cong \B_l(n,k)$.
\end{itemize}
We can thus view $\Del'_1$ as a bimodule over $(\B'_l(n+1,k+1),\B_l(n,k+1))$ and $\Rest''_1$ as a bimodule over $(\B_l(n,k), \B'_l(n+1,k+1))$.

\begin{proposition}
Under the above identifications, $\Del'_1$ and $\Rest''_1$ agree with the $DA$ bimodules proposed in Section~\ref{sec:HDsCaseOfInterest} for the left and right Heegaard diagrams in Figure~\ref{fig:HeegaardFactorization} respectively.
\end{proposition}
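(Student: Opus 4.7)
The plan is to identify $\Del'_1$ and $\Rest''_1$, under the Ozsv{\'a}th--Szab{\'o} identifications of Theorem~\ref{thm:LeftCyclicIsom}, with explicit bimodules over the algebras $\B'(n+1,k+1)$, $\B_l(n,k+1)$, and $\B_l(n,k)$ obtained by inducing the identity bimodule of one algebra through a homomorphism to the other. The proposed $DA$ bimodules of Section~\ref{sec:HDsCaseOfInterest} are also given in this form, so the comparison reduces to checking the action on idempotents $\Ib_{\x}$ and on the generators $R_i$, $L_i$, $U_i$, together with the vanishing of higher $\A_{\infty}$ terms and differentials.

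For $\Del'_1$, I would first unwind the combinatorics of first-deletion. For $\alpha \in \cal{P}(\V)$ with associated dot set $\x_{\alpha} \in V_l(n,k+1)$, the sequence $+\alpha$ lies in $\cal{P}(\hat{\V})$ and starts with $+$; a direct calculation from Definition~\ref{def:StatesFromAlpha} shows its dot set in $V'(n+1,k+1) \subset V_l(n+1,k+1)$ equals $\x_{\alpha}+1 := \{i+1 : i \in \x_{\alpha}\}$. Tracing the definition of $\del^{\Bt}_1(\V,+)$ through the quiver description of Theorem~\ref{thm:LeftCyclicIsom} then shows that $R_i, L_i, U_i \in \B_l(n,k+1)$ are sent to $R_{i+1}, L_{i+1}, U_{i+1} \in \B'(n+1,k+1)$. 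Thus $\Del'_1$ agrees, as a $(\B'(n+1,k+1), \B_l(n,k+1))$-bimodule, with the left-action induction of the identity bimodule of $\B_l(n,k+1)$ along the index-shifting inclusion into $\B'(n+1,k+1)$, which is exactly the description of the corresponding $DA$ bimodule in Section~\ref{sec:HDsCaseOfInterest}.

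For $\Rest''_1 = (\rest')^1_{\Bt}(\hat{\V},+)$ composed with the sign-flip isomorphism $\Bt(\V') \cong \Bt(\V'')$, the effect on idempotents is the crux. Given $\alpha \in \cal{P}(\hat{\V})$ with $\alpha_1 = +$ and $\x_{\alpha} \in V'(n+1,k+1)$, one computes using $\var_l$ that the image sign sequence, viewed in $\V''$, is bounded feasible precisely when $1 \in \x_{\alpha}$, and that in that case its $\kappa_l$-dot set in $V_l(n,k)$ is $\{i-1 : i \in \x_{\alpha} \setminus \{1\}\}$; otherwise the image idempotent vanishes. On arrow generators, $R_j, L_j, U_j$ with $j \geq 2$ are sent to $R_{j-1}, L_{j-1}, U_{j-1}$, while the defining relation $u_1 \mapsto 1$ of $(\rest')^1$ translates into $U_1 \Ib_{\x}$ (for $1 \in \x$) acting on the right by outputting $1 \in \B_l(n,k)$. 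This last identity is exactly the algebraic shadow of the exceptional holomorphic disk depicted in Figure~\ref{fig:InterestingDomain}, whose domain stays away from the outgoing boundary.

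The main obstacle is the sign-flip bookkeeping in the second step: one must verify directly from Definition~\ref{def:StatesFromAlpha} that the composition of (a)~dropping the leading $+$ of $\alpha$, (b)~flipping all remaining signs, and (c)~reading off $\kappa_l$ for the left cyclic arrangement $\V''$, yields both the feasibility criterion $1 \in \x_{\alpha}$ and the shift-down-by-one of $\x_{\alpha} \setminus \{1\}$. This reduces to a short $\var_l$-calculation that handles the two cases $\alpha_2 = \pm$ separately. Once this combinatorial fact is in place, the remaining verifications are mechanical, because on both the algebraic and Heegaard-diagram sides the action is determined entirely by (i)~propagation of generators with index $\geq 2$, (ii)~vanishing on idempotents $\Ib_{\x}$ with $1 \notin \x$, and (iii)~the single exceptional $U_1$-contribution.
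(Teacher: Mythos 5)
Your proposal is correct and follows essentially the same route as the paper: the paper's own proof declares the $\Del'_1$ case clear, establishes the $\Rest''_1$ case by the same $\var_l$ computation (an idempotent with $1\notin\x$ corresponds to a sequence starting $++$, whose sign-flipped truncation starts $--$ and has $\var_l=k+2>k$, hence vanishes), and leaves the check of the right action of $R_i$, $L_i$, $U_i$ to the reader. You simply spell out the details the paper compresses — the dot-set shift $\x_\alpha\mapsto\x_\alpha+1$ identifying the deletion homomorphism with the index-shifting inclusion, the formula $\{i-1:i\in\x_\alpha\setminus\{1\}\}$ for surviving idempotents, and the translation of $u_1\mapsto 1$ into the exceptional $U_1$-action — all of which check out.
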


For $\Del'_1$, this is clear from Section~\ref{sec:HDsCaseOfInterest}. For $\Rest''_1$, note that for $\x \in V'(n+1,k+1)$ with $1 \notin \x$, the sign sequence $\alpha$ corresponding to $\x$ starts with $++$. Thus, $\tilde{\alpha}$ starts with $--$, so
\[
\var_l(\tilde{\alpha}) = 1 + \var(\tilde{\alpha}) = 1 + \var(\alpha) = k+2 > k,
\]
and $e_{\tilde{\alpha}}$ is thereby zero in $\tilde{B}(\V'')$ (compare with Remark~\ref{rem:ExtendedBDef}). Thus, $\Rest''_1$ agrees with the $DA$ bimodule proposed for the right Heegaard diagram of Figure~\ref{fig:HeegaardFactorization} as left $\B_l(n,k)$-modules. By examining the right action of $R_i$, $L_i$, and $U_i$ elements, one concludes that the right actions also agree.

We can interpret $\Fb_k$ similarly, using the Heegaard diagram from Figure~\ref{fig:HeegaardDiag}; since we did not describe the $DA$ bimodule of that diagram in explicit terms above, we will just take this interpretation as the definition of the $DA$ bimodule of that Heegaard diagram.

\begin{corollary}
Under the above identifications, there is an isomorphism of $(\B_l(n,k), \B_l(n,k+1))$ bimodules
\[
\Fb_k \cong \Rest''_1 \otimes_{\B'(n+1,k+1)} \Del'_1.
\]
\end{corollary}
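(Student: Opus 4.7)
The plan is to exhibit both bimodules as right-module pushforwards of the identity bimodule over $\B_l(n,k+1)$ along a single non-unital ring homomorphism $\Phi\colon \B_l(n,k+1)\to \B_l(n,k)$, and to identify this $\Phi$ with the homomorphism $\Phi^{OSz}$ used in the definition of $\Fb_k^{OSz}$ recalled at the start of Section~\ref{sec:qg-bimodules}. Once this reduction is in place, the corollary becomes a computation on a finite generating set.

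First I would unwind the tensor product. The bimodule $\Del'_1$ is $\tilde{B}^+(\hat{\cal{V}})$ as a $(\tilde{B}^+(\hat{\cal{V}}),\tilde{B}(\cal{V}))$-bimodule with right action through $\del^{\tilde B}_1(\hat{\cal{V}},+)$, and $\Rest''_1$ is $\tilde{B}(\cal{V}'')$ as a $(\tilde{B}(\cal{V}''),\tilde{B}^+(\hat{\cal{V}}))$-bimodule with right action through the sign-change twist of $(\rest')^1_{\tilde B}(\hat{\cal{V}},+)$. Collapsing the tensor product gives
\[
\Rest''_1\otimes_{\tilde{B}^+(\hat{\cal{V}})}\Del'_1\;\cong\;\tilde{B}(\cal{V}'')\cdot e_k^\vee\;\cong\;\B_l(n,k)\cdot e_k^\vee
\]
as a left $\B_l(n,k)$-module, where $e_k^\vee$ is the image of $1$ under the composite, and where the right $\B_l(n,k+1)$-action factors through $\Phi := (\rest')^1_{\tilde B}(\hat{\cal{V}},+)_{\mathrm{twisted}}\circ \del^{\tilde B}_1(\hat{\cal{V}},+)$, translated through Theorem~\ref{thm:LeftCyclicIsom}. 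Since $\Fb_k = P_k^\vee = \B_l(n,k)\cdot e_k^\vee$ with right action through $\Phi^{OSz}$, it suffices to prove $\Phi = \Phi^{OSz}$ on the generators $\Ib_\x$, $R_i$, $L_i$, $U_i$ of $\B_l(n,k+1)$.

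The comparison on generators proceeds through the bijection $\kappa_l$ of Definition~\ref{def:StatesFromAlpha}. On idempotents, $\Phi(e_\alpha) = e_{-\alpha}$ (prepend $+$, remove the leading $+$, flip all signs), and by Corollary~\ref{cor:LeftCyclicBoundedFeasible} this is nonzero iff $\alpha_1 = -$, i.e.\ iff $0 \in \x_\alpha$; when nonzero, a direct sign-variation count yields $\kappa_l(-\alpha) = \kappa_l(\alpha)\setminus\{0\} = \x_\alpha^{(\vee)}$, matching $\Phi^{OSz}$. The polynomial generators transform via $u_i \mapsto u_{i+1} \mapsto u_i$, the index shift from deletion at position~$1$ cancelling the shift from restriction, which gives the correct $U_i$ action. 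For the quiver arrows, the arrows between idempotents both having $0$ in their dot sets map to same-named arrows, while $R_1$ and $L_1$ necessarily alter $\alpha_1$ and must be shown to vanish. This last point is the main (still elementary) step: if $\alpha = (-,-,\alpha_3,\ldots)$ and $\beta = \alpha^1$, then $\Phi(p(\alpha,\beta)) = p(-\alpha, -\beta)$ in $\tilde{B}(\cal{V}'')$, and one computes $\var_l(-\beta) = \var(+(-\beta)) = \var(+\beta) + 1 = k+2 > k$ (since $\beta_1 = +$ flips to $-$ in $-\beta$, introducing an extra leading sign change), so $e_{-\beta} = 0$ in $\tilde{B}(\cal{V}'')$ by relation $A1$ of Corollary~\ref{cor:SimpleBTilde}, forcing $\Phi(R_1) = 0$ and matching $\Phi^{OSz}$; the case $L_1$ is symmetric. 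This completes the verification $\Phi = \Phi^{OSz}$ and hence the corollary.
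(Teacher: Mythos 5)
Your proposal is correct and is essentially the direct verification the paper has in mind (the paper gives no details beyond asserting the corollary ``can be verified directly,'' with the Heegaard-diagram gluing serving only as motivation): collapsing the tensor product to $\B_l(n,k)\,e_k^{\vee}$ with right action through the composite deletion--restriction homomorphism, and then matching that composite against the homomorphism defining $\Fb_k^{OSz}$ on idempotents, on the $U_i$, and on the arrows, is exactly the intended argument. Your sign-variation computations also check out: $e_{-\alpha}\neq 0$ in $\Bt(\cal{V}'')$ precisely when $\alpha_1=-$, in which case $\x_{-\alpha}=\x_\alpha\setminus\{0\}$, the index shifts on the $u_j$ cancel, and the images of $R_1,L_1$ vanish because $\var_l(-\beta)=k+2$.
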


We state this as a corollary since the gluing theorem in bordered Floer homology (if it holds in our generalized setting) would identify the $DA$ bimodule of the diagram from Figure~\ref{fig:HeegaardDiag} with the box tensor product $\boxtimes$ (as in e.g. \cite{LOTBimodules}) of the $DA$ bimodules of the two diagrams in Figure~\ref{fig:HeegaardFactorization}, and since in this case the box tensor product agrees with the ordinary tensor product. However, the corollary makes no mention of the $DA$ bimodules associated to diagrams, and it can be verified directly without issue. Equally well, one can verify the following similar result.

\begin{proposition}
Under the above identifications, there is an isomorphism of $(\B_l(n,k), \B_l(n,k+1))$ bimodules
\[
\Fb_k \cong \Rest_1 \otimes_{\B_l(n+1,k+1)} \Del_1,
\]
where we implicitly postcompose $\Rest_1$ with a sign-change automorphism similar to those used above.
\end{proposition}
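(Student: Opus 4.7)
The plan is to reduce the proposition to the preceding corollary by showing that the tensor products $\Rest_1 \otimes_{\tilde{B}(\hat{\V})} \Del_1$ and $\Rest''_1 \otimes_{\tilde{B}^+(\hat{\V})} \Del'_1$ describe the same $(\tilde{B}(\V''), \tilde{B}(\V))$-bimodule. Both are bimodules induced from the identity along an algebra homomorphism, so they agree as soon as the underlying composite homomorphisms coincide.

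First I would unwind that $\Rest_1 \otimes_{\tilde{B}(\hat{\V})} \Del_1$ is canonically the $(\tilde{B}(\V''), \tilde{B}(\V))$-bimodule whose underlying left module is $\tilde{B}(\V'')$ and whose right $\tilde{B}(\V)$-action is given by the composite
\[
\tilde{B}(\V) \xrightarrow{\;\del^{\tilde{B}}_1(\hat{\V},+)\;} \tilde{B}(\hat{\V}) \xrightarrow{\;\rest^1_{\tilde{B}}(\hat{\V},+)\;} \tilde{B}(\V') \xrightarrow{\;\cong\;} \tilde{B}(\V''),
\]
with the last arrow the sign-change isomorphism from Definition~\ref{def:AltRes}. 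The identical manipulation expresses $\Rest''_1 \otimes_{\tilde{B}^+(\hat{\V})} \Del'_1$ as the bimodule attached to the corresponding composite built from the primed variants, after noting that $\del^{\tilde{B}}_1(\hat{\V},+)$ has image contained in $\tilde{B}^+(\hat{\V})$ and may therefore be regarded as landing there. The key input is then the ``Compositions'' paragraph of Section~\ref{sec:GeneralDeletionRestriction}, which asserts that the unprimed and primed composites agree as homomorphisms $\tilde{B}(\V) \to \tilde{B}(\V')$. Postcomposing with the sign change on both sides identifies the two bimodules, and combining with the preceding corollary yields the stated isomorphism.

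The subtle point, and hence the main obstacle, is the verification that the two composites really do coincide: the unprimed $\rest^1_{\tilde{B}}(\hat{\V},+)$ sends $u_1 \in \tilde{B}(\hat{\V})$ to $0$, while the primed counterpart sends $u_1$ to $1$, so a priori the two restrictions differ. The resolution is that $\del^{\tilde{B}}_1(\hat{\V},+)$ shifts indices via $u_j \mapsto u_{j+1}$ and so never produces $u_1$ in its image; the two restrictions therefore agree after restriction to that image, giving equal composites. Beyond this, the argument is a bookkeeping exercise tracking the non-unital identification $\Del_1 \cong \tilde{B}(\hat{\V}) \otimes_{\tilde{B}^+(\hat{\V})} \Del'_1$ of $(\tilde{B}(\hat{\V}), \tilde{B}(\V))$-bimodules, together with compatibility of the sign-change twist relating $\Rest_1$ and $\Rest''_1$.
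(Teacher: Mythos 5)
Your proposal is correct, and it is a legitimate way to carry out what the paper leaves unproved: the paper offers no argument beyond the remark that, as with the preceding corollary, the isomorphism ``can be verified directly,'' whereas you instead reduce the proposition to that corollary, using the fact that a tensor product of bimodules obtained by left induction of identity bimodules along homomorphisms is the bimodule induced along the composite, together with the agreement of the primed and unprimed composites asserted in the Compositions paragraph of Section~\ref{sec:GeneralDeletionRestriction}. This is exactly the machinery the paper set up (the description of $\Del_1$ as induced from $\Del'_1$, the containment of the image of $\del^{\tilde{B}}_1(\hat{\V},+)$ in $\tilde{B}^+(\hat{\V})$, and the common sign-change twist identifying $\tilde{B}(\V')$ with $\tilde{B}(\V'')$ on both sides), so your route is arguably cleaner than an independent direct check of both sides. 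One phrase should be tightened: saying that $\del^{\tilde{B}}_1(\hat{\V},+)$ ``never produces $u_1$ in its image'' is not quite well posed, since whether an element of a quotient of $P(Q)\otimes_{\Z}\Z[u_1,\ldots,u_{n+1}]$ ``involves $u_1$'' depends on the chosen representative; the precise statement is that, all maps being algebra homomorphisms, it suffices to compare the two composites on the multiplicative generators $e_{\alpha}$, $p(\alpha,\beta)$ and $u_j$ of $\tilde{B}(\V)$, which the deletion sends to $e_{+\alpha}$, $p(+\alpha,+\beta)$ and $u_{j+1}$, and on these $\rest^1_{\tilde{B}}(\hat{\V},+)$ and $(\rest')^1_{\tilde{B}}(\hat{\V},+)$ visibly agree, the only discrepancy ($u_1\mapsto 0$ versus $u_1\mapsto 1$) never arising. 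With that adjustment your argument is complete.
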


\bibliographystyle{alpha}
\bibliography{bib_clean}

\end{document}